\def\Aut{\operatorname{Aut}} 
\def\D{\operatorname{dom}}
\def\c{\operatorname{cod}}
\def\clsp{\overline{\operatorname{span}}}
\def\id{\operatorname{id}}
\def\max{\operatorname{max}}
\def\min{\operatorname{min}}
\def\PI{\operatorname{PIso}}
\def\Per{\operatorname{Per}}
\def\C{\mathbb{C}}
\def\F{\mathbb{F}}
\def\R{\mathbb{R}}
\def\N{\mathbb{N}}
\def\Z{\mathbb{Z}}
\def\T{\mathbb{T}}
\def\CC{\mathcal{C}}
\def\KK{\mathcal{K}}
\def\LL{\mathcal{L}}
\def\OO{\mathcal{O}}
\def\TT{\mathcal{T}}
\def\NT{\mathcal{NT}}
\newcommand{\inv}{^{-1}}
\newtheorem{thm}{Theorem}[section]
\newtheorem{cor}[thm]{Corollary}
\newtheorem{lemma}[thm]{Lemma}
\newtheorem{prop}[thm]{Proposition}
\theoremstyle{definition}
\newtheorem{definition}[thm]{Definition}
\theoremstyle{remark}
\newtheorem{remark}[thm]{Remark}
\newtheorem{example}[thm]{Example}
\numberwithin{equation}{section}
\tikzstyle{vertex}=[circle]
\tikzstyle{goto}=[->,shorten >=1pt,>=stealth,semithick]
\begin{document}

\date{\today}
\title[Self-similar actions of groupoids on $k$-graphs]{$C^*$-algebras of self-similar actions of groupoids on higher-rank graphs and their equilibrium states}
\author{Zahra Afsar}
\author{Nathan Brownlowe}
\address{Zahra Afsar and Nathan Brownlowe, School of Mathematics and Statistics, University of Sydney,
NSW 2006, Australia.}
\email{za.afsar@gmail.com,Nathan.Brownlowe@sydney.edu.au}
\author[Jacqui Ramagge]{Jacqui Ramagge}
\address{Jacqui Ramagge, University of South Australia, Adelaide, SA 5001, Australia.}
\email{Jacqui.Ramagge@unisa.edu.au}
\author[Mike Whittaker]{Michael F. Whittaker}
\address{Michael F. Whittaker, School of Mathematics and Statistics, University of Glasgow, University Place, Glasgow Q12 8QQ, United Kingdom}
\email{Mike.Whittaker@glasgow.ac.uk}

 \thanks{This research was partially supported by ARC Discovery Project grant DP170101821, and EPSRC grant EP/R013691/1.}

\subjclass[2020]{Primary: 46L55, 37A55; Secondary: 37B05, 37B10}
\keywords{}
\thanks{}

\begin{abstract}
We introduce the notion of a self-similar action of a groupoid $G$ on a finite higher-rank graph. To these actions we associate a compactly aligned product system of Hilbert bimodules, and thereby obtain corresponding universal Nica--Toeplitz and Cuntz--Pimsner algebras. We consider natural actions of the real numbers on both algebras and study the KMS states of the associated dynamics. For large inverse temperatures, we describe the simplex of KMS states on the Nica--Toeplitz algebra in terms of traces on the full $C^*$-algebra of $G$. 
We prove that if the graph is $G$-aperiodic and the action satisfies a finite-state condition, then there is a unique KMS state on the Cuntz--Pimsner algebra. 
\end{abstract}

\maketitle

\setcounter{tocdepth}{1}


\section{Introduction}\label{sec:intro}
The introduction of self-similar groups in the past 40 years have been used to solve open problems and provide new and interesting examples of groups. Perhaps the most famous among them is the Grigorchuk group \cite{Gr} providing the first example of a group with intermediate growth, solving a problem of Milnor from 1968. Since the appearance of the Grigorchuk group, the theory of self-similar groups, or self-similarity more generally, has grown rapidly through the contributions of many authors, and it is now a major area of research within the broader field of geometric group theory. Recent applications of self-similarity include Nekrashevych's example of a finitely generated \textit{simple} group with intermediate growth \cite{Nek-simp} and the work of Belk, Bleak, and Matucci \cite{BBM} that uses graph automata to prove that all Gromov hyperbolic groups embed into the asynchronous rational group of \cite{GrigNS}. 

There is now a rich interplay between self-similar groups and operator algebras, pioneered by Nekrashevych's construction in \cite{nekra0,nekra} of a Cuntz--Pimsner algebra from a self-similar action. The work in \cite{nekra} was revisited by the last two named authors, along with Laca and Raeburn, in \cite{lrrw}, where they also studied the Toeplitz extension. Motivated by operator algebraic considerations, Exel and Pardo introduced the notion of a self-similar action of a group on the path space of a directed graph \cite{EP2}, and used this new notion of self-similarity to produce a class of $C^*$-algebras which unified Nekrashevych's Cuntz--Pimsner algebras from \cite{nekra0} and the Katsura algebras from \cite{Kat}. Working in parallel to \cite{EP2}, the authors of \cite{lrrw} produced a follow-up \cite{LRRW} in which they defined a self-similar \textit{groupoid} action on the path space of a directed graph, and associated Toeplitz and Cuntz--Pimsner algebras to these new self-similar actions. 

The successful jump to actions on graphs in \cite{EP2} and \cite{LRRW} begs the question of whether the theory can be extended to actions on \textit{higher-rank} graphs. This has been successfully achieved on the group-action side through the work of Li and Yang in \cite{LY0,LY,LY1}, thus giving a higher-rank analogue of \cite{EP2}. The first main goal of this paper is to build a higher-rank analogue of \cite{LRRW} by constructing a theory of self-similar groupoid actions on higher-rank graphs. For this we consider finite higher-rank graphs (or $k$-graphs) $\Lambda$. We view $\Lambda$ as a forest of rooted trees whose set of roots is $\Lambda^0$. We define a partial isomorphism $g$ to be a degree-preserving isomorphism between two trees with the extra property that $g(\lambda e) \in g(\lambda)\Lambda$ for all $\lambda \in v\Lambda$ and edges $e \in s(\lambda)\Lambda$. The set of partial isomorphisms of $\Lambda$ is a groupoid under the natural structure maps. We say a groupoid $G$ with unit space $\Lambda^0$ acts self-similarly on $\Lambda$ if there is an embedding of $G$ into the groupoid of partial isomorphisms of $\Lambda$ with the property that for every $g\in G$ and edge $e\in \D(g)\Lambda$, there exists $h\in G$ satisfying $g\cdot (e\lambda)=(g\cdot e)(h\cdot \lambda)$ for all $\lambda\in s(e)\Lambda$. (Here, we are viewing $g$ as a partial isomorphism, and writing $g\cdot \mu$ for the image of $\mu\in \Lambda$ under $g$.) 

To each self-similar action of a groupoid $G$ on a higher-rank graph $\Lambda$ we associate both a Nica--Toeplitz algebra $\TT(G,\Lambda)$ and a Cuntz--Pimsner algebra $\OO(G,\Lambda)$, which we do by first constructing a product system of Hilbert bimodules. This product system generalises the construction in \cite{LRRW} in which the Toeplitz and Cuntz--Pimsner algebras of a single Hilbert bimodule are considered. We describe our $C^*$-algebras $\TT(G,\Lambda)$ and $\OO(G,\Lambda)$ succinctly through generators and relations that encode the self-similarity of the action. 

Following the work in \cite{lrrw,LRRW} (see also \cite{CS}), as well as \cite{LY} for group actions on $k$-graphs, we devote the majority of this paper to calculating the equilibrium states of our $C^*$-algebras $\TT(G,\Lambda)$ and $\OO(G,\Lambda)$ under a natural time evolution lifted from a gauge action. Our work generalises the results in \cite{lrrw,LRRW}, and it has a number of advantages over the work in \cite{LY}. Firstly, we give a description of the equilibrium states of the Nica--Toeplitz algebra, which has a richer KMS structure than its Cuntz--Pimsner quotient. More importantly, we do not assume that our actions are pseudo free, in the sense of \cite[Definition~2.3(i)]{LY}. The presence of psuedo freeness in \cite{LY} means that the results do not apply to many classical self-similar group actions like the basilica group. To see this, consider the sets $F_g^k$ from \cite[Equation (6.5)]{lrrw}. The set $F_b^1$ from the basilica action is nonempty, and hence the action is not pseudo free. However, the action is \emph{regular}, see \cite[Definition 6.1]{nekra}, and hence the groupoid of germs is Hausdorff by \cite[Lemma 5.4]{nekra}.

Pseudo freeness is a hypothesis in \cite{LY} (see also \cite{EP2,LY}) because it ensures that the groupoid model for the Cuntz--Pimsner algebra is Hausdorff. This allows them, for example, to use groupoid results to prove the surjectivity of the isomorphism in \cite[Theorem~6.12]{LY}, whereas the surjectivity in our analogous Theorem~\ref{thm3} is proved by constructing KMS states directly using 
Perron--Frobenius theory.  Moreover,  this property makes characterising and computation of KMS states significantly easier. In particular, imposing  pseudo freeness means that \cite[Theorem~6.3]{LY0} gives only those KMS$_1$-states coming from the underlying higher-rank graph algebra, and the group action plays no part in the KMS structure.

For the Toeplitz algebra of a Hilbert bimodule there is general machinery in \cite{ln} which characterises the KMS states in terms of the traces of the coefficient algebra satisfying a subinvariance relation. The first-named author, along with Larsen and Neshveyev, recently generalised this result to the Nica--Toeplitz algebra of a product system of Hilbert bimodules \cite{ALN} (see also \cite{Kak}). In the last decade, operator algebraists have shown interest  in providing a more direct approach to the results of \cite{ln}. This direct approach has a number of advantages, including that it provides concrete solutions to the subinvariant relations of \cite{ln, ALN}; for each trace on the coefficient algebra, it gives an explicit formula for the corresponding KMS state on the Toeplitz algebras, leading to KMS states that are computable; and it reveals many interesting properties of both the underling objects and the corresponding $C^*$-algebras. We follow this trend by taking a direct approach to studying the KMS states of $\TT(G,\Lambda)$. For the dynamics, we fix a vector $ r\in [0, \infty)^k$ and compose the embedding $t\mapsto e^{irt}$ of $\R$ into $\T^k$ with the gauge action. Then for $\beta$ greater than a standard critical inverse temperature, our first main theorem shows that the simplex of the KMS$_\beta$-states for $\TT(G,\Lambda)$ is isomorphic to the simplex of tracial states of the full groupoid $C^*$-algebra $C^*(G)$. Our Theorem~\ref{Thm:KMS_beta_tau} indicates that all tracial states on $C^*(G)$ satisfy the subinvariance relation of \cite[Theorem ~2.1 and  Theorem ~3.1]{ALN}.  

In general, Cuntz--Pimsner algebras admit KMS states at very few inverse temperatures with respect to the gauge dynamics and their values are difficult to compute. Our Cuntz--Pimsner algebra $\OO(G,\Lambda)$ is no exception due to fact that each KMS state of $\OO(G,\Lambda)$ restricts to a KMS state of the higher-rank graph algebra $C^*(\Lambda)$ via the natural embedding $C^*(\Lambda)\hookrightarrow \OO(G,\Lambda)$. Thus, the simplex of KMS states of $\OO(G,\Lambda)$ is a subset of the simplex of KMS states on $C^*(\Lambda)$. In the case of $C^*(\Lambda)$ it was shown in \cite{aHLRS} that the KMS states are non-trivial only at the inverse temperature $\beta=1$ in the direction of the preferred dynamics given by taking $r:=(\ln(\rho(B_1),\dots, \ln(\rho(B_k)))$, where each $\rho(B_i)$ is the spectral radius of the adjacency matrix $B_i$ of $\Lambda$. We note that the KMS states of $\TT(G,\Lambda)$ may not factor through $\OO(G,\Lambda)$ for general strongly connected $\Lambda$; this happens only for a small family of graphs that are coordinatewise strongly connected and that have rationally independent $\ln(\rho(B_i))$s.  This means we need to study KMS states of $\OO(G,\Lambda)$ using a different approach.  

Motivated by \cite{aHLRS,LY}, we associate to $(G,\Lambda)$ the $G$-periodicity group $\Per(G,\Lambda)$, which is a subgroup of $\Z^k$ that measures the periodicity of $\Lambda$ in the presence of the self-similar action of $G$. Given $p-q\in \Per(G,\Lambda)$, we introduce a bijection $\theta_{p,q}$ that maps $\Lambda^p$ onto $\Lambda^q$; and a function $h_{p,q}$ from $\Lambda^p$  into $G$. The $\theta$ bijections reduce to those of \cite{aHLRS} when there is no groupoid action. The $h$ functions reflect the effects of the groupoid action on the $G$-periodicity of the graph. These functions play crucial roles in our analysis of the KMS states of $\OO(G,\Lambda)$, so we take our time to study their proprieties in Lemmas~\ref{lemma:htehta} and \ref{lemma:mor-prop-htheta}. We then use them to represent the full group $C^*$-algebra $C^*(\Per(G,\Lambda))$ in $\OO(G,\Lambda)$ via a central representation $V$ in Proposition~\ref{prop:rep-per}. This gives a homomorphism $\pi_V$ of the space of tracial states of  $C^*(\Per(G,\Lambda))$ into the simplex of KMS$_1$-states of $\OO(G,\Lambda)$. We then focus on proving that $\pi_V$ is an isomorphism. We first observe that if the unimodular Perron--Frobenius eigenvector $x_\Lambda$ of the $k$-graph is not invariant under the action of $G$, then there is no KMS$_1$-state for $\OO(G,\Lambda)$. So to prove the surjectivity of $\pi_V$, we need to assume that $x_\Lambda$ is invariant. Then given a tracial state $\tau$ of $C^*(\Per(G,\Lambda))$, we use the Perron--Frobenius measure $M$ of \cite{aHLRS} to construct a KMS$_1$-state directly. The process is very involved and requires a careful study of the effects of the groupoid action on $M$. In particular, we need to compute the measure of the cylinder sets $Z(\lambda,g,\mu)$ of \eqref{Z-set}. Theorem ~\ref{thm2} shows that this effect will be captured by  numbers $c_{d(\lambda),d(\mu),\lambda}(g)$. These numbers have appeared before in a much simpler form  in the  self-similar actions on $1$-graphs \cite{lrrw,LRRW} when the underling graph automatically has trivial $G$-periodicity group. In our setting, having a nontrivial $\Per(G,\Lambda)$ adds extra levels of difficulty to computing these numbers. Interestingly, as Theorem~\ref{thm:formula for kms1} shows, these numbers are crucial in characterising of the KMS$_1$-states of $\OO(G,\Lambda)$ and consequently the injectivity of the homomorphism $\pi_V$ in Theorem~\ref{thm3}. In Theorem~\ref{thm2} we characterise the uniqueness of the KMS$_1$-state for $\OO(G,\Lambda)$. We show that a self-similar action with $G$-invariant unimodular eigenvector and with a finite-state condition has a unique KMS$_1$-state if and only if the $k$-graph is $G$-aperiodic.

All of the theory described above is supported by examples. In particular, we introduce the notion of a $k$-coloured graph automaton, and we use them to build examples of self-similar groupoid actions on $k$-graphs. In Section~\ref{sec:automaton}, we focus on some particular examples coming from automata. The first example is an automaton on a $2$-coloured graph with a single vertex, which gives rise to a self-similar group action on a $2$-graph that is not pseudo free. We also construct a version of the basilica group acting on
a $2$-coloured graph, and obtain a self-similar groupoid action on a $2$-graph. In both examples, the unimodular eigenvectors are invariant for the groupoid actions, and the $G$-periodicity groups are trivial, meaning that there is a unique KMS$_1$-state. In Section~\ref{examples} we then compute their unique KMS$_1$-states.

\section{Preliminaries} 
\subsection{Groupoids}
A \textit{groupoid} $G$ is a small category with a partially-defined product in which every morphism is invertible. We write $G^0$ for the set of objects (which we usually call the \textit{unit space} of $G$), and $G^2:=\{(g,h):\D(g)=c(h)\}$ for the set of \textit{composable pairs}. We write $\c,\D:G\to G^{(0)}$ for the range and source maps of $G$.
Following \cite{LRRW}, a \textit{unitary representation} of $G$ in a $C^*$-algebra $B$ is a set of partial isometries $\{U_g: g\in G\}\subseteq B$ such that 
\begin{enumerate}[label=(U\arabic*),ref=U\arabic*]
\item \label{U1}$\{U_v: v\in G^0\}$ is a set of mutually orthogonal projections;
\item \label{U2}$U_g^*=U_{g^{-1}}$ for all $g\in G$; and
\item \label{U3} $U_g U_h=\delta_{\D(g),\c(h)}U_{gh}$ for all $g,h\in G$.
\end{enumerate}
The set $\{i_g:g\in G\}$ of point masses is a unitary representation of $G$ in the $C^*$-algebra $C^*(G)$ described in \cite{e3}. By \cite[Proposition~4.1]{LRRW}, the pair $(C^*(G),i)$ is universal for the unitary representation of $G$. 

\subsection{$k$-graphs}
Let $ 1\leq k\in \N$, and $\Lambda$ be a $k$-graph as in \cite{KP}. We write $\Lambda^0$ for the vertex set, $r$ and $s$ for the range and source maps, and $d$ for the degree map. We say $\Lambda$ is \textit{finite} if $\Lambda^p:=d^{-1}(p)$ is finite for all $p\in\N^k$. We denote the standard generators of $\N^k$ by $e_1,\dots,e_k$, and we say $\lambda\in \Lambda$ is an \textit{edge} if $d(\lambda)=e_i$ for some $1\leq i\leq k$. Given $v,w\in \Lambda^0$, $v\Lambda^p w$ denotes the set $\{\lambda\in \Lambda^p: r(\lambda)=v \text { and } s(\lambda)=w\}$. 
We say $\Lambda$ has \textit{no sources} if $v\Lambda^p\neq\emptyset$ for every $v\in \Lambda^0$ and $p\in \N^k$. We denote by $B_1,\dots ,B_k \in M_{\Lambda^0}[0,\infty)$ the coordinate adjacency matrices of $\Lambda$, in the sense that
$B_i (v,w)=:|v\Lambda^{e_i}w|$ for all $1\leq i\leq k$ and $v,w\in \Lambda^0$. We write $\rho(B_i)$ for the spectral radius of $B_i$.

A\textit{Toeplitz--Cuntz--Krieger $\Lambda$-family} in a $C^*$-algebra $B$ is a set of partial isometries $\{T_\lambda: \lambda\in \Lambda\}\subseteq B$ such that
	\begin{enumerate}[label=(TCK\arabic*),ref=TCK\arabic*]
		\item \label{TCK1} $\{T_v: v\in \Lambda^0\}$ is a set of mutually orthogonal projections;
		\item \label{TCK2} $T_\lambda T_\mu=T_{\lambda \mu}$ whenever $s(\lambda)=r(\mu)$; and
		\item \label{TCK3} $T_\lambda^*T_\mu=\sum_{(\eta,\zeta)\in\Lambda^{\min}(\lambda,\mu)} T_\eta T_\zeta ^*$ for all $\lambda,\mu\in \Lambda$. 
	\end{enumerate}
We interpret empty sums as $0$. A Toeplitz--Cuntz--Krieger $\Lambda$-family $\{T_\lambda: \lambda\in \Lambda\}$ is a \textit{Cuntz--Krieger $\Lambda$-family} if we also have 
	\begin{enumerate}[label=(CK),ref=CK]
	 \item\label{CK}	$\quad T_{v}= \sum_{\lambda\in v\Lambda^p}T_\lambda T_\lambda^*$ for all $v\in \Lambda^0$ and $p\in \N^k$.
	\end{enumerate}
	
Recall that $\Omega_k:=\{(p,q):p,q\in \N^k, p\leq q\}$ is the $k$-graph with structure maps $(p,q)(q,n)=(p,n)$, $r(p,q)=(p,p)$, $s(p,q)=(q,q)$, and $d(p,q)=q-p$. For a $k$-graph $\Lambda$, we let $\Lambda^\infty$ denote the set of all degree-preserving functors from $\Omega_k$ to $\Lambda$. We call $\Lambda^\infty$ the \textit{infinite path space of} $\Lambda$. We write $\sigma:\Lambda^\infty\to \Lambda^\infty$ for the \textit{shift map} defined by 
$\sigma^n(x)(p,q)=x(p-n,q-n)$. 
	
\subsection{Product systems of Hilbert bimodules}
Let $A$ be a $C^*$-algebra. A right Hilbert $A$-bimodule is a right Hilbert $A$-module $M$ with a left action of $A$ defined by a homomorphism $\varphi:A\to \LL(M)$ from $A$ into the $C^*$-algebra $ \LL(M)$ of adjointable operators on $M$. We write $_A{A}_A$ for the standard bimodule with the inner product defined by 
$\langle a,b\rangle=ab^*$, and the actions given by multiplication of $A$. 
For $x,y\in M$, let $\Theta_{x,y}$ be the rank-one opeartor on $M$ defined by $\Theta_{x,y}(z)=x\cdot\langle y,z\rangle$. Then $\KK(M):=\clsp\{\Theta_{x,y}:x,y\in M\}$ is the algebra of \emph{compact operators on $M$}. 
Provided that $A$ is a unital $C^*$-algebra, $M$ is called \textit{essential} if $\varphi(1_A)x=x$ for all $x\in M$. 

Given two right Hilbert $A$-bimodules $M$ and $N$, let $M\odot N$ be the algebraic tensor product of $M$ and $N$, and let 
 $M\odot_A N$ be the quotient of $M\odot N$ by the subspace
\begin{align}\label{balanced}
\{(x\cdot a)\odot y-x\odot(a\cdot y): x\in M, y\in N, a\in A\}.
\end{align}
There is a well-defined right action of $A$ on $M\odot_A N$ such that 
$(x\odot_A y)\cdot a=x\odot_A y\cdot a$ for $x\odot_A y\in X\odot_A N$ and $a\in A$.
We can equip $M\odot_A N$ with the right $A$-valued inner product 
	\begin{align}\label{bipro}
	\big\langle x\odot_A y, x'\odot_A y' \big\rangle =\big\langle y, \varphi_N(\langle x,x'\rangle) y'\big\rangle\quad \text{ for }x\odot_A y, x'\odot_A y' \in M\odot_A N.
	\end{align}
Writing $M\otimes_A N$ for the completion of $M\odot_A N$ in the inner product \eqref{bipro}, Lemma~2.16 in  \cite{tfb} implies that \eqref{bipro} extends to a right $A$-valued inner product on $M\otimes_A N$. Since for each $S \in \LL(M)$ there is an adjointable operator $S\otimes 1_N$ such that $S\otimes 1_N(x\otimes y)=S(x)\otimes y$ for $x\otimes y\in M\otimes_A N$, the map $a\mapsto \varphi_M(a)\otimes 1_N$ defines a left action of $A$ by adjointable operators on $M\otimes_A N$. Thus $M\otimes_A N$ becomes a right Hilbert $A$--$A$ bimodule called the \textit{balanced tensor product} of $M$ and $N$. We denote elements of $M\odot N$ by $x\odot y$, and write $x\otimes y$ for elements of both $M\odot_A N$ and $M\otimes_A N$.

Let $P$ be a semigroup with identity $e$, and let $A$ be a $C^*$-algebra. For each $p\in P$ let $M_p$ be a right Hilbert $A$-bimodule and let $\varphi_p:A\rightarrow \LL(M_p)$ be the homomorphism that determines the left action of $A$ on $M_p$. A \textit{product system of right Hilbert $A$-bimodules over $P$} is the disjoint union $M:=\bigsqcup_{p\in P}M_p$
	such that
	\begin{enumerate}[label=(P\arabic*),ref=P\arabic*]
		\item \label{P1} $M_e$ is equal to the standard bimodule $_A{A}_A$;
		\item\label{P2} $M$ is a semigroup, and for each $p,q \in P\setminus\{e\}$ the map $(x,y)\mapsto xy:M_p\times M_q \rightarrow M_{pq}$ descends to an isomorphism $\pi_{p,q}: M_p\otimes_A M_q\rightarrow M_{pq}$; and
		\item\label{P3} the multiplications $M_e\times X_p\rightarrow M_p$ and $M_p\times M_e\rightarrow M_p$ satisfy
		\[ax=\varphi_p(a)z,\quad xa=x\cdot a\, \text{ for }\, a\in M_e \text { and } x\in M_p.\]
	\end{enumerate}
Each bimodule is called a \textit{fiber}. If each fiber is essential, then (\ref{P2}) also holds for $p,q=e$. The product systems of interest to us have essential fibers, and so (\ref{P2}) holds for all $p\in P$.
	
If $P$ is a subsemigroup of a group $G$ with $P\cap P^{-1}=\{e\}$, then a partial order on $G$ is defined by $p\leq q$ if $p^{-1}q\in P$. The pair $(G,P)$ is called a \textit{quasi-lattice ordered group} if for each pair $p,q\in G$ with a common upper bound in $P$ there is a least upper bound $p\vee q$ in $P$. If $p,q\in G$ have no common upper bound, we write $p\vee q=\infty$. 

Given a quasi-lattice ordered group $(G,P)$ and elements $p,q\in P$, the isomorphism $\pi_{p,q}$ from \eqref{P2} gives rise to an isomorphism from $\LL(M_p)$ to $\LL(M_{p\vee q})$ given by $S\mapsto S^{p\vee q}_p:=\pi_{p,q}\circ (S\otimes 1_{M_p})\circ\pi_{p,q}^{-1}$; so we have
\begin{align}\label{for-compacts}
S^{p\vee q}_p(x\otimes y)&=S(x) y \text{ for all }x\otimes y \in M_p\otimes M_{(p\vee q)p^{-1}}.
\end{align}
A product system of right Hilbert $A$-bimodules over $P$ is called \textit{compactly aligned}, if for all $p,q\in P$ with $p\vee q<\infty$, $S\in \KK(X_p)$ and $T\in\KK(X_q)$, we have $S^{p\vee q}_pT^{p\vee q}_q\in \KK(X_{p\vee q})$.
	
\subsection{$C^*$-algebras of product systems of Hilbert bimodules}
Let $P$ be a semigroup with identity $e$, and $M$ be a product system of right Hilbert $A$-bimodules over $P$. Suppose that $\psi$ is a function from $M$ to a $C^*$-algebra $B$, and write $\psi_p$ for the restriction of $\psi$ to $X_p$. Then we say $\psi$ is a \textit{Toeplitz representation} of $M$ if
	\begin{enumerate}[label=(T\arabic*),ref=T\arabic*]
		\item\label{T1} $\psi_e:A\rightarrow B$ is a homomorphism, and $\psi_p:X_p\rightarrow B$ is linear for each $p\in P\setminus\{e\}$;
		\item\label{T2}$\psi_p(x)^*\psi_p(y)=\psi_e(\langle x,y\rangle)$ for $p\in P$, and $x,y\in M_p$;
		\item \label{T3} $\psi_{pq}(xy)=\psi_p(x)\psi_q(y)$ for $p,q\in P$, $x\in X_p$, and $y\in M_q$.
	\end{enumerate}
	Conditions (\ref{T1}) and (\ref{T2}) imply that $(\psi_p,\psi_e)$ is a representation of the Hilbert $A$-bimodule $M_p$.  We know from \cite{P} that there is a homomorphism $\psi^{(p)}:\KK(M_p)\rightarrow B$ such that $\psi^{(p)}(\Theta_{x,y})=\psi_p(x)\psi_p(y)^*$.	
	
Recall from \cite{F} that a Toeplitz representation $\psi$ of a compactly aligned product system $M$ in a $C^*$-algebra $B$ is \textit{Nica covariant} if and only if for every $p,q\in P$, $S\in \KK(M_p)$, and $T\in \KK(M_q)$, we have
	\begin{align*}
	\psi^{(p)}(S)\psi^{(q)}(T)=\begin{cases}
	\psi^{(p\vee q)}\big(S^{p\vee q}_pT^{p\vee q}_q\big)&\text{if $p\vee q<\infty$}\\
	0&\text{otherwise.}\\
	\end{cases}
	\end{align*}
The \textit{Nica-Toeplitz algebra } $\NT(M)$ 
is the $C^*$-algebra generated by a universal Nica covariant representation of $M$. We write $\psi$ for the universal Nica covariant representation.

Recall from \cite{F} that a Toeplitz representation $T$ of $M$ is \textit{Cuntz--Pimsner covariant} if
	\begin{align}\label{cunzcon}	
	T_e(a)=T^{(p)}(\varphi_p(a))\quad\text{for all } p\in P, a\in \varphi_p^{-1}(\KK(M_p)).
	\end{align}
The \textit{Cuntz--Pimsner algebra} $\OO(X)$ is generated by a universal Cuntz--Pimsner covariant representation of $M$. 
If the Cuntz--Pimsner covariance implies Nica covariance, then \cite[Lemma~2.2]{AaHR} implies that $\OO(M)$ is the quotient of $\NT(M)$ by the ideal
	\begin{align}\label{C-P-Q}
\big\langle\psi(a)-\psi^{(p)}(\varphi_p(a)):p\in P, a\in \varphi_p^{-1}(\KK(M_p))\big\rangle.
\end{align}

\section{Self-similar groupoid actions on $k$-graphs}

In this section we introduce the notion of a self-similar action of a groupoid on a $k$-graph. We start with the definition of a partial isomorphism of a $k$-graph.

\begin{definition}\label{def: partial iso}
Let $\Lambda$ be a $k$-graph. A \emph{partial isomorphism} of $\Lambda$ consists of vertices $v,w \in \Lambda^0$ and a bijection $g:v\Lambda \to w\Lambda$ satisfying
\begin{enumerate}[label=(\arabic*),ref=\arabic*]
\item\label{prope1} for all $p\in \N^k$, the restriction $g|_{v\Lambda^p}$ is a bijection of $v\Lambda^p$ onto $w\Lambda^{p}$; and
\item \label{prope2} $g(\lambda e) \in g(\lambda)\Lambda$ for all $\lambda \in v\Lambda$ and edges $e \in s(\lambda)\Lambda$.
\end{enumerate}
We write $\PI(\Lambda)$ for the set of all partial isomorphisms of $\Lambda$.
\end{definition}

For each $v\in \Lambda^0$, the identity map $\id_{v}:v\Lambda \to v\Lambda$ is a partial isomorphism. We define domain and codomain maps $\D,\c\colon \PI(\Lambda) \to \Lambda^0$ by 
\[\D(g):=v\text{ and } \c(g):=w, \text{ for }g:v\Lambda \to w\Lambda.\]

\begin{lemma}\label{lemma:groupoid}
Let $\Lambda$ be a $k$-graph. Then $\PI(\Lambda)$ is a groupoid, with unit space $\Lambda^0$; range map $\c$ and source map $\D$; and composition and inverse given by composition and inverse of functions, respectively.
\end{lemma}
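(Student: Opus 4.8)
To prove that $\PI(\Lambda)$ is a groupoid with the stated structure, I would verify directly that the proposed data assemble into a small category in which every morphism is invertible. The main work is to confirm that composition and inversion of functions stay within $\PI(\Lambda)$—that is, that the class of partial isomorphisms is closed under these operations and that the codomain/domain bookkeeping is consistent with function composition. First I would fix the conventions: a partial isomorphism $g$ is a bijection $g\colon \D(g)\Lambda \to \c(g)\Lambda$ satisfying (\ref{prope1}) and (\ref{prope2}), and two such maps $g,h$ are composable precisely when $\D(g)=\c(h)$, in which case $g\circ h\colon \D(h)\Lambda \to \c(g)\Lambda$ is the usual composite of functions. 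The identity morphisms are the maps $\id_v$ for $v\in\Lambda^0$, already noted above to lie in $\PI(\Lambda)$.

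\medskip

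The key steps, in order, are as follows. (i) \emph{Closure under composition.} Given composable $g,h$, the composite $g\circ h$ is a bijection of $\D(h)\Lambda$ onto $\c(g)\Lambda$ as a composite of bijections; property (\ref{prope1}) for $g\circ h$ follows by composing the degree-preserving bijections $h|_{\D(h)\Lambda^p}\colon \D(h)\Lambda^p\to \D(g)\Lambda^p$ and $g|_{\D(g)\Lambda^p}\colon \D(g)\Lambda^p\to \c(g)\Lambda^p$; and property (\ref{prope2}) follows because $(g\circ h)(\lambda e)=g(h(\lambda e))$, where $h(\lambda e)\in h(\lambda)\Lambda$ by (\ref{prope2}) for $h$, and then applying $g$ and (\ref{prope2}) for $g$ keeps the result in $g(h(\lambda))\Lambda=(g\circ h)(\lambda)\Lambda$. (ii) \emph{Closure under inversion.} The inverse function $g^{-1}\colon \c(g)\Lambda\to\D(g)\Lambda$ is a bijection restricting to bijections of each $\c(g)\Lambda^p$ onto $\D(g)\Lambda^p$, so (\ref{prope1}) holds; for (\ref{prope2}), given $\mu\in\c(g)\Lambda$ and an edge $f\in s(\mu)\Lambda$, I would write $\mu=g(\lambda)$ and use (\ref{prope2}) for $g$ together with the factorisation property of the $k$-graph to express $\mu f$ as $g(\lambda e)$ for a suitable edge $e$, yielding $g^{-1}(\mu f)=\lambda e\in g^{-1}(\mu)\Lambda$. (iii) \emph{Associativity, units, and inverse laws.} Associativity is inherited from associativity of function composition; that $\id_{\c(g)}\circ g=g=g\circ\id_{\D(g)}$ and $g\circ g^{-1}=\id_{\c(g)}$, $g^{-1}\circ g=\id_{\D(g)}$ are immediate from the corresponding identities for bijections, once one checks the source/range maps $\D,\c$ match up for composability.

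\medskip

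The step I expect to be the main obstacle is verifying property (\ref{prope2}) for the inverse $g^{-1}$, since this is the only axiom that does not reduce to a formal statement about composites or inverses of bijections. The difficulty is that (\ref{prope2}) is phrased as a \emph{one-sided} condition (the image of an extension is an extension), so proving it for $g^{-1}$ requires genuinely using that $g$ itself preserves the tree structure and then running the argument backwards. The clean way to do this is to first establish, using (\ref{prope1}) and (\ref{prope2}) together, that any $g\in\PI(\Lambda)$ in fact satisfies the stronger factorisation-respecting property $g(\lambda\nu)=g(\lambda)\,g'(\nu)$ for an appropriate induced bijection at the vertex $s(\lambda)$; granting this, the corresponding statement for $g^{-1}$ follows symmetrically, and (\ref{prope2}) for $g^{-1}$ is an immediate consequence. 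Everything else in the lemma is routine.
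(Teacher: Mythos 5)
Your proposal is correct and takes essentially the same route as the paper: the category-theoretic parts (closure under composition, units, associativity) are treated as routine, and the crux --- condition (2) of Definition~\ref{def: partial iso} for $g^{-1}$ --- is settled by pulling the extended path $\mu e$ back through $g$, factoring the preimage so its final segment has the degree of the edge, and then using condition (2) for $g$ together with the unique factorisation property to identify the initial segment with $g^{-1}(\mu)$. Your optional repackaging of this as a ``factorisation-respecting'' property with an induced bijection at $s(\lambda)$ is the same argument stated as a mini-lemma (its surjectivity step is exactly the paper's inline computation), so it buys cleanliness but not a genuinely different proof.
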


\begin{proof}
A similar argument to the one found in the proof of \cite[Proposition~3.2]{LRRW} shows that $\PI(\Lambda)$ is a category and we have $\id_{\c(g)}g=g=g\id_{\D(g)}$ for each $g$. 
Since $g:\D(g)\Lambda\to \c(g)\Lambda$ is a bijection satisfying Definition~\ref{def: partial iso}\eqref{prope1}, the inverse function $g^{-1}:\c(g)\Lambda\to \D(g)\Lambda$ also satisfies Definition~\ref{def: partial iso}\eqref{prope1}. To check Definition~\ref{def: partial iso}\eqref{prope2} for $g^{-1}$, let $\mu\in \c(g) \Lambda$ and $ e\in s(\mu)\Lambda$. Let $\lambda=g^{-1}(\mu e)$. Since $d(g^{-1}(\mu e))=d(\mu e)$, we can factor $\lambda=\lambda'\lambda''$ such that $d(\lambda'')=d(e)$. Now since $g$ satisfies Definition~\ref{def: partial iso}\eqref{prope2}, we have 
\[g^{-1}(\mu e)=\lambda'\lambda''\Longrightarrow \mu e=g(\lambda'\lambda'')\in g(\lambda')\lambda^{d(\lambda'')}.\]
 By the factorisation property, $g(\lambda')=\mu$ and hence $\lambda' =g^{-1}(\mu)$. Thus $g^{-1}(\mu e)\in g^{-1}(\mu)\Lambda$ giving Definition~\ref{def: partial iso}\eqref{prope2}. Hence
$\PI(\Lambda)$ is a category with inverses; i.e. a groupoid. 
\end{proof}

Let $\Lambda$ be a $k$-graph and let $G$ be a groupoid with unit space $G^{(0)}=\Lambda^0$. 
An \emph{action} of $G$ on $\Lambda$ is 
a groupoid homomorphism $\varphi:G\to \PI(\Lambda)$. Identifying $\id_v$ with $v$ for $v\in \Lambda^0$, we see that $\varphi$ is unit preserving. We say $\varphi$ is \emph{faithful} if it is injective. When it is not ambiguous to do so, we write $g\cdot \mu$ for $\varphi_g(\mu)$. 

\begin{definition}
\label{faithful self-similar groupoid action}
A \emph{self-similar groupoid action} $(G,\Lambda)$ consists of a $k$-graph $\Lambda$, a groupoid $G$ with unit space $\Lambda^0$, and a faithful action of $G$ on $\Lambda$ such that
for every $g\in G$ and edge $e \in \D(g)\Lambda$, there exists $h\in G$ satisfying
\begin{equation}
\label{selfsimilar groupoid defn}
g\cdot(e\lambda)=(g\cdot e)(h\cdot \lambda) \text{ for all } \lambda\in s(e)\Lambda.
\end{equation}
Since the action is faithful, there is a unique $h$ satisfying \eqref{selfsimilar groupoid defn}, and we write $g|_e:=h$.
\end{definition}

Let $p\in \N^k$, $\lambda\in \D(g)\Lambda^p$, and let $\lambda_1\lambda_2\dots\lambda_l$ and $\lambda'_1\lambda'_2\dots\lambda'_l$ be different factorisations of $\lambda$ into edges in $\Lambda$. Then for every $\mu\in s(\lambda)\Lambda$, we have 
\[
(g\cdot \lambda)((\cdots (g|_{\lambda_1})|_{\lambda_2})\cdots )|_{\lambda_{l}})\cdot \mu=g\cdot(\lambda\mu)=(g\cdot \lambda)((\cdots (g|_{\lambda'_1})|_{\lambda'_2})\cdots )|_{\lambda'_{l}})\cdot \mu
\]
The factorisation property now implies that 
\[
((\cdots (g|_{\lambda_1})|_{\lambda_2})\cdots )|_{\lambda_{l}})\cdot \mu=((\cdots (g|_{\lambda'_1})|_{\lambda'_2})\cdots )|_{\lambda'_{l}})\cdot \mu,
\]
and since the action is faithful, we have 
\[
((\cdots (g|_{\lambda_1})|_{\lambda_2})\cdots )|_{\lambda_{l}})=((\cdots (g|_{\lambda'_1})|_{\lambda'_2})\cdots )|_{\lambda'_{l}}).
\]
We then define 
\begin{equation}\label{rest-formula}
g|_\lambda:=(\cdots (g|_{\lambda_1})|_{\lambda_2})\cdots )|_{\lambda_{l}}.
\end{equation}

The following lemma shows some properties of a self-similar groupoid action. 
\begin{lemma}\label{lem:properties}
Let $(G,\Lambda)$ be a self-similar groupoid action. Then for $g, h \in G$ with $\D(h) =\c(g)$, $\lambda\in \D(g)\Lambda$, and $\mu\in s(\lambda)\Lambda$ we have

\begin{enumerate}[label=(S\arabic*),ref=S\arabic*]
 \item\label{S1}$g\cdot(\lambda \mu)=(g\cdot \lambda)(g|_\lambda \cdot \mu)$;
 \item\label{S2} $g|_{\lambda \mu}=(g|_\lambda)|_\mu$;
 \item\label{S3}$\D(g|_\lambda)=s(\lambda)$ and $\c(g|_\lambda)=s(g\cdot \lambda)$;
 \item\label{S4} $r(g\cdot \lambda) =g\cdot r(\lambda)$ and $s(g\cdot \lambda) =g|_\lambda \cdot s(\lambda)$;
\item \label{S5} $\id_{r(\lambda)}|_{\lambda}=\id_{s(\lambda)}$;
\item\label{S6} $(gh)|_\lambda=g|_{h\cdot \lambda} h|_\lambda$;
\item\label{S7} $g^{-1}|_{\lambda}=\big(g|_{g^{-1}\cdot \lambda}\big)^{-1}$; and 
\item\label{S8} $g|_{\D(g)}=g$.
\end{enumerate}
\end{lemma}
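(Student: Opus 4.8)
The plan is to prove each of the eight identities \eqref{S1}--\eqref{S8} essentially in sequence, since the later ones depend on the earlier ones. The two foundational tools throughout are the defining self-similarity relation \eqref{selfsimilar groupoid defn} together with the inductive extension \eqref{rest-formula} that defines $g|_\lambda$ for arbitrary $\lambda$, and the \emph{faithfulness} of the action: because $\varphi$ is injective, two elements of $G$ agreeing as partial isomorphisms must be equal, which is how every ``restriction'' identity will be closed out. I would also lean repeatedly on the factorisation property of $\Lambda$ to justify decomposing paths into edges and reassembling them.

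First I would establish \eqref{S1} and \eqref{S2} together by induction on the number of edges in a factorisation $\lambda=\lambda_1\cdots\lambda_l$. The base case $l=1$ is exactly \eqref{selfsimilar groupoid defn} with $h=g|_{\lambda_1}$, and the inductive step is precisely the computation already carried out in the text preceding \eqref{rest-formula}: applying the single-edge relation repeatedly gives $g\cdot(\lambda\mu)=(g\cdot\lambda)(g|_\lambda\cdot\mu)$ with $g|_\lambda$ as in \eqref{rest-formula}, and the associativity $g|_{\lambda\mu}=(g|_\lambda)|_\mu$ falls out of the same nested-restriction bookkeeping (again using faithfulness to promote an equality of actions to an equality in $G$). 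With \eqref{S1} in hand, \eqref{S3} and \eqref{S4} are mostly a matter of tracking ranges and sources: since $g\cdot\lambda\in\c(g)\Lambda$ and $g$ preserves degree, reading off the source of $g\cdot\lambda$ via \eqref{S1} and the factorisation $g\cdot(\lambda\mu)=(g\cdot\lambda)(g|_\lambda\cdot\mu)$ pins down $\D(g|_\lambda)=s(\lambda)$ and $\c(g|_\lambda)=s(g\cdot\lambda)$, while \eqref{S4} follows by specialising $\mu$ to the vertex $s(\lambda)$ and using that $g$ sends $r(\lambda)$ to $g\cdot r(\lambda)$. The identity \eqref{S5} is the observation that $\id_{r(\lambda)}$ acts trivially, so its restriction must act trivially, hence equals $\id_{s(\lambda)}$ by faithfulness.

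For the cocycle-type identities \eqref{S6} and \eqref{S7} I would compute both sides as partial isomorphisms and invoke faithfulness. For \eqref{S6}, apply $(gh)$ to $\lambda\mu$ using \eqref{S1}, then also apply it as $g\circ h$ using \eqref{S1} twice, and compare the induced restrictions on $\mu$: one gets $(gh)|_\lambda\cdot\mu=g|_{h\cdot\lambda}\cdot(h|_\lambda\cdot\mu)=(g|_{h\cdot\lambda}\,h|_\lambda)\cdot\mu$, and faithfulness finishes it (one must also check the composable-pair conditions match up, using \eqref{S3}). Then \eqref{S7} follows from \eqref{S6} by setting $h=g^{-1}$ and $\lambda\mapsto g^{-1}\cdot\lambda$, so that $g|_{g^{-1}\cdot\lambda}\,g^{-1}|_{g^{-1}\cdot\lambda}$ restricts the identity and hence equals a unit, giving $g^{-1}|_\lambda=(g|_{g^{-1}\cdot\lambda})^{-1}$ after reindexing. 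Finally \eqref{S8} is immediate once one notes that $\D(g)$ is a vertex and restriction at a vertex is the identity restriction, so $g|_{\D(g)}=g$ directly from \eqref{selfsimilar groupoid defn} with $e$ replaced by the trivial path. I expect the main obstacle to be the careful index bookkeeping in \eqref{S6}--\eqref{S7}: one must verify at each stage that the relevant pairs are actually composable (that the sources and codomains line up, via \eqref{S3} and \eqref{S4}) before the nested restrictions and products are even defined, and it is easy to misplace a $g|_{h\cdot\lambda}$ versus $g|_\lambda$. Everything else reduces to induction on edge-length plus a faithfulness argument.
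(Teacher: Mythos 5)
Your proposal is correct and takes essentially the same route as the paper: (S1)--(S2) by iterating the defining relation \eqref{selfsimilar groupoid defn} and \eqref{rest-formula}, (S3)--(S6) by source/range bookkeeping plus faithfulness (which the paper simply cites from \cite{LRRW}), (S7) by restricting the identity $gg^{-1}$ using (S5)--(S6) (the paper cites the corresponding argument in \cite{LRRW}), and (S8) by applying the defining property to the trivial path and closing with faithfulness, exactly as the paper does. The only slip is notational: in your (S7) argument the product that restricts the identity is $g^{-1}|_{\lambda}\, g|_{g^{-1}\cdot\lambda}$ (equivalently $g|_{g^{-1}\cdot\lambda}\, g^{-1}|_{\lambda}$), not $g|_{g^{-1}\cdot\lambda}\, g^{-1}|_{g^{-1}\cdot\lambda}$, but the reindexing you mention repairs this.
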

\begin{proof}
Parts (\ref{S1}) and (\ref{S2}) follow from iterated applications of the Definition~\ref{faithful self-similar groupoid action} and \eqref{rest-formula}. With (\ref{S1}) in mind, a similar argument to \cite[Lemma~ 3.4]{LRRW} gives (\ref{S3})--(\ref{S6}). Also using (\ref{S5})--(\ref{S6}) in the proof of \cite[Proposition~3.5~(4)]{LRRW} we can prove (\ref{S7}).
For (\ref{S8}), take $\lambda\in \D(g)\Lambda$. Applying (\ref{S3}) in the second, we have
\[r(g|_{\D(g)}\cdot \lambda)=\c(g|_{\D(g)})=s(g\cdot \D(g))=s(\c(g))=\c(g).\]
 Therefore 
\[g\cdot \lambda=g\cdot (\D(g)\lambda)=g\cdot \D(g)(g|_{\D(g)}\cdot \lambda)=\c(g)(g|_{\D(g)}\cdot \lambda)=g|_{\D(g)}\cdot \lambda\]
for all $\lambda\in \D(g)\Lambda$. Since the action is faithful, we have $g|_{\D(g)}=g$.
\end{proof}

\begin{example}\label{eg:LYexample}
In \cite{LY0}, the authors introduce the notion of a self-similar action of a \textit{group} $F$ on a $k$-graph $\Lambda$. For such an action, let $F\times \Lambda^0$ be the transformation groupoid with structure maps $\D(f,v)=v$, $\c(f,v)=f\cdot v$, $(f,f'\cdot v)(f',v)=(ff',v)$ and $(f,v)^{-1}=(f^{-1},k\cdot v)$. For each $(f,v)\in F\times \Lambda^0$, define 
\begin{equation}\label{action-apx}
\varphi_{(f,v)}(\lambda):=f\cdot \lambda, \quad \text{for } \lambda\in v\Lambda.
\end{equation}
Then by \cite[Definition~3.1]{LY0}, $\varphi_{(f,v)}:\D(f,v)\Lambda\to \c (f, v)\Lambda$ is a bijection. Also 
for $\lambda \in v\Lambda$ and an edge $e \in s(\lambda)\Lambda$, we have 
\begin{align*}
\varphi_{(f,v)}(\lambda e)&=f\cdot (\lambda e)=(f\cdot \lambda)(f|_\lambda \cdot e)=
\varphi_{(f,v)}(\lambda)(f|_\lambda)\cdot e \in \varphi_{(f,v)}(\lambda)\Lambda.
\end{align*}
Hence $F\times \Lambda^0$ acts on $\Lambda$ by partial isomorphisms. If the action \eqref{action-apx} is faithful, then it is self-similar with restriction map $(f,v)|_{\lambda}:=(f|_ \lambda, s(\lambda))$, for $\lambda\in v \Lambda$.
\end{example}

\section{Coloured-graph automata}\label{sec:automaton}

In this section we introduce the notion of a coloured-graph automaton, and we show how a coloured-graph automaton gives rise to a self-similar groupoid action on a $k$-graph. We begin with a subsection devoted to a recap on the basics of coloured graphs from \cite{HRSW}.

\subsection{$k$-coloured graphs and their $k$-graphs}
Fix $k\in \N\setminus\{0\}$, and let $\F_k$ be the free semigroup generated by $\{c_1,\dots,c_k\}$.
A \textit{$k$-coloured graph} (or just \textit{coloured graph}) is a directed graph $E$ together with a function $c : E^1 \to \{c_1,\dots,c_k\}$. We call $c$ a \textit{colour map}. We can extend $c$ to a functor $c : E^* \to \F_k$.
We define $\omega:\F_k\to\N^k$ by $\omega(c_i)=e_i$. Given two $k$-coloured graphs $E,F$ with colour maps $c_E,c_F$, a \emph{coloured-graph morphism} $\theta$ consists of functions $\theta_0:E^0\to F^0$ and $\theta_1:E^1\to F^1$ such that 
$ r_F\circ\theta_1= \theta_0\circ r_E$, $s_F\circ\theta_1= \theta_0\circ s_E$, and $c_F\circ\theta_1=c_E$.

Let $p \in (\N \cup \{\infty\})^k$, and let $p+v_1,\ldots,p+v_k$ be formal symbols. Recall from \cite[Example~3.1]{HRSW} the $k$-coloured graph $E_{k,p}$, where 
\[
E^0_{k,p} := \{q \in \N^k : 0 \leq q \leq p\}\quad\text{and}\quad E^1_{k,p} := \{q + v_i : q, q + e_i \in E^0_{k,p}\};
\]
$r(q+ v_i) := q; s(q + v_i): = q + e_i$; and $c(q + v_i): = c_i.$

Given a $k$-coloured graph $E$ and distinct $i,j \in \{1,\dots, k\}$, a \emph{square} in $E$ is a coloured-graph morphism $\theta: E_{k,e_i+e_j} \to E$. Given a coloured-graph morphism $\lambda: E_{k,p} \to E$ and a square $\theta$ in $E$, we say $\theta$ \emph{occurs in $\lambda$} if there exists
$q\in \N^k$ such that $\theta(x) = \lambda(x + q)$ for all $x\in E_{k,e_i+e_j }$.
A \emph{complete collection of squares} is a collection $\CC$ of
squares in $E$ such that, for each $x \in E^*$ with $c(x) = c_ic_j$ and $i \neq j$, there is a unique
$\theta\in \CC$ such that $x = \theta(v_i)\theta(e_i + v_j).$ For such a $\theta,$ we will write $\theta(v_i)\theta(e_i + v_j ) \sim \theta(v_j )\theta(e_j + v_i)$. In other words,
for each $c_ic_j$-coloured path $x \in E^*$, there is a unique $c_j c_i$-coloured path $y$ such that
$x\sim y$. A coloured-graph morphism
$\lambda: E_{k,p} \to E$ is \emph{$\CC$-compatible} if each square occurring in $\lambda$ belongs to $\CC$.
A complete collection of squares is $\CC$-\emph{associative} if for
every path $xyz$ in $E$ such that $x,y,z$ are edges of distinct colour, the edges $x_1, x_2, y_1, y_2,z_1, z_2 $ and $x^1, x^2, y^1, y^2,
z^1, z^2 $ determined by
\[xy\sim x^1y^1,x^1z\sim z^1x^2 \text{ and } y^1z^1\sim z^2y^2\]
\[yz\sim z_1y_1,x z_1\sim z_2 x_1\text{ and } x_1y_1\sim x_2 y_2\]
satisfy $x_2=x^2$, $y_2=y^2$ and $z_2=z^2$. 

Let $E$ be a $k$-coloured graph, and $\CC$ be an associative complete collection of squares in $E$. We now recall from \cite[Theorems~4.4~and~4.5]{HRSW} that this data gives rise to a unique $k$-graph $\Lambda_{E,\CC}$. For each $p\in \N^k$, let $\Lambda_{E,\CC}^p$ be the set of all $\CC$-compatible coloured-graph morphisms $\lambda:E_{k,p}\to E$. Define 
\[d(\lambda):=p,\quad r(\lambda):=\lambda(0),\quad\text{and}\quad s(\lambda):=\lambda(p).\] In \cite[Theorem~4.4]{HRSW} it is proved that if $\lambda:E_{k,p}\to E$ and $\mu:E_{k,q}\to E$ are $\CC$-compatible coloured-graph morphisms such that $s(\lambda)=r(\mu)$, then there exists a unique $\CC$-compatible coloured-graph morphism $\mu\nu:E_{k,p+q}\to E$ that restricts to be $\lambda$ and $\mu$. Under this composition, and the structure maps above, 
\begin{align}\label{k-graph from couloured}
\Lambda_{E,\CC}:=\bigcup_{p\in \N^k}\Lambda_{E,\CC}^p
\end{align}
is a $k$-graph. 

Given $E,\CC$ and a $\CC$-compatible coloured-graph morphism $\lambda : E_{k,p} \to E$, we say $x\in E^*$ \emph{traverses} $\lambda$ if $\omega (c(x)) = p $ and $\lambda(\omega(c(x_1 \dots x_{l-1})) + v_{c(x_l)}) = x_l$ for all $0<l\leq |p|$. When $p = 0$, then $x\in E^0$ and we say $x$ traverses $\lambda$ if $x = \lambda(0)$. In particular, for each coloured-graph
morphism $\lambda\in \Lambda_{E,\CC}^p$, and every decomposition $d(\lambda) = e_{j_1} + e_{j_2} +\cdots + e_{j_l}$ the path
$x:=\lambda(0+v_{j_1})\lambda(e_{j_1}+v_{j_2})\lambda ((d(\lambda)-e_{j_1})+v_{j_1})$ in $E^*$
 traverses $\lambda$. Conversely,
for every $x \in E^*$ there is a unique coloured-graph morphism $\lambda_x :E_{k,\omega(c(x))} \to E$ such that $ x$ traverses $\lambda_x$
see \cite[Proposition~4.7]{HRSW}. Moreover by \cite[Remark~4.12]{HRSW} we have 
\begin{align}\label{lambda-homo}
\lambda_{x}\lambda_{y}=\lambda_{x,y} \quad \text{ whenever } x,y\in E^*\text{ and } r(y)=s(x).
\end{align}

\subsection{Automata associated to coloured graphs}\label{subsec: automatons}

In this subsection we introduce the notion of a coloured-graph automaton, and we construct a self-similar groupoid action on a $k$-graph from a coloured-graph automaton.

\begin{definition}\label{defn: (E,C)-automaton}
Let $E$ be a $k$-coloured graph with colour map $c$, and let $\CC$ be an associative complete collection of squares in $E$. An \emph{$(E,\CC)$-automaton} is a finite set $A$ containing $E^0$,
together with functions $r_A,s_A\colon A\to E^0$ such that $r_A(v)=v=s_A(v)$ if $v\in E^0\subseteq A$, 
and a function 
\begin{equation}\label{autmap}
A \tensor[_{s_A}]{\times}{_{r_E}} E^1 \ni (a,e) \mapsto (a\cdot e, a|_e)\in E^1 \tensor[_{s_E}]{\times}{_{r_A}} A
\end{equation} 
such that for all $a\in A$ we have
\begin{enumerate}[label=(A\arabic*),ref=A\arabic*]
\item \label{item: bijection} $e\mapsto a\cdot e$ is a bijection of $s_A(a)E^1$ onto $r_A(a)E^1$;
\item \label{item: colour_preserve} $c(a\cdot e)=c(e)$ for all $e\in s_A(a)E^1$;
\item \label{second property of automaton} $s_A(a|_e)=s_E(e)$ for all $e\in s_A(a)E^1$;
\item \label{item: range_and_source} $r_E(e)\cdot e= e$ and $r_E(e)|_e=s_E(e)$ for all $e\in E^1$;
\item \label{item: square_preserving_1} for each $\lambda \in \CC$ with $\lambda:= ef\sim f'e'$ we have $(a \cdot e)(a|_e \cdot f) \sim (a \cdot f')(a|_{f'} \cdot e')$; and
\item \label{item: square_preserving_2} for each $\lambda \in \CC$ with $\lambda:= ef\sim f'e'$ we have $(a|_e)|_f = (a|_{f'})|_{e'}$.
\end{enumerate}
\end{definition}

 We find the following diagram a helpful illustration of properties \eqref{item: square_preserving_1} and \eqref{item: square_preserving_2}, where $a \in A$ maps the top commuting square to the bottom via the vertical arrows.
\[
\begin{tikzpicture}[yscale=0.15,xscale=0.2]
 \node[inner sep=0pt] (m) at (0,0){} ;
 \node[inner sep=0pt] (n) at (0,20) {};
 \node[inner sep=0pt] (e) at (18,0) {};
 \node[inner sep=0pt] (ne) at (18,20) {};
 \node[inner sep=0pt] (m') at (8,8) {};
 \node[inner sep=0pt] (n') at (8,28) {};
 \node[inner sep=0pt] (e') at (26,8) {};
 \node[inner sep=0pt] (ne') at (26,28) {};
 \draw[-latex, blue] (ne.west)--(n.east) node[pos=0.3,above] 
{\color{blue}$e$};
 \draw[-latex, blue] (e.west)--(m.east) node[pos=0.5,below] 
{\color{blue}$a.e$};
 \draw[-latex, black,dotted] (n.south)--(m.north) node[pos=0.4,left] 
{\color{black}$a$};
 \draw[-latex, black,dotted] (ne.south)--(e.north) 
node[pos=0.3,right] {\color{black}$a|_e$};
 \draw[-latex, red,dashed] (n'.south)--(n.north) node[pos=0.5,left] 
{\color{red}$f'$};
\draw[-latex, red,dashed] (ne'.south)--(ne.north) node[pos=0.5,left] 
{\color{red}$f$};
 \draw[-latex, black,dotted] (ne'.south)--(e'.north) node[pos=0.5,right] 
{\color{black}$(a|_{e})|_{f}=(a|_{f'})|_{e'}$};
 \draw[-latex, black,dotted] (n'.south)--(m'.north) node[pos=0.6,left] 
{\color{black}$a|_{f'}$};
\draw[-latex, blue] (ne'.west)--(n'.east) node[pos=0.4,above] 
{\color{blue}$e'$};
\draw[-latex, blue] (e'.west)--(m'.east) node[pos=0.7,below] 
{\color{blue}$a|_{f'}.e'$};
\draw[-latex, red,dashed] (m'.south)--(m.north) node[pos=0.4,left] 
{\color{red}$a. f'$};
\draw[-latex, red,dashed] (e'.south)--(e.north) node[pos=0.7,right] 
{\color{red}$a|_e. f$};
\end{tikzpicture}
\]

\begin{definition}
For $a\in A$, and $x_1\cdots x_l\in s_A(a)E^*$ we define 
\begin{equation}\label{aut1-2}
a|_{ x_1\cdots x_l}:=(\cdots ((a|_{x_1})|_{x_2})\cdots )|_{x_{l}}
\end{equation}
and
\begin{equation}\label{aut2-2}
a\cdot (x_1\cdots x_l):= (a\cdot x_1)(a|_{x_1}\cdot x_2)\cdots (a|_{x_1\cdots x_{l-1}}\cdot x_{l}).
\end{equation}
\end{definition}

For simplicity, we write $r$, $s$ for the range and source maps of $k$-coloured graphs and their corresponding $k$-graphs as in \eqref{k-graph from couloured}.

\begin{lemma}\label{autpropert}
Let $A$ be an $(E,\CC)$-automaton.
\begin{enumerate}
\item \label{autpropert-1}For all $x\in s_A(a) E^*$ and $y\in s(x) E^*$ we have $a\cdot(xy)=(a\cdot x)(a|_x\cdot y)$.
\item \label{autpropert-2}For all $a\in A$ and $l\in \N$, the map $x\mapsto a\cdot x$ is a bijection of $s_A(a)E^l$ onto $r_A(a)E^l$.
\end{enumerate}
\end{lemma}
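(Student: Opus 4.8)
The plan is to treat the two statements in turn, deriving part~(1) directly from the defining formulas \eqref{aut1-2} and \eqref{aut2-2}, and then using part~(1) to run an induction on path length for part~(2). Throughout I will use the two bookkeeping identities $s_A(a|_x)=s(x)$ and $r_A(a|_x)=s(a\cdot x)$ for $x\in s_A(a)E^*$; both reduce to single-edge statements applied to the last edge of $x$, namely \eqref{second property of automaton} for the former and the codomain condition $s_E(a\cdot e)=r_A(a|_e)$ built into \eqref{autmap} for the latter, together with \eqref{aut1-2}.

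For part~(1), write $x=x_1\cdots x_m$ and $y=y_1\cdots y_n$ as products of edges, using that the factorisation of a path in $E^*$ into edges is unique. Expanding $a\cdot(xy)$ by \eqref{aut2-2}, the first $m$ factors are exactly those appearing in $a\cdot x$, so they multiply to $a\cdot x$. For the remaining factors, the key observation is that restriction is iterated, so that $a|_{x_1\cdots x_m y_1\cdots y_{j-1}}=(a|_x)|_{y_1\cdots y_{j-1}}$ directly from \eqref{aut1-2}; substituting this shows the last $n$ factors are precisely the factors of $a|_x\cdot y$ computed via \eqref{aut2-2} with $a|_x$ in place of $a$. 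Hence $a\cdot(xy)=(a\cdot x)(a|_x\cdot y)$. The degenerate cases in which $x$ or $y$ is a vertex are handled by the natural conventions $a\cdot s_A(a)=r_A(a)$ and $a|_{s_A(a)}=a$ (consistent with $r_A(v)=v=s_A(v)$ on $E^0$ and with \eqref{item: range_and_source}), under which the identity is immediate.

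For part~(2), I would induct on $l$. The case $l=1$ is exactly \eqref{item: bijection}, and $l=0$ is trivial. For the inductive step, factor a path of length $l+1$ uniquely as $xe$ with $x\in s_A(a)E^l$ and $e$ an edge, and use part~(1) to write $a\cdot(xe)=(a\cdot x)(a|_x\cdot e)$. Injectivity follows by first appealing to unique factorisation in $E^*$ to separate the length-$l$ part from the terminal edge, then to the inductive hypothesis to conclude $x$ is determined, and finally to \eqref{item: bijection} for $a|_x$ to conclude $e$ is determined. For surjectivity, given $w=w'f\in r_A(a)E^{l+1}$ with $w'\in r_A(a)E^l$ and $f$ an edge, the inductive hypothesis produces $x$ with $a\cdot x=w'$; since $r_A(a|_x)=s(a\cdot x)=r(f)$, the single-edge bijection \eqref{item: bijection} for $a|_x$ produces an edge $e$ with $a|_x\cdot e=f$, and as $s_A(a|_x)=s(x)$ we may form $xe$, which satisfies $a\cdot(xe)=w$.

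The computations are essentially bookkeeping, so I do not expect a serious obstacle; the one point requiring care is the source/range accounting that makes the concatenations in \eqref{aut2-2} and in part~(2) into legitimate paths, in particular verifying $r_A(a|_x)=s(a\cdot x)$ so that $a\cdot x$ and $a|_x\cdot e$ actually compose. It is worth noting that the square-preserving axioms \eqref{item: square_preserving_1} and \eqref{item: square_preserving_2} play no role here: both parts are statements about the free path category $E^*$, where factorisation into edges is unique, so only the ``one edge at a time'' axioms \eqref{item: bijection}, \eqref{second property of automaton}, \eqref{item: range_and_source} and the fibre-product structure of \eqref{autmap} are needed. The commuting-square axioms are instead required later, when one checks that this action descends to the $k$-graph $\Lambda_{E,\CC}$ and respects its factorisation property.
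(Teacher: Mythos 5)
Your proof is correct, and it differs from the paper's in one structural choice. For part~(\ref{autpropert-1}) the paper simply asserts that the identity follows from \eqref{aut1-2} and \eqref{aut2-2}; your expansion is the detailed version of that same computation. For part~(\ref{autpropert-2}) both arguments are inductions on $l$ with base case \eqref{item: bijection}, but in the inductive step you factor a path of length $l+1$ as $xe$, peeling off the \emph{terminal} edge, whereas the paper factors it as $ey$, peeling off the \emph{initial} edge. The paper's arrangement applies \eqref{item: bijection} directly to $a$ (pinning down the first edge) and the inductive hypothesis to the restricted element $a|_e$; the only range/source bookkeeping it needs is the single-edge fact $r_A(a|_{e'})=s_E(a\cdot e')$, which is immediate from the fibre-product codomain of \eqref{autmap}. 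Your arrangement applies the inductive hypothesis to $a$ itself (pinning down the length-$l$ prefix) and \eqref{item: bijection} to the restricted element $a|_x$, which costs you the iterated identities $s_A(a|_x)=s(x)$ and $r_A(a|_x)=s(a\cdot x)$ for paths $x$ rather than edges; you correctly observe that each reduces to a single application, to the last edge of $x$, of \eqref{second property of automaton} and of the codomain condition in \eqref{autmap} respectively, so the overhead is negligible, and your version has the mild advantage of matching the left-to-right way \eqref{aut2-2} builds the image path, so part~(\ref{autpropert-1}) feeds directly into the step. Your closing remark that \eqref{item: square_preserving_1} and \eqref{item: square_preserving_2} are not used here is also accurate: the lemma lives entirely in the free path category $E^*$, and the paper indeed only invokes the square axioms afterwards (in Lemma~\ref{lemma:k-aut} and Proposition~\ref{prop:aut}) when descending to $\Lambda_{E,\CC}$.
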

\begin{proof}
Part~\eqref{autpropert-1} follows from \eqref{aut1-2} and \eqref{aut2-2}. We prove \eqref{autpropert-2} by induction on $l$. The base case $l=1$ is precisely \eqref{item: bijection}. Now suppose that the claim is true for $l$. To prove the case $l+1$, 
note that $x\mapsto a\cdot x$ is a colour preserving map from $s_A(a)E^{l+1}$ to $r_A(a)E^{l+1}$. To see that this map is injective, let $x,x'\in s_A(a)E^{l+1}$ such that $a\cdot x=a\cdot x'$. Write $x=ey$ and $x'=e'y'$, where $e,e'\in E^1$ and $y,y'\in E^l$. Then by part \eqref{autpropert-1}, we have 
\[(a\cdot e)(a|_e\cdot y)=a\cdot x=a\cdot x'=(a\cdot e')(a|_{e'}\cdot y').\]
 It follows that $a\cdot e=a\cdot e'$ and $a|_e\cdot y= a|_{e'}\cdot y'$. The relation \eqref{item: bijection} implies that $e=e'$ and hence $a|_{e}=a|_{e'}$. Now applying the inductive hypothesis with $a|_e\cdot y= a|_{e}\cdot y'$ shows that $y=y'$ and hence $x=x'$.
 
For surjectivity, let $x\in r_A(a)E^{l+1}$. Write $x=ey$ where $e\in r_A(a)E^1$ and $y\in s(e)E^l$. By \eqref{item: bijection}, there is an $e'\in s_A(a)E^1$ such that $e=a\cdot e'$ and $r_A(a|_{e'})=s(a\cdot e')=s(e)=r(y)$. Therefore $y\in r_A(a|_{e'})E^l$ and applying the inductive hypothesis gives $y'\in s_A(a|_{e'})E^l$ such that $y=a\cdot y'$. Applying \eqref{second property of automaton} gives $r(y')=s_A(a|_{e'})=s(e')$. Part~\eqref{autpropert-1} now implies that $a\cdot(e'y')=(a\cdot e')(a|_{e'}\cdot y')=ey=x$, giving surjectivity.
\end{proof}

\begin{definition}\label{def: differing_square}
We say two paths $x=x_1\cdots x_{l}$ and $ y=y_1\cdots y_{l}$ in $E^*$ \emph{differ by a square} if there is an $1\leq i\leq l-1$ such that $x_ix_{i+1}\sim y_iy_{i+1}$ and $x_j= y_j$ for all $j\neq i,i+1.$
\end{definition}

\begin{lemma}\label{lemma:k-aut}
Let $A$ be an $(E,\CC)$-automaton. If $x_1\cdots x_{l}, y_1\cdots y_{l}\in E^*$ differ by a square, then
\begin{enumerate}
\item \label{lemma:k-aut-1}$a|_{x_1\cdots x_{l}}=a|_{y_1\cdots y_{l}}$, and 
\item \label{lemma:k-aut-2}$a\cdot(x_1\cdots x_{l})$ and $a\cdot(y_1\cdots y_{l})$ differ by a square. 
\end{enumerate}
\end{lemma}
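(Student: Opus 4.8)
The plan is to reduce the two claims, which concern the iterated restriction \eqref{aut1-2} and action \eqref{aut2-2} along a whole path, to the single-square axioms \eqref{item: square_preserving_1} and \eqref{item: square_preserving_2} by isolating the common prefix and common suffix of the two paths. By \defref{def: differing_square}, fix $1\le i\le l-1$ with $x_ix_{i+1}\sim y_iy_{i+1}$ and $x_j=y_j$ for $j\ne i,i+1$. Set $u:=x_1\cdots x_{i-1}=y_1\cdots y_{i-1}$ and $b:=a|_u$ (reading $u$ as the vertex $r(x_i)$ and $b:=a$ when $i=1$). Iterating \eqref{second property of automaton} gives $s_A(b)=s(u)=r(x_i)$, so all restrictions and actions of $b$ along $x_i\cdots x_l$ and $y_i\cdots y_l$ are defined, and by \eqref{aut1-2} we have $a|_{x_1\cdots x_l}=b|_{x_i\cdots x_l}$ and similarly for $y$.

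For part~\eqref{lemma:k-aut-1}, I would write $b|_{x_i\cdots x_l}=\big(b|_{x_ix_{i+1}}\big)|_{x_{i+2}\cdots x_l}$. Axiom \eqref{item: square_preserving_2}, applied to $b$ and the square $x_ix_{i+1}\sim y_iy_{i+1}$, gives $b|_{x_ix_{i+1}}=(b|_{x_i})|_{x_{i+1}}=(b|_{y_i})|_{y_{i+1}}=b|_{y_iy_{i+1}}$. Since $x_j=y_j$ for every $j>i+1$, applying the same remaining restriction maps to this common element yields $b|_{x_i\cdots x_l}=b|_{y_i\cdots y_l}$, which is \eqref{lemma:k-aut-1}.

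For part~\eqref{lemma:k-aut-2}, I would factor the action by two applications of Lemma~\ref{autpropert}\eqref{autpropert-1},
\[
a\cdot(x_1\cdots x_l)=(a\cdot u)\,\big(b\cdot(x_ix_{i+1})\big)\,\big(b'\cdot(x_{i+2}\cdots x_l)\big),\qquad b':=b|_{x_ix_{i+1}},
\]
and likewise for $y$. The prefix $a\cdot u$ is identical on both sides. Axiom \eqref{item: square_preserving_1} gives $b\cdot(x_ix_{i+1})=(b\cdot x_i)(b|_{x_i}\cdot x_{i+1})\sim(b\cdot y_i)(b|_{y_i}\cdot y_{i+1})=b\cdot(y_iy_{i+1})$, so the two middle edges of the transformed paths are related by a square. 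Finally part~\eqref{lemma:k-aut-1} (equivalently \eqref{item: square_preserving_2}) shows $b'=b|_{x_ix_{i+1}}=b|_{y_iy_{i+1}}$, and since $x_j=y_j$ for $j>i+1$ the suffixes agree: $b'\cdot(x_{i+2}\cdots x_l)=b'\cdot(y_{i+2}\cdots y_l)$. As $a\cdot u$ has length $i-1$, the differing square lies at positions $i,i+1$, so $a\cdot(x_1\cdots x_l)$ and $a\cdot(y_1\cdots y_l)$ differ by a square.

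The main obstacle is the interdependence of the two parts: part~\eqref{lemma:k-aut-2} needs the transformed suffixes to agree, which requires the restriction element of $a$ at the branch point to be the same along both paths — exactly part~\eqref{lemma:k-aut-1}/axiom \eqref{item: square_preserving_2}. Beyond this, the only care needed is the bookkeeping of positions and the degenerate cases $i=1$ (empty prefix) and $i+1=l$ (empty suffix), handled by reading $u$, respectively $x_{i+2}\cdots x_l$, as a vertex and invoking the source conditions \eqref{second property of automaton} to keep every action and restriction well defined.
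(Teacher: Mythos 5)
Your proposal is correct and follows essentially the same route as the paper's proof: both isolate the common prefix $u$ (the paper's $\alpha$), set $b:=a|_u$, use \eqref{item: square_preserving_2} to identify $b|_{x_ix_{i+1}}=b|_{y_iy_{i+1}}$ for part~\eqref{lemma:k-aut-1}, and use \eqref{item: square_preserving_1} together with that identification (so that the common suffix is acted on by the same restriction element) for part~\eqref{lemma:k-aut-2}. Your explicit handling of the degenerate cases $i=1$ and $i+1=l$ is a minor tidiness the paper leaves implicit, but the argument is the same.
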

\begin{proof}

Let $\alpha,\beta\in E^*$ such that $x_1\cdots x_l=\alpha x_ix_{i+1}\beta$ and $y_1\cdots y_{l}=\alpha y_i y_{i+1}\beta$. For \eqref{lemma:k-aut-1}, let $b:= a|_{\alpha}$. Then \eqref{item: square_preserving_2} gives
\[
a|_{x_1\dots x_l} = \left(b|_{x_ix_{i+1}}\right)|_\beta = \left(b|_{y_iy_{i+1}}\right)|_\beta = a|_{y_1\dots y_l}.
\]
For \eqref{lemma:k-aut-2}, first note that $\alpha x_ix_{i+1}$ and $\alpha y_i y_{i+1}$ differ by a square, and hence by \eqref{lemma:k-aut-1} we have $a|_{\alpha x_ix_{i+1}}\cdot \beta = a|_{\alpha y_iy_{i+1}}\cdot \beta$. Using this identity gives
\begin{align*}
a\cdot (x_1\cdots x_l)&= (a\cdot \alpha)(a|_{\alpha}\cdot x_i)
 (a|_{\alpha x_{i}}\cdot x_{i+1}) (a|_{\alpha x_{i}x_{i+1}}\cdot \beta)\\
&=(a\cdot \alpha)(a|_{\alpha}\cdot x_i) (a|_{\alpha x_{i}}\cdot x_{i+1}) (a|_{\alpha y_{i}y_{i+1}}\cdot \beta).
\end{align*}
Since we know from \eqref{item: square_preserving_1} that $(a|_{\alpha}\cdot x_i) (a|_{\alpha x_{i}}\cdot x_{i+1})\sim(a|_{\alpha}\cdot y_i) (a|_{\alpha y_{i}}\cdot y_{i+1})$, we see that $a\cdot (x_1\cdots x_l)$ differs from
\[
a\cdot (y_1\cdots y_l)=(a\cdot \alpha)(a|_{\alpha}\cdot y_i) (a|_{\alpha y_{i}}\cdot y_{i+1}) (a|_{\alpha y_{i}y_{i+1}}\cdot \beta)
\]
by a square. 
\end{proof}

\begin{prop}\label{prop:aut}
Let $A$ be an $(E,\CC)$-automaton, and let $\Lambda:=\Lambda_{E,\CC}$ be the associated $k$-graph constructed in \eqref{k-graph from couloured}. For $a\in A$, and $x,y\in E^*$ traversing the same element of $s_A(a)\Lambda$, we have $\lambda_{a\cdot x}=\lambda_{a\cdot y}$. For each $p\in\N^k$ we then have a bijection $g_{a,p}:s_A(a)\Lambda^p\to r_A(a)\Lambda^p$ given by
\begin{equation}\label{deffa}
g_{a,p}(\mu)=\lambda_{a\cdot x}, \text{ where $x\in E^*$ traverses $\mu$.} 
\end{equation}
Moreover, $g_a:=\{g_{a,p}:p\in \N^k\}$ is a partial isomorphism of $s_A(a)\Lambda$ onto $r_A(a)\Lambda$ so that $\D(g_a)=s_A(a)$, $\c(g_a)=r_A(a)$.
\end{prop}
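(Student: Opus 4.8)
The plan is to prove the proposition in three stages: first the well-definedness claim ($\lambda_{a\cdot x}=\lambda_{a\cdot y}$ when $x,y$ traverse the same $k$-graph element), then that the resulting map $g_{a,p}$ is a bijection, and finally that the collection $g_a$ is a partial isomorphism with the stated domain and codomain.

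For the \textbf{well-definedness}, suppose $x,y\in E^*$ both traverse $\mu\in s_A(a)\Lambda^p$. By \cite[Theorem~4.5]{HRSW}, any two paths traversing the same $\CC$-compatible coloured-graph morphism are connected by a finite sequence of ``differ by a square'' moves (this is the content of the factorisation property at the level of coloured paths). I would therefore reduce to the case where $x$ and $y$ differ by a single square and apply \lemref{lemma:k-aut}\eqref{lemma:k-aut-2}, which says that $a\cdot x$ and $a\cdot y$ then also differ by a square; hence $a\cdot x$ and $a\cdot y$ traverse the same coloured-graph morphism, so $\lambda_{a\cdot x}=\lambda_{a\cdot y}$ by the uniqueness in \cite[Proposition~4.7]{HRSW}. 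Iterating along the sequence of square-moves gives the general case. This makes $g_{a,p}$ well-defined by \eqref{deffa}.

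Next I would check $g_{a,p}$ is a \textbf{bijection} of $s_A(a)\Lambda^p$ onto $r_A(a)\Lambda^p$. Fix a decomposition $p=e_{j_1}+\cdots+e_{j_l}$; every $\mu\in s_A(a)\Lambda^p$ is traversed by a unique path $x\in s_A(a)E^l$ of colour $c_{j_1}\cdots c_{j_l}$, and conversely each such path determines a unique $\mu$. Under this correspondence \eqref{deffa} becomes the map $x\mapsto \lambda_{a\cdot x}$, and \lemref{autpropert}\eqref{autpropert-2} already gives that $x\mapsto a\cdot x$ is a bijection of $s_A(a)E^l$ onto $r_A(a)E^l$ preserving colour. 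Since $x\mapsto\lambda_x$ is a colour-respecting bijection between paths of a fixed colour and elements of the corresponding $\Lambda^p$ (by \cite[Proposition~4.7]{HRSW}), composing gives that $g_{a,p}$ is a bijection.

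Finally I would verify that $g_a=\{g_{a,p}\}$ is a \textbf{partial isomorphism} in the sense of \defref{def: partial iso}. Property \eqref{prope1} is immediate since each $g_{a,p}$ is by construction a bijection of $s_A(a)\Lambda^p$ onto $r_A(a)\Lambda^p$. For the degree-compatibility condition \eqref{prope2}, given $\mu\in s_A(a)\Lambda$ and an edge $\nu\in s(\mu)\Lambda$, I would pick $x\in E^*$ traversing $\mu$ and an edge $e\in E^1$ traversing $\nu$ with $r(e)=s(x)$, so that $xe$ traverses $\mu\nu$; then \lemref{autpropert}\eqref{autpropert-1} gives $a\cdot(xe)=(a\cdot x)(a|_x\cdot e)$, and applying \eqref{lambda-homo} yields $g_a(\mu\nu)=\lambda_{a\cdot(xe)}=\lambda_{a\cdot x}\lambda_{a|_x\cdot e}=g_a(\mu)\,\lambda_{a|_x\cdot e}\in g_a(\mu)\Lambda$, which is exactly \eqref{prope2}. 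The statements $\D(g_a)=s_A(a)$ and $\c(g_a)=r_A(a)$ follow from the domain and range of each $g_{a,p}$. I expect the \textbf{main obstacle} to be the well-definedness step: one must be careful that the ``differ by a square'' equivalence in \lemref{lemma:k-aut} genuinely generates the full equivalence relation identifying all paths that traverse a given $\CC$-compatible morphism, which is where the associativity of $\CC$ and the factorisation results of \cite{HRSW} do the real work.
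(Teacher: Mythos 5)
Your proof is correct, and two of its three steps coincide with the paper's own argument: the well-definedness step (connect $x$ and $y$ by a finite chain of square moves and apply Lemma~\ref{lemma:k-aut}\eqref{lemma:k-aut-2}) and the verification of Definition~\ref{def: partial iso}\eqref{prope2} via $a\cdot(xe)=(a\cdot x)(a|_x\cdot e)$ and \eqref{lambda-homo} are exactly what the paper does. Where you genuinely diverge is the bijectivity of $g_{a,p}$. The paper proves injectivity by induction on $|p|$: it factors $\mu=\mu'\mu''$ and $\nu=\nu'\nu''$ with $d(\mu')=d(\nu')=e_j$, uses colour-preservation \eqref{item: colour_preserve} to match the leading edges, and then applies the inductive hypothesis to the restricted element $a|_x$; surjectivity is deduced separately from Lemma~\ref{autpropert}\eqref{autpropert-2}. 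You instead fix a colour word $c_{j_1}\cdots c_{j_l}$ with $\omega(c_{j_1}\cdots c_{j_l})=p$ and exhibit $g_{a,p}$ as the conjugate of the path-level bijection $x\mapsto a\cdot x$ of Lemma~\ref{autpropert}\eqref{autpropert-2} by the traversal correspondence $\mu\leftrightarrow x_\mu$, so injectivity and surjectivity fall out simultaneously and the induction disappears. This is shorter and conceptually cleaner; its one extra ingredient, which you should state explicitly, is that the fixed-colour traversal correspondence is itself a bijection: each $\mu\in\Lambda^p$ has a \emph{unique} traversing path of a given colour word, since the definition of traversal forces $x_m=\mu\big(\omega(c(x_1\cdots x_{m-1}))+v_{c(x_m)}\big)$, so the edges are determined recursively by $\mu$ and the colour word, and \cite[Proposition~4.7]{HRSW} provides the inverse $x\mapsto\lambda_x$. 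Together with the fact that $x\mapsto a\cdot x$ preserves colour (so it restricts to a bijection between the colour-$c_{j_1}\cdots c_{j_l}$ paths at $s_A(a)$ and at $r_A(a)$), your three-fold composition of bijections is airtight. What the paper's induction buys in exchange is that it never needs this fixed-colour bookkeeping, working only one edge at a time.
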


\begin{proof}
We first note that if $ef\sim f'g'$, then we have $\lambda_{ef}=\lambda_{f'e'}$. Hence if $x,y\in E^*$ differ by a square, then there are $\alpha,\beta\in E^*$ with $x=\alpha x_ix_{i+1}\beta, y=\alpha y_iy_{i+1}\beta$, and 
\[
\lambda_x=\lambda_\alpha \lambda_{x_ix_{i+1}}\lambda_\beta =\lambda_\alpha \lambda_{y_iy_{i+1}}\lambda_\beta = \lambda_y.
\] 
Now let $a\in A$, $\mu\in s_A(a)\Lambda$, and $x,y\in E^*$ that both traverse $\mu$. Then there exists a finite sequence $\{\alpha_j:1\leq j\leq l\}$ of paths in $E^*$ such that $\alpha_1= x$, $\alpha_l=y$, and $\alpha_j$ and $\alpha_{j+1}$ differ by a square for all $1\leq j\leq l-1$. It now follows from Lemma~\ref{lemma:k-aut}\eqref{lemma:k-aut-2} that
\[\lambda_{a\cdot x}=\lambda_{a\cdot \alpha_1}=\lambda_{a\cdot \alpha_2}=\cdots =\lambda_{a\cdot \alpha_l}=\lambda_{a\cdot y}.\]

For each $a\in A$ and $p\in\N^k$, the formula $g_{a,p}(\mu):=\lambda_{a\cdot x}$ does not depend on the choice of path $x\in E^*$ that traverses $\mu$, and hence we get a well-defined function $g_{a,p}\colon s_A(a)\Lambda^p\to r_A(a)\Lambda^p$. 

We now prove that $g_{a,p}$ is bijective for each $p\in \N^k$. For injectivity, we argue by induction on $|p|:=\sum_{i=1}^k p_i$. The base case $|p|=1$ follows from \eqref{item: bijection}. So we assume that the claim holds for $|p|=l\geq 1$. Suppose that $|p|=l+1$ and let $\mu,\nu\in s_A(a)\Lambda^p$ such that $g_{a,p}(\mu)=g_{a,p}(\nu)$. Since $|p|>1$, there is $1\leq j\leq k$ with $p_j>0$. Factor $\mu=\mu' \mu''$ and $\nu=\nu'\nu''$ such that $d(\mu')=d(\nu')=e_j$.
Let $x,y\in E^1$ such that $x$ traverses $\mu'$, and $y$ traverses $\nu'$. Let $x_1\dots x_l,y_1\dots y_l\in E^*$ such that $x_1\dots x_l$ traverses $\mu''$ and $y_1\dots y_l$ traverses $\nu''$. Since we know from \eqref{item: colour_preserve} that the action is colour preserving, we have
\[
a\cdot x=\lambda_{a\cdot (xx_1\dots x_l)}(v_j)=g_{a,p}(\mu)(v_j)=g_{a,p}(\nu)(v_j)=\lambda_{a\cdot(yy_1\dots y_l)}(v_j)=a\cdot y,
\]
and so $x=y$. Hence $\mu'=\lambda_x=\lambda_y=\nu'$. Now we have 
\begin{align*}
\lambda_{a\cdot x}\lambda_{a|_x\cdot(x_1\dots x_l)}=g_{a,p}(\mu)=g_{a,p}(\nu)=\lambda_{a\cdot y}\lambda_{a|_y\cdot (y_1\dots y_l)} &\implies  \lambda_{a|_x\cdot(x_1\dots x_l)}=\lambda_{a|_y\cdot (y_1\dots y_l)} \\
&\implies g_{a|_x,p-e_j}(\mu'') = g_{a|_y,p-e_j}(\nu'').
\end{align*}
The inductive hypothesis now gives $\mu''=\nu''$. Hence $\mu=\nu$. 

To see that $g_{a,p}$ is onto, let $\mu\in r_A(a)\Lambda^p$ and take $x\in r_A(a)E^{|p|}$ that traverses $\mu$. By Lemma~\ref{autpropert}\eqref{autpropert-2}, there is $y\in s_A(a)E^{|p|}$ such that $a\cdot y=x$. Let $\nu$ be the unique element of $\Lambda $ that is traversed by $a\cdot y$. Then $g_{a,p}(\nu)=\lambda_{a\cdot y}=\lambda_x=\mu$. Thus $g_{a,p}$ is onto.

We have now proved that $g_a:=\{g_{a,p}:p\in \N^k\}$ satisfies  Definition~\ref{def: partial iso}\eqref{prope1}. For Definition~\ref{def: partial iso}\eqref{prope2}, let $\mu\in s_A(a)\Lambda^p$, and take an edge $\xi\in s(\mu)\Lambda^{e_j}$. Let $x\in E^{|p|}$ and $y\in s(\mu)E^1$ such that $x$ traverses $\mu$, and $y$ traverses $\xi$. Note that $xy$ will traverse $\mu\xi$. Now we have 
\[g_a(\mu \xi)=g_{a,p+e_j}(\mu\xi)=\lambda_{xy}=\lambda_x\lambda_y=g_{a,p}(\mu)\lambda_y=g_a(\mu)\lambda_y\in g_a(\mu)\Lambda^{e_j}.\]
Hence Definition~\ref{def: partial iso}\eqref{prope2} holds, and so $g_a$ is a partial isomorphism.
\end{proof}

\begin{remark}\label{rem: gvisidentity}
If $a=v\in A\cap E^0$, then since $r_A(v)=v=s_A(v)$, and $v\cdot e=e$ and $v|_e=v$ for each edge $e\in vE^1$, it follows that $v\cdot x=x$ for all $x\in E^*$. Now, given $\mu\in v\Lambda^p$, and $x\in E^*$ that traverses $\mu$, we have
$g_v(\mu)=\lambda_{v\cdot x}=\lambda_{x}=\mu$. Hence $g_v$ is the identity on $v\Lambda$. 
\end{remark}

We can now prove that each $(E,\CC)$-automaton $A$ gives rise to a groupoid action on the $k$-graph $\Lambda_{E,\CC}$.

\begin{thm}\label{def:GA}
Let $A$ be an $(E,\CC)$-automaton, and let $\Lambda:=\Lambda_{E,\CC}$ be the associated $k$-graph constructed in \eqref{k-graph from couloured}. For each $a\in A$, let $g_a$ be the partial isomorphism of $\Lambda$ described in Proposition~\ref{prop:aut}, and let $G_A$ be the subgroupoid of $\PI(\Lambda)$ generated by $\{g_a:a\in A\}$. Then $(G_A,\Lambda)$ is a self-similar groupoid action.
\end{thm}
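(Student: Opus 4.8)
The plan is to verify the two conditions in \defref{faithful self-similar groupoid action}: first that $G_A$ really acts on $\Lambda$ by an action (a groupoid homomorphism into $\PI(\Lambda)$) that is faithful, and second that this action satisfies the self-similarity equation \eqref{selfsimilar groupoid defn}. \proref{prop:aut} already supplies, for each generator $a \in A$, a partial isomorphism $g_a \in \PI(\Lambda)$ with $\D(g_a)=s_A(a)$ and $\c(g_a)=r_A(a)$, so the groupoid $G_A$ is well-defined as the subgroupoid of $\PI(\Lambda)$ generated by these elements. Because $G_A$ is by construction a subgroupoid of $\PI(\Lambda)$, the inclusion $G_A \hookrightarrow \PI(\Lambda)$ is automatically a faithful action in the sense of the paper, so no separate injectivity argument is needed. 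Thus the entire content of the theorem reduces to establishing the self-similarity relation \eqref{selfsimilar groupoid defn} for a general element $g \in G_A$, i.e.\ for an arbitrary product $g = g_{a_1}\cdots g_{a_n}$ of generators and their inverses.

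I would proceed in two stages. \emph{First}, I would prove the self-similarity relation for the generators $g_a$ directly from the automaton structure. Given an edge $\xi \in \D(g_a)\Lambda^{e_i} = s_A(a)\Lambda^{e_i}$ and any $\lambda \in s(\xi)\Lambda$, I want to produce $h \in G_A$ with $g_a\cdot(\xi\lambda) = (g_a\cdot\xi)(h\cdot\lambda)$. The natural candidate is $h = g_{a|_e}$, where $e \in s_A(a)E^1$ is the edge traversing $\xi$. The identity to check is exactly the translation of \lemref{autpropert}\eqref{autpropert-1}, $a\cdot(xy)=(a\cdot x)(a|_x\cdot y)$, into the $k$-graph language via the correspondence $x \mapsto \lambda_x$ of \proref{prop:aut}: writing $y \in E^*$ for a path traversing $\lambda$, one computes
\[
g_a\cdot(\xi\lambda) = \lambda_{a\cdot(ey)} = \lambda_{(a\cdot e)(a|_e\cdot y)} = \lambda_{a\cdot e}\,\lambda_{a|_e\cdot y} = (g_a\cdot\xi)(g_{a|_e}\cdot\lambda),
\]
using \eqref{lambda-homo} in the middle. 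Here one must confirm $g_{a|_e}\in G_A$; since $a|_e$ need not lie in $A$, this requires closing $A$ under the restriction operation, or equivalently observing that the subgroupoid generated by $\{g_a:a\in A\}$ automatically contains all $g_{a|_e}$ via the restriction formula \eqref{rest-formula} applied to the generators.

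\emph{Second}, I would promote the relation from generators to all of $G_A$ by an induction on word length in the generators, using the multiplicativity of restriction. The inductive step splits into a product step and an inverse step. For products, if $g$ and $g'$ both satisfy \eqref{selfsimilar groupoid defn} with restrictions $g|_e$ and $g'|_{e'}$ lying in $G_A$, then the cocycle identity \eqref{S6}, $(gg')|_\lambda = g|_{g'\cdot\lambda}\,g'|_\lambda$, shows the product does too, with restriction again in $G_A$; for inverses one uses \eqref{S7}, $g^{-1}|_\lambda = (g|_{g^{-1}\cdot\lambda})^{-1}$. Crucially, \lemref{lem:properties} establishes \eqref{S1}--\eqref{S8} for \emph{any} self-similar groupoid action, so once the base relation holds for generators, the algebraic identities close the induction and simultaneously guarantee the restrictions stay inside $G_A$. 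The main obstacle is the bookkeeping at the generator stage: one must carefully match the automaton-level restriction $a|_e$ with the $k$-graph-level restriction $g_a|_\xi$, check that the resulting $h$ is independent of the factorisation of $\xi\lambda$ into a traversing path (which is exactly the well-definedness already secured in \proref{prop:aut} together with \lemref{lemma:k-aut}), and verify that the generating set is effectively closed under restriction so that every $h$ that arises is a genuine element of $G_A$.
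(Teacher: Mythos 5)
Your proposal is correct and takes essentially the same route as the paper: establish \eqref{selfsimilar groupoid defn} for each generator $g_a$ via the traversal correspondence and \eqref{lambda-homo}, obtaining $g_a(\xi\mu)=g_a(\xi)g_{a|_e}(\mu)$, and then close under inverses and composition (your induction on word length is just the paper's two closure steps phrased via the cocycle identities). Two small repairs: your worry that ``$a|_e$ need not lie in $A$'' is unfounded, since by Definition~\ref{defn: (E,C)-automaton} the map \eqref{autmap} takes values in $E^1 \tensor[_{s_E}]{\times}{_{r_A}} A$, so $a|_e\in A$ and $g_{a|_e}$ is a generator outright; and you should not cite (\ref{S6})--(\ref{S7}) of Lemma~\ref{lem:properties} in the inductive step, since that lemma presupposes the whole action is already self-similar --- instead, as the paper does, apply the self-similarity relation of $g'$ and then of $g$ directly to compute $(gg')\cdot(e\lambda)=((gg')\cdot e)\big((g|_{g'\cdot e}\,g'|_e)\cdot\lambda\big)$, which proves the needed identity rather than assuming it.
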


\begin{proof} First note that since $G_A$ is a subgroupoid of $\PI(\Lambda)$, it acts faithfully on $\Lambda$ by definition. 

Fix $a\in A$. Let $\xi \in \D(g_a)\Lambda^{e_i}$ and $\mu\in s(\xi)\Lambda^p$. Suppose that $e\in E^1$ traverses $\xi$, and $x\in E^{|p|}$ traverses $\mu$. Now, \eqref{lambda-homo} implies that
\begin{equation}\label{restriction-thm}
g_a(\xi\mu)=\lambda_{a\cdot(ex)}=\lambda_{(a\cdot e)(a|_e\cdot x)}=\lambda_{a\cdot e}\lambda_{a|_e\cdot x}=g_{a}(\xi)g_{a|_e}(\mu),
\end{equation}
 and therefore $g_{a}$ satisfies the self-similar condition \eqref{selfsimilar groupoid defn}. 

It now suffices to prove that inverses and the composition of the generators of $G_A$ also satisfy \eqref{selfsimilar groupoid defn}.
Consider $g_a^{-1}\in G_A$. Let $\eta \in \c(g_a)\Lambda^{e_i}$ for some $i$, and $\nu\in s(\eta)\Lambda^p$. Let $\xi\in\D(g_a)\Lambda^{e_i}$ and $\mu\in s(\xi)\Lambda$ such that $g_a^{-1}(\eta\nu)=\xi\mu$. Then by \eqref{restriction-thm} we have $\eta \nu =g_a(\xi\mu)=g_a(\xi)g_{a|_e}(\mu)$. The factorisation property implies that $\eta =g_a(\xi)$ and $\nu =g_{a|_e}(\mu)$. Thus we have $\xi=g_a^{-1}(\eta)$ and $\mu=g_{a|_e}^{-1}(\nu)$, and so
\[g_a^{-1}(\eta\nu)=g_a^{-1}(\eta)g_{a|_e}^{-1}(\nu). \]
Hence $g_a^{-1}$ satisfies \eqref{selfsimilar groupoid defn}.

Now let $g_b,g_a\in G_A$ be composable. Let $\eta \in \D(g_a)\Lambda^{e_i}$ for some $i$, and $\nu\in s(\eta)\Lambda$. Since $g_a$ act self-similarly, there exists a unique $g_{a'}\in G_A$ such that $g_a(\xi\mu)=g_a(\xi)g_{a'}(\mu)$. Since $g_a(\xi)$ is an edge in $\Lambda$ and since $g_b$ acts self-similarly, there is a unique element $g_{b'}\in G_A$ such that $(g_bg_a)(\xi\mu)=g_b(g_a(\xi))g_{b'}\big(g_{a'}(\mu)\big)=(g_bg_a)(\xi)\big(g_{b'}g_{a'}\big)(\mu)$. Thus $g_bg_a$ acts self-similarly.
\end{proof}

\begin{example}\label{ex1}
Let $\Gamma$ be a $1$-coaligned $(k+1)$-graph, in the sense that for all $1\leq i\neq j\leq k+1$ and $(\lambda,\mu)\in \Gamma^{e_i}\times \Gamma^{e_j}$ with $s(\lambda)=s(\mu)$, there exists a unique pair $(\eta,\zeta)\in \Gamma^{e_j}\times \Gamma^{e_i}$ such that $\eta\lambda=\zeta\mu$. Fix $1\leq i\leq k+1$. Let $A:=\Gamma^{e_i}\cup \Gamma^0$, $E_\Gamma$ be the skeleton of $\Gamma$ and $\CC_\Gamma$ the complete collection of squares associated to $\Gamma$ (see \cite[Definition~4.1]{HRSW}). Let $E:=(E_\Gamma^0,E_\Gamma^1\setminus c^{-1}(c_i), r,s)$, and $\CC$ be the collection of squares in $\CC_\Lambda$ which do not involve edges of colour $c_i$. Define $r_A, s_A:A\to E^0$ as range and source maps of $\Gamma$.
For each $v\in A\cap \Gamma^0$ and $e\in E^1$ with $r(e)=v$, define $v\cdot e= e$ and $v|_e=s(e)$. 
 For each $a\in A$ and $e\in E^1$ with $r(e)=s_A(a)$, since $a, e$ are two edges of $\Gamma$ with different degrees, there is a unique $b\in A$ and $f\in E^1\cap c^{-1}(c(e))$ such that 
$ae=fb$. We define $a|_e:=b$ and $a\cdot e=f$, and we claim that $A$ is an $(E,\CC)$-automaton. Property \eqref{item: bijection} follows from the factorisation property and the $1$-coalignedness of $\Gamma$. Properties \eqref{item: colour_preserve}--\eqref{item: range_and_source} hold true by construction. To see \eqref{item: square_preserving_1} and \eqref{item: square_preserving_2}, let $a\in A$, $e\in s_A(a)E^1$, and $ef\sim f'e'$. Then
\[aef=(a\cdot e)(a|_e)f=(a\cdot e)(a|_e\cdot f)((a|_e)|_f)\]
and
\[af'e'=(a\cdot f')(a|_{f'})e'=(a\cdot f')(a|_{f'}\cdot e')((a|_{f'})|_{e'}).\]
The factorisation property now gives \eqref{item: square_preserving_1} and \eqref{item: square_preserving_2}.
\end{example}

\begin{remark}
Suppose that $E$ is a $k$-coloured graph and $\CC$ is an associative complete collection of squares in $E$. If $A$ is an $(E,\CC)$-automaton such that the map \eqref{autmap} is a bijection, then by giving an additional colour to the elements of 
$A$ we obtain a $(k+1)$-coloured graph $F$. Since the map \eqref{autmap} is a bijection, for each pair $(a,e)\in A \tensor[_{s_A}]{\times}{_{r_E}} E^1 $, $ae\sim (a\cdot e)a|_e$ is a square in $F$. Adding these squares to $\CC$, we get a complete collection of squares for $F$ that is associative by \eqref{item: square_preserving_1} and \eqref{item: square_preserving_2}. We note that for a general $(E,\CC)$-automaton $A$, one may not be able to find a complete and associative collection of squares for the new graph.
\end{remark}

We next give two examples $(E,\CC)$-automata for $E$ a $2$-coloured graph.

\begin{example}\label{ex: single vertex}
Consider the $2$-coloured graph $E$ in Figure \ref{fig:single vertex} with the complete collection of squares 
\[
\CC: \quad e_1 f_1\sim f_1 e_1, \,\, e_1 f_2\sim f_2 e_1,\,\, e_2 f_1\sim f_1 e_3,\,\, e_2 f_2\sim f_2 e_2,\,\, e_3 f_1\sim f_1 e_2 \,\text{ and }\,  e_3f_2\sim f_2 e_3.
\]
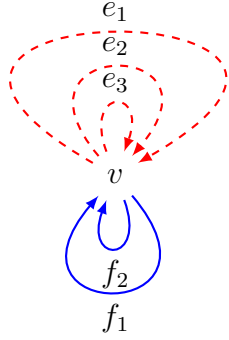
\begin{figure}[h]
\vspace{-0.75cm}
\begin{tikzpicture}
\node at (0,0) {};
\node[vertex] (vertexv) at (0,0)   {$v$}
	edge [->,>=latex,out=150,in=30,loop,thick,looseness=20,red,dashed] node[above]{\color{black} $e_1$} (vertexv)
	edge [->,>=latex,out=130,in=50,loop,thick,looseness=12,red,dashed] node[above]{\color{black} $e_2$} (vertexv)
	edge [->,>=latex,out=110,in=70,loop,thick,looseness=10,red,dashed] node[above]{\color{black} $e_3$} (vertexv)
	edge [->,>=latex,out=-70,in=250,loop,thick,looseness=10,blue] node[below]{\color{black} $f_2$} (vertexv)
	edge [->,>=latex,out=-50,in=230,loop,thick,looseness=13,blue] node[below]{\color{black} $f_1$} (vertexv);
\end{tikzpicture}
\vspace{-0.5cm}
\caption{The 2-graph $\Lambda$ of Example \ref{ex: single vertex}.}
\label{fig:single vertex}
\end{figure}

Using \eqref{k-graph from couloured} we obtain a $2$-graph $\Lambda:=\Lambda_{(E,\CC)}$. To define an $(E,\CC)$-automaton, let $A:=\{a\} \cup E^0=\{a,v\}$ with $s_A(a)=v=r_A(a)$. We define
\begin{align*}
a \cdot e_1=e_1, \quad a|_{e_1}=v; & \qquad a \cdot f_1=f_1, \quad a|_{f_1}=a;\\
a \cdot e_2=e_3, \quad a|_{e_2}=v; & \qquad a \cdot f_2=f_2, \quad a|_{f_2}=a;\\
a \cdot e_3=e_2, \quad a|_{e_3}=v, &
\end{align*}
and note that the restrictions on $E^0$ are specified in \eqref{item: range_and_source} of Definition \eqref{defn: (E,C)-automaton}. To verify \eqref{item: bijection} and \eqref{item: colour_preserve}, we immediately read off that $a$ is a colour preserving bijection on $E^1$. Both \eqref{second property of automaton} and \eqref{item: range_and_source} also follow immediately since $E^0$ consists of a single vertex. We check \eqref{item: square_preserving_1} and \eqref{item: square_preserving_2} for the commuting squares $\CC$:
\begin{align*}
&(a \cdot e_1)(a|_{e_1} \cdot f_1)=e_1 f_1\sim f_1 e_1= (a \cdot f_1)(a|_{f_1} \cdot e_1)&&(a|_{e_1})_{f_1}=v=(a|_{f_1})|_{e_1};\\
&(a \cdot e_1)(a|_{e_1} \cdot f_2)=e_1f_2\sim f_2 e_1= (a \cdot f_2)(a|_{f_2} \cdot e_1)&&(a|_{e_1})_{f_2}=v=(a|_{f_2})|_{e_1};\\
& (a \cdot e_2)(a|_{e_2} \cdot f_1)=e_3f_1\sim f_1 e_2= (a \cdot f_1)(a|_{f_1} \cdot e_3)&&(a|_{e_3})_{f_1}=v=(a|_{f_1})|_{e_2};\\
&(a \cdot e_2)(a|_{e_2} \cdot f_2)=e_3f_2\sim f_2 e_3= (a \cdot f_2)(a|_{f_2} \cdot e_2)&&(a|_{e_3})_{f_2}=v=(a|_{f_2})|_{e_3};\\
&(a \cdot e_3)(a|_{e_3} \cdot f_1)=e_2f_1\sim f_1 e_3= (a \cdot f_1)(a|_{f_1} \cdot e_2)&&( a|_{e_2})_{f_1}=v=(a|_{f_1})|_{e_3};\\
&(a \cdot e_3)(a|_{e_3} \cdot f_2)=e_2f_2\sim f_2 e_2= (a \cdot f_2)(a|_{f_2} \cdot e_3)&& (a|_{e_2})_{f_2}=v=(a|_{f_2})|_{e_2}.
\end{align*}
Theorem~\ref{def:GA} gives us a self-similar groupoid action $(G_A,\Lambda)$. Notice that since there is only one vertex, all elements of $G_A$ are composable. Hence $G_A$ is a group. 
\end{example}

The next example is inspired by the basilica group defined in \cite{nek-book}.

\begin{example}\label{ex: bascilica2}
Consider the $2$-coloured graph $E$ in Figure \ref{fig:two vertex} with the complete collection of squares 
\[
\CC: \quad
e_1 f_1\sim f_3 e_1, \,\, e_1 f_2\sim f_4 e_1,\,\, e_2 f_3\sim f_1 e_2,\,\, e_2 f_4\sim f_2 e_2,\,\, e_3 f_1\sim f_1 e_3 \,\text{ and }\,  e_3f_2\sim f_2 e_3.
\]
Using \eqref{k-graph from couloured} we obtain a $2$-graph $\Lambda:=\Lambda_{(E,\CC)}$.
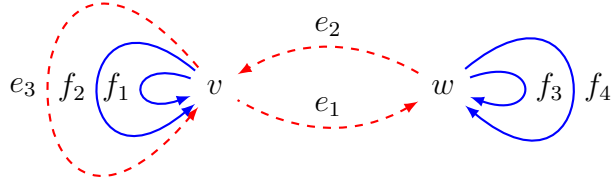
\begin{figure}[h]
\vspace{-2cm}
\begin{tikzpicture}
\node[vertex] (vertexv) at (0,0)   {$v$}
	edge [->,>=latex,out=130,in=230,loop,thick,looseness=20,red,dashed] node[left]{\color{black} $e_3$} (vertexv)
	edge [->,>=latex,out=160,in=200,loop,thick,looseness=10,blue] node[left]{\color{black} $f_1$} (vertexv)
	edge [->,>=latex,out=140,in=220,loop,thick,looseness=13,blue] node[left]{\color{black} $f_2$} (vertexv);
\node[vertex] (vertexw) at (3,0)   {$w$}
	edge [->,>=latex,out=150,in=30,thick,red,dashed] node[above]{\color{black} $e_2$} (vertexv)
	edge [<-,>=latex,out=210,in=-30,thick,red,dashed] node[above]{\color{black} $e_1$} (vertexv)
	edge [->,>=latex,out=20,in=-20,loop,thick,looseness=10,blue] node[right]{\color{black} $f_3$} (vertexw)
	edge [->,>=latex,out=40,in=-40,loop,thick,looseness=13,blue] node[right]{\color{black} $f_4$} (vertexw);
\end{tikzpicture}
\vspace{-2cm}
\caption{The 2-graph $\Lambda$ of Example \ref{ex: bascilica2}.}
\label{fig:two vertex}
\end{figure}

To define an $(E,\CC)$-automaton, let $A:=\{a_v,a_w,b_v,b_w\} \cup E^0$  where each element has range and source equal to its subscript. We define
the action and restriction maps by:
\begin{align*}
a_v \cdot e_2=e_2,& \quad a_v|_{e_2}=a_w&&b_v \cdot e_2=e_2,&&  b_v|_{e_2}=b_w \\
a_v \cdot e_3=e_3,& \quad a_v|_{e_3}=a_v&& b_v \cdot e_3=e_3,&&  b_v|_{e_3}=b_v \\
a_v \cdot f_1=f_2,& \quad a_v|_{f_1}=b_v && b_v \cdot f_1=f_1,&&  b_v|_{f_1}=a_v \\
a_v \cdot f_2=f_1,& \quad a_v|_{f_2}=v && b_v \cdot f_2=f_2,&&  b_v|_{f_2}=v \\
a_w \cdot e_1=e_1,&\quad a_w|_{e_1}=a_v&& b_w \cdot e_1=e_1,&&  b_w|_{e_1}=b_v  \\
a_w \cdot f_3=f_4,& \quad a_w|_{f_3}=b_w&& b_w \cdot f_3=f_3,&&  b_w|_{f_3}=a_w \\
a_w \cdot f_4=f_3,& \quad a_w|_{f_4}=w&& b_w \cdot f_4=f_4,&&  b_w|_{f_4}=w,
\end{align*}
and note that the restrictions on $E^0$ are specified in \eqref{item: range_and_source} of Definition~\ref{defn: (E,C)-automaton}. Since  elements of $A$ are colour preserving bijections on $E^1$,  \eqref{item: bijection} and \eqref{item: colour_preserve} are immediate.  Both \eqref{second property of automaton} and \eqref{item: range_and_source} are easily checked using the restrictions maps above. We check \eqref{item: square_preserving_1}   and \eqref{item: square_preserving_2} for the commuting squares $\CC$:
\begin{align*}
&(a_w \cdot e_1)(a_w|_{e_1} \cdot f_1)=e_1 f_2\sim f_4 e_1= (a_w \cdot f_3)(a_w|_{f_3} \cdot e_1)  && a_w|_{e_1 f_1}=b_v=a_w|_{f_3 e_1}\\
&(a_w \cdot e_1)(a_w|_{e_1} \cdot f_2)=e_1f_1\sim f_3 e_1= (a_w \cdot f_4)(a_w|_{f_4} \cdot e_1)  && a_w|_{e_1 f_2}=v=a_w|_{f_4 e_1}\\
&(a_v \cdot e_2)(a_v|_{e_2} \cdot f_3)=e_2f_4\sim f_2 e_2= (a_v \cdot f_1)(a_v|_{f_1} \cdot e_2) && a_v|_{e_2 f_3}=b_w=a_v|_{f_1 e_2}\\
&(a_v \cdot e_2)(a_v|_{e_2} \cdot f_4)=e_2f_3\sim f_1 e_2= (a_v \cdot f_2)(a_v|_{f_2} \cdot e_2) && a_v|_{e_2 f_4}=w=a_v|_{f_2 e_2} \\
&(a_v \cdot e_3)(a_v|_{e_3} \cdot f_1)=e_3f_2\sim f_2 e_3= (a_v \cdot f_1)(a_v|_{f_1} \cdot e_3) && a_v|_{e_3 f_1}=b_v=a_v|_{f_1 e_3} \\
&(a_v \cdot e_3)(a_v|_{e_3} \cdot f_2)=e_3f_1\sim f_1 e_3= (a_v \cdot f_2)(a_v|_{f_2} \cdot e_3) && a_v|_{e_3 f_2}=v=a_v|_{f_2 e_3}\\
&&\\
&(b_w \cdot e_1)(b_w|_{e_1} \cdot f_1)=e_1 f_1\sim f_3 e_1= (b_w \cdot f_3)(b_w|_{f_3} \cdot e_1) && b_w|_{e_1 f_1}=a_v=b_w|_{f_3 e_1} \\
&(b_w \cdot e_1)(b_w|_{e_1} \cdot f_2)=e_1f_2\sim f_4 e_1= (b_w \cdot f_4)(b_w|_{f_4} \cdot e_1) && b_w|_{e_1 f_2}=v=b_w|_{f_4 e_1} \\
&(b_v \cdot e_2)(b_v|_{e_2} \cdot f_3)=e_2f_3\sim f_1 e_2= (b_v \cdot f_1)(b_v|_{f_1} \cdot e_2) &&  b_v|_{e_2 f_3}=a_w=b_v|_{f_1 e_2} \\
&(b_v \cdot e_2)(b_v|_{e_2} \cdot f_4)=e_2f_4\sim f_2 e_2= (b_v \cdot f_2)(b_v|_{f_2} \cdot e_2) &&  b_v|_{e_2 f_4}=w=b_v|_{f_2 e_2}  \\
&(b_v \cdot e_3)(b_v|_{e_3} \cdot f_1)=e_3f_1\sim f_1 e_3= (b_v \cdot f_1)(b_v|_{f_1} \cdot e_3) && b_v|_{e_3 f_1}=a_v=b_v|_{f_1 e_3} \\
&(b_v \cdot e_3)(b_v|_{e_3} \cdot f_2)=e_3f_2\sim f_2 e_3= (b_v \cdot f_2)(b_v|_{f_2} \cdot e_3) && b_v|_{e_3 f_2}=v=b_v|_{f_2 e_3}
\end{align*}
Theorem~\ref{def:GA} gives us a self-similar groupoid action $(G_A,\Lambda)$. 
\end{example}
\begin{remark}
In Example~\ref{ex: bascilica2}, if for $g\in\{a_v,a_w,b_v,b_w\}$, we define $g\cdot e:=g_{r(e)} \cdot e$ and $g|_{e}:=g_{r(e)}|_{e}$ then we obtain a group rather than a groupoid. The group we obtain has exactly the same relations as the basilica group. 
\end{remark}

\section{Product systems from self-similar actions}
Our goal is to associate $C^*$-algebras to self-similar groupoid actions on $k$-graphs. In this section we construct a product system of right Hilbert bimodules from a self-similar groupoid action on a $k$-graph, and then in Section~\ref{universal-algebras}, we will describe the $C^*$-algebras associated to this product system through generators and relations. 

	Let $E$ be a finite directed graph, and $(G,E)$ be a self-similar groupoid action in the sense of \cite{lrrw}.  Let $X(E)$ be the corresponding graph bimodule as in \cite[Chapter~8]{R_book}. Recall from \cite[pages~282--284]{lrrw} that
	$M(G,E):=X(E)\otimes_{C(E^0)}C^*(G)$ is a right Hilbert $C^*(G)$-bimodule with the module actions
\begin{equation}\label{eq: rightaction}
(x\otimes a)\cdot i_g=(x\otimes ai_g),
\end{equation}
\begin{equation}\label{eq: rightaction1}
i_g\cdot(\delta_\lambda \otimes a)=\begin{cases}
(\delta_{g\cdot \lambda}\otimes i_{g|_\lambda} a) \notag&\text{if $r(\lambda)=\D(g)$}\\
	0\notag&\text{otherwise.}\\
	\end{cases}
\end{equation}
and inner product
\begin{equation}\label{eq: rightaction2}
\langle x\otimes a,y\otimes b\rangle= a^*\langle x,y \rangle b.
\end{equation}
The bimodule $M(G,E)$ is spanned by $\{\delta_\lambda\otimes i_g:g\in G , \lambda\in E^1, s(\lambda)=\c(g)\}$, and the elements $\{\delta_\lambda\otimes 1:\lambda\in E^1\}$ form a Parseval frame for $M(G,E)$ with the reconstruction formula
\begin{equation}\label{recons-for}
m=\sum_{\lambda\in E^1}\delta_\lambda\otimes\langle \delta_\lambda\otimes 1, m\rangle,\quad \text { for } m\in M(G,E).
\end{equation}

We now consider a self-similar groupoid action on a $k$-graph, and we generalise the construction of $M(G,E)$ to a product system of right Hilbert bimodules.

\begin{prop}\label{prop:product system}
	Let $\Lambda$ be a finite $k$-graph and suppose that $(G,\Lambda)$ is a self-similar groupoid action. For each nonzero $p\in \N^k$, $\Lambda_p:=(\Lambda^0,\Lambda^p,r|_{\Lambda^p},s|_{\Lambda^p})$ is a directed graph, and $(G,\Lambda_p)$ is a self-similar groupoid action on a directed graph. With $M_0:=C^*(G)$, and $M_p:=M(G,\Lambda_p)$ the right Hilbert $C^*(G)$-bimodule associated to $(G,\Lambda_p)$, $M:=\bigsqcup_{p\in \N^k}M_p$ is a product system with multiplication characterised by
\begin{align}	\label{multiplicationformula1}
	(\delta_\lambda\otimes i_g)(\delta_\mu\otimes i_h)=\begin{cases}
\delta_{\lambda (g\cdot \mu)}\otimes i_{(g|_\mu)h}&\text{if $r(\mu)=\D(g)$}\\
	0&\text{otherwise,}\\
	\end{cases}
	\end{align}	
for nonzero $p,q\in \N^k$, $\delta_\lambda\otimes i_g\in M_p$, $\delta_\mu\otimes i_h\in M_q$. Moreover, each $M_p$ is essential, the left action of $M_0$ on each $M_p$ is by compact operators, and $M$ is compactly aligned.
\end{prop}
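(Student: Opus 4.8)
The plan is to verify each of the claimed structures in turn, leaning heavily on the single-graph bimodule $M(G,E)$ recalled just before the statement. First I would check that each $\Lambda_p=(\Lambda^0,\Lambda^p,r|_{\Lambda^p},s|_{\Lambda^p})$ is a directed graph on which $G$ acts self-similarly: the action restricts by Lemma~\ref{lem:properties}(\ref{S4}) (which shows $g$ preserves $\Lambda^p$ and sources/ranges), and the self-similar condition for $\Lambda_p$ follows from (\ref{S1}) applied to $\lambda\in\Lambda^p$, $\mu\in\Lambda^q$, giving the required factorisation $g\cdot(\lambda\mu)=(g\cdot\lambda)(g|_\lambda\cdot\mu)$ with $g|_\lambda$ playing the role of the restriction. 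This lets me define $M_p:=M(G,\Lambda_p)$ using the recalled construction, so each $M_p$ is automatically a right Hilbert $C^*(G)$-bimodule.

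Next I would construct the multiplication. On spanning elements I define the map $M_p\times M_q\to M_{p+q}$ by \eqref{multiplicationformula1} and must check it is well defined, i.e.\ compatible with the balancing relation \eqref{balanced} over $C^*(G)$, so that it descends to a map on $M_p\otimes_{C^*(G)}M_q$. The key algebraic identities here are (\ref{S1}) and (\ref{S6}): the former guarantees that $\lambda(g\cdot\mu)$ is a genuine path of degree $p+q$ (using the factorisation property of $\Lambda$), and the latter, $(g|_\mu)h$ combined with the cocycle-type relation, ensures the formula respects how group elements may be moved across the balanced tensor. I would then verify that this multiplication is associative and isometric for the inner products \eqref{eq: rightaction2} and \eqref{bipro}, so that it induces an isomorphism $\pi_{p,q}:M_p\otimes_{C^*(G)}M_q\to M_{p+q}$; surjectivity is clear since $\{\delta_{\nu}\otimes i_g:\nu\in\Lambda^{p+q}\}$ spans $M_{p+q}$ and every $\nu$ factors as $\lambda\mu'$ with the second factor of the form $g\cdot\mu$, while injectivity follows by a frame/inner-product computation. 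Associativity amounts to checking \eqref{multiplicationformula1} composes consistently, which reduces to (\ref{S2}) and (\ref{S6}).

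For the essentiality claim, I would show $\varphi_p(1_{C^*(G)})$ acts as the identity on $M_p$: since $1_{C^*(G)}=\sum_{v\in\Lambda^0}i_v$ and $i_v\cdot(\delta_\lambda\otimes a)=\delta_\lambda\otimes a$ when $r(\lambda)=v$, summing over $v$ recovers every spanning element, giving $\varphi_p(1)m=m$. For the left action being by compacts, I would use the Parseval frame $\{\delta_\lambda\otimes 1:\lambda\in\Lambda^p\}$ from \eqref{recons-for} to express $\varphi_p(i_g)$ as a finite sum of rank-one operators $\Theta_{\delta_{g\cdot\lambda}\otimes i_{g|_\lambda},\,\delta_\lambda\otimes 1}$, reconstructing the left-multiplication formula \eqref{eq: rightaction1}; finiteness of $\Lambda^p$ makes this a genuine compact operator, and since such elements $i_g$ densely span $C^*(G)$, the whole left action is by compacts.

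Finally, for compact alignment I must show that for all $p,q$ with $p\vee q=p+q$ (automatic in $(\Z^k,\N^k)$), and $S\in\KK(M_p)$, $T\in\KK(M_q)$, the product $S^{p+q}_pT^{p+q}_q$ lies in $\KK(M_{p+q})$. Because the left actions are by compacts, a standard argument reduces this to checking it on rank-one operators, and then the operator $1_{M_p}\otimes\varphi_q(\langle\cdot,\cdot\rangle)$-type identities together with the frame expansion express the composite as a compact operator on $M_{p+q}$; here the finiteness of $\Lambda$ is again essential. I expect this last point, compact alignment, together with the well-definedness and bijectivity of $\pi_{p,q}$, to be the main obstacle, since it requires carefully tracking the bimodule actions and inner products through the balanced tensor product; the self-similarity identities (\ref{S1}), (\ref{S2}) and (\ref{S6}) from Lemma~\ref{lem:properties} are exactly what make these computations close up.
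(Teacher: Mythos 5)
Your proposal is correct in substance and, for most of the statement, follows the same route as the paper: the paper also builds the multiplication by first proving (in Lemma~\ref{lemma:product system iso}) that the formula \eqref{multiplicationformula1} descends through the balancing relation and preserves left actions, right actions and inner products (using (\ref{S1}), (\ref{S6}) and (\ref{S7})), also proves essentiality by letting $\sum_{v\in\Lambda^0}i_v$ act on elementary tensors, and also gets compactness of the left action from the Parseval frame \eqref{recons-for} (the paper's identity \eqref{cuntz-proof} does this for arbitrary $a\in C^*(G)$ in one stroke, whereas you do it on the generators $u_g$ and invoke density plus closedness of $\KK(M_p)$ — both are fine). The genuine divergence is compact alignment. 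The paper proves it by an explicit Raeburn--Sims style computation (Lemma~\ref{lemma:compactly-alig}) which exhibits $\big(\Theta_{\lambda_1,g_1,\lambda_2,g_2}\big)_p^{p\vee q}\big(\Theta_{\mu_1,h_1,\mu_2,h_2}\big)_q^{p\vee q}$ as an explicit finite sum of rank-one operators indexed by $\Lambda^{\min}(\lambda_2,\mu_1)$. You instead invoke the general principle that a product system whose left actions are by compact operators is automatically compactly aligned (this is Fowler's argument: for rank-one $S=\Theta_{x,y}$ one has $S_p^{p\vee q}=T_xT_y^*$ with $T_x\colon w\mapsto x\otimes w$, and compactness of $\varphi_{(p\vee q)-p}$ lets one approximate this by rank-ones on $M_{p\vee q}$). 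Your route is shorter and more conceptual; what the paper's computation buys — and the reason it cannot be skipped in the paper — is the \emph{explicit formula} \eqref{eq:comp-alig2}, which is reused later to verify (\ref{TCK3}) and the Nica covariance of the representation $\rho$ in Proposition~\ref{prop:universal}. For the proposition as stated, either argument suffices.

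One concrete error to fix: you assert that $p\vee q=p+q$ is ``automatic in $(\Z^k,\N^k)$''. It is not — in this quasi-lattice ordered group $p\vee q$ is the coordinatewise maximum, and $p\vee q=p+q$ holds only when $p\wedge q=0$ (e.g.\ $p=(1,0)$, $q=(1,1)$ gives $p\vee q=(1,1)\neq(2,1)$). What is automatic is merely that $p\vee q<\infty$ for all pairs. As written, your final paragraph verifies the wrong condition (compactness of $S_p^{p+q}T_q^{p+q}$ rather than of $S_p^{p\vee q}T_q^{p\vee q}$ as required by \eqref{for-compacts} and the definition of compact alignment). The repair is immediate, since your argument only uses that the left action of $C^*(G)$ on $M_{(p\vee q)-p}$ and $M_{(p\vee q)-q}$ is by compacts, which holds for every $p,q$; but the statement must be made for $p\vee q$, not $p+q$.
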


We first note that it follows immediately from the definitions that for each nonzero $p\in\N^k$, $\Lambda_p:=(\Lambda^0,\Lambda^p,r|_{\Lambda^p},s|_{\Lambda^p})$ is a directed graph, and $(G,\Lambda_p)$ is a self-similar groupoid action on a directed graph in the sense of \cite{LRRW}. 

We set $M_0:=C^*(G)$, and $M_p:=M(G,\Lambda_p)$  for each nonzero $p\in\N^k$. Whenever we write $\delta_\lambda\otimes i_g\in M_p$, we assume that $s(\lambda)=\c(g)$.

\begin{lemma}\label{lemma:product system iso}	
Let $\Lambda$ be a finite $k$-graph and suppose that $(G,\Lambda)$ is a self-similar groupoid action. For each nonzero $p,q\in \N^k$, there is an isomorphism $\pi_{p,q}$ from $M_p\otimes_{C^*(G)} M_q$ onto $M_{p+q}$ such that
	\begin{align}\label{equ:isomor}
	\pi_{p,q}\big((\delta_\lambda\otimes i_g)\otimes(\delta_\mu\otimes i_h)\big)=\begin{cases}
\delta_{\lambda (g\cdot \mu)}\otimes i_{(g|_\mu)h}&\text{if $r(\mu)=\D(g)$}\\
	0&\text{otherwise,}\\
	\end{cases}
	\end{align}
for all $g,h\in G$, $\lambda\in \Lambda^p$, and $\mu\in\Lambda^q$ with $s(\lambda)=\c(g)$ and $s(\mu)=\c(h)$. 
\end{lemma}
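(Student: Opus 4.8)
The plan is to construct $\pi_{p,q}$ first on a dense algebraic core, check that \eqref{equ:isomor} respects the balancing and preserves the inner product, and then extend to a surjective isometry. The starting point is that $M_p$ decomposes as the orthogonal Hilbert-module direct sum $\bigoplus_{\lambda\in\Lambda^p}\delta_\lambda\otimes i_{s(\lambda)}C^*(G)$, and since the point masses $\{i_g\}$ are linearly independent in $C^*(G)$, the set $\{\delta_\lambda\otimes i_g: s(\lambda)=\c(g)\}$ is linearly independent and spans a dense subspace $M_p^0$. Consequently the right-hand side of \eqref{equ:isomor} unambiguously determines a bilinear map $\Phi\colon M_p^0\times M_q^0\to M_{p+q}$, which descends to the algebraic tensor product $M_p^0\odot M_q^0$.

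First I would check that $\Phi$ annihilates the balancing subspace \eqref{balanced}, so that it factors through $M_p\odot_{C^*(G)}M_q$. Taking $a=i_k$ and using $(\delta_\lambda\otimes i_g)\cdot i_k=\delta_{\D(g),\c(k)}\,\delta_\lambda\otimes i_{gk}$ together with the left action \eqref{eq: rightaction1}, the required identity $\Phi\big((\delta_\lambda\otimes i_g)\cdot i_k,\,\delta_\mu\otimes i_h\big)=\Phi\big(\delta_\lambda\otimes i_g,\, i_k\cdot(\delta_\mu\otimes i_h)\big)$ reduces to the two facts $(gk)\cdot\mu=g\cdot(k\cdot\mu)$ (because $\varphi$ is a groupoid homomorphism) and $(gk)|_\mu=g|_{k\cdot\mu}\,k|_\mu$, which is exactly \eqref{S6}; linearity and continuity in the $C^*(G)$-variable then extend this to all of $C^*(G)$. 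That $\Phi$ is a right $C^*(G)$-module map is immediate from \eqref{U3} and \eqref{eq: rightaction}, and a parallel computation using \eqref{S1}, \eqref{S2} and \eqref{S6} shows that $\Phi$ intertwines the left actions, so the resulting map is a homomorphism of right Hilbert bimodules.

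The crux, and where I expect the main difficulty, is that $\Phi$ preserves the inner product. Using \eqref{bipro}, the graph-bimodule inner product $\langle\delta_\lambda,\delta_{\lambda'}\rangle=\delta_{\lambda,\lambda'}\,i_{s(\lambda)}$, and the left action \eqref{eq: rightaction1}, the inner product of $(\delta_\lambda\otimes i_g)\otimes(\delta_\mu\otimes i_h)$ and $(\delta_{\lambda'}\otimes i_{g'})\otimes(\delta_{\mu'}\otimes i_{h'})$ in $M_p\otimes_{C^*(G)}M_q$ collapses, after simplifying the point-mass products with \eqref{U3} and \eqref{S3}, to
\[
\delta_{\lambda,\lambda'}\;\delta_{\mu,(g^{-1}g')\cdot\mu'}\;i_{h^{-1}\,(g^{-1}g')|_{\mu'}\,h'}.
\]
On the other hand, the inner product of the images $\delta_{\lambda(g\cdot\mu)}\otimes i_{(g|_\mu)h}$ and $\delta_{\lambda'(g'\cdot\mu')}\otimes i_{(g'|_{\mu'})h'}$ in $M_{p+q}$ equals
\[
\delta_{\lambda(g\cdot\mu),\,\lambda'(g'\cdot\mu')}\;i_{h^{-1}(g|_\mu)^{-1}(g'|_{\mu'})h'}.
\]
The factorisation property matches the supports: $\lambda(g\cdot\mu)=\lambda'(g'\cdot\mu')$ forces $\lambda=\lambda'$ and $g\cdot\mu=g'\cdot\mu'$, and applying $g^{-1}$ (again using that $\varphi$ is a homomorphism) rewrites the latter as $\mu=(g^{-1}g')\cdot\mu'$. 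It then remains to match the group elements, that is, to prove $(g^{-1}g')|_{\mu'}=(g|_\mu)^{-1}(g'|_{\mu'})$; this follows by applying \eqref{S6} to $g^{-1}g'$ and then using \eqref{S7} to rewrite $g^{-1}|_{g'\cdot\mu'}=\big(g|_{(g^{-1}g')\cdot\mu'}\big)^{-1}=(g|_\mu)^{-1}$. Thus the two expressions agree, $\Phi$ is isometric, and it extends to an isometry $\pi_{p,q}$ of the completion $M_p\otimes_{C^*(G)}M_q$ into $M_{p+q}$; in particular $\pi_{p,q}$ is injective.

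Finally I would establish surjectivity. Given a spanning element $\delta_\nu\otimes i_a$ of $M_{p+q}$, I factor $\nu=\lambda\nu''$ with $d(\lambda)=p$ and $d(\nu'')=q$. Since units act as the identity and \eqref{S5} gives $\id_{s(\lambda)}|_{\nu''}=\id_{s(\nu'')}$, formula \eqref{equ:isomor} yields $\pi_{p,q}\big((\delta_\lambda\otimes i_{s(\lambda)})\otimes(\delta_{\nu''}\otimes i_a)\big)=\delta_{\lambda\nu''}\otimes i_a=\delta_\nu\otimes i_a$. Hence the range of $\pi_{p,q}$ is dense, and being the range of an isometry of Hilbert modules it is also closed, so $\pi_{p,q}$ is onto. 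This produces the desired isomorphism satisfying \eqref{equ:isomor}.
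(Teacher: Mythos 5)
Your proposal is correct and follows essentially the same route as the paper's proof: both define the map on spanning elements, check balancing and bimodule compatibility, and reduce the whole argument to the inner-product computation, which in both cases hinges on combining (\ref{S6}) and (\ref{S7}) to identify $(g^{-1}g')|_{\mu'}$ with $(g|_\mu)^{-1}(g'|_{\mu'})$ once the factorisation property has matched $\lambda=\lambda'$ and $\mu=(g^{-1}g')\cdot\mu'$. The only organisational difference is that the paper records surjectivity at the outset, at the level of the bilinear map, via the same factorisation $\nu=\nu(0,p)\,\nu(p,p+q)$ you use, whereas you deduce it at the end from density of the range together with closedness of the range of an isometry.
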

\begin{proof}
	Since the elements $\{\delta_\lambda\otimes i_g:\lambda\in \Lambda^p, g\in G , s(\lambda)=\c(g)\}$ span $ M_p$, the formula
	\begin{align}\label{equ:preiso}
	\pi\big(\delta_\lambda\otimes i_g,\delta_\mu\otimes i_h\big)=\begin{cases}
\delta_{\lambda (g\cdot \mu)}\otimes i_{(g|_\mu)h}&\text{if $r(\mu)=\D(g)$}\\
	0&\text{otherwise.}\\
	\end{cases}
	\end{align}	
gives a map $\pi: M_p\times M_q\rightarrow M_{p+q}$. Clearly $\pi$ is bilinear. We also have that $\pi$ is  surjective. Indeed, since for any $\nu\in \Lambda^{p+q}$ and $h\in G$ with $s(\nu)=\c(h)$, we have $\pi( \delta_{\nu(0,p)}\otimes i_{\nu(p)},\delta_{\nu(p,p+q)}\otimes i_h)=\delta_\nu\otimes i_h$. By the universal property of the algebraic tensor product $M_p\odot M_q$, $\pi$ descends to a unique surjective linear map $\tilde{\pi}:M_p\odot M_q\rightarrow M_{p+q}$ such that $\tilde{\pi}((\delta_\lambda\otimes i_g)\odot(\delta_\mu\otimes i_h))=\pi\big(\delta_\lambda\otimes i_g,\delta_\mu\otimes i_h\big)$. Since for each $f\in G$ with $\c(f)=\D(g)$ we have
	$ \tilde{\pi}((\delta_\lambda\otimes i_g)\cdot i_f\odot (\delta_\mu\otimes i_h))=\tilde{\pi}((\delta_\lambda\otimes i_g)\odot i_f\cdot (\delta_\mu\otimes i_h))$,
	 $\tilde{\pi}$ induces a surjective linear map $\pi_{p,q}:M_p\odot_{C^*(G)} M_q\rightarrow M_{p+q}$ that satisfies \eqref{equ:isomor}.	
		
To see that $\pi_{p,q}$ preserves left actions, let $ (\delta_\lambda\otimes i_g)\otimes (\delta_\mu\otimes a)\in M_p\odot_{C^*(G)} M_q$ and $i_f\in C^*(G)$. Since 
	$i_f\cdot (\delta_\lambda\otimes i_g)=\delta_{\D(f),r(\lambda)}(\delta_{f\cdot \lambda}\otimes i_{f|_\lambda g})$, the formula \eqref{equ:isomor} implies that 
	\begin{align*}
\pi_{p,q}\Big(i_f\cdot \big(& (\delta_\lambda\otimes i_g)\otimes(\delta_\mu\otimes a)\big)\Big)=\pi_{p,q}\Big(\big(i_f\cdot (\delta_\lambda\otimes i_g)\big)\otimes(\delta_\mu\otimes a)\Big)\\
&=\begin{cases}
\delta_{(f\cdot \lambda)((f|_\lambda g)\cdot \mu)}\otimes i_{(f|_\lambda g)|_\mu}a&\text{if $r(\lambda)=\D(f)$ and $\D(g)=r(\mu)$}\\
	0\notag&\text{otherwise.}\\
	\end{cases}
	\end{align*}
We also have
	\begin{align*}
i_f\cdot \pi_{p,q}\big(&(\delta_\lambda\otimes i_g)\otimes(\delta_\mu\otimes a)\big)=\delta_{\D(g),r(\mu)}i_f\cdot(\delta_{\lambda(g\cdot \mu)}\otimes i_{g|_\mu }a)\\
&=\begin{cases}
\delta_{f\cdot (\lambda(g\cdot \mu))}\otimes i_{f|_{\lambda ( g\cdot \mu)}g|_\mu}a&\text{if $r(\mu)=\D(g)$ and $\D(f)=r(\lambda)$ }\\
	0\notag&\text{otherwise.}\\
	\end{cases}\\
	\end{align*}
	Now, (\ref{S1}) and (\ref{S6}) from Lemma~\ref{lem:properties} imply that $f\cdot (\lambda(g\cdot \mu))=(f\cdot \lambda)((f|_\lambda) g)\cdot \mu$ and $((f|_\lambda )g)|_\mu=f|_{\lambda (g\cdot \mu)}g|_\mu$, and it follows that $\pi_{p,q}$ preserves left actions.

A straightforward calculation shows that 
\[
\pi_{p,q}\big((\delta_\lambda\otimes i_g)\otimes(\delta_\mu\otimes a)\big)\cdot i_f = \delta_{\D(g),r(\mu)}\delta_{\lambda(g\cdot \mu)}\otimes i_{g|_ \mu}a i_ f= \pi_{p,q}\big((\delta_\lambda\otimes i_g)\otimes(\delta_\mu\otimes a)\cdot i_f\big),
\]
and hence $\pi_{p,q}$ preserves right actions.
		
	To see that $\pi_{p,q}$ preserves inner products, let $(\delta_\lambda\otimes i_g)\otimes (\delta_\mu\otimes a), (\delta_{\nu}\otimes i_{h})\otimes (\delta_{\tau}\otimes b)\in M_p\odot_{C^*(G)} M_q$. We have
	\begin{align*}
	\Big\langle (\delta_\lambda\otimes i_g)\notag&\otimes (\delta_\mu\otimes a),(\delta_{\nu}\otimes i_{h})\otimes (\delta_{\tau}\otimes b)\Big\rangle\\
	\notag&=\Big\langle \delta_\mu\otimes a,\big\langle\delta_\lambda\otimes i_g,\delta_{\nu}\otimes i_{h}\big\rangle \cdot (\delta_{\tau}\otimes b)\Big\rangle\\
		\notag&=\begin{cases}
	\big\langle \delta_\mu\otimes a,i_{g^{-1}h} \cdot (\delta_{\tau}\otimes b)\big\rangle&\text{if $\lambda =\nu$ and $\c(h)=\c(g)$}\\
	0&\text{otherwise.}\\
	\end{cases}\\
	\notag&=\begin{cases}
	\big\langle \delta_\mu\otimes a,\delta_{(g^{-1}h)\cdot\tau}\otimes i_{(g^{-1}h)|_{\tau}}b\big\rangle&\text{if $\lambda =\nu$ and $\c(h)=\c(g)$}\\
	0&\text{otherwise.}\\
	\end{cases}\\
	&=\begin{cases}
	a^*i_{s((g^{-1}h)\cdot\tau)}i_{(g^{-1}h)|_{\tau}}b&\text{if $\lambda =\nu$, $\c(h)=\c(g)$ and $(g^{-1}h)\cdot\tau=\mu$}\\
	0&\text{otherwise.}\\
	\end{cases}
	\end{align*}
	Since $s((g^{-1}h)\cdot\tau)=\c((g^{-1}h)|_{\tau})$, we have $i_{s((g^{-1}h)\cdot\tau)}i_{(g^{-1}h)|_{\tau}}=i_{(g^{-1}h)|_{\tau}}$, and hence
	\begin{align}\label{eq:inerright}
	\big\langle (\delta_\lambda\otimes i_g)\otimes (\delta_\mu\otimes a),\notag&(\delta_{\nu}\otimes i_{h})\otimes (\delta_{\tau}\otimes b)\big\rangle\\
	&=\begin{cases}
	a^*i_{(g^{-1}h)|_{\tau}}b&\text{if $\lambda =\nu$, $\c(h)=\c(g)$ and $(g^{-1}h)\cdot\tau=\mu$}\\
	0&\text{otherwise.}\\
	\end{cases}
	\end{align}
We also have
\begin{align}\label{eq:inerleft}
	\big\langle\pi_{p,q} \big((\delta_\lambda\otimes i_g)\notag&\otimes (\delta_\mu\otimes a)\big),\pi_{p,q}\big((\delta_{\nu}\otimes i_{h})\otimes (\delta_{\tau}\otimes b)\big)\big\rangle\\
	\notag&=\big\langle \delta_{\lambda (g\cdot\mu)}\otimes i_{(g|_\mu)}a,\delta_{\nu (h\cdot\tau)}\otimes i_{(h|_{\tau})}b\big\rangle\\
\notag&=\begin{cases}
	a^* i_{(g|_\mu)^{-1}}i_{(h|_{\tau})}b&\text{if $\lambda (g\cdot\mu)=\nu (h\cdot\tau)$}\\
	0&\text{otherwise.}\\
	\end{cases}\\
		&=\begin{cases}
	a^* i_{(g|_\mu)^{-1}(h|_{\tau})}b&\text{if $\lambda (g\cdot\mu)=\nu (h\cdot\tau)$ and $\c(g|_\mu)=\c(h|_{\tau})$}\\
	0&\text{otherwise.}\\
	\end{cases}
	\end{align}
	
	If $\lambda =\nu$, $\c(h)=\c(g)$ and $(g^{-1}h)\cdot\tau=\mu$, then $ \lambda(g\cdot\mu)=\nu(g\cdot( (g^{-1}h)\cdot\tau))=\nu(h\cdot \tau)$ and $\c(g|_\mu)=s(g\cdot \mu)=s(h\cdot \tau)=\c(h|_{\tau})$. Conversely if $\lambda (g\cdot\mu)=\nu(h\cdot\tau)$, then the factorisation property implies that $\lambda=\nu$ and $h\cdot\tau=g\cdot \mu$. Then $\c(g)=r(g\cdot \mu)=r(h\cdot\tau)=\c(h)$ and $(g^{-1}h)\cdot\tau=g^{-1}(h\cdot\tau)=g^{-1}(g\cdot \mu)=\mu$. Now, using (\ref{S7})	we see that
$(g|_\mu)^{-1}=(g|_{(g^{-1}h)\cdot\tau})^{-1}=(g|_{g^{-1}(h\cdot\tau)})^{-1}=g^{-1}|_{h\cdot \tau}.$ So $(g|_\mu)^{-1}(h|_{\tau})=g^{-1}|_{h\cdot \tau}(h|_{\tau})=(g^{-1}h)|_{\tau}$.	
Hence \eqref{eq:inerright}	and \eqref{eq:inerleft}	are equal and therefore $\pi_{p,q}$ preserves inner products. 

It then follows that $\pi_{p,q}$ is an isometry on $M_p\odot_{C^*(G)} M_q$, and hence it extends to an isomorphism $\pi_{p,q}$ of $M_p\otimes_{C^*(G)}M_q$ onto $M_{p+q}$ which satisfies \eqref{equ:isomor}.
\end{proof}
\begin{remark}\label{remark:identification}
Let $M_0$ be the bimodule generated by $\{\delta_{\c(g)}\otimes i_g:g\in G\}$ as in Lemma~\ref{lemma:product system iso} and consider the standard bimodule $_{C^*(G)} C^*(G)_{ C^*(G)}$. The canonical map $(\delta_{\c(g)},i_g)\mapsto i_g:X(\Lambda^0)\odot_{C^*(G)}C^*(G)\rightarrow C^*(G)$ preserves the actions and the inner products and hence extends to a (right Hilbert bimodules) isomorphism from $M_0$ onto $_{C^*(G)} C^*(G)_{ C^*(G)}$. Now for $(\delta_{\c(g)}\otimes i_g)\in M_0$ and $(\delta_\lambda\otimes i_h)\in M_p$ with $\c(h)=s(\lambda)$ and $p\neq 0$, we have 
	\begin{align*}
(\delta_\lambda\otimes i_h)(\delta_{\c(g)}\otimes i_g)
	=\delta_{\D(h),\c(g)}(\delta_{\lambda (h\cdot \D(h))}\otimes i_{h|_{\D(h)}g})&=(\delta_\lambda\otimes i_h)\cdot i_g.
\end{align*}
and
\begin{align*}(\delta_{\c(g)}\otimes i_g)(\delta_\lambda\otimes i_h)
		&=i_g\cdot(\delta_\lambda\otimes i_h),
\end{align*}
and the formula \eqref{equ:isomor} for $\pi_{0,p}:M_0\otimes M_p\to M_0$ and $\pi_{0,p}:M_p\otimes M_0\to M_0$ coincide with the left and right actions of $C^*(G)$ on $M_p$ respectively. Thus, to ease the computations, from now on, whenever we need to view $C^*(G)$ as the standard bimodule, we will identify $C^*(G)$ with $M_0$ and use the formula \eqref{equ:isomor} for the actions. 
\end{remark}

We need the following lemma to prove the compact alignment of our product system. For each $p\in\N^k$, and $\delta_{\lambda}\otimes i_{g},\delta_{\mu}\otimes i_{h}\in M_p$, we define $\Theta_{\lambda,g,\mu,h}$ to be the rank-one operator
\[
\Theta_{\lambda,g,\mu,h} := \Theta_{(\delta_{\lambda}\otimes i_{g}), (\delta_{\mu}\otimes i_{h})} \in \KK(M_p).
\]

\begin{lemma}\label{lemma:compactly-alig}
Let $p,q\in \N^k$, $\delta_{\lambda_1}\otimes i_{g_1},\delta_{\lambda_2}\otimes i_{g_2}\in M_p$ and $\delta_{\mu_1}\otimes i_{h_1},\delta_{\mu_2}\otimes i_{h_2}\in M_q$. Then the product
\begin{align}\label{eq:comp-alig}
\big(\Theta_{\lambda_1,g_1,\lambda_2,g_2}\big)_p^{p\vee q}\big(\Theta_{\mu_1,h_1,\mu_2,h_2}\big)_q^{p\vee q}
\end{align}
is equal to
\begin{align}\label{eq:comp-alig2}
\sum_{(\eta,\zeta)\in\Lambda^{\min}(\lambda_2, \mu_1)}\Theta_{\lambda_1((g_1g_2^{-1})\cdot\eta)\,,\, (g_1g_2^{-1})|_\eta\,,\,\mu_2((h_2h_1^{-1})\cdot\zeta)\,,\,(h_2h_1^{-1})|_\zeta}
\end{align}
whenever $\D(h_1)=\D(h_2)=s(\mu_1)$ and $\D(g_1)=\D(g_2)=s(\lambda_2)$, and is zero otherwise.
\end{lemma}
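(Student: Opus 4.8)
The plan is to verify the identity by evaluating both sides on the spanning elements $\delta_\nu\otimes i_f$ of $M_{p\vee q}$, where $\nu\in\Lambda^{p\vee q}$ and $f\in G$ with $s(\nu)=\c(f)$. Write $m:=p\vee q$; note that in $\N^k$ the join always exists. The first step is an auxiliary computation describing how a single lifted rank-one operator acts on such a generator. Fixing $\delta_a\otimes i_u,\delta_b\otimes i_w\in M_q$ and using the isomorphism $\pi_{q,m-q}$ of \eqref{equ:isomor} to write $\delta_\nu\otimes i_f=\pi_{q,m-q}\big((\delta_{\nu'}\otimes i_{r(\nu'')})\otimes(\delta_{\nu''}\otimes i_f)\big)$ for the factorisation $\nu=\nu'\nu''$ with $d(\nu')=q$, I would apply $\big(\Theta_{a,u,b,w}\big)_q^{m}=\pi_{q,m-q}\circ(\Theta_{a,u,b,w}\otimes 1)\circ\pi_{q,m-q}^{-1}$ and simplify the inner product using \eqref{eq: rightaction2} together with \eqref{U1}--\eqref{U3}. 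This yields the clean formula that $\big(\Theta_{a,u,b,w}\big)_q^{m}(\delta_\nu\otimes i_f)$ is nonzero precisely when $b$ is the degree-$q$ initial segment of $\nu$ (say $\nu=b\nu''$) and $\D(u)=\D(w)$, in which case it equals $\delta_{a((uw^{-1})\cdot\nu'')}\otimes i_{((uw^{-1})|_{\nu''})f}$: the operator matches $b$ at the front, replaces it by $a$, and transports the tail $\nu''$ by the groupoid element $uw^{-1}$.

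With this formula in hand I would compute the left-hand side of \eqref{eq:comp-alig} by applying it twice. Applying $\big(\Theta_{\mu_1,h_1,\mu_2,h_2}\big)_q^{m}$ first forces $\nu=\mu_2\beta$ and (for nonvanishing) $\D(h_1)=\D(h_2)$, producing the intermediate vector $\delta_{\mu_1(k\cdot\beta)}\otimes i_{(k|_\beta)f}$ with $k:=h_1h_2^{-1}$. Applying $\big(\Theta_{\lambda_1,g_1,\lambda_2,g_2}\big)_p^{m}$ to this then forces $\lambda_2$ to be the degree-$p$ initial segment of $\mu_1(k\cdot\beta)$ and $\D(g_1)=\D(g_2)$, yielding $\delta_{\lambda_1(j\cdot\gamma)}\otimes i_{(j|_\gamma)(k|_\beta)f}$ with $j:=g_1g_2^{-1}$, where $\mu_1(k\cdot\beta)=\lambda_2\gamma$. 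These composability requirements are exactly the conditions $\D(g_1)=\D(g_2)$ and $\D(h_1)=\D(h_2)$ in the statement, outside of which the relevant products $i_{g_1}i_{g_2^{-1}}$ or $i_{h_1}i_{h_2^{-1}}$ vanish by \eqref{U3}.

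The crux is the combinatorial identification. Since $\mu_1$ is literally the degree-$q$ front of the degree-$m$ path $\mu_1(k\cdot\beta)=\lambda_2\gamma$ while $\lambda_2$ is its degree-$p$ front, this path is a common extension of $\lambda_2$ and $\mu_1$ of the minimal degree $p\vee q=m$; that is, $(\gamma,k\cdot\beta)\in\Lambda^{\min}(\lambda_2,\mu_1)$. Conversely, each $(\eta,\zeta)\in\Lambda^{\min}(\lambda_2,\mu_1)$ arises from exactly one generator, obtained by setting $\zeta=k\cdot\beta$, i.e. $\beta=(h_2h_1^{-1})\cdot\zeta$ and $\nu=\mu_2\big((h_2h_1^{-1})\cdot\zeta\big)$, with $\gamma=\eta$; I would check that this $\nu$ and $\beta$ are well defined using the source/range bookkeeping of Lemma~\ref{lem:properties}. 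Thus the nonzero contributions of the left-hand side are parametrised by $\Lambda^{\min}(\lambda_2,\mu_1)$, and distinct pairs hit distinct generators, since $\zeta$, and hence $(\eta,\zeta)$, is recovered from $\nu$.

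Finally I would evaluate the right-hand side \eqref{eq:comp-alig2} on $\delta_\nu\otimes i_f$ directly: each summand $\Theta_{A,U,B,W}$ already lives in $\KK(M_m)$, so the same inner-product simplification gives $\Theta_{A,U,B,W}(\delta_\nu\otimes i_f)=[\nu=B]\,\delta_A\otimes i_{UW^{-1}f}$, which I would match termwise against the composition above. The only nontrivial rewriting is converting the restricted elements: using \eqref{S7} one gets $k|_\beta=(h_1h_2^{-1})|_{(h_2h_1^{-1})\cdot\zeta}=\big((h_2h_1^{-1})|_\zeta\big)^{-1}$ and likewise $j|_\gamma=(g_1g_2^{-1})|_\eta$, so that $(j|_\gamma)(k|_\beta)f=UW^{-1}f$ with $U=(g_1g_2^{-1})|_\eta$ and $W=(h_2h_1^{-1})|_\zeta$, while $\lambda_1(j\cdot\gamma)=A$ and $\mu_2\big((h_2h_1^{-1})\cdot\zeta\big)=B$. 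This gives equality on every generator and hence the operator identity. I expect the main obstacle to be precisely the bookkeeping in the crux step: correctly reconciling the two distinct front-factorisations of $M_m$ (as $M_p\otimes M_{m-p}$ and as $M_q\otimes M_{m-q}$) and confirming, via \eqref{S1}, \eqref{S6} and \eqref{S7}, that the groupoid labels produced on both sides coincide.
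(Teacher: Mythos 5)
Your proposal is correct and takes essentially the same approach as the paper: the paper likewise evaluates the composed lifted operators on spanning elements of $M_{p\vee q}$ (expanding $x\otimes a$ over the point masses $\delta_\nu$), obtains the same nonvanishing conditions $\nu=\mu_2\beta$, $\D(h_1)=\D(h_2)$, $\D(g_1)=\D(g_2)$, rewrites the restricted element via (\ref{S7}) as $(h_1h_2^{-1})|_{\beta}=\big((h_2h_1^{-1})|_{\zeta}\big)^{-1}$, and matches the surviving terms against the right-hand side through the same bijection $(\eta,\zeta)\mapsto \mu_2\big((h_2h_1^{-1})\cdot\zeta\big)$ between $\Lambda^{\min}(\lambda_2,\mu_1)$ and the admissible $\nu$. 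Your evaluation on elementary tensors $\delta_\nu\otimes i_f$ rather than on $x\otimes a$ is an immaterial difference.
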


\begin{proof}
 We follow the technique of \cite[Theorem~5.4]{rs}. We start by evaluating the product in \eqref{eq:comp-alig} on an element $m=x\otimes a\in M_{p\vee q}$, where $x\in C_c(\Lambda^p)$. Then $x=\sum_{\nu\in \Lambda^{p\vee q}} x(\nu)\delta_\nu$, and we have 
\begin{align}\label{eq:comp-alig-left1}
\big(\Theta_{\lambda_1,g_1,\lambda_2,g_2}\big)_p^{p\vee q}\notag&\big(\Theta_{\mu_1,h_1,\mu_2,h_2}\big)_q^{p\vee q}(x\otimes a)\\
&=\sum_{\nu\in \Lambda^{p\vee q}}x(\nu)\big(\Theta_{\lambda_1,g_1,\lambda_2,g_2}\big)_p^{p\vee q}\big(\Theta_{\mu_1,h_1,\mu_2,h_2}\big)_q^{p\vee q}(\delta_\nu\otimes a).
\end{align}
 Fix $\nu\in \Lambda^{p\vee q}$, and factor $\nu$ as $\nu_1\nu_2$ with $\nu_1\in \Lambda^q$. Then $\delta_\nu\otimes a= (\delta_{\nu_1}\otimes i_{r(\nu_2)})(\delta_{\nu_2}\otimes a)$, and we have
\begin{align}\label{eq:comp-alig-left-inner}
\notag\big(\Theta_{\mu_1,h_1, \mu_2,h_2}\big)_q^{p\vee q}(\delta_{\nu}\otimes a) &=
\Theta_{\mu_1,h_1,\mu_2,h_2}(\delta_{\nu_1}\otimes i_{r(\nu_2)})(\delta_{\nu_2}\otimes a)\\
\notag&=\Big((\delta_{\mu_1}\otimes i_{h_1})\cdot \big\langle\delta_{\mu_2}\otimes i_{h_2}, \delta_{\nu_1}\otimes i_{r(\nu_2)}\big\rangle\Big)(\delta_{\nu_2}\otimes a)\\
\notag&=\begin{cases}
	\big(\delta_{\mu_1}\otimes i_{h_1h_2^{-1}\id _{r(\nu_2)}}\big)(\delta_{\nu_2}\otimes a)&\text{if $\nu_1=\mu_2$, $\D(h_1)=\D(h_2)$}\\
	0\notag&\text{otherwise}\\
	\end{cases}\\
	&=\begin{cases}
	\delta_{\mu_1(h_1h_2^{-1}\cdot \nu_2)}\otimes i_{(h_1h_2^{-1})|_{\nu_2}}a&\text{if $\nu_1=\mu_2, $ $\D(h_1)=\D(h_2)$}\\
	0&\text{otherwise.}\\
	\end{cases}
\end{align}
Now suppose that $\nu_1=\mu_2, $ $\D(h_1)=\D(h_2)$ and let $\beta_\nu:=\mu_1(h_1h_2^{-1}\cdot \nu_2)$ . Applying the computation of \eqref{eq:comp-alig-left-inner} shows that
 \begin{align*}
&\big(\Theta_{\lambda_1,g_1,\lambda_2,g_2}\big)_p^{p\vee q}(\delta_{\beta_\nu}\otimes i_{(h_1h_2^{-1})|_{\nu_2}}a)\\
	\notag&=\begin{cases}
	\delta_{\lambda_1((g_1g_2^{-1})\cdot \beta_\nu(p,p\vee q))}\otimes i_{(g_1 g_2^{-1})|_{ \beta_\nu(p,p\vee q)}(h_1h_2^{-1})|_{\nu_2}}a&\text{if $\beta_\nu(0,p)=\lambda_2$, $ \D(g_1)=\D(g_2)$}\\
	0&\text{otherwise.}\\
	\end{cases}\\
	\end{align*}
Since $\nu_2=(h_2 h_1^{-1})\cdot\beta_\nu(q,p\vee q)$, identity (\ref{S7}) from Lemma~\ref{lem:properties} implies that $(h_1h_2^{-1})|_{\nu_2}=(h_2h_1^{-1})|_{\beta_\nu(q,p\vee q)}^{-1}$, and hence we have
 \begin{align}\label{equ-fin}
&\big(\Theta_{\lambda_1,g_1,\lambda_2,g_2}\big)_p^{p\vee q}(\delta_{\beta_\nu}\otimes i_{(h_1h_2^{-1})|_{\nu_2}}a)\\
	\notag&=\begin{cases}
	\delta_{\lambda_1((g_1g_2^{-1})\cdot \beta_\nu(p,p\vee q))}\otimes i_{(g_1 g_2^{-1})|_{ \beta_\nu(p,p\vee q)}(h_2h_1^{-1})|_{\beta_\nu(q,p\vee q)}^{-1}}a&\!\!\text{if $\beta_\nu(0,p)=\lambda_2$, $ \D(g_1)\!=\!\D(g_2)$}\\
	0&\!\!\text{otherwise.}\\
	\end{cases}
\end{align}
The above computations show that the product in \eqref{eq:comp-alig} is zero unless $\D(h_1)=\D(h_2)=s(\mu_1)$ and $\D(g_1)=\D(g_2)=s(\lambda_2)$, which is the second assertion of the lemma. 

Assume $\D(h_1)=\D(h_2)=s(\mu_1)$ and $\D(g_1)=\D(g_2)=s(\lambda_2)$. To see that the expressions in \eqref{eq:comp-alig} and \eqref{eq:comp-alig2} agree, we first we first observe that the map $\nu\mapsto \big(\beta_\nu(p,p\vee q),\beta_\nu(q,p\vee q)\big)$ is an isomorphism from $\mu_2\Lambda^{(p\vee q)-q}$ onto $\Lambda^{\min}(\lambda_2,\mu_1)$, with inverse $(\eta,\zeta)\mapsto \mu_2(h_2 h_1^{-1})\cdot\zeta$. Now, it follows from \eqref{eq:comp-alig-left1} and \eqref{equ-fin} that
\begin{align}\label{eq:comp-alig-left}
\big(\Theta_{\lambda_1,g_1,\lambda_2,g_2}\big)_p^{p\vee q}\notag&\big(\Theta_{\mu_1,h_1,\mu_2,h_2}\big)_q^{p\vee q}(x\otimes a)\\
&=\sum_{(\eta,\zeta)\in \Lambda^{\min}(\lambda_2,\mu_1)}x\big(\mu_2(h_2 h_1^{-1})\cdot\zeta\big)\delta_{\lambda_1((g_1g_2^{-1})\cdot \eta)}\otimes i_{(g_1 g_2^{-1})|_{ \eta}(h_2h_1^{-1})|_\zeta^{-1}}a.
\end{align}
For \eqref{eq:comp-alig2}, using $x=\sum_{\nu\in \Lambda^{p\vee q}} x(\nu)\delta_\nu$, we have 
\begin{align}\label{kholaseh}
\notag&\sum_{(\eta,\zeta)\in\Lambda^{\min}(\lambda_2, \mu_1)}\Theta_{\lambda_1(g_1g_2^{-1})\cdot\eta\,,\, (g_1g_2^{-1})|_\eta\,,\,\mu_2(h_2h_1^{-1})\cdot\zeta\,,\,(h_2h_1^{-1})|_\zeta^{-1}}(x\otimes a)\\
\notag&=\sum_{(\eta,\zeta)\in\Lambda^{\min}(\lambda_2, \mu_1)}\sum_{\nu\in \Lambda^{p\vee q}}x(\nu)\Theta_{\lambda_1(g_1g_2^{-1})\cdot\eta\,,\, (g_1g_2^{-1})|_\eta\,,\,\mu_2(h_2h_1^{-1})\cdot\zeta\,,\,(h_2h_1^{-1})|_\zeta^{-1}}
(\delta_\nu\otimes a)\\
&=\sum_{(\eta,\zeta)\in\Lambda^{\min}(\lambda_2, \mu_1)}\sum_{\nu\in \Lambda^{p\vee q}}x(\nu)\Big(\big(\delta_{\lambda_1(g_1g_2^{-1})\cdot\eta}\otimes i_{ (g_1g_2^{-1})|_\eta}\big)\cdot \big \langle\delta_{\mu_2(h_2h_1^{-1})\cdot\zeta}\otimes i_{(h_2h_1^{-1})|_\zeta^{-1}}, \delta_\nu\otimes a\big\rangle \Big)
\end{align} 
Evaluating the inner product and applying the right action shows that this expression is the right-hand side of \eqref{eq:comp-alig-left}, and we see that the expressions in \eqref{eq:comp-alig} and \eqref{eq:comp-alig2} agree.
\end{proof}

\begin{proof}[Proof of Proposition~\ref{prop:product system}]
The arguments in the proof of Lemma~\ref{lemma:product system iso} show that there is a well-defined multiplication on $M:=\bigsqcup_{p\in \N^k}M_p$ characterised by
\begin{align}	\label{multiplicationformula2}
	(\delta_\lambda\otimes i_g)(\delta_\mu\otimes i_h)=\begin{cases}
\delta_{\lambda (g\cdot \mu)}\otimes i_{(g|_\mu)h}&\text{if $r(\mu)=\D(g)$}\\
	0&\text{otherwise,}\\
	\end{cases}
	\end{align}	
for nonzero $p,q\in \N^k$, $\delta_\lambda\otimes i_g\in M_p$, $\delta_\mu\otimes i_h\in M_q$. Straightforward computations using the properties in Lemma~\ref{lem:properties} show that this multiplication is associative.  Condition (\ref{P1}) holds by construction. Conditions (\ref{P2}) and (\ref{P3}) follow from Lemma~\ref{lemma:product system iso} and Remark~\ref{remark:identification}, respectively. Hence $M$ a product system. 

To see that each each bimodule $M_p$ is essential, it suffices to show that the identity element $\sum_{v\in \Lambda^0}i_v$ in $G$ acts trivially on each elementary tensor $\delta_\lambda\otimes i_g$. This follows from 
\[
\big(\sum_{v\in \Lambda^0}i_v\big)\cdot (\delta_\lambda \otimes i_g)=\delta_{r(\lambda)\cdot \lambda}\otimes i_{r(\lambda)|_\lambda} i_g=\delta_\lambda \otimes i_g.
\]
A straightforward computation using the reconstruction formula~\eqref{recons-for} shows that for each $a\in C^*(G)$ and $p\in \N^k$, we have 
\begin{equation}\label{cuntz-proof}
a=\sum_{\lambda\in \Lambda^p}\Theta_{(\delta_\lambda\otimes 1), a^*(\delta_\lambda\otimes 1)}.
\end{equation} 
Hence $M_0$ acts by compact operators on each fiber $M_p$. Finally, Lemma~\ref{lemma:compactly-alig} implies that $M$ is compactly aligned.
\end{proof}

\section{$C^*$-algebras of self-similar groupoid actions on $k$-graphs}\label{universal-algebras}

Let $(G,\Lambda)$ be a self-similar groupoid action on a $k$-graph, and $M(G,\Lambda)$ be the product system of Proposition~\ref{prop:product system}. Fowler's theory of product systems \cite{F} gives us two $C^*$- algebras: the Nica--Toeplitz algebra $\TT(G,\Lambda):=\NT\big(M(G,\Lambda)\big)$, and the Cuntz--Pimsner algebra $\OO(G,\Lambda):=\OO\big(M(G,\Lambda)\big)$. In this section we show that these universal $C^*$-algebras are generated by families of unitaries and partial isometries satisfying relations that satisfy the self-similarity equation \eqref{selfsimilar groupoid defn}.

\begin{prop}\label{prop:universal}
Let $\Lambda$ be a finite $k$-graph, and $(G,\Lambda)$ be a self-similar groupoid action. 
Define $u:G\rightarrow \TT(G,\Lambda)$ and $t:\Lambda\rightarrow \TT(G,\Lambda)$ by 
\[
u_g:=\psi_0(i_g), \quad t_v:=\psi_0(i_v), \quad \text{and} \quad t_\lambda:=\psi_{p}(\delta_\lambda\otimes i_{s(\lambda)}),
\] 
for all $g\in G$, $v\in \Lambda^0$, and $\lambda\in \Lambda^p$ with $p\not=0$. Then the $C^*$-algebra $\TT(G,\Lambda)$ is generated by $u(G)\cup t(\Lambda)$, and we have
	\begin{enumerate}[label={(\arabic*)},ref={\arabic*}]
\item \label{us1}$u$ is a unitary representation of $G$ with $u_v=t_v$ for $v\in \Lambda^0$;
\item \label{us2}the set $\{t_\lambda:\lambda\in \Lambda\}$ is a Toeplitz--Cuntz--Krieger $\Lambda$-family in $\TT(G,\Lambda)$, with $\sum_{v\in \Lambda^0}t_v$ the identity of $\TT(G,\Lambda)$; and
\item \label{us3}for $g\in G$ and $\lambda\in \Lambda$, we have
\begin{align}\label{UT}
u_gt_\lambda=\begin{cases}
t_{g\cdot \lambda}u_{g|_\lambda}&\text{if $\D(g)=r(\lambda)$}\\
0&\text{otherwise.}
\end{cases}
\end{align}
\end{enumerate}
\end{prop}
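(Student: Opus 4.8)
The plan is to deduce all of the stated relations directly from the fact that $\psi$ is a Nica covariant Toeplitz representation, using the structural properties of the product system from Proposition~\ref{prop:product system} together with Lemma~\ref{lem:properties}. Most of the work is bookkeeping with the module actions; the one genuinely delicate point is the Cuntz--Krieger-type relation \ref{TCK3}.

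First I would dispatch the easy parts. Part~\ref{us1} is immediate: since $\psi_0$ is a $*$-homomorphism by \ref{T1} and the point masses $\{i_g\}$ satisfy \ref{U1}--\ref{U3} in $C^*(G)$, the images $u_g=\psi_0(i_g)$ inherit \ref{U1}--\ref{U3}, and $u_v=\psi_0(i_v)=t_v$ by definition. For \ref{us3}, I would compute $u_gt_\lambda=\psi_0(i_g)\psi_p(\delta_\lambda\otimes i_{s(\lambda)})$ and use \ref{T3} together with the identification of $M_0$-multiplication with the left action (Remark~\ref{remark:identification}): this rewrites the product as $\psi_p\big(i_g\cdot(\delta_\lambda\otimes i_{s(\lambda)})\big)$, which by \eqref{eq: rightaction1} and \ref{S3} equals $\psi_p(\delta_{g\cdot\lambda}\otimes i_{g|_\lambda})$ when $\D(g)=r(\lambda)$ and vanishes otherwise; a symmetric computation of $t_{g\cdot\lambda}u_{g|_\lambda}$ via the right action \eqref{eq: rightaction} and \ref{S3} shows the two agree. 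The same manipulation identifies each spanning vector $\psi_p(\delta_\lambda\otimes i_g)$ with $t_\lambda u_g$, which yields the generation statement since $\psi(M)$ generates $\TT(G,\Lambda)$.

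For Part~\ref{us2}, relations \ref{TCK1} and \ref{TCK2} follow the same pattern: \ref{TCK1} is the projection statement already contained in \ref{us1}, and \ref{TCK2} comes from \ref{T3} and the multiplication formula \eqref{multiplicationformula1}, using \ref{S5} to see that $\id_{s(\lambda)}|_\mu=\id_{s(\mu)}$. The identity $\sum_{v}t_v=\psi_0(\sum_v i_v)$ is the unit because each fiber is essential, so $\psi_0(1)$ fixes every generator on both sides by \ref{T3} and essentiality. The main obstacle is \ref{TCK3}, which I would establish through Nica covariance in tandem with Lemma~\ref{lemma:compactly-alig}. The idea is to first compute the product of the range projections $(t_\lambda t_\lambda^*)(t_\mu t_\mu^*)$: writing $t_\lambda t_\lambda^*=\psi^{(p)}(\Theta_{\lambda,s(\lambda),\lambda,s(\lambda)})$ and similarly for $\mu$, Nica covariance turns this into $\psi^{(p\vee q)}$ applied to the product \eqref{eq:comp-alig}, and Lemma~\ref{lemma:compactly-alig} evaluates that product. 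Since all the groupoid elements appearing are units, the inverses, restrictions, and actions collapse by \ref{S5}, leaving $\sum_{(\eta,\zeta)\in\Lambda^{\min}(\lambda,\mu)}t_{\lambda\eta}t_{\mu\zeta}^*$.

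It then remains to pass from the range projections to $t_\lambda^*t_\mu$ itself. Here I would first record that \ref{T2} and \eqref{eq: rightaction2} give $t_\lambda^*t_\lambda=u_{s(\lambda)}$, so each $t_\lambda$ is a partial isometry whose initial projection is its source vertex; consequently $t_\lambda^*(t_\lambda t_\lambda^*)=t_\lambda^*$ and $(t_\mu t_\mu^*)t_\mu=t_\mu$. Multiplying the displayed identity on the left by $t_\lambda^*$ and on the right by $t_\mu$, and using $t_{\lambda\eta}=t_\lambda t_\eta$ and $t_{\mu\zeta}=t_\mu t_\zeta$ from \ref{TCK2} (together with $u_{s(\lambda)}t_\eta=t_\eta$, $t_\zeta^* u_{s(\mu)}=t_\zeta^*$), collapses the left-hand side to $t_\lambda^*t_\mu$ and the right-hand side to $\sum_{(\eta,\zeta)\in\Lambda^{\min}(\lambda,\mu)}t_\eta t_\zeta^*$, which is exactly \ref{TCK3}. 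Finally I would treat the degenerate cases in which $\lambda$ or $\mu$ is a vertex separately; these reduce to the left/right action formulas and are routine. I expect the computation underlying \ref{TCK3}, in particular verifying that the unit groupoid elements force Lemma~\ref{lemma:compactly-alig} to collapse to the clean sum over $\Lambda^{\min}(\lambda,\mu)$, to be the only step requiring real care.
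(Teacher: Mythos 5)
Your proposal is correct and follows essentially the same route as the paper's proof: parts (1) and (3) via the homomorphism property of $\psi_0$, \ref{T3}, and the left/right action formulas; \ref{TCK1}--\ref{TCK2} and the unit via \ref{T2}, the multiplication formula, and essentiality; and \ref{TCK3} via the partial-isometry identity $t_\lambda^*t_\mu=t_\lambda^*(t_\lambda t_\lambda^*)(t_\mu t_\mu^*)t_\mu$ combined with Nica covariance and Lemma~\ref{lemma:compactly-alig} specialised to unit groupoid elements. The only difference is organisational (you evaluate the product of range projections first and then sandwich, whereas the paper carries the whole expression inside the $\psi$ maps), which does not change the substance of the argument.
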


Before starting the proof, we note that taking adjoints in \eqref{UT}, and then applying (\ref{S7}) give us the following useful formula:
\begin{align}\label{UT*}
t_\lambda^*u_g=\begin{cases}
u_{g|_{g^{-1}\cdot \lambda}}t_{g^{-1}\cdot \lambda}^*&\text{if $\c(g)=r(\lambda)$}\\
0&\text{otherwise.}
\end{cases}
\end{align}

\begin{proof}[Proof of Proposition~\ref{prop:universal}]
 We know that $\TT(G,\Lambda)$ is generated by $\{u_g\}\cup\{t_\lambda\}$ because $\psi(\delta_\lambda\otimes i_g)=t_\lambda u_g$, and the span of elementary tensors $\delta_\lambda\otimes i_g$ are dense in each $M_p$. Part~\eqref{us1} follows from the fact that $i:G\to C^*(G)$ is a unitary representation and $\psi_0$ is a homomorphism. We next verify part \eqref{us3}. Note that
\begin{align*}
u_g t_\lambda&= \psi_0(i_g)\psi_{d(\lambda)}(\delta_\lambda\otimes i_{s(\lambda)})=\psi_{d(\lambda)}\big(i_g\cdot (\delta_\lambda\otimes i_{s(\lambda)})\big)
=\delta_{\D(g),r(\lambda)}\psi_{d(\lambda)}(\delta_{g\cdot \lambda}\otimes i_{g|_\lambda} i_{s(\lambda)}).
\end{align*}
Now, since $ i_{g|_\lambda} i_{s(\lambda)}= i_{g|_\lambda}=
 i_{\c(g|_\lambda)} i_{g|_\lambda}= i_{s(g\cdot\lambda)} i_{g|_\lambda}$, we have
\begin{align*}
u_g t_\lambda&=
\delta_{\D(g),r(\lambda)}\psi_{d(\lambda)}(\delta_{g\cdot \lambda}\otimes i_{s(g\cdot\lambda)} i_{g|_\lambda})= \delta_{\D(g),r(\lambda)}t_{g \cdot \lambda}u_{g|_\lambda}.
\end{align*}

For part~\eqref{us2}, first observe that the point masses $\{\delta_v:v\in \Lambda^0\}$ are mutually orthogonal projections in $C(\Lambda^0)$, and hence the $\{t_v:v\in\Lambda^0\}$ are mutually orthogonal projections in $\TT(G,\Lambda)$. Therefore $\sum_{v\in \Lambda^0}t_v$ is a projection. Since each fibre $M_p$ is essential, the homomorphism $\psi_0:C^*(G)\to \TT(G,\Lambda)$ is nondegenerate, and we have
$1=\psi_0(1)=\psi_0\big(\sum_{v\in \Lambda^0}i_v\big)=\sum_{v\in \Lambda^0} t_v$.

It remains to show that $\{t_\lambda:\lambda\in \Lambda\}$ satisfies (\ref{TCK1})--(\ref{TCK3}). To prove this, we identify each $i_g$ with $\delta_{\c(g)}\otimes i_g$ and use the multiplication formula \eqref{equ:isomor} for the actions (this is possible by Remark~\ref{remark:identification}). To see that (\ref{TCK1}) holds, take $\lambda,\mu \in \Lambda^p$. Applying (\ref{T2}) gives
\begin{align*}
t_\lambda^*t_\mu=\psi_p(\delta_\lambda\otimes i_{s(\lambda)})^*\psi_p(\delta_\mu\otimes i_{s(\mu)})=\psi_0\big(\langle \delta_\lambda\otimes i_{s(\lambda)},\delta_\mu\otimes i_{s(\mu)}\rangle\big)=\delta_{\lambda,\mu}\psi_0(i_{s(\lambda)})=\delta_{\lambda,\mu}t_{s(\lambda)}.
\end{align*}
Therefore $t_\nu^*t_\nu=t_{s(\nu)}$, and we see that each $t_\nu$ is a partial isometry. Condition (\ref{TCK1}) also follows. 

To see that (\ref{TCK2}) holds, take $\lambda\in \Lambda^p$ and $\mu\in \Lambda^q$ with $s(\lambda)=r(\mu)$. We have
\begin{align*}
t_\lambda t_\mu&=\psi_p(\delta_\lambda\otimes i_{s(\lambda)})\psi_p(\delta_\mu\otimes i_{s(\mu)})=\psi_{p+q}\big( (\delta_\lambda\otimes i_{s(\lambda)})(\delta_\mu\otimes i_{s(\mu)})\big)\\
&=\psi_{p+q}\big( \delta_{\lambda(s(\lambda)\cdot \mu)}\otimes i_{s(\lambda)|_\mu }i_{s(\mu)}\big)=\psi_{p+q}\big( \delta_{\lambda \mu}\otimes i_{s(\mu)}\big)=t_{\lambda\mu}.
\end{align*}

For (\ref{TCK3}), we use the Nica covariance of $\psi$ to get 
\begin{align*}
t_\lambda^*t_\mu&=t_\lambda^*t_\lambda t_\lambda^* t_\mu t_\mu^* t_\mu\\
&=\psi_{p}(\delta_\lambda\otimes i_{s(\lambda)})^*\psi_{p}(\delta_\lambda\otimes i_{s(\lambda)})\psi_{p}(\delta_\lambda\otimes i_{s(\lambda)})^*\psi_{q}(\delta_\mu\otimes i_{s(\mu)})
\psi_{q}(\delta_\mu\otimes i_{s(\mu)})^*\psi_{q}(\delta_\mu\otimes i_{s(\mu)})\\
&=\psi_{p}(\delta_\lambda\otimes i_{s(\lambda)})^*\psi^{(p)}\big(\Theta_{(\delta_\lambda\otimes i_{s(\lambda)}),(\delta_\lambda\otimes i_{s(\lambda)})}\big)\psi^{(q)}\big(\Theta_{(\delta_\lambda\otimes i_{s(\lambda)}),(\delta_\mu\otimes i_{s(\mu)})}\big)
\psi_{q}(\delta_\mu\otimes i_{s(\mu)})\\
&=\psi_{p}(\delta_\lambda\otimes i_{s(\lambda)})^*\psi^{(p\vee q)}\Big(\big(\Theta_{(\delta_\lambda\otimes i_{s(\lambda)}),(\delta_\lambda\otimes i_{s(\lambda)})}\big)_p^{p\vee q}\big(\Theta_{(\delta_\mu\otimes i_{s(\mu)}),(\delta_\mu\otimes i_{s(\mu)})}\big)_q^{p\vee q}\Big)
\psi_{q}(\delta_\mu\otimes i_{s(\mu)}).
 \end{align*}
 Now, Lemma~\ref{lemma:compactly-alig} implies that 
\begin{align*}
\big(\Theta_{(\delta_\lambda\otimes i_{s(\lambda)}),(\delta_\lambda\otimes i_{s(\lambda)})}\big)_p^{p\vee q}\big(\Theta_{(\delta_\mu\otimes i_{s(\mu)}),(\delta_\mu\otimes i_{s(\mu)})}\big)_q^{p\vee q}
&=\sum_{(\eta,\zeta)\in \Lambda^{\min}(\lambda,\mu)}\Theta_{(\delta_{\lambda\eta}\otimes i_{s(\eta)}),(\delta_{\mu\zeta}\otimes i_{s(\zeta)})}.
\end{align*}
 Therefore 
\begin{align*}
t_\lambda^*t_\mu=&\sum_{(\eta,\zeta)\in \Lambda^{\min}(\lambda,\mu)}\psi_{p}(\delta_\lambda\otimes i_{s(\lambda)})^*\psi_{p\vee q}(\delta_{\lambda\eta}\otimes i_{s(\eta)})\psi_{p\vee q}(\delta_{\mu\zeta}\otimes i_{s(\zeta)})^*
\psi_{q}(\delta_\mu\otimes i_{s(\mu)}).
 \end{align*} 
Since $\psi_{p\vee q}(\delta_{\lambda\eta}\otimes i_{s(\eta)})=\psi_{p}(\delta_{\lambda}\otimes i_{s(\lambda)})\psi_{(p\vee q)-p}(\delta_\eta \otimes i_{s(\eta)})$ and $\psi_{p\vee q}(\delta_{\mu\zeta}\otimes i_{s(\zeta)})^*=\psi_{(p\vee q)-q}(\delta_\zeta\otimes i_{s(\zeta)})^*\psi_q(\delta_\mu\otimes i_{s(\mu)})^*$, we have
\begin{align*}
t_\lambda^*t_\mu
&=\sum_{(\eta,\zeta)\in \Lambda^{\min}(\lambda,\mu)}\psi_{0}( i_{s(\lambda)})\psi_{(p\vee q)-p}(\delta_{\eta}\otimes i_{s(\eta)})\psi_{(p\vee q)-q}(\delta_{\zeta}\otimes i_{s(\zeta)})^*
\psi_{0}(i_{s(\mu)})\\
&=\sum_{(\eta,\zeta)\in \Lambda^{\min}(\lambda,\mu)}u_{s(\lambda)}t_\eta t_{\zeta}^*u_{s(\mu)}^*\\
&=\sum_{(\eta,\zeta)\in \Lambda^{\min}(\lambda,\mu)}t_\eta t_{\zeta}^*.
 \end{align*}
Hence (\ref{TCK3}) holds.
\end{proof}

\begin{prop}\label{prop:uniforT}
Let $\Lambda$ be a finite $k$-graph, and $(G,\Lambda)$ be a self-similar groupoid action. Then $\big(\TT(G,\Lambda), (u,t)\big)$ is universal for families $\{U_g:g\in G\}\cup\{T_\lambda:\lambda\in\Lambda\}$ satisfying the relations \textnormal{(\ref{us1})}--\textnormal{(\ref{us3})} of Proposition~\ref{prop:universal}.
\end{prop}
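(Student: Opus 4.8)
The plan is to deduce the universal property from that of the Nica--Toeplitz algebra $\NT(M) = \TT(G,\Lambda)$. Since Proposition~\ref{prop:universal} shows that $u(G)\cup t(\Lambda)$ generates $\TT(G,\Lambda)$, any homomorphism agreeing with $U$ and $T$ on the generators is unique, so only existence needs proof. Thus, given a $C^*$-algebra $B$ and families $\{U_g\}\cup\{T_\lambda\}$ satisfying \textnormal{(1)}--\textnormal{(3)}, I must produce $\pi\colon \TT(G,\Lambda)\to B$ with $\pi(u_g)=U_g$ and $\pi(t_\lambda)=T_\lambda$. I will do this by assembling a Nica covariant Toeplitz representation $\Psi=\bigsqcup_p\Psi_p$ of $M$ in $B$ and then invoking the universal property of $\NT(M)$: writing $\pi$ for the resulting homomorphism with $\pi\circ\psi=\Psi$, the formulas $u_g=\psi_0(i_g)$ and $t_\lambda=\psi_{d(\lambda)}(\delta_\lambda\otimes i_{s(\lambda)})$ give $\pi(u_g)=U_g$ and, using $T_\lambda T_{s(\lambda)}=T_\lambda$ from (TCK2), $\pi(t_\lambda)=T_\lambda U_{s(\lambda)}=T_\lambda$.

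To build $\Psi$, note first that by relation \textnormal{(1)} the family $\{U_g\}$ is a unitary representation of $G$, so the universal property of $(C^*(G),i)$ recalled in the preliminaries yields a homomorphism $\Psi_0\colon C^*(G)=M_0\to B$ with $\Psi_0(i_g)=U_g$. For nonzero $p$, rather than define $\Psi_p$ on elementary tensors and check compatibility with the balancing relation, I exploit the reconstruction formula \eqref{recons-for} for the Parseval frame $\{\delta_\lambda\otimes 1:\lambda\in\Lambda^p\}$ and set $\Psi_p(m):=\sum_{\lambda\in\Lambda^p}T_\lambda\,\Psi_0(\langle\delta_\lambda\otimes 1,m\rangle)$. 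This is a finite sum, manifestly linear and well defined; evaluating the inner product on a spanning element gives $\langle\delta_\lambda\otimes 1,\delta_\mu\otimes i_g\rangle=\delta_{\lambda,\mu}\,i_g$, so $\Psi_p(\delta_\mu\otimes i_g)=T_\mu U_g$, the expected value.

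Next I verify that $\Psi$ is a Toeplitz representation. Condition (T1) holds by construction. For (T2) I compute $\Psi_p(\delta_\lambda\otimes i_g)^*\Psi_p(\delta_\mu\otimes i_h)=U_g^*T_\lambda^*T_\mu U_h$, and since (TCK3) collapses for equal degrees to $T_\lambda^*T_\mu=\delta_{\lambda,\mu}T_{s(\lambda)}$, relations $T_v=U_v$ from \textnormal{(1)} together with (U2)--(U3) identify this with $\delta_{\lambda,\mu}\delta_{\c(g),\c(h)}U_{g^{-1}h}=\Psi_0(\langle\delta_\lambda\otimes i_g,\delta_\mu\otimes i_h\rangle)$. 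For (T3) I expand $\Psi_p(\delta_\lambda\otimes i_g)\Psi_q(\delta_\mu\otimes i_h)=T_\lambda U_g T_\mu U_h$, push $U_g$ past $T_\mu$ via relation \textnormal{(3)} $U_gT_\mu=\delta_{\D(g),r(\mu)}T_{g\cdot\mu}U_{g|_\mu}$, and fold up using (TCK2) and (U3) to match $\Psi_{p+q}$ of the product $\delta_{\lambda(g\cdot\mu)}\otimes i_{(g|_\mu)h}$ given by \eqref{multiplicationformula1}; the degenerate cases $p=0$ or $q=0$ follow from $\Psi_0$ being a homomorphism and Remark~\ref{remark:identification}. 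Conditions (T1)--(T2) then supply a homomorphism $\Psi^{(p)}$ on $\KK(M_p)$ with $\Psi^{(p)}(\Theta_{x,y})=\Psi_p(x)\Psi_p(y)^*$.

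The main obstacle is Nica covariance. Because $(\Z^k,\N^k)$ is a lattice, $p\vee q$ is always finite, so I only need $\Psi^{(p)}(S)\Psi^{(q)}(T)=\Psi^{(p\vee q)}(S_p^{p\vee q}T_q^{p\vee q})$, and as each $\Psi^{(p)}$ is continuous and $\KK(M_p)$ is the closed span of the operators $\Theta_{\lambda,g,\mu,h}$, it suffices to check this on such rank-one generators. There the left-hand side is $T_{\lambda_1}U_{g_1}U_{g_2}^*T_{\lambda_2}^*T_{\mu_1}U_{h_1}U_{h_2}^*T_{\mu_2}^*$, and I must show it equals the image under $\Psi^{(p\vee q)}$ of the sum over $\Lambda^{\min}(\lambda_2,\mu_1)$ computed in Lemma~\ref{lemma:compactly-alig}. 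This is exactly the bookkeeping performed to establish (TCK3) in Proposition~\ref{prop:universal}, now carried out in the abstract algebra $B$: it rewrites the product as a sum over minimal common extensions using only relations \textnormal{(1)}--\textnormal{(3)}, the identities of Lemma~\ref{lem:properties}, and the explicit expression \eqref{eq:comp-alig2}. Once Nica covariance is confirmed, $\Psi$ is a Nica covariant Toeplitz representation of $M$, and the universal property of $\NT(M)=\TT(G,\Lambda)$ delivers the required homomorphism $\pi$.
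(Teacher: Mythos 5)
Your proposal is correct and follows essentially the same route as the paper's own proof: the paper likewise obtains $\rho_0$ from the universal property of $C^*(G)$, defines $\rho_p(m)=\sum_{\lambda\in\Lambda^p}T_\lambda\,\rho_0(\langle\delta_\lambda\otimes 1,m\rangle)$ via the Parseval frame, verifies (T1)--(T3) and then Nica covariance on rank-one operators by comparing against Lemma~\ref{lemma:compactly-alig}, and finally invokes the universal property of $\NT(M)$ and checks $\rho_*(u_g)=U_g$, $\rho_*(t_\lambda)=T_\lambda U_{s(\lambda)}=T_\lambda$. The one cosmetic slip is attributing the Nica-covariance bookkeeping to the proof of (TCK3) in Proposition~\ref{prop:universal} (which runs in the opposite direction, deducing (TCK3) from Nica covariance of $\psi$); the computation actually needed is the one the paper carries out inside this universality proof, expanding $T_{\lambda_2}^*T_{\mu_1}$ with the assumed relation (TCK3) and pushing unitaries through with \eqref{UT}, though the manipulations are indeed of the same kind.
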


\begin{proof}
Suppose $B$ is a $C^*$-algebra, and $U:G\rightarrow B$ and $T:\Lambda\rightarrow B $ satisfy the relations \eqref{us1}--\eqref{us3} of Proposition~\ref{prop:universal}. Then the universal property of $C^*(G)$ induces a homomorphism $\rho_0:C^*(G)\to B$ that satisfies $\rho_0(i_g)=U_g$.
For each nonzero $p\in \N^k,$ define $\rho_p:M_p\to B$ by
\begin{align}\label{omega0}
\rho_p(m)=\sum_{\lambda\in \Lambda^p}T_\lambda\rho_0(\langle \delta_\lambda\otimes1,m\rangle).
\end{align}
We claim that $\rho$ is a Nica covariant Toeplitz representation of $M(G,\Lambda)$. Condition (\ref{T1}) is clear, and (\ref{T2}) follows from the arguments in the proof of \cite[Proposition~4.4]{LRRW}. We can also use the proof of \cite[Proposition~4.4]{LRRW} to verify (\ref{T3}) in the case where the multiplication comes from the left or right action. To complete the proof of (\ref{T3}), let $p,q\in \N^k$ be nonzero, and take $m=\delta_\lambda\otimes i_{g}\in M_p$ and $n=\delta_\mu\otimes i_{h}\in M_q$. Then we have
\begin{align*}
\rho_p(m)\rho_q(n)&=\sum_{\xi\in \Lambda^p}T_\xi \rho_0(\langle \delta_\xi\otimes 1,\delta_\lambda\otimes i_{g}\rangle)\sum_{\eta\in \Lambda^q}T_\eta\rho_0(\langle \delta_\eta\otimes 1,\delta_\mu\otimes i_{h}\rangle)\\
&=T_\lambda U_{g}T_\mu U_{h}\\
&=\delta_{r(\mu),\D(g)}T_\lambda T_{g\cdot \mu}U_{g|_\mu}U_h\\
&=\delta_{r(\mu),\D(g)}T_{\lambda (g\cdot \mu)}U_{g|_\mu}U_h.
\end{align*}
On the other hand
\begin{align*}
\rho_{p+q}(mn)&=\delta_{r(\mu),\D(g)}\rho(\delta_{\lambda(g\cdot \mu)}\otimes i_{g|_\lambda}i_h)\\
&=\delta_{r(\mu),\D(g)} \sum_{\nu\in \Lambda^{p+q}}T_\nu \rho_0(\langle \delta_\nu\otimes 1,\delta_{\lambda(g\cdot \mu)}\otimes i_{g|_\mu}i_h\rangle)\\
&=\delta_{r(\mu),\D(g)} T_{\lambda(g\cdot \mu) }\rho_0( i_{g|_\mu}i_h)\\
&=\delta_{r(\mu),\D(g)}T_{\lambda (g\cdot \mu)}U_{g|_\mu}U_h,
\end{align*}
giving (\ref{T3}).

To see that $\rho$ is Nica covariant, take $(\delta_{\lambda_1}\otimes i_{g_1}), (\delta_{\lambda_2}\otimes i_{g_2})\in M_p$ and $ (\delta_{\mu_1}\otimes i_{h_1}),(\delta_{\mu_2}\otimes i_{h_2})\in M_q$, and let $K=\Theta_{(\delta_{\lambda_1}\otimes i_{g_1}), (\delta_{\lambda_2}\otimes i_{g_2})}$ and $S=\Theta_{(\delta_{\mu_1}\otimes i_{h_1}),(\delta_{\mu_2}\otimes i_{h_2})}$. We have
\begin{align*}
\rho^{(p)}(K)&\rho^{(q)}(S)\\
&=\rho_p(\delta_{\lambda_1}\otimes i_{g_1})\rho_p (\delta_{\lambda_2}\otimes i_{g_2})^*\rho_q(\delta_{\mu_1}\otimes i_{h_1})\rho_q(\delta_{\mu_2}\otimes i_{h_2})^*\\
&=T_{\lambda_1}U_{g_1}(T_{\lambda_2}U_{g_2})^*T_{\mu_1}U_{h_1}(T_{\mu_2}U_{h_2})^*\\
&=\delta_{\D(g_1),\D(g_2)}\delta_{\D(h_1),\D(h_2)}T_{\lambda_1}U_{g_1g_2^{-1}} T_{\lambda_2}^*T_{\mu_1}U_{h_1h_2^{-1}}T_{\mu_2}^*\\
&=\delta_{\D(g_1),\D(g_2)}\delta_{\D(h_1),\D(h_2)}T_{\lambda_1}U_{g_1g_2^{-1}}\Big(\sum_{(\eta,\zeta)\in \Lambda^{\min}(\lambda_2,\mu_1)} T_{\eta}T_{\zeta}^*\Big)U_{h_1h_2^{-1}}T_{\mu_2}^*\\
&=\delta_{\D(g_1),\D(g_2)}\delta_{\D(h_1),\D(h_2)}\sum_{(\eta,\zeta)\in \Lambda^{\min}(\lambda_2,\mu_1)} T_{\lambda_1}U_{g_1g_2^{-1}}T_{\eta}\big(T_{\mu_2}U_{h_2h_1^{-1}}T_{\zeta}\big)^*.
\end{align*}
Applying \eqref{UT} now gives 
	\[\rho^{(p)}(K)\rho^{(q)}(S)=\sum_{(\eta,\zeta)\in \Lambda^{\min}(\lambda_2,\mu_1)}T_{\lambda_1((g_1g_2^{-1})\cdot\eta)}U_{ (g_1g_2^{-1})|_{\eta}}\big(T_{\mu_2((h_2h_1^{-1})\cdot\zeta)}U_{(h_2h_1^{-1})|_{\zeta}}\big)^*,\]
whenever $\D(g_1)=\D(g_2)=s(\lambda_1)$ and $\D(h_1)=\D(h_2)=s(\mu_2)$, and zero otherwise.
On the other hand, Lemma~\ref{lemma:compactly-alig} implies that $\rho^{(p\vee q)}(K_p^{p\vee q}S_p^{p\vee q})$ is only nonzero if $\D(g_1)=\D(g_2)=s(\lambda_1)$ and $\D(h_1)=\D(h_2)=s(\mu_2)$, and is given by
\begin{align*}
\rho^{(p\vee q)}(K_p^{p\vee q}S_p^{p\vee q})
&=\rho^{(p\vee q)}\Big(\sum_{(\eta,\zeta)\in \Lambda^{\min}(\lambda_2,\mu_1)}\Theta_{\big(\delta_{\lambda_1((g_1g_2^{-1})\cdot\eta)}\otimes i_{(g_1g_2^{-1})|_{\eta}}\big),\big(\delta_{\mu_2((h_2h_1^{-1})\cdot\zeta)}\otimes i_{(h_2h_1^{-1})|_{\zeta}}\big)}\Big)\\
&=\sum_{(\eta,\zeta)\in \Lambda^{\min}(\lambda_2,\mu_1)}\rho\big(\delta_{\lambda_1((g_1g_2^{-1})\cdot\eta)}\otimes i_{(g_1g_2^{-1})|_{\eta}}\big)\rho\big(\delta_{\mu_2((h_2h_1^{-1})\cdot\zeta)}\otimes i_{(h_2h_1^{-1})|_{\zeta}}\big)^*\\
&=\sum_{(\eta,\zeta)\in \Lambda^{\min}(\lambda_2,\mu_1)}T_{\lambda_1((g_1g_2^{-1})\cdot\eta)}U_{ (g_1g_2^{-1})|_{\eta}}\big(T_{\mu_2((h_2h_1^{-1})\cdot\zeta)}U_{(h_2h_1^{-1})|_{\zeta}}\big)^*
\end{align*}
and we have proven the Nica covariance.

The universal property of $\TT(G,\Lambda)$ induces a homomorphism $\rho_*:\TT(G,\Lambda)\to B$ such that $\rho=\rho_*\circ \psi$. It remains to check that $\rho_*$ maps $(u,t)$ to $(U,T)$. For each $g\in G$, we have
\[
\rho_*(u_g)=\rho_*(\psi_0(i_g))=\rho(i_g)=U_g.
\]
Also for $\lambda\in \Lambda^p$, we have
\begin{align*}
\rho_*(t_\lambda)&=\rho_*(\psi_p(\delta_\lambda\otimes 1))=\rho(\delta_\lambda\otimes 1)=T_\lambda U_{s(\lambda)}=T_\lambda.\qedhere
\end{align*}
\end{proof}

\begin{prop}\label{Toeplitz_spanning}
Let $(G,\Lambda)$ be a self-similar groupoid action, and let $(u, t)$ be as in Proposition~\ref{prop:universal}. Then
\begin{enumerate}
\item \label{Toeplitz_spanning-1}
$
\TT(G,\Lambda)=\clsp\{t_\lambda u_g t_\mu^* : \lambda,\mu \in \Lambda, g \in G \text{ and } s(\lambda)=g\cdot s(\mu)\}
$; and
\item \label{Toeplitz_spanning-2}$\OO(G,\Lambda)$ is the quotient of $\TT(G,\Lambda)$ by the ideal 
\begin{align}\label{Cuntz-quotient}
\Big\langle t_v-\sum_{\lambda\in v\Lambda^p}t_\lambda t_\lambda^*:v\in \Lambda^0, p\in \N^k \Big\rangle.
\end{align}
\end{enumerate}
\end{prop}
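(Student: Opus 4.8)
The plan is to handle the two parts separately: part~(\ref{Toeplitz_spanning-1}) by showing the candidate set is a closed $*$-subalgebra containing the generators, and part~(\ref{Toeplitz_spanning-2}) by identifying the Cuntz--Pimsner ideal \eqref{C-P-Q} with \eqref{Cuntz-quotient}.

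For part~(\ref{Toeplitz_spanning-1}), write $S:=\clsp\{t_\lambda u_g t_\mu^*: s(\lambda)=g\cdot s(\mu)\}$. Since Proposition~\ref{prop:universal} shows that $\TT(G,\Lambda)$ is generated by $u(G)\cup t(\Lambda)$, it suffices to prove $S$ is a closed $*$-subalgebra containing every $u_g$ and $t_\lambda$. Containment of the generators is immediate from $t_\lambda=t_\lambda u_{s(\lambda)}t_{s(\lambda)}^*$ and $u_g=t_{\c(g)}u_g t_{\D(g)}^*$, each of which meets the source constraint. Self-adjointness follows from $(t_\lambda u_g t_\mu^*)^*=t_\mu u_{g^{-1}} t_\lambda^*$, using \eqref{U2} together with the fact that $g^{-1}\cdot s(\lambda)=s(\mu)$ whenever $s(\lambda)=g\cdot s(\mu)$. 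The substantive step is closure under multiplication. In the product $(t_\lambda u_g t_\mu^*)(t_\nu u_h t_\tau^*)$ I would first apply \eqref{TCK3} to expand $t_\mu^* t_\nu=\sum_{(\eta,\zeta)\in\Lambda^{\min}(\mu,\nu)}t_\eta t_\zeta^*$, then push the unitaries inwards via \eqref{UT} (giving $u_g t_\eta=t_{g\cdot\eta}u_{g|_\eta}$) and \eqref{UT*} (giving $t_\zeta^* u_h=u_{h|_{h^{-1}\cdot\zeta}}t_{h^{-1}\cdot\zeta}^*$), and finally collapse the remaining factors using \eqref{TCK2} and \eqref{U3}. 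Each surviving summand takes the form $t_{\lambda(g\cdot\eta)}u_{(g|_\eta)(h|_{h^{-1}\cdot\zeta})}t_{\tau(h^{-1}\cdot\zeta)}^*$, which lies in $S$ once the source condition is checked; hence $S=\TT(G,\Lambda)$.

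For part~(\ref{Toeplitz_spanning-2}), the starting point is \eqref{cuntz-proof}, which gives $\varphi_p(C^*(G))\subseteq\KK(M_p)$ for every $p$, so $\varphi_p^{-1}(\KK(M_p))=C^*(G)$ and the Cuntz--Pimsner relation \eqref{cunzcon} is imposed for all $a\in C^*(G)$. I would first verify that for this product system Cuntz--Pimsner covariance implies Nica covariance — this is exactly where the compactness of the left actions is used — so that \cite[Lemma~2.2]{AaHR} identifies $\OO(G,\Lambda)$ with the quotient of $\TT(G,\Lambda)$ by the ideal $I$ of \eqref{C-P-Q}. It then remains to show $I$ equals the ideal \eqref{Cuntz-quotient}. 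Taking $a=i_v$ and applying \eqref{cuntz-proof} yields $\psi_0(i_v)-\psi^{(p)}(\varphi_p(i_v))=t_v-\sum_{\lambda\in v\Lambda^p}t_\lambda t_\lambda^*$, so \eqref{Cuntz-quotient} is contained in $I$. Conversely, by linearity and continuity it suffices to treat $a=i_g$; computing $\varphi_p(i_g)$ from \eqref{cuntz-proof}, applying $\psi^{(p)}$, and using \eqref{S7} and \eqref{UT} to re-index the sum via $\mu=g^{-1}\cdot\lambda\in\D(g)\Lambda^p$, I expect to obtain
\[
\psi_0(i_g)-\psi^{(p)}(\varphi_p(i_g))=u_g\Big(t_{\D(g)}-\sum_{\mu\in\D(g)\Lambda^p}t_\mu t_\mu^*\Big),
\]
which lies in the ideal generated by \eqref{Cuntz-quotient}. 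Thus $I$ equals \eqref{Cuntz-quotient}.

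The main obstacles are as follows. In part~(\ref{Toeplitz_spanning-1}) the only genuine work is the bookkeeping of domain/codomain and range/source conditions in the multiplication step, ensuring each intermediate factor is nonzero and that the final source condition holds. In part~(\ref{Toeplitz_spanning-2}) the genuinely delicate point is verifying that Cuntz--Pimsner covariance implies Nica covariance, since without it the quotient description via \cite[Lemma~2.2]{AaHR} is unavailable; once that is established, the identification of the ideal is a direct computation driven by \eqref{cuntz-proof}, \eqref{UT}, and \eqref{S7}.
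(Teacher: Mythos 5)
Your proposal is correct and follows essentially the same route as the paper's proof: part~(\ref{Toeplitz_spanning-1}) via the same subalgebra argument expanding $t_\mu^*t_\nu$ with (\ref{TCK3}) and pushing unitaries through with \eqref{UT} and \eqref{UT*}, and part~(\ref{Toeplitz_spanning-2}) by invoking compactness of the left actions so that Cuntz--Pimsner covariance implies Nica covariance, then identifying the ideal \eqref{C-P-Q} with \eqref{Cuntz-quotient} via \eqref{cuntz-proof}. Your explicit computation $\psi_0(i_g)-\psi^{(p)}(\varphi_p(i_g))=u_g\big(t_{\D(g)}-\sum_{\mu\in\D(g)\Lambda^p}t_\mu t_\mu^*\big)$, together with the linearity-and-continuity reduction to the generators $i_g$, is a correct (and slightly more detailed) filling-in of the containment of \eqref{C-P-Q} in \eqref{Cuntz-quotient}, which the paper dispatches with ``it follows.''
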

\begin{proof}
For \eqref{Toeplitz_spanning-1}, note that since
$t_\lambda u_g t_\mu^*= t_\lambda t_{s(\lambda)}u_g t_{s(\mu)}t_\mu^*$, the equation $t_\lambda u_g t_\mu^*\not=0$ implies that $t_{s(\lambda)}u_g t_{s(\mu)}=t_{s(\lambda)}t_{g\cdot s(\mu)}\not=0$.
Thus the condition $s(\lambda)=g\cdot s(\mu)$ rules out zero elements. Now let $\lambda, \mu,\nu, \xi \in \Lambda$ and $g,h \in S$ such that $s(\lambda)=g \cdot s(\mu)$ and $s(\nu)=h\cdot s(\xi)$. Then 
\begin{align*}
(t_\lambda u_g t_\mu^*)(t_\nu u_ht_\xi^*)&=t_\lambda u_g \big(\sum_{(\eta,\zeta)\in \Lambda^{\min}(\mu,\nu)}t_\eta t_\zeta^*\big) u_ht_\xi^*\\
&=\delta_{\D(g),s(\mu)}\delta_{\c(h),s(\nu)}\sum_{(\eta,\zeta)\in \Lambda^{\min}(\mu,\nu)}t_{\lambda(g\cdot \eta)}u_{(g|_\eta )(h|_{h^{-1}\cdot \zeta})}t_{\xi(h^{-1}\cdot \zeta)}^*.
\end{align*}
Now, applying (\ref{S4}) gives
\begin{align*}
s(\lambda(g\cdot \eta)) &= s(g\cdot \eta)= g|_{\eta}\cdot s(\eta)= g|_{\eta}\cdot s(\zeta)= g|_{\eta} h|_{h^{-1}\cdot \zeta}\cdot s(h^{-1}\cdot\zeta)\\
&= g|_{\eta} h|_{h^{-1}\cdot \zeta}\cdot s((\xi h^{-1}\cdot\zeta)),
\end{align*}
and it follows that $\operatorname{span}\{t_\lambda u_g t_\mu^* : \lambda,\mu \in \Lambda, g \in G \text{ and } s(\lambda)=g\cdot s(\mu)\}$ is a subalgebra. Since it contains the generators of $\TT(G,\Lambda)$, part~ \eqref{Toeplitz_spanning-1} follows.

For \eqref{Toeplitz_spanning-2}, since the left action of $C^*(G)$ on each fiber is by compact operators, \cite[Proposition~5.4]{F} says that Cuntz--Pimsner covariance implies Nica covariance. Therefore $\OO(G,\Lambda)$ is the quotient of $\TT(G,\Lambda)$ by the ideal from \eqref{C-P-Q}. For $p\in \N^k$ recall that $\varphi_p:C^*(G)\to \LL(M_p)$ is the homomorphism defining the left action. For each $v\in \Lambda^0$, \eqref{cuntz-proof} implies that
\[\psi^{(p)}(\varphi_p(i_v))=\sum_{\lambda\in \Lambda^0}\psi_p(\delta_\lambda\otimes 1)\psi_p( i_v(\delta_\lambda\otimes 1))^*=\big(\sum_{\lambda\in \Lambda^0}t_\lambda t_\lambda^*\big)t_v.\]
It follows that the ideal from \eqref{C-P-Q} is the ideal from \eqref{Cuntz-quotient}, and so the result follows.
\end{proof}

\begin{remark}\label{rem:gensofO}
We also label the generators of $\OO(G,\Lambda)$ by $\{u_g:g\in G\}$ and $\{t_\lambda: \lambda\in \Lambda\}$. 
\end{remark}

\begin{example}
Recall the example discussed in Example~\ref{eg:LYexample}. One can check that $U_{(f,v)}:=u_f s_v$ defines a unitary representation of $F\times \Lambda^0$ in $\OO_{F,\Lambda}$ of \cite[Definition~3.8]{LY0}, and the family $\{U_{f,v}\}$ together with the universal Cuntz-Krieger $\Lambda$-family satisfy the relations \eqref{us1}--\eqref{us3} of Proposition~\ref{prop:universal}. The induced homomorphism $\pi:\OO\big(F\times \Lambda^0, \Lambda\big) \to \OO_{F,\Lambda}$ is an isomorphism with the inverse map given by $\pi^{-1}(u_f)= \sum_{v\in \Lambda^0}u_{(f,v)}$ and $\pi^{-1}(s_\lambda)= t_\lambda$.
\end{example}

\section{An algebraic characterisation of KMS states of $\TT(G,\Lambda)$}\label{sec:algchar}

Let $\Lambda$ be a finite $k$-graph, $(G,\Lambda)$ be a self-similar groupoid action and $r\in (0,\infty)^k$. By the universal property of $\TT(G,\Lambda)$, 
there is a strongly continuous action $\gamma: \T^k \to \Aut\TT(G,\Lambda)$ such that $\gamma_z(t_\lambda u_g t_\mu^*)=z^{r\cdot (d(\lambda)-d(\mu))}t_\lambda u_g t_\mu^*$ where $r\cdot p=\sum_{i=1}^kr_ip_i$ for all $p\in \N^k$.  The action $\gamma$ gives rise to dynamics $\sigma: \R\to \Aut\TT(G,\Lambda)$  given by $\sigma_t=\gamma_{ e^{irt}}$. So we have
\begin{equation}\label{action on T}
\sigma_t(t_\lambda u_g t_\mu^*)=e^{itr\cdot (d(\lambda)-d(\mu))}t_\lambda u_g t_\mu^*.
\end{equation}
This action fixes the elements generating the ideal in \eqref{Cuntz-quotient}, and hence it descends to an action of $\OO(G,\Lambda)$.

We recall that for a $C^*$-algebra $A$ and an action $\sigma: \R \to \Aut(A)$, an
element $a\in A$ is \textit{analytic} if the map $t\mapsto \sigma_t(a)$ extends to 
 an analytic function $z\mapsto\alpha_z(a)$ on the complex plane $\C$. For $\beta\in \R$, a state $\phi$ of $A$ is called \text{KMS$_\beta$}-state for $(A,\sigma)$ if it satisfies the \textit{KMS condition}
 \[\phi(ab)=\phi(b\sigma_{i\beta}(a)) \text { for all analytic elements $a,b$}. \]
We note that it suffices to check this condition on a set of analytic elements that span a dense subalgebra of $A$. 

In this section and Section~\ref{sec:largetemp} we study KMS states of the dynamics $\big(\TT(G,\Lambda),\sigma\big)$. We first prove an algebraic characterisation formula for KMS states. Recall that $\psi_0:C^*(G)\to \TT(G,\Lambda)$ denotes the homomorphism satisfying $\psi_0(i_g)=u_g$ for all $g\in G$. 

\begin{prop}\label{prop:algebraic-chara}
Let $\Lambda$ be a finite $k$-graph with no sources. For $1 \leq i\leq k$, let $B_i$ be the matrix with entries $B_i(v, w) = |v\Lambda^{e_i}w|$ and let 
 $\rho(B_i)$ be the spectral radius of $B_i$.
Suppose that $(G,\Lambda)$ is a self-similar groupoid action. Let $r\in (0,\infty)^k$ and let $\sigma: \R \to \Aut\TT(G,\Lambda)$ be the dynamics given by \eqref{action on T}. Suppose that $\beta\in[0,\infty)$ and $\phi$ is a state on $\TT(G,\Lambda)$. 
\begin{enumerate}
\item\label{prop:algebraic-chara-1}If $\phi\circ \psi_0$ is a trace on $C^*(G)$, and 
\begin{equation}\label{char:span}
\phi(t_\lambda u_g t_\mu^*)=
\delta_{\lambda,\mu} \delta_{\c(g),\D(g)}e^{-\beta r\cdot d(\lambda)}\phi( u_g) \text{ for } g \in G \text{ and } \lambda, \mu\in \Lambda,
\end{equation}
then $\phi$ is a KMS$_{\beta}$-state for $\big(\TT(G,\Lambda),\sigma\big)$.
\item \label{prop:algebraic-chara-2}If $\phi$ is a KMS$_{\beta}$-state for $\big(\TT(G,\Lambda),\sigma\big)$, then $\phi\circ \psi_0$ is a trace on $C^*(G)$. If in addition $r$ has rationally independent coordinates, then $\phi$ satisfies \eqref{char:span}. 
\item \label{prop:algebraic-chara-3}If $\beta r_i>\ln \rho(B_i)$ for all $1\leq i\leq k$, then $\phi$ is a KMS$_{\beta}$-state for $\big(\TT(G,\Lambda),\sigma\big)$ if and only if $\phi$ satisfies \eqref{char:span} and $\phi\circ \psi_0$ is a trace on $C^*(G)$.
\end{enumerate}
\end{prop}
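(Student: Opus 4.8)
The plan is to check everything on the spanning family $\{t_\lambda u_g t_\mu^*\}$ of Proposition~\ref{Toeplitz_spanning}. From \eqref{action on T} these are $\sigma$-eigenvectors, $\sigma_z(t_\lambda u_g t_\mu^*)=e^{izr\cdot(d(\lambda)-d(\mu))}t_\lambda u_g t_\mu^*$, so they are analytic and span a dense $\sigma$-invariant $*$-subalgebra; hence it suffices to verify the KMS condition $\phi(ab)=\phi(b\sigma_{i\beta}(a))$ for $a,b$ in this family. I will freely use that every KMS$_\beta$-state is $\sigma$-invariant, the product expansion of two spanning elements computed in the proof of Proposition~\ref{Toeplitz_spanning}, and the relations \eqref{UT}, \eqref{UT*} and Lemma~\ref{lem:properties}.

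For part~\eqref{prop:algebraic-chara-2} the trace property is quick: each $u_g=\psi_0(i_g)$ is $\sigma$-fixed, so $\sigma_{i\beta}(u_g)=u_g$ and the KMS condition gives $\phi(u_gu_h)=\phi(u_hu_g)$; since the $i_g$ span $C^*(G)$, $\phi\circ\psi_0$ is a trace, and traciality applied to $u_g=u_{\c(g)}u_gu_{\D(g)}$ yields $\phi(u_g)=\delta_{\c(g),\D(g)}\phi(u_g)$. For \eqref{char:span} I first use $\sigma$-invariance: $\phi(t_\lambda u_g t_\mu^*)=e^{itr\cdot(d(\lambda)-d(\mu))}\phi(t_\lambda u_g t_\mu^*)$ forces the value to vanish whenever $r\cdot(d(\lambda)-d(\mu))\neq0$, and rational independence of the coordinates of $r$ upgrades this to vanishing whenever $d(\lambda)\neq d(\mu)$. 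On the diagonal $d(\lambda)=d(\mu)$ I apply the KMS condition to $a=t_\mu^*$, $b=t_\lambda u_g$: since $\sigma_{i\beta}(t_\mu^*)=e^{\beta r\cdot d(\mu)}t_\mu^*$ and $t_\mu^*t_\lambda=\delta_{\lambda,\mu}t_{s(\mu)}$ by (\ref{TCK3}), I obtain $\phi(t_\lambda u_g t_\mu^*)=\delta_{\lambda,\mu}e^{-\beta r\cdot d(\mu)}\phi(u_{s(\mu)}u_g)$, and $u_{s(\mu)}u_g=\delta_{s(\mu),\c(g)}u_g$ together with the above give exactly \eqref{char:span}.

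For part~\eqref{prop:algebraic-chara-1} I verify the KMS condition by direct computation. Expanding $ab$ for $a=t_\lambda u_g t_\mu^*$ and $b=t_\nu u_h t_\xi^*$ with the product formula of Proposition~\ref{Toeplitz_spanning}, applying $\phi$, and using \eqref{char:span} leaves a sum over the pairs $(\eta,\zeta)\in\Lambda^{\min}(\mu,\nu)$ whose indices collapse onto the diagonal, each term a weight $e^{-\beta r\cdot(\cdot)}$ times a value of the trace $\phi\circ\psi_0$. Carrying out the same expansion for $b\sigma_{i\beta}(a)=e^{-\beta r\cdot(d(\lambda)-d(\mu))}(t_\nu u_h t_\xi^*)(t_\lambda u_g t_\mu^*)$ produces a sum over $\Lambda^{\min}(\xi,\lambda)$; I then match the two sides by exhibiting a degree-preserving bijection between the two index sets, checking that the exponential weights coincide, and, this being the essential use of the hypothesis, invoking the \emph{trace} property to replace each $\phi(u_{xy})$ by $\phi(u_{yx})$. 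The algebraic identities (\ref{S4}), (\ref{S6}) and (\ref{S7}) of Lemma~\ref{lem:properties} are what make the sources, ranges and restrictions line up. This step is long but entirely mechanical.

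Part~\eqref{prop:algebraic-chara-3} combines the previous two: the backward implication is precisely part~\eqref{prop:algebraic-chara-1}, and in the forward implication traciality is supplied by part~\eqref{prop:algebraic-chara-2}. The only genuinely new point, and the main obstacle, is to prove the off-diagonal vanishing $\phi(t_\lambda u_g t_\mu^*)=0$ for $d(\lambda)\neq d(\mu)$ \emph{without} rational independence, since $\sigma$-invariance now only disposes of the terms with $r\cdot(d(\lambda)-d(\mu))\neq0$ and leaves the degenerate case $d(\lambda)\neq d(\mu)$, $r\cdot d(\lambda)=r\cdot d(\mu)$. Here I would apply the KMS condition to $t_\mu^*$ and $t_\lambda u_g$ as above, expand $t_\mu^*t_\lambda$ through (\ref{TCK3}) and \eqref{UT*}, and observe that this expresses each such off-diagonal value as a weighted sum of off-diagonal values at degrees no larger than $(d(\lambda),d(\mu))$, with the same $r$-weight and the same nonzero difference $d(\lambda)-d(\mu)$. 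Iterating sets up a finite linear system whose transfer operator is governed by the path-counting matrices $B_i$ and the weights $e^{-\beta r_i}$; the strict inequality $\beta r_i>\ln\rho(B_i)$ makes this operator a contraction, exactly as in \cite{aHLRS,LRRW}, so all off-diagonal values must vanish. With this in hand the diagonal computation of part~\eqref{prop:algebraic-chara-2} applies verbatim and produces \eqref{char:span}, completing the equivalence.
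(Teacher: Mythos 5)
Your treatment of parts~\eqref{prop:algebraic-chara-1} and \eqref{prop:algebraic-chara-2} is correct and is essentially the paper's own proof: the product expansion over $\Lambda^{\min}(\mu,\nu)$ versus $\Lambda^{\min}(\xi,\lambda)$, the degree-preserving bijection $(\eta,\zeta)\mapsto(h^{-1}\cdot\zeta,g\cdot\eta)$ between the index sets together with (\ref{S4}), (\ref{S6}), (\ref{S7}) and traciality of $\phi\circ\psi_0$, and for part~\eqref{prop:algebraic-chara-2} the two-fold application of the KMS condition plus the diagonal computation $t_\mu^*t_\lambda=\delta_{\lambda,\mu}t_{s(\mu)}$ all match the paper.

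In part~\eqref{prop:algebraic-chara-3}, however, your closing mechanism has a genuine gap. First, the ``finite linear system'' is not finite: each expansion replaces $g$ by restrictions $g|_\nu$ and products thereof, and this proposition imposes no finite-state hypothesis, so the middle groupoid elements proliferate. Second, and more seriously, the ``transfer operator is a contraction'' claim fails per step. One iteration contributes a weight $e^{-\beta r\cdot p}$ and a sum with up to $\max_v|v\Lambda^{q}|\approx\|B^{q}\|_\infty$ terms, and the hypothesis $\beta r_i>\ln\rho(B_i)$ only controls the \emph{spectral radius} of $e^{-\beta r\cdot q}B^{q}$, not its norm: for a multi-vertex graph one can have $\rho(B_1)=1$ but $\|B_1\|_\infty$ arbitrarily large (e.g.\ $B_1=\left(\begin{smallmatrix}1&100\\0&1\end{smallmatrix}\right)$), so the one-step bound can exceed $1$ even deep in the subcritical region. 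The decay is only asymptotic, via $\|B^{lq}\|_\infty^{1/l}\to\rho(B^{q})$, and to exploit it you must show that the $l$-fold iterated minimal-extension sums telescope into a single sum indexed by genuine paths, namely the identity $\phi(t_\lambda u_g t_\mu^*)=\sum_{\nu\in s(\mu)\Lambda^{lp}}\phi\big(t_{\lambda(g\cdot\nu)}u_{g|_\nu}t_{\mu\nu}^*\big)$ of Lemma~\ref{lemma:approximation}; proving that the nonzero terms of the $(l{+}1)$-step expansion are exactly those indexed by $s(\mu)\Lambda^{(l+1)p}$ is nontrivial bookkeeping that your sketch omits.

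Even granting the telescoping, your iteration never converts the unknown off-diagonal values into computable quantities, and the trivial bound $|\phi(\cdot)|\le1$ per term diverges since $|s(\mu)\Lambda^{lp}|$ grows like $\rho(B)^{lp}$. The missing ingredient is the Cauchy--Schwarz estimate \eqref{ok1} of Lemma~\ref{lemma-kms-g-invariance}, combined with the $G$-invariance $\phi(u_{s(g\cdot\nu)})=\phi(u_{s(\nu)})$ from part~\eqref{kms-action-inv-2} of that lemma, which bounds each summand by a \emph{diagonal} value and yields $|\phi(t_\lambda u_g t_\mu^*)|\le e^{-\frac{\beta}{2}r\cdot(d(\lambda)+d(\mu))}\sum_{\nu\in s(\lambda)\Lambda^{lp}}e^{-\beta r\cdot d(\nu)}\phi(u_{s(\nu)})$; this is the tail of a series that converges precisely because $\beta r_i>\ln\rho(B_i)$, hence vanishes as $l\to\infty$. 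Your pointer to \cite{aHLRS,lrrw} does identify where this argument lives, but as written your proposal substitutes a per-step contraction that is false in general for the correct tail-of-a-convergent-series argument, so part~\eqref{prop:algebraic-chara-3} does not close without these steps.
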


We start the proof of Proposition~\ref{prop:algebraic-chara} with some basic properties of KMS states of $\TT(G,\Lambda)$. Note that the same properties hold for KMS states of $(\OO(G,\Lambda),\sigma)$.

\begin{lemma}\label{lemma-kms-g-invariance}
Let $\Lambda$ be a finite $k$-graph with no sources, and $(G,\Lambda)$ be a self-similar groupoid action. Let $\beta\in \R$, and suppose that $\phi$ is a KMS$_{\beta}$-state for $(\TT(G,\Lambda),\sigma)$.
\begin{enumerate}
\item\label{kms-action-inv-1} $\phi(u_{\D(g)})=\phi(u_{\c(g)})$ for all $g\in G$.
\item \label{kms-action-inv-2}$\phi(u_{s(g\cdot \lambda)})=\phi(u_{s(\lambda)})$ for all $g\in G$ and $\lambda\in \D(g)\Lambda$.
\item\label{kms-action-inv-3} If $\D(g)\neq \c(g)$, then $\phi(u_g)=0$.
\item \label{kms-action-inv-4}For each $\lambda,\mu\in \Lambda$ and $g\in G$ with $s(\lambda)=g\cdot s(\mu)$, we have 
\begin{align}\label{ok1}
\big|\phi(t_\lambda u_g t_\mu^*)\big|\leq\phi(t_{\mu}t_{\mu}^*).
\end{align}
\end{enumerate}
\end{lemma}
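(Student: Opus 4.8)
The plan is to handle the four parts in order, exploiting throughout that $u_g=\psi_0(i_g)$ and $u_v=\psi_0(i_v)$ lie in the fixed-point algebra of $\sigma$, so that $\sigma_{i\beta}(u_g)=u_g$, while $\sigma_{i\beta}(t_\lambda)=e^{-\beta r\cdot d(\lambda)}t_\lambda$.

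For \eqref{kms-action-inv-1} I would apply the KMS condition to the analytic elements $a=u_g$ and $b=u_{g^{-1}}=u_g^*$. Using (\ref{U2}) and (\ref{U3}) we have $u_gu_{g^{-1}}=u_{\c(g)}$ and $u_{g^{-1}}u_g=u_{\D(g)}$, so $\phi(ab)=\phi(b\sigma_{i\beta}(a))$ becomes $\phi(u_{\c(g)})=\phi(u_{\D(g)})$. Part \eqref{kms-action-inv-2} is then immediate: applying \eqref{kms-action-inv-1} to $g|_\lambda$ and invoking $\D(g|_\lambda)=s(\lambda)$ and $\c(g|_\lambda)=s(g\cdot\lambda)$ from (\ref{S3}) yields $\phi(u_{s(\lambda)})=\phi(u_{s(g\cdot\lambda)})$. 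For \eqref{kms-action-inv-3} I would write $u_g=u_{\c(g)}u_g$ and apply the KMS condition to $a=u_{\c(g)}$, $b=u_g$; since $u_{\c(g)}$ is $\sigma$-fixed this gives $\phi(u_g)=\phi(u_gu_{\c(g)})$, and $u_gu_{\c(g)}=\delta_{\D(g),\c(g)}u_g$ forces $\phi(u_g)=0$ whenever $\D(g)\neq\c(g)$.

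The substance is in \eqref{kms-action-inv-4}. First note that $s(\lambda)=g\cdot s(\mu)$ forces $\D(g)=s(\mu)$ and $\c(g)=s(\lambda)$, since a partial isomorphism carries the root $\D(g)$ to the root $\c(g)$. Writing $T:=t_\lambda u_gt_\mu^*$, a short computation with (\ref{U2}), (\ref{U3}) and the relation $t_\nu^*t_\nu=t_{s(\nu)}$ gives $T^*T=t_\mu u_{g^{-1}}t_{s(\lambda)}u_gt_\mu^*=t_\mu t_\mu^*$ and symmetrically $TT^*=t_\lambda t_\lambda^*$. Applying the Cauchy--Schwarz inequality for the state $\phi$ to the factorisation $T=(t_\lambda u_g)(t_\mu^*)$ then gives $|\phi(T)|^2\le\phi(t_\lambda u_gu_{g^{-1}}t_\lambda^*)\,\phi(t_\mu t_\mu^*)=\phi(TT^*)\,\phi(T^*T)$.

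The main obstacle is that this geometric-mean bound is by itself too weak: one application of the KMS condition shows $\phi(t_\lambda t_\lambda^*)=e^{-\beta r\cdot d(\lambda)}\phi(u_{s(\lambda)})$ and $\phi(t_\mu t_\mu^*)=e^{-\beta r\cdot d(\mu)}\phi(u_{s(\mu)})$, so the two factors differ by $e^{-\beta r\cdot(d(\lambda)-d(\mu))}$ and their geometric mean can exceed $\phi(t_\mu t_\mu^*)$ when $d(\lambda)\neq d(\mu)$. To remove this factor I would use the KMS condition once more, now with $b=\sum_{v\in\Lambda^0}t_v=1$: it gives $\phi(T)=\phi(\sigma_{i\beta}(T))=e^{-\beta r\cdot(d(\lambda)-d(\mu))}\phi(T)$, so $\phi(T)=0$ unless $\beta\, r\cdot(d(\lambda)-d(\mu))=0$. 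In the surviving case $e^{-\beta r\cdot(d(\lambda)-d(\mu))}=1$, and the KMS condition with $a=T$, $b=T^*$ gives $\phi(TT^*)=e^{-\beta r\cdot(d(\lambda)-d(\mu))}\phi(T^*T)=\phi(T^*T)$; the Cauchy--Schwarz estimate therefore collapses to $|\phi(T)|^2\le\phi(T^*T)^2=\phi(t_\mu t_\mu^*)^2$, which is exactly \eqref{ok1}. This dichotomy covers $\beta=0$ and $\beta\neq 0$ simultaneously, so no separate argument for the tracial case is required.
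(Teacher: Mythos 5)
Your proof is correct and takes essentially the same approach as the paper: parts \eqref{kms-action-inv-1}--\eqref{kms-action-inv-3} follow from the KMS condition together with the fact that each $u_g$ is fixed by $\sigma$, and part \eqref{kms-action-inv-4} from the scaling trick (forcing $\phi(t_\lambda u_g t_\mu^*)=0$ unless the exponential factor is $1$) combined with Cauchy--Schwarz. Your organisation of part \eqref{kms-action-inv-4} is in fact marginally tidier: by splitting on whether $e^{-\beta r\cdot(d(\lambda)-d(\mu))}=1$ rather than on whether $r\cdot d(\lambda)=r\cdot d(\mu)$, and by deducing $\phi(TT^*)=\phi(T^*T)$ directly from the KMS condition applied to $T$ and $T^*$, your argument is self-contained (it never invokes part \eqref{kms-action-inv-2}) and covers $\beta=0$ uniformly, whereas the paper's first case is vacuous there.
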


\begin{proof}
Part~\eqref{kms-action-inv-1} follows because $\phi(u_{\D(g)})=\phi(u_g^*u_g)=\phi(u_gu_g^*)=\phi(u_{\c(g)})$. For \eqref{kms-action-inv-2}, we can apply \eqref{kms-action-inv-1} and (\ref{S3}) to get
$\phi(u_{s(g\cdot \lambda)})=\phi(u_{\c(g|_\lambda)})=\phi(u_{\D(g|_\lambda)})=\phi(u_{s(\lambda)})$.
For \eqref{kms-action-inv-3}, we apply the KMS condition to get
\[\phi(u_g)=\phi(u_{\c(g)} u_g u_{\D(g)}) = \phi( u_g u_{\D(g)}u_{\c(g)})=0.\]
For \eqref{kms-action-inv-4}, note that if $r\cdot d(\lambda)\neq r\cdot d(\mu)$, we can apply the KMS condition twice to get 
$\phi(t_\lambda u_g t_\mu^*)=e^{r\cdot d(\lambda)-r\cdot d(\mu)}\phi(t_\lambda u_g t_\mu^*)$, forcing $\phi(t_\lambda u_g t_\mu^*)=0$. If $r\cdot d(\lambda)= r\cdot d(\mu)$, then applying the Cauchy--Schwarz inequality and part~\eqref{kms-action-inv-2} gives
\begin{align*}
\big|\phi(t_\lambda u_g t_\mu^*)\big|^2&\leq\phi(t_{\mu}t_\mu^*)\phi(t_{\lambda}u_{\c(g)}t_\lambda^*)=\phi(t_{\mu}t_\mu^*)\phi(t_{\lambda}t_\lambda^*)
=\phi(u_{s(\mu)})\phi(u_{s(\lambda)})\\
&=\phi(u_{s(\mu)})^2=\phi(t_{\mu}t_{\mu}^*)^2.\qedhere
\end{align*}
\end{proof}

We will need the following analogue of \cite[Lemma~5.3]{aHLRS} in the proof of Proposition~\ref{prop:algebraic-chara}\eqref{prop:algebraic-chara-3}.

\begin{lemma}\label{lemma:approximation}
Let $\beta\in(0,\infty)$ and suppose that $\phi$ is a KMS$_{\beta}$-state for $(\TT(G,\Lambda),\sigma)$. Let $p:=(d(\lambda)\vee d(\mu))-d(\mu)$ and take $\lambda,\mu\in \Lambda$ and $g\in G$ with $s(\lambda)=g\cdot s(\mu)$. Then 
\begin{align}\label{approximation}
\phi(t_\lambda u_g t_\mu^*)=\sum_{\nu\in s(\mu)\Lambda^{lp}} \phi\big(t_{\lambda (g\cdot \nu)}u_{g|_\nu}t_{\mu\nu}^*\big)\quad \text{ for all } l\in \N.
\end{align}
\end{lemma}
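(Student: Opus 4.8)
The plan is to establish the case $l=1$ directly from the KMS condition together with relation \textnormal{(\ref{TCK3})}, and then to bootstrap to arbitrary $l$ by induction using the restriction identities \textnormal{(\ref{S1})} and \textnormal{(\ref{S2})} of Lemma~\ref{lem:properties}. Throughout I use that the standing hypothesis $s(\lambda)=g\cdot s(\mu)$ forces $\D(g)=s(\mu)$ and $\c(g)=s(\lambda)$. The case $l=0$ is immediate, since $s(\mu)\Lambda^{0}=\{s(\mu)\}$, $g\cdot s(\mu)=s(\lambda)$, and $g|_{s(\mu)}=g|_{\D(g)}=g$ by \textnormal{(\ref{S8})}.

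For $l=1$ I first rewrite the summands on the right-hand side of \eqref{approximation}. Fix $\nu\in s(\mu)\Lambda^{p}$. Since $r(\nu)=s(\mu)=\D(g)$, relation \eqref{UT} gives $u_g t_\nu=t_{g\cdot\nu}u_{g|_\nu}$; since $r(g\cdot\nu)=g\cdot r(\nu)=\c(g)=s(\lambda)$, relation \textnormal{(\ref{TCK2})} gives $t_\lambda t_{g\cdot\nu}=t_{\lambda(g\cdot\nu)}$. Together with $t_{\mu\nu}^*=t_\nu^* t_\mu^*$ these yield $t_{\lambda(g\cdot\nu)}u_{g|_\nu}t_{\mu\nu}^*=t_\lambda u_g\, t_\nu t_\nu^*\, t_\mu^*$, so that, writing $P:=\sum_{\nu\in s(\mu)\Lambda^{p}}t_\nu t_\nu^*$,
\begin{equation*}
\sum_{\nu\in s(\mu)\Lambda^{p}}\phi\big(t_{\lambda(g\cdot\nu)}u_{g|_\nu}t_{\mu\nu}^*\big)=\phi\big(t_\lambda u_g\,P\,t_\mu^*\big).
\end{equation*}
Because $u_g t_{s(\mu)}=u_g u_{\D(g)}=u_g$, the left-hand side of \eqref{approximation} equals $\phi\big(t_\lambda u_g\,t_{s(\mu)}\,t_\mu^*\big)$, so it suffices to show $\phi\big(t_\lambda u_g(t_{s(\mu)}-P)t_\mu^*\big)=0$. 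Now $t_{s(\mu)}-P$ lies in the fixed-point algebra of $\sigma$, while $\sigma_{i\beta}(t_\lambda)=e^{-\beta r\cdot d(\lambda)}t_\lambda$ by \eqref{action on T}; applying the KMS condition with $a=t_\lambda$ and $b=u_g(t_{s(\mu)}-P)t_\mu^*$ and then expanding $t_\mu^* t_\lambda$ via \textnormal{(\ref{TCK3})} gives
\begin{equation*}
\phi\big(t_\lambda u_g(t_{s(\mu)}-P)t_\mu^*\big)=e^{-\beta r\cdot d(\lambda)}\sum_{(\alpha,\gamma)\in\Lambda^{\min}(\mu,\lambda)}\phi\big(u_g(t_{s(\mu)}-P)\,t_\alpha t_\gamma^*\big).
\end{equation*}
Every $\alpha$ occurring here satisfies $r(\alpha)=s(\mu)$ and $d(\alpha)=(d(\mu)\vee d(\lambda))-d(\mu)=p$, so $\alpha\in s(\mu)\Lambda^{p}$; hence $t_{s(\mu)}t_\alpha=t_\alpha$, while $\sum_{\nu\in s(\mu)\Lambda^{p}}t_\nu t_\nu^* t_\alpha=t_\alpha$ because $t_\nu^* t_\alpha=\delta_{\nu,\alpha}t_{s(\alpha)}$ by \textnormal{(\ref{TCK3})}. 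Thus $(t_{s(\mu)}-P)t_\alpha=0$, every summand vanishes, and the case $l=1$ follows.

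For the inductive step, assume \eqref{approximation} for some $l\geq1$. Each summand has the same shape as the left-hand side, with $(\lambda,\mu,g)$ replaced by $(\lambda(g\cdot\nu),\,\mu\nu,\,g|_\nu)$: indeed $\D(g|_\nu)=s(\nu)=s(\mu\nu)$ and $s(\lambda(g\cdot\nu))=g|_\nu\cdot s(\mu\nu)$ by \textnormal{(\ref{S3})} and \textnormal{(\ref{S4})}, and the corresponding exponent is $\big((d(\lambda)+lp)\vee(d(\mu)+lp)\big)-(d(\mu)+lp)=p$. Applying the case $l=1$ to each summand and using $(g\cdot\nu)\big((g|_\nu)\cdot\nu'\big)=g\cdot(\nu\nu')$ from \textnormal{(\ref{S1})} together with $(g|_\nu)|_{\nu'}=g|_{\nu\nu'}$ from \textnormal{(\ref{S2})}, I obtain
\begin{equation*}
\phi\big(t_{\lambda(g\cdot\nu)}u_{g|_\nu}t_{\mu\nu}^*\big)=\sum_{\nu'\in s(\nu)\Lambda^{p}}\phi\big(t_{\lambda(g\cdot(\nu\nu'))}u_{g|_{\nu\nu'}}t_{\mu\nu\nu'}^*\big).
\end{equation*}
Summing over $\nu\in s(\mu)\Lambda^{lp}$ and using that the factorisation property identifies $s(\mu)\Lambda^{(l+1)p}$ with $\{\nu\nu':\nu\in s(\mu)\Lambda^{lp},\ \nu'\in s(\nu)\Lambda^{p}\}$ establishes \eqref{approximation} for $l+1$.

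The crux is the degree bookkeeping in the $l=1$ case: the choice $p=(d(\lambda)\vee d(\mu))-d(\mu)$ is exactly what makes every $\alpha\in\Lambda^{\min}(\mu,\lambda)$ have degree $p$, so that the ``gap'' element $t_{s(\mu)}-P$ annihilates $t_\alpha$. The only other thing to watch is matching the source and range conditions so that \eqref{UT}, \textnormal{(\ref{TCK2})}, and the identities of Lemma~\ref{lem:properties} apply at each step.
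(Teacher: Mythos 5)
Your proof is correct. It uses the same basic ingredients as the paper's proof --- induction on $l$, the KMS condition, (\ref{TCK3}), and the degree bookkeeping built into the choice of $p$ --- but it organises them differently at the key step, and the difference is worth noting. The paper's inductive step does the analytic work at every level: for each level-$l$ summand it inserts the range projection $t_{\lambda(g\cdot\nu)}t_{\lambda(g\cdot\nu)}^*$, rotates it around with the KMS condition, expands $t_{\mu\nu}^*t_{\lambda(g\cdot\nu)}$ by (\ref{TCK3}) into a sum over $\Lambda^{\min}(\mu\nu,\lambda(g\cdot\nu))$, and then needs a separate re-indexing argument to identify the resulting double sum with the single sum over $s(\mu)\Lambda^{(l+1)p}$ --- in particular, it must show that any $\tau\in s(\mu)\Lambda^{(l+1)p}$ with $\Lambda^{\min}(\mu\tau,\lambda(g\cdot\tau))=\emptyset$ contributes $0$. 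Your version instead isolates the one-step identity ($l=1$) and proves it with the defect projection $t_{s(\mu)}-P$: the full degree-$p$ sum is $\phi(t_\lambda u_g P t_\mu^*)$, and a single KMS rotation of $t_\lambda$ followed by the (\ref{TCK3}) expansion of $t_\mu^*t_\lambda$ annihilates the defect, precisely because every $\alpha$ with $(\alpha,\gamma)\in\Lambda^{\min}(\mu,\lambda)$ has degree exactly $p$, so $(t_{s(\mu)}-P)t_\alpha=0$. Since your one-step identity sums over all of $s(\mu)\Lambda^{p}$ rather than only over minimal common extensions, your inductive step reduces to (\ref{S1}), (\ref{S2}) and the factorisation property, with no vanishing argument needed there. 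Both proofs are sound; yours handles the ``missing terms'' issue once and up front, which makes the induction purely combinatorial, while the paper's handles it inside each inductive step.
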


\begin{proof}
We prove by induction on $l$. If $l=0$, then the sum in the right-hand side of \eqref{approximation} collapses to the summand $\nu=s(\mu)$ and the result follows from $g\cdot s(\mu)=s(\lambda)$ and $g|_{s(\mu)}=g|_{\D(g)}=g$. So we suppose that \eqref{approximation} holds for 
$l\in \N$. 

Suppose $\nu\in s(\mu)\Lambda^{lp}$. Since $t_{\lambda (g\cdot \nu)}$ is a partial isometry, applying the KMS condition gives
\[\phi\big(t_{\lambda (g\cdot \nu)}u_{g|_\nu}t_{\mu\nu}^*\big)=\phi\big(t_{\lambda (g\cdot \nu)}t_{\lambda (g\cdot \nu)}^*t_{\lambda (g\cdot \nu)}u_{g|_\nu}t_{\mu\nu}^*\big)=\phi\big(t_{\lambda (g\cdot \nu)}u_{g|_\nu}t_{\mu\nu}^*t_{\lambda (g\cdot \nu)}t_{\lambda (g\cdot \nu)}^*\big).\]
Applying (\ref{TCK3}) to $t_{\mu\nu}^*t_{\lambda (g\cdot \nu)}$ then gives
\begin{align}\label{equ-addi-00}
\phi\big(t_{\lambda (g\cdot \nu)}u_{g|_\nu}t_{\mu\nu}^*\big)&=\phi\Big(t_{\lambda (g\cdot \nu)}u_{g|_\nu}\Big(\sum_{(\eta,\zeta)\in \Lambda^{\min}(\mu\nu,\lambda(g\cdot \nu))}t_{\eta}t_{\zeta}^*\Big)t_{\lambda (g\cdot \nu)}^*\Big).
\end{align}
Since $r(\eta)=s(\nu)=\D(g|_\nu)$, 
the relation \eqref{UT} implies that 
 $u_{g|_\nu}t_{\eta}=t_{(g|_\nu)\cdot \eta}u_{g|_{\nu\eta}}$. Since $r((g|_\nu)\cdot \eta)=\c(g|_\nu)=s(g\cdot \nu)$, we have $t_{\lambda (g\cdot \nu)}u_{g|_\nu}t_{\eta}=t_{\lambda (g\cdot \nu)(g|_\nu)\cdot \eta}u_{g|_{\nu\eta}}$, which is equal $t_{\lambda (g\cdot (\nu\eta))}$ by (\ref{S1}). Putting this in \eqref{equ-addi-00} gives
\begin{align}\label{komaki}
\phi\big(t_{\lambda (g\cdot \nu)}u_{g|_\nu}t_{\mu\nu}^*\big)
&=\phi\Big(\sum_{(\eta,\zeta)\in \Lambda^{\min}(\mu\nu,\lambda(g\cdot \nu))}t_{\lambda (g\cdot (\nu\eta))}u_{g|_{\nu\eta}}t_{\lambda (g\cdot \nu)\zeta}^*\Big).
\end{align}

We now use the inductive hypothesis and \eqref{komaki} to write 
\begin{align}\label{j1 induction}
\phi(t_\lambda u_g t_\mu^*)\notag&=\sum_{\nu\in s(\mu)\Lambda^{pl}} \sum_{(\eta,\zeta)\in \Lambda^{\min}(\mu\nu,\lambda(g\cdot \nu))}\phi\big (t_{\lambda (g\cdot (\nu \eta))}u_{g|_{\nu\eta}}t_{\lambda (g\cdot \nu)\zeta}^*\big)\\
&=\sum_{\nu\in s(\mu)\Lambda^{pl}} \sum_{(\eta,\zeta)\in \Lambda^{\min}(\mu\nu,\lambda(g\cdot \nu))}\phi\big (t_{\lambda (g\cdot (\nu \eta))}u_{g|_{\nu\eta}}t_{\mu\nu\eta}^*\big).
\end{align}
We claim that \eqref{j1 induction} is equal to the expression 
\begin{equation}\label{j1 induction 2}
\sum_{\tau\in s(\mu)\Lambda^{p(l+1)}} \phi\big (t_{\lambda (g\cdot \tau)}u_{g|_{\tau}}t_{\mu\tau}^*\big).
\end{equation}
To see this, first fix $(\eta,\zeta)\in \Lambda^{\min}(\mu\nu,\lambda(g\cdot \nu))$. Since
\[d(\mu\nu)+d(\eta)=d(\mu\nu)\vee d(\lambda (g\cdot \nu))=(d(\mu)+d(\nu))\vee(d(\lambda) +d(\nu))=(d(\mu)\vee d(\lambda)) +d(\nu),\]
we have $d(\eta)=(d(\mu)\vee d(\lambda)) -d(\mu)=p$. Hence $d(\nu\eta)=(l+1)p$, and it follows that every summand in \eqref{j1 induction} appears in \eqref{j1 induction 2}. 

Now suppose $\tau\in s(\lambda)\Lambda^{(l+1)p}$, and $\phi(t_{\lambda (g\cdot \tau)}u_{g|_\nu}t_{\mu\tau}^*)\neq 0$. Then by the KMS condition we have
\[
\phi(t_{\lambda (g\cdot \tau)}u_{g|_\nu}t_{\mu\tau}^*)\neq 0 \implies \phi(t_{\mu\tau}^*t_{\lambda (g\cdot \tau)}u_{g|_\nu})\neq 0 \implies t_{\mu\tau}^*t_{\lambda (g\cdot \tau)} \implies \Lambda^{\min}(\mu\tau, \lambda(g\cdot \tau))\not=\emptyset.
\]
Let $(\alpha, \xi)\in\Lambda^{\min}(\mu\tau, \lambda(g\cdot \tau))$. Define
\[
\nu := \tau(0,lp),\quad\eta := \tau(lp,(l+1)p),\quad\text{and}\quad\zeta := (g\cdot \tau)(lp,lp+(d(\mu)\vee d(\lambda))-d(\lambda)).
\]
Then
\[
\mu\nu\eta=(\mu\tau\alpha)(0,(d(\mu)\vee d(\lambda))+lp)=(\lambda(g\cdot\tau)\xi)(0,(d(\mu)\vee d(\lambda))+lp)=\lambda(g\cdot \nu)\zeta.
\]
Hence $(\eta,\zeta)\in \Lambda^{\min}(\mu\nu,\lambda(g\cdot \nu))$, and $\phi\big (t_{\lambda (g\cdot \tau)}u_{g|_{\tau}}t_{\mu\tau}^*\big)=\phi\big (t_{\lambda (g\cdot (\nu \eta))}u_{g|_{\nu\eta}}t_{\mu\nu\eta}^*\big)$. It follows that the expressions \eqref{j1 induction} and \eqref{j1 induction 2} agree.
\end{proof}

\begin{proof} [Proof of Proposition~\ref{prop:algebraic-chara}]
For part~\eqref{prop:algebraic-chara-1}, suppose that $\phi\circ \psi_0$ is a trace and that $\phi$ satisfies \eqref{char:span}. To see that $\phi$ is a KMS$_\beta$-state, it suffices to take two spanning elements $b=t_\lambda u_g t_\mu^*$ and $c=t_\nu u_h t_\xi^*$ with $s(\lambda)=g\cdot s(\mu)$ and $s(\nu)=h\cdot s(\xi)$ and check the KMS condition
\begin{equation}\label{KMScondbc}
\phi(bc)=\phi(c\sigma_{i\beta}(b))=e^{-\beta r\cdot (d(\lambda)-d(\mu))}\phi(cb).
\end{equation}
We compute using (\ref{TCK3}) in the first equality, and \eqref{UT}, \eqref{UT*} in the second, to get
\[
bc=t_\lambda u_g \Big(\sum_{(\eta,\zeta)\in \Lambda^{\min}(\mu,\nu)}t_\eta t_\zeta^*\Big) u_h t_\xi^*=\sum_{(\eta,\zeta)\in \Lambda^{\min}(\mu,\nu)}t_{\lambda(g\cdot \eta)}u_{g|_\eta }u_{h|_{h^{-1}\cdot \zeta}}t_{\xi(h^{-1}\cdot \zeta)}^*.
\]
Applying (\ref{S3}) now gives 
\[
bc=\sum_{(\eta,\zeta)\in \Lambda^{\min}(\mu,\nu)}t_{\lambda(g\cdot \eta)}u_{g|_\eta h|_{h^{-1}\cdot \zeta}}t_{\xi(h^{-1}\cdot \zeta)}^*.
\]
We now apply $\phi$ to both sides, and it follows from \eqref{char:span} that
 \begin{align}\label{char:left}
\phi(bc)
&=\sum_{\substack{(\eta,\zeta)\in \Lambda^{\min}(\mu,\nu) \\ \lambda(g\cdot \eta)=\xi(h^{-1}\cdot \zeta)}}e^{-\beta r\cdot d(\lambda(g\cdot \eta))}\phi\big(u_{g|_\eta h|_{h^{-1}\cdot \zeta}}\big).
\end{align}
A similar argument gives 
\begin{align}\label{char:right}
\phi(cb)&=\sum_{\substack{(\alpha,\tau)\in \Lambda^{\min}(\xi,\lambda) \\ \nu(h\cdot \alpha)=\mu(g^{-1}\cdot \tau)}}e^{-\beta r\cdot d(\nu(h\cdot \alpha))}\phi\big(u_{h|_\alpha g|_{g^{-1}\cdot \tau}}\big).
\end{align}
Next suppose that $(\eta,\zeta)\in \Lambda^{\min}(\mu,\nu)$ with $ \lambda(g\cdot \eta)=\xi(h^{-1}\cdot \zeta)$. Then since $d(g\cdot \eta)\wedge d(h^{-1}\cdot \zeta)= d(\eta)\wedge d(\zeta)=0$ and $d(\lambda)+d(g\cdot \eta)=d(\xi)+d(h^{-1}\cdot \zeta)$, \cite[Lemma~3.2]{aHLRS} implies that $d(\lambda)+d(g\cdot \eta)=d(\lambda)\vee d(g\cdot \eta)$ and $(h^{-1}\cdot \zeta, g\cdot \eta)\in\Lambda^{\min}(\xi,\lambda)$. Since $\nu(h\cdot (h^{-1}\cdot \zeta))=\mu(g^{-1}\cdot (g\cdot \eta))$, we see that $(h^{-1}\cdot \zeta, g\cdot \eta)$ belongs to the index of the sum in \eqref{char:right}. Similarly, we can show that for each $(\alpha,\tau)$ in the index set of the sum in \eqref{char:right}, $( g^{-1}\cdot \tau,h\cdot \alpha)$ is in the the index of the sum in \eqref{char:left}. Thus the map $(\eta,\zeta)\mapsto (h^{-1}\cdot \zeta, g\cdot \eta)$ is a bijection between the two index sets. 

Now take $(\eta,\zeta)$ in the index set of \eqref{char:left} and let $(\alpha,\tau)= (h^{-1}\cdot \zeta, g\cdot \eta)$ be the corresponding element in the index of \eqref{char:right}. Then $h|_{h^{-1}\cdot \zeta}=h|_\alpha$ and $g|_{g^{-1}\cdot \tau}=g|_\eta$ and the tracial property of $\phi$ gives $\phi\big(u_{h|_\alpha g|_{g^{-1}\cdot \tau}}\big)=\phi\big(u_{g|_\eta h|_{h^{-1}\cdot \zeta}}\big)$. Moreover since $d(g^{-1}\cdot \tau)=d(\tau)=d(\eta)$, we have
\[e^{-\beta r\cdot (d(\lambda)-d(\mu))}e^{-\beta r\cdot d(\nu(g\cdot \alpha))}=e^{-\beta r\cdot (d(\lambda)-d(\mu))}e^{-\beta r\cdot d(\mu(g^{-1}\cdot \tau))}=e^{-\beta r\cdot (d(\lambda)+d(\eta))}=e^{-\beta r\cdot d(\lambda(g\cdot \eta))}.\]
Thus the summands in both sides of \eqref{KMScondbc} are equal, as required. 

For \eqref{prop:algebraic-chara-2}, since each $u_g$ is fixed by the action, the KMS condition implies that $\phi\circ \psi_0$ is a trace on $C^*(G)$. To see that \eqref{KMScondbc} holds, first assume that $\lambda\not=\mu$. If $d(\lambda)=d(\mu)$, then applying the KMS condition gives $\phi(t_\lambda u_g t_\mu^*)=e^{-\beta r\cdot d(\mu)}\phi(t_\mu^*t_\lambda u_g)=0$. If $d(\lambda)\not= d(\mu)$, then two applications of the KMS condition gives $\phi(t_\lambda u_g t_\mu^*)=e^{-\beta r\cdot (d(\mu)-d(\lambda))}\phi(t_\lambda u_g t_\mu^*)$. Since $r$ has rationally independent coordinates we have $e^{-\beta r(\cdot d(\mu)-d(\lambda))}\not= 0$, and hence $\phi(t_\lambda u_g t_\mu^*)=0$. Now assume $\c(g)\not=\D(g)$. Since $t_\lambda u_g t_\mu^*$ is nonzero only if $s(\mu)=\D(g)$ and $\c(g)=s(\lambda)$, we see that $t_\lambda u_g t_\mu^*\not=0 \implies \lambda\not=\mu$, and then by the argument above we know that $\phi(t_\lambda u_g t_\mu^*)=0$. We now assume that $\lambda=\mu$ and $\c(g)=\D(g)$. Then the KMS condition gives $\phi(t_\lambda u_g t_\mu^*)=e^{-\beta r\cdot d(\mu)}\phi(t_\mu^*t_\mu u_g)=e^{-\beta r\cdot d(\mu)}\phi(u_g)$. Hence \eqref{KMScondbc} holds.

We now need to prove \eqref{prop:algebraic-chara-3}.  By parts~\eqref{prop:algebraic-chara-1} and \eqref{prop:algebraic-chara-2}, we need only prove that if $\phi$ is a KMS$_\beta$-state, then $\phi$ satisfies \eqref{char:span}. We can use the arguments in the preceding paragraph to prove all cases except the implication $\lambda\not=\mu \implies \phi(t_\lambda u_g t_\mu^*)=0$. When $d(\lambda)= d(\mu)$, we can use the KMS condition as above to get $\phi(t_\lambda u_g t_\mu^*)=0$. So we suppose that $d(\lambda)\neq d(\mu)$. Then one of $p:=(d(\lambda)\vee d(\mu))-d(\mu)$ and $q:=(d(\lambda)\vee d(\mu))-d(\lambda)$ will be nonzero. We assume $p\neq 0$.  (Taking adjoints gives $q\neq 0$.) A computation using \eqref{approximation} and the Cauchy-Schwarz inequality gives
\begin{align*}
\big|\phi(t_\lambda u_g t_\mu^*)\big|&\leq \sum_{\nu\in s(\lambda)\Lambda^{pl}}\big| \phi\big(t_{\lambda\nu}u_{g|_{g^{-1}\cdot \nu}}t_{\mu(g^{-1}\cdot \nu)}^*\big)\big|\\
&\leq \sum_{\nu\in s(\lambda)\Lambda^{pl}}\sqrt{\phi\big(t_{\lambda\nu}u_{\c(g|_{g^{-1}\cdot \nu})}t_{\lambda\nu}^*\big)\phi\big(t_{\mu(g^{-1}\cdot \nu)}t_{\mu(g^{-1}\cdot \nu)}^*\big)}\\
&= \sum_{\nu\in s(\lambda)\Lambda^{pl}}\sqrt{\phi(t_{\lambda\nu}u_{s(\nu)}t_{\lambda\nu}^*)\phi\big(t_{\mu(g^{-1}\cdot \nu)}t_{\mu(g^{-1}\cdot \nu)}^*\big)}.
\end{align*}
Since $d(g^{-1}\cdot \nu)=d(\nu)$, applying the KMS condition gives us
\[
\big|\phi(t_\lambda u_g t_\mu^*)\big|\le \sum_{\nu\in s(\lambda)\Lambda^{pl}}\sqrt{e^{-\beta r\cdot(d(\lambda \nu)+d(\mu \nu))}\phi(u_{s(\nu)})\phi(u_{s(g^{-1}\cdot \nu)})}.
\]
We can now use Lemma~\ref{lemma-kms-g-invariance}\eqref{kms-action-inv-2} to see that each $\phi(u_{s(\nu)})=\phi(u_{s(g^{-1}\cdot \nu)})$, and hence
\[
\big|\phi(t_\lambda u_g t_\mu^*)\big| =e^{-\frac{\beta}{2} r\cdot(d(\lambda)+d(\mu))}\sum_{\nu\in s(\lambda)\Lambda^{pl}}e^{-\beta r\cdot d(\nu)}\phi(u_{s(\nu)}).
\]
The argument of the last paragraph in \cite[Theorem~5.1]{aHLRS} now shows that the right-hand side vanishes as $l\to \infty$.
\end{proof}

\section{KMS states of $\TT(G,\Lambda)$ for large inverse temperatures}\label{sec:largetemp}

We now state our main result for the KMS structure of the Toeplitz algebra of a self-similar groupoid action on a $k$-graph.

\begin{thm}\label{Thm:KMS_beta_tau}
Let $\Lambda$ be a finite $k$-graph with no sources, and $(G,\Lambda)$ a self-similar groupoid action. For $1 \leq i\leq k$, let $B_i$ be the matrix with entries $B_i(v, w) = |v\Lambda^{e_i}w|$ and let 
 $\rho(B_i)$ be the spectral radius of $B_i$. Take $r\in (0,\infty)^k$ and let $\sigma: \R \to \Aut\TT(G,\Lambda)$ be the dynamics of \eqref{action on T}.
Suppose that $\beta r_i> \ln\rho(B_i)$ for all $1\leq i\leq k$. 
 \begin{enumerate}[label={(\arabic*)},ref={\arabic*}]
 \item\label{Zconverges} If $\tau$ is tracial state on $C^*(G)$, then the series
\begin{equation}\label{suminthm}
\sum_{p\in\N^k} \sum_{\lambda \in \Lambda^p} e^{-\beta r\cdot p} \tau(i_{s(\lambda)})
\end{equation}
converges to a positive number $Z(\beta,\tau)$, and there is a KMS$_\beta$-state $\phi_{\tau}$ of $(\TT(G,\Lambda),\sigma)$ such that
\begin{equation}\label{KMS_tau_formula}
\phi_{\tau}(t_\lambda u_g t_\mu^*)=\delta_{\lambda,\mu}Z(\beta,\tau)^{-1} \sum_{p\geq d(\lambda)}e^{-\beta r\cdot p} \sum_{\{\nu \in s(\lambda)\Lambda^{p-d(\lambda)}: \, g \cdot \nu=\nu\}} \tau\big(i_{g|_\nu}\big) .
\end{equation}
\item\label{KMS_tau_affine} 
The map $\tau \mapsto \phi_{\tau}$ 
is an affine isomorphism of the simplex of tracial states of $C^*(G)$ onto the simplex of KMS$_\beta$-states of $(\TT(G,\Lambda),\sigma)$.
\end{enumerate}
\end{thm}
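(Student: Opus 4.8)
The plan is to dispatch part~\eqref{Zconverges} first. Convergence of the series \eqref{suminthm} is a Perron--Frobenius estimate: since $\tau$ is a state and the $i_{s(\lambda)}$ are positive, $\sum_{\lambda\in\Lambda^p}\tau(i_{s(\lambda)})=\sum_{v\in\Lambda^0}|\Lambda^p v|\,\tau(i_v)$, and the commuting adjacency matrices give a bound $|\Lambda^p v|\leq C\prod_i\rho(B_i)^{p_i}$, so each term is dominated by $C\prod_i\big(e^{-\beta r_i}\rho(B_i)\big)^{p_i}$, which is summable exactly because $\beta r_i>\ln\rho(B_i)$; the $p=0$ term is $\tau(1)=1$, whence $Z(\beta,\tau)>0$. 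To produce $\phi_\tau$ I would avoid checking positivity of \eqref{KMS_tau_formula} directly and instead realise it as a vector-type state: take the GNS triple $(H_\tau,\pi_\tau,\xi_\tau)$ of $\tau$ on $C^*(G)=M_0$, induce the Fock representation of $\TT(G,\Lambda)=\NT\big(M(G,\Lambda)\big)$ on $\bigoplus_{p\in\N^k}M_p\otimes_{C^*(G)}H_\tau$, and set $\phi_\tau:=Z(\beta,\tau)^{-1}\sum_{p}e^{-\beta r\cdot p}\sum_{\lambda\in\Lambda^p}\big\langle\,\cdot\,(\delta_\lambda\otimes\xi_\tau),\delta_\lambda\otimes\xi_\tau\big\rangle$. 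This is manifestly a positive normalised functional; a short computation on the spanning elements $t_\lambda u_g t_\mu^*$ returns \eqref{KMS_tau_formula}, shows $\phi_\tau\circ\psi_0$ is a trace, and verifies \eqref{char:span}, so that Proposition~\ref{prop:algebraic-chara}\eqref{prop:algebraic-chara-1} certifies $\phi_\tau$ as a KMS$_\beta$-state.

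For part~\eqref{KMS_tau_affine} the decisive tool is Proposition~\ref{prop:algebraic-chara}\eqref{prop:algebraic-chara-3}: in the regime $\beta r_i>\ln\rho(B_i)$ a state is a KMS$_\beta$-state precisely when it obeys \eqref{char:span} and restricts to a trace on $C^*(G)$. Because \eqref{char:span} writes $\phi(t_\lambda u_g t_\mu^*)$ entirely in terms of $\phi\circ\psi_0$, the restriction map $F\colon\phi\mapsto\phi\circ\psi_0$ is injective on the KMS$_\beta$-simplex. It therefore suffices to exhibit a map back from KMS$_\beta$-states to tracial states that inverts $\tau\mapsto(\phi_\tau\circ\psi_0)$.

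The inverse I would build from the gap projections $Q_v:=\prod_{i=1}^k\big(t_v-\sum_{\lambda\in v\Lambda^{e_i}}t_\lambda t_\lambda^*\big)$ and their sum $Q:=\sum_{v\in\Lambda^0}Q_v$. Using \eqref{UT} one checks $u_g Q_{\D(g)}u_g^*=Q_{\c(g)}$, so $Q$ commutes with every $u_g$, i.e. $Q$ is central for $\psi_0(C^*(G))$. Consequently, for any KMS$_\beta$-state $\phi$ the functional $\tau_\phi:=\phi(Q)^{-1}\,\phi\big(Q\,\psi_0(\cdot)\big)$ is automatically a tracial state (positivity and traciality are inherited from the central projection $Q$ and the trace $\phi\circ\psi_0$), and $\phi(Q)>0$ since otherwise $\phi$ would factor through $\OO(G,\Lambda)$, which carries no KMS$_\beta$-state for $\beta r_i>\ln\rho(B_i)$. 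The heart of the matter is then the pair of inclusion--exclusion identities $\tau_{\phi_\tau}=\tau$ and $\phi_{\tau_\phi}\circ\psi_0=\phi\circ\psi_0$: since $Q$ implements the $k$-fold defect $\prod_i\big(1-e^{-\beta r_i}B_i\big)$ inverting the resolvent $\sum_p e^{-\beta r\cdot p}B^p$ that appears in \eqref{KMS_tau_formula}, evaluating $\phi(Q_{\c(g)}u_g)$ via \eqref{char:span} collapses the defining sum and returns $\tau$. Granting these, injectivity of $\tau\mapsto\phi_\tau$ is immediate, and surjectivity follows because for any KMS$_\beta$-state $\phi$ the states $\phi$ and $\phi_{\tau_\phi}$ share the restriction $\phi\circ\psi_0$ and both satisfy \eqref{char:span}, whence $\phi=\phi_{\tau_\phi}$ by Proposition~\ref{prop:algebraic-chara}\eqref{prop:algebraic-chara-3}. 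Affineness and weak-$*$ continuity come from \eqref{KMS_tau_formula} together with the affineness of $F$ identifying the KMS-simplex with a face of the trace simplex.

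The main obstacle is this inclusion--exclusion computation in the presence of the self-similar action. In the bare $k$-graph case of \cite{aHLRS} the defect $\prod_i(1-e^{-\beta r_i}B_i)$ and the resolvent $\sum_p e^{-\beta r\cdot p}B^p$ act on measures on $\Lambda^0$, but here each is twisted by the groupoid: the relevant transfer operators run over the fixed paths $\{\nu:g\cdot\nu=\nu\}$ and carry the restrictions $g|_\nu$, so one must verify that the twisted $B_i$ still commute (this rests on the $k$-graph factorisation together with the square conditions of the action) and that $\phi(Q_{\c(g)}u_g)$ genuinely realises the inverse defect at the level of $C^*(G)$, not merely on $\Lambda^0$. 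Carrying this groupoid bookkeeping through the lattice $\N^k$ and reconciling it with the Perron--Frobenius data of \cite{aHLRS} is the technical crux of the argument.
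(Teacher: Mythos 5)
Your proposal follows essentially the same route as the paper's own proof. Part~(\ref{Zconverges}) is the paper's argument: realise \eqref{KMS_tau_formula} as a sum of vector states in a GNS/Fock-type representation attached to $\tau$, then invoke Proposition~\ref{prop:algebraic-chara}\eqref{prop:algebraic-chara-1}. In part~(\ref{KMS_tau_affine}) your $Q$ is the paper's projection $P$ of \eqref{def-p}, your $\tau_\phi$ is the paper's $\omega_{P,\phi}\circ\psi_0$ from Lemma~\ref{Lemma:phi-P} (these agree because $Pu_g=u_gP$ and $P^2=P$ give $\phi(Pu_gP)=\phi(Pu_g)$), and your two ``heart of the matter'' identities are precisely the paper's injectivity computation and its reconstruction formula \eqref{KMS-reconstr-formula}. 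The only methodological difference is in how these are verified: the paper gets $\tau_{\phi_\tau}=\tau$ in one line by observing that $\pi_{U,T}(P)$ is the projection onto the $p=0$ summand of the Fock space, and proves the reconstruction formula via the projections $M_q$ of \eqref{def-Mq} and the claim $\phi(M_q)\to1$; your purely algebraic inclusion--exclusion through \eqref{char:span} can be made to work, but it ultimately reduces to the same Perron--Frobenius estimate.

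The one genuine flaw is your justification that $\phi(Q)>0$. You argue that $\phi(Q)=0$ would force $\phi$ to factor through $\OO(G,\Lambda)$. That implication does not follow: $Q_v=\prod_{i=1}^k\big(t_v-\sum_{\lambda\in v\Lambda^{e_i}}t_\lambda t_\lambda^*\big)$ is a \emph{subprojection} of each generator $t_v-\sum_{\lambda\in v\Lambda^{e_i}}t_\lambda t_\lambda^*$ of the ideal \eqref{Cuntz-quotient}, and a state can vanish on a subprojection without vanishing on the larger projection, so $\phi(Q)=0$ gives no control over the ideal. (The other half of your claim --- that $\OO(G,\Lambda)$ has no KMS$_\beta$-states in this regime --- is true, and has a short direct proof: relation (\ref{CK}) and the KMS condition force $B_im^\phi=e^{\beta r_i}m^\phi$ for the nonnegative vector $m^\phi_v=\phi(u_v)$, which is impossible for a nonzero nonnegative vector when $e^{\beta r_i}>\rho(B_i)$; but with the first implication broken you never get to apply it.) The repair is a computation you have already scheduled as your ``technical crux'', and it needs no twisting by the groupoid since no $u_g$ with $g\notin\Lambda^0$ occurs: by \cite[Lemma~4.2]{aHLRS} and \eqref{char:span} one has $\phi(Q_v)=\big(\prod_{i=1}^k(1-e^{-\beta r_i}B_i)m^\phi\big)_v\ge 0$, and each factor $1-e^{-\beta r_i}B_i$ is invertible because $e^{-\beta r_i}\rho(B_i)<1$, so $\phi(Q)=0$ would force $m^\phi=0$, contradicting $\sum_{v}m^\phi_v=\phi(1)=1$. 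This is exactly how the paper's Lemma~\ref{Lemma:phi-P} (via $\phi(M_q)\to1$) secures positivity; so the gap is reparable with tools you already intended to use, but as written the step fails.
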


\begin{remark}\label{rem:refsofrproductsystems}
We will use direct methods to prove Theorem~\ref{Thm:KMS_beta_tau} by considering the generators-and-relations model of $\TT(G,\Lambda)$, as in Proposition~\ref{prop:uniforT}. We point out here that there are results for equilibrium states of product system $C^*$-algebras in the literature \cite{ALN,Kak} that could give alternative proofs.
\end{remark}

\begin{proof}[Proof of  Theorem~\ref{Thm:KMS_beta_tau}\eqref{Zconverges}]
Since $0\leq \tau(i_{s(\lambda)})\leq 1$ for all $\lambda$, we have
\[\sum_{p\in\N^k} \sum_{\lambda \in \Lambda^p} e^{-\beta r\cdot p} \tau(i_{s(\lambda)})=\sum_{v\in \Lambda^0}\sum_{p\in\N^k} \sum_{\lambda \in \Lambda^pv} e^{-\beta r\cdot p} \tau(i_{s(\lambda)})\le \sum_{v\in \Lambda^0}\sum_{p\in\N^k} \sum_{\lambda \in \Lambda^pv} e^{-\beta r\cdot p}.\]
We know from \cite[Theorem~6.1(a)]{aHLRS} that $\sum_{p\in\N^k} \sum_{\lambda \in \Lambda^pv} e^{-\beta r\cdot p}$ converges for all $v\in \Lambda^0$, so the series in \eqref{suminthm} converges because $\Lambda^0$ is finite.

To see the second assertion, we follow the structure of \cite[Theorem~6.1]{lrrw}. Let $\tau$ be a tracial state of $C^*(G)$, and let $(\pi_\tau, \xi_\tau, K_\tau)$ be the GNS-triple corresponding to $\tau$.
For $p\in \N^k$, let $X(\Lambda^p)$ be the graph correspondence of the directed graph $(\Lambda^0,\Lambda^p,r,s)$, and $H_p$ the Hilbert space 
$H_p:=X(\Lambda^p)\otimes_{C(\Lambda^0)} K_\tau$. Let $H:=\bigoplus_{p\in \N^k}^\infty H_p$. Observe that 
\[
H=\clsp\{\delta_\mu\otimes \kappa:\mu\in \Lambda, \kappa\in K_\tau\text{ and }\pi_\tau(i_{s(\mu)})\kappa=\kappa\}. 
\]
A similar argument to the proof of \cite[Lemma~6.2]{lrrw} shows that there is a unitary representation $U: G\to U(H)$, and 
a family $\{T_\lambda:\lambda\in \Lambda\}\subseteq B(H)$ 
such that 
\begin{equation}\label{Fock-U}
U_g(\delta_\mu \otimes \kappa)= \begin{cases}\delta_{g \cdot \mu }\otimes \pi_\tau (i_{g|_\mu})\kappa & \text {if } r(\mu) =\D(g)\\
0 &\text{ otherwise,}
\end{cases}
\end{equation}
\begin{equation}\label{Fock-T}
T_\lambda(\delta_\mu\otimes \kappa)=\delta_{s(\lambda),r(\mu)} (\delta_{\lambda\mu}\otimes \kappa),
\end{equation}
and that $(U,T)$ satisfies the relations \eqref{us1}--\eqref{us3} of Proposition~\ref{prop:universal}. To see that $(U,T)$ satisfies \eqref{us2}, note that 
the operator $T_\lambda$ is adjointable with
\begin{equation}\label{adjoint-T}
T^*_\lambda(\delta_\nu \otimes \kappa)= \begin{cases}
\delta_\xi \otimes \kappa & \text {if }\nu = \lambda \xi\\
0 &\text{ otherwise.}
\end{cases}
\end{equation}
Straightforward calculations show that $\{T_\lambda:\lambda\in\Lambda\}$ is a family of partial isomorphisms satisfying (\ref{TCK1}) and (\ref{TCK2}). 

 For (\ref{TCK3}), take $\lambda,\mu\in \Lambda$. The formulas \eqref{Fock-T} and \eqref{adjoint-T} imply that
\begin{equation}\label{right-hand Fock}
T^*_\lambda T_\mu(\delta_\nu \otimes \kappa)= \begin{cases}
\delta_\xi\otimes\kappa & \text {if $r(\nu) =s(\mu)$ and $\mu\nu=\lambda \xi$}\\
0 &\text{ otherwise.}
\end{cases}
\end{equation}
On the other hand, we have
\begin{equation}\label{left-hand Fock}
\sum_{(\eta,\zeta)\in \Lambda^{\min}(\lambda,\mu)}T_\eta T^*_\zeta(\delta_\nu \otimes \kappa)= \begin{cases}
\delta_{\alpha \nu''}\otimes \kappa & \text {if $r(\nu) =s(\mu)$, $\nu=\nu'\nu''$ and $(\alpha,\nu')\in \Lambda^{\min}(\lambda,\mu)$ }\\
0 &\text{ otherwise.}
\end{cases}
\end{equation}
Now if $\mu\nu=\lambda\xi$, then we can factor $\nu=\nu'\nu''$ and $\xi=\alpha\nu''$ such that $(\alpha,\nu')\in \Lambda^{\min}(\lambda,\mu)$. Conversely, if $r(\nu) =s(\mu)$, $\nu=\nu'\nu''$ and $(\alpha,\nu')\in \Lambda^{\min}(\lambda,\mu)$, then $\mu\nu=\mu\nu'\nu''=\lambda\alpha \nu''$ and with $\xi=\alpha \nu''$ the right-hand sides of \eqref{right-hand Fock} and \eqref{left-hand Fock} are equal. This gives (\ref{TCK3}).

We can now use the universal property of $\TT(G,\Lambda)$ to get a homomorphism $\pi_{U,T}:\TT(G,\Lambda)\to B(H)$. For $a\in \TT(G,\Lambda)$, let
\begin{equation}\label{psi_a}
\phi_{\tau}(a):={Z(\beta,\tau)\inv} \sum_{p\in \N^k} e^{-\beta r\cdot p} \sum_{\lambda \in \Lambda^p} \big( \pi_{U,T}(a)(\delta_\lambda\otimes \xi_\tau) \mid \delta_\lambda \otimes \xi_\tau\big).
\end{equation}
Since 
\begin{align*}
\phi_{\tau}(1)&=Z(\beta,\tau)\inv \sum_{p\in\N^k}e^{-\beta r\cdot p} \sum_{\lambda \in \Lambda^p} \big(\delta_\lambda\otimes \xi_\tau \mid \delta_\lambda \otimes \xi_\tau\big) \\
&=Z(\beta,\tau)\inv \sum_{p\in\N^k}e^{-\beta r\cdot p} \sum_{\lambda \in \Lambda^p}\tau(i_{s(\lambda)})\\
&=1, 
\end{align*} 
the function $a\mapsto \phi_{\tau}(a)$ is a state of $\TT(G,\Lambda)$. Since the action $G$ on $\Lambda$ is degree preserving, a similar argument to \cite[Theorem~6.1(2)]{LRRW} shows that the sum in \eqref{psi_a} reduces to \eqref{KMS_tau_formula}; $ \phi_{\tau}\circ \psi_0$ is a trace; and $ \phi_{\tau}$ satisfies \eqref{char:span}. Thus
 $ \phi_{\tau}$ is a KMS$_\beta$-state for $(\TT(G,\Lambda),\sigma)$.
\end{proof}

To prove Theorem~\ref{Thm:KMS_beta_tau}\eqref{KMS_tau_affine}  we need two lemmas.

\begin{lemma}\label{projection}
Let $\Lambda$ be a finite $k$-graph with no sources, and $(G,\Lambda)$ a self-similar groupoid action. Let
 \begin{align}\label{def-p}
P:= \sum_{v\in \Lambda^0} \prod_{i=1}^k\Big(u_v - \sum_{\lambda\in v\Lambda^{e_i} } t_\lambda t_\lambda^*\Big),
\end{align}
and for each $q\in \N^k$, define 
\begin{align}\label{def-Mq}
M_q := \sum_{0\leq p\leq q} \sum_{\lambda\in \Lambda^p} t_\lambda P t_\lambda^*.
\end{align}
Then $P$ and $M_q$ are projections in $\TT(G,\Lambda)$, and we have 
\begin{align}\label{pug}
Pu_g=u_g P \quad \text{ for all } g\in G. 
\end{align}
\end{lemma}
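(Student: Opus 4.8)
The plan is to treat the three assertions in turn, building everything on one structural observation about the range projections. Throughout write $Q_i^v := u_v - \sum_{\lambda \in v\Lambda^{e_i}} t_\lambda t_\lambda^*$, so that $P = \sum_{v\in\Lambda^0}\prod_{i=1}^k Q_i^v$. The first thing I would record is that the range projections $\{t_\lambda t_\lambda^* : \lambda\in\Lambda\}$ commute: relation (TCK3) gives $t_\lambda t_\lambda^* t_\mu t_\mu^* = \sum_{(\eta,\zeta)\in\Lambda^{\min}(\lambda,\mu)} t_{\lambda\eta}t_{\lambda\eta}^*$, which is symmetric in $\lambda,\mu$. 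Since distinct edges of a common colour have $\Lambda^{\min}(\lambda,\mu)=\emptyset$, the projections $\{t_\lambda t_\lambda^* : \lambda\in v\Lambda^{e_i}\}$ are mutually orthogonal and dominated by $u_v=t_v$, so each $Q_i^v$ is a projection with $Q_i^v\le t_v$. As the $Q_i^v$ lie in the commutative algebra generated by the range projections, for fixed $v$ they commute and $\prod_i Q_i^v$ is a projection, while for $v\neq w$ the orthogonality $t_v t_w=0$ gives $\prod_i Q_i^v \perp \prod_i Q_i^w$. Hence $P$ is a sum of mutually orthogonal projections, so $P$ is a projection; the same bookkeeping shows $t_v P=P t_v=\prod_i Q_i^v$.

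For $M_q$ the crucial lemma I would prove is that $P t_\eta=0$ whenever $d(\eta)\neq 0$. Indeed $(\prod_i Q_i^v)t_\eta=0$ unless $v=r(\eta)$, and if $d(\eta)\ge e_i$ then factoring $\eta=\kappa\eta'$ with $\kappa\in r(\eta)\Lambda^{e_i}$ and using (TCK2)--(TCK3) yields $\sum_{\kappa'\in r(\eta)\Lambda^{e_i}} t_{\kappa'}t_{\kappa'}^* t_\eta = t_\eta$, so $Q_i^{r(\eta)}t_\eta=0$ and therefore $(\prod_i Q_i^{r(\eta)})t_\eta=0$. Granting this, each $t_\lambda P t_\lambda^*$ is a projection (from $P t_{s(\lambda)}P=t_{s(\lambda)}P$ and $t_\lambda t_{s(\lambda)}=t_\lambda$), and for $\lambda\neq\mu$ I would expand $t_\lambda P t_\lambda^* t_\mu P t_\mu^* = \sum_{(\eta,\zeta)\in\Lambda^{\min}(\lambda,\mu)} t_\lambda (P t_\eta)(P t_\zeta)^* t_\mu^*$ by (TCK3); since $\lambda\neq\mu$ forces $d(\eta)\neq 0$ or $d(\zeta)\neq 0$ in every term of the sum, the lemma kills each summand. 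Thus the $t_\lambda P t_\lambda^*$ are mutually orthogonal projections, and $M_q$, a finite sum of them by finiteness of $\Lambda$, is a projection.

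Finally, for $P u_g=u_g P$ I would first collapse the sums: since $\prod_i Q_i^v\le t_v$ together with $u_g t_v=\delta_{\D(g),v}u_g$ and $t_v u_g=\delta_{\c(g),v}u_g$, only one term survives on each side, giving $u_g P = u_g \prod_i Q_i^{\D(g)}$ and $P u_g = \prod_i Q_i^{\c(g)} u_g$. It then suffices to prove $u_g Q_i^{\D(g)} = Q_i^{\c(g)} u_g$ for each $i$ and push $u_g$ through the product. Writing $v=\D(g)$, $w=\c(g)$, I would compute $u_g t_\lambda t_\lambda^* = t_{g\cdot\lambda}u_{g|_\lambda}t_\lambda^*$ via (UT) and, running (UT*) in reverse, identify this with $t_\mu t_\mu^* u_g$ where $\mu=g\cdot\lambda$; the degree-preserving bijection $g\colon v\Lambda^{e_i}\to w\Lambda^{e_i}$ then matches the two sums, so $u_g\big(\sum_{\lambda\in v\Lambda^{e_i}} t_\lambda t_\lambda^*\big)=\big(\sum_{\mu\in w\Lambda^{e_i}} t_\mu t_\mu^*\big)u_g$, while $u_g t_v=t_w u_g$ handles the leading term; subtracting gives $u_g Q_i^v = Q_i^w u_g$. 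I expect the second paragraph --- establishing $P t_\eta=0$ and exploiting it for the orthogonality of the $t_\lambda P t_\lambda^*$ --- to be the main obstacle, since that is where the Toeplitz (as opposed to Cuntz--Krieger) nature of $P$ does the essential work; the relabelling $\lambda\leftrightarrow g\cdot\lambda$ in the last paragraph is the other point demanding care.
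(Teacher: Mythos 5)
Your proof is correct and takes essentially the same route as the paper's: your key lemma that $Pt_\eta=0$ whenever $d(\eta)\neq 0$ is precisely the adjoint of the paper's computation $t_\zeta^*\prod_{i=1}^k\big(u_{s(\mu)}-\sum_{\nu\in s(\mu)\Lambda^{e_i}}t_\nu t_\nu^*\big)=0$, and your proof of $Pu_g=u_gP$ rests on the same two facts the paper uses, namely $u_gt_\lambda t_\lambda^*=t_{g\cdot\lambda}t_{g\cdot\lambda}^*u_g$ and the bijectivity of $\lambda\mapsto g\cdot\lambda$ on paths of fixed degree. The only differences are organisational: you push $u_g$ through the factors $u_v-\sum_{\lambda\in v\Lambda^{e_i}}t_\lambda t_\lambda^*$ one at a time instead of invoking the inclusion--exclusion expansion of $P$ from \cite[Lemma~4.2]{aHLRS}, and your uniform orthogonality argument for the $t_\lambda Pt_\lambda^*$ (in each minimal common extension at least one of $d(\eta)$, $d(\zeta)$ is nonzero) replaces the paper's case analysis on whether $d(\lambda)\vee d(\mu)$ strictly exceeds $d(\mu)$ or $d(\lambda)$.
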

\begin{proof}
Since $u_v - \sum_{\lambda\in v\Lambda^{e_i} } t_\lambda t_\lambda^*$ is a projection and the product in \eqref{def-p} is commuting by \cite[Lemma~4.2]{aHLRS}, we see that $\prod_{i=1}^k\Big(u_v - \sum_{\lambda\in v\Lambda^{e_i} } t_\lambda t_\lambda^*\Big)$ is a projection. Also (\ref{TCK1}) implies that for $v\neq w$, $u_v - \sum_{\lambda\in v\Lambda^{e_i} } t_\lambda t_\lambda^*$ and $u_w - \sum_{\mu\in w\Lambda^{e_i} } t_\mu t_\mu^*$ are mutually orthogonal. It follows that $P$ is a projection. 

We next prove \eqref{pug}. Let $g\in G$. A computation using \eqref{UT} and \eqref{UT*} shows that if $\D(g) = r(\lambda)$, then $u_g t_\lambda t_\lambda^* = t_{g \cdot \lambda} t_{g \cdot \lambda}^* u_g$ (see also \cite[page 297]{LRRW}). For each $v\in \Lambda^0$ and nonempty subset $J$ of $\{1,\dots ,k\}$, let $e_J:=\sum_{j\in J}e_j$ and $b_{v,J}:=\sum_{\mu\in v\Lambda^{e_J}}t_\mu t_\mu^*$. Then \cite[Lemma~4.2]{aHLRS} implies that 
\begin{align}\label{tf-help}
\prod_{i=1}^k\Big(u_v - \sum_{\lambda\in v\Lambda^{e_i}} t_\lambda t_\lambda^*\Big)=u_v+\sum_{\emptyset\neq J\subseteq \{1,\dots ,k\}}(-1)^{|J|}b_{v,J},
\end{align}
 and therefore 
\begin{equation}\label{p-formula-help}
P=\sum_{v\in \Lambda^0}\Big(u_v+\sum_{\emptyset\neq J\subseteq \{1,\dots ,k\}}(-1)^{|J|}b_{v,J}\Big)=1+\sum_{v\in \Lambda^0}\sum_{\emptyset\neq J\subseteq \{1,\dots ,k\}}(-1)^{|J|}b_{v,J}.
\end{equation}
Now we have 
\begin{align*}
u_g P&=u_g +\sum_{\emptyset\neq J\subseteq \{1,\dots ,k\}}(-1)^{|J|}u_g b_{\D(g),J}\\
&=u_g +\sum_{\emptyset\neq J\subseteq \{1,\dots ,k\}}(-1)^{|J|}\sum_{\mu\in \D(g)\Lambda^{e_J}}t_{g\cdot\mu }t_{g\cdot\mu}^*u_g.
\end{align*}
Since for each $J\subseteq \{1,\dots ,k\}$, $\mu\mapsto g\cdot \mu$ is a bijection of $\D(g)\Lambda^{e_J}$ onto $\c(g)\Lambda^{e_J}$, we can rewrite the sum as
\begin{align*}
u_g P&=u_g +\sum_{\emptyset\neq J\subseteq \{1,\dots ,k\}}(-1)^{|J|}\sum_{\nu\in \c(g)\Lambda^{e_J}}t_{\nu}t_{\nu}^*u_g\\
& =\sum_{v\in \Lambda^0}\big(u_v+\sum_{\emptyset\neq J\subseteq \{1,\dots ,k\}}(-1)^{|J|}b_{v,J}\big)u_g\\
&=Pu_g.
\end{align*}

Finally, it remains to show that $M_q$ is a projection. For this, we compute using \eqref{pug}:
\[t_\lambda P t_\lambda^*t_\lambda P t_\lambda^*=t_\lambda P u_{s(\lambda)}Pt_\lambda^*=t_\lambda u_{s(\lambda)}P^2t_\lambda^*= t_\lambda P t_\lambda^*.\]
It follows that each summand in \eqref{def-Mq} is a projection. So we need to show that for $\lambda\neq \mu$, the two summnds $t_\lambda P t_\lambda^*$ and $t_\mu P t_\mu^*$ are orthogonal. By (\ref{TCK3}) it suffices to check this for $\lambda,\mu$ with $r(\lambda)=r(\mu)$ and $d(\lambda)\neq d(\mu)$ (otherwise $\Lambda^{\min}(\lambda,\mu)$ is empty). So we assume $d(\lambda)\neq d(\mu)$. This assumption implies that 
$d(\lambda)\vee d(\mu)$ is strictly bigger that at least one of $d(\lambda)$ and $d(\mu)$. We claim that if $d(\lambda)\vee d(\mu)>d(\mu)$, then $t_\lambda^*t_\mu P=0$. (Note that if $d(\lambda)\vee d(\mu)>d(\lambda)$, we can take adjoints to get $Pt_\lambda^*t_\mu=0$.)

We compute using (\ref{TCK3}):
\begin{equation}\label{Mq-summand}
t_\lambda^*t_\mu P=\Big(\sum_{(\eta,\zeta)\in\Lambda^{\min}(\lambda,\mu) }t_\eta t_\zeta^*\Big) P=\sum_{(\eta,\zeta)\in\Lambda^{\min}(\lambda,\mu) }\Big(t_\eta t_\zeta^*
\prod_{i=1}^k\Big(u_{s(\mu)} - \sum_{\nu\in s(\mu)\Lambda^{e_i} } t_\nu t_\nu^*\Big)\Big).
\end{equation}
Fix $(\eta,\zeta)\in\Lambda^{\min}(\lambda,\mu)$. Since $d(\lambda)\vee d(\mu)>d(\mu)$ and $d(\mu)+d(\zeta)=d(\lambda)\vee d(\mu)$, we have $d(\zeta)>0$. Therefore we can factor $\zeta=e\zeta'$ where $e\in \Lambda^{e_j}$ for some $1\leq j\leq k$. Since 
\[t_e^*\Big(u_{s(\mu)} - \sum_{\nu\in s(\mu)\Lambda^{e_j} } t_\nu t_\nu^*\Big)=t_e^*-t_e^*=0,\]
 we have 
\begin{align*}
t_\zeta^*\prod_{i=1}^k \Big(u_{s(\mu)} - &\sum_{\nu\in s(\mu)\Lambda^{e_i} } t_\nu t_\nu^*\Big)\\
&=t_{\zeta'}^* t_e^*\Big(u_{s(\mu)} - \sum_{\nu\in s(\mu)\Lambda^{e_j} } t_\nu t_\nu^*\Big)\prod_{i=1, i\neq j}^k \Big(u_{s(\mu)} - \sum_{\nu\in s(\mu)\Lambda^{e_i} } t_\nu t_\nu^*\Big)\\
&=0
\end{align*}
Hence $t_\lambda^*t_\mu P=0$, as claimed. 
\end{proof}

\begin{lemma}\label{Lemma:phi-P}
Suppose that $\beta r_i>\ln\rho(B_i)$ for all $1\leq i\leq k$ and let $\phi$ be a KMS$_\beta$-state of $(\TT(G,\Lambda),\sigma)$. Let $P$ be as in \eqref{def-p}. Then there is a state $\omega_{P,\phi}$ of $\TT(G,\Lambda)$ given by
\begin{align}\label{omegap}
\omega_{P,\phi}(a)=\phi(P)\inv \phi(PaP) \quad\text{ for } a\in \TT(G,\Lambda).
\end{align}
Moreover, $\omega_{P,\phi}\circ \psi_0$ is a trace on $C^*(G)$, and we have
\begin{align}\label{KMS-reconstr-formula}
\phi(a)&=\phi(P) \sum_{p\in\N^k}^\infty \sum_{\lambda \in \Lambda^p} e^{-\beta r\cdot p} \omega_{P,\phi}(t_\lambda^* a t_\lambda)\quad \text{ for } a \in \TT(G,\Lambda).
\end{align}
\end{lemma}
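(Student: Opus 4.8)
The plan is to treat the projections $M_q$ of Lemma~\ref{projection} as a net of cut-offs adapted to $\sigma$, and to extract everything from the behaviour of $\phi(M_q c)$ as $q\to\infty$. Writing $Q_p:=\sum_{\lambda\in\Lambda^p}t_\lambda t_\lambda^*$ (a projection, since distinct $\lambda,\mu\in\Lambda^p$ have $\Lambda^{\min}(\lambda,\mu)=\emptyset$), the identity $\sum_{\lambda\in\Lambda^p}t_\lambda Q_{e_J}t_\lambda^*=Q_{p+e_J}$ together with \eqref{p-formula-help} turns \eqref{def-Mq} into a telescoping sum that collapses to $M_q=\prod_{i=1}^k\big(1-Q_{(q_i+1)e_i}\big)$. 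Hence $1-M_q\le\sum_{i=1}^k Q_{(q_i+1)e_i}$, and by the KMS condition $\phi(Q_{ne_i})=e^{-n\beta r_i}\sum_{\lambda\in\Lambda^{ne_i}}\phi(u_{s(\lambda)})\le e^{-n\beta r_i}|\Lambda^{ne_i}|$. Since $\beta r_i>\ln\rho(B_i)$ controls the exponential growth of $|\Lambda^{ne_i}|$, as in the estimate in \cite{aHLRS}, each $\phi(Q_{ne_i})\to0$, so $\phi(M_q)\to1$. As $1-M_q$ is a projection, Cauchy--Schwarz gives $|\phi(c)-\phi(M_q c)|\le\phi(1-M_q)^{1/2}\phi(c^*c)^{1/2}\to0$, i.e.\ $\phi(c)=\lim_q\phi(M_q c)$ for all $c\in\TT(G,\Lambda)$.

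Next I would expand $\phi(M_q c)=\sum_{d(\lambda)\le q}\phi(t_\lambda P t_\lambda^* c)$ and apply the KMS condition to the analytic element $t_\lambda$ (with $\sigma_{i\beta}(t_\lambda)=e^{-\beta r\cdot d(\lambda)}t_\lambda$) to obtain $\phi(t_\lambda P t_\lambda^* c)=e^{-\beta r\cdot d(\lambda)}\phi(P t_\lambda^* c\, t_\lambda)$. The crucial observation is that one may then insert $P$ on the right for free: because $P$ is $\sigma$-invariant and $(1-P)P=0$, applying the KMS condition to $X=P t_\lambda^*$ and $Y=c\,t_\lambda(1-P)$ gives
\[
\phi\big(P t_\lambda^* c\, t_\lambda(1-P)\big)=e^{\beta r\cdot d(\lambda)}\phi\big(c\,t_\lambda(1-P)P t_\lambda^*\big)=0,
\]
so that $\phi(P t_\lambda^* c\, t_\lambda)=\phi(P t_\lambda^* c\, t_\lambda P)$. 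Letting $q\to\infty$ then yields $\phi(c)=\sum_{p\in\N^k}\sum_{\lambda\in\Lambda^p}e^{-\beta r\cdot p}\phi(P t_\lambda^* c\, t_\lambda P)$.

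To finish I would establish $\phi(P)>0$ by taking $c=1$: each summand equals $\phi(P u_{s(\lambda)}P)$, which lies in $[0,\phi(P)]$ since $u_{s(\lambda)}P=P u_{s(\lambda)}$ is a projection dominated by $P$; were $\phi(P)=0$ the whole sum would vanish, contradicting $\phi(1)=1$. Thus \eqref{omegap} defines a genuine state $\omega_{P,\phi}$ (positivity from $\phi(P c^*cP)=\phi((cP)^*(cP))\ge0$ and $\omega_{P,\phi}(1)=1$), and the displayed formula is precisely \eqref{KMS-reconstr-formula}. That $\omega_{P,\phi}\circ\psi_0$ is a trace follows from \eqref{pug} and the $\sigma$-invariance of each $u_g$: using $P u_g=u_g P$ and the KMS condition, $\phi(P u_g u_h)=\phi(u_g u_h P)=\phi(u_h P u_g)=\phi(P u_h u_g)$, and this extends from the spanning unitaries $u_g$ to all of $C^*(G)$ by linearity and continuity.

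I expect the main obstacle to be the convergence $\phi(M_q)\to1$, since this is the only place the hypothesis $\beta r_i>\ln\rho(B_i)$ is genuinely used and it rests on the Perron--Frobenius growth estimate for $|\Lambda^{ne_i}|$ borrowed from \cite{aHLRS}. By contrast, the algebraic heart of the argument, the free insertion of $P$ on the right, is a one-line KMS computation once one notices that $P$ is fixed by $\sigma$ and satisfies $(1-P)P=0$.
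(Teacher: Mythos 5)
Your proof is correct, and it diverges from the paper's argument at both places where the real work happens; in each case your route is more elementary and self-contained. For the convergence $\phi(M_q)\to 1$, the paper computes $\phi(M_q)$ \emph{exactly}: it introduces $m^\phi_v:=\phi(u_v)$ and the vector $W=\prod_{i=1}^k(1-e^{-\beta r_i}B_i)m^\phi$, follows the matrix manipulations of \cite[Lemmas~2.2 and 4.2]{aHLRS}, and sums the Neumann series $\sum_{p_i}e^{-\beta r_ip_i}B_i^{p_i}=(1-e^{-\beta r_i}B_i)^{-1}$. You instead prove the algebraic identity $M_q=\prod_{i=1}^k\big(1-Q_{(q_i+1)e_i}\big)$ --- which is genuinely correct: by \eqref{p-formula-help} one has $P=\sum_{J}(-1)^{|J|}Q_{e_J}$, the multiplicativity $Q_pQ_{p'}=Q_{p+p'}$ for $p\wedge p'=0$ follows from (\ref{TCK3}) and the factorisation property, and then \eqref{def-Mq} telescopes coordinatewise --- after which only the crude bound $\phi(Q_{ne_i})\le e^{-n\beta r_i}|\Lambda^{ne_i}|\to 0$ is needed, using $\beta r_i>\ln\rho(B_i)$ through the spectral radius formula rather than through the subinvariance vector. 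For the reconstruction formula, the paper compresses on both sides, invokes \cite[Lemma~7.3]{lrr} to get $\phi(M_qaM_q)\to\phi(a)$, and kills the cross terms $\phi(t_\lambda Pt_\lambda^*at_\mu Pt_\mu^*)$ for $\lambda\neq\mu$ using $\sigma$-invariance; you multiply on one side only, which needs nothing beyond Cauchy--Schwarz against the projection $1-M_q$, and then recover the missing right-hand copy of $P$ via the identity $\phi\big(Pt_\lambda^*c\,t_\lambda(1-P)\big)=e^{\beta r\cdot d(\lambda)}\phi\big(c\,t_\lambda(1-P)Pt_\lambda^*\big)=0$. That one-line insertion trick is the nicest feature of your argument and removes the appeal to an external lemma. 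You also prove $\phi(P)>0$ explicitly (taking $c=1$ in the expansion and using $Pu_vP\le P$), a point the paper leaves implicit in asserting that $\omega_{P,\phi}$ is ``clearly'' a state. What the paper's route buys in exchange is the exact value of $\phi(M_q)$ and the visible connection to the subinvariance relation $\prod_i(1-e^{-\beta r_i}B_i)m^\phi\ge 0$, which is reused in the surrounding theory; your route buys brevity and independence from \cite{aHLRS,lrr}. The trace argument for $\omega_{P,\phi}\circ\psi_0$ is the same in both proofs, resting on \eqref{pug} and $\sigma$-invariance of the $u_g$.

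Two small points you should spell out, though neither is a gap. First, when passing from $\lim_q\phi(M_qc)$ to the unconditional sum over $p\in\N^k$ in \eqref{KMS-reconstr-formula}, note the absolute summability $|\phi(Pt_\lambda^*c\,t_\lambda P)|\le\|c\|\,\phi(Pu_{s(\lambda)}P)$ (Cauchy--Schwarz), whose weighted sum is $\|c\|$ by the case $c=1$; this identifies the limit of the net of partial sums with the series as written. Second, your uses of the KMS condition pair an analytic element ($t_\lambda$, $Pt_\lambda^*$, or $u_g$, all of which are eigenvectors of $\sigma$) with an arbitrary element $c$; this is legitimate because, for fixed analytic $a$, both $c\mapsto\phi(ac)$ and $c\mapsto\phi(c\sigma_{i\beta}(a))$ are bounded and agree on the dense subspace of analytic elements, but the extension deserves a sentence.
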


\begin{remark}
As is customary in the literature, we call equation \eqref{KMS-reconstr-formula} the reconstruction formula.
\end{remark}

\begin{proof}[Proof of Lemma~\ref{Lemma:phi-P}]
The function $\omega_{P,\phi}$ is clearly a state on $\TT(G,\Lambda)$ because $\phi$ is a state. Suppose that $g,h\in G$ satisfy $\D(g)=\c(h)$. Note that the action $\sigma$ fixes $u_g$ and $u_gP$. Then applying \eqref{pug} in the first and KMS condition at the second equality imply that
\[
\phi(Pu_gu_hP)=\phi(u_gP^2u_h)=\phi(Pu_h\sigma_{i\beta}(u_{g}P))=\phi(Pu_hu_gP).
\]
Therefore $i_g\mapsto \phi(P)\inv\phi(u_gP)=\omega_{P,\phi}\circ \psi_0(i_g)$ descends to a trace on $C^*(G)$.

To see that \eqref{KMS-reconstr-formula} holds, we first claim that $\phi(M_q) \to 1$ as $q \to \infty$ (in the sense that $q_i\to \infty$ for each $1\leq i\leq k$). Define $m^\phi\in[0,\infty)^{\Lambda^0}$ by $m^\phi_v :=\phi(u_v)$ and let $W$ be the vector $\big(\prod_{i=1}^k \big(1 - e^{-\beta r_i}B_i\big)\big)m^\phi$. For each $v\in \Lambda$, the equality~\eqref{tf-help} implies that 
\[\phi\Big(\prod_{i=1}^k\Big(u_v - \sum_{\lambda\in v\Lambda^{e_i}} t_\lambda t_\lambda^*\Big)\Big)=\phi\Big(u_v+\sum_{\emptyset\neq J\subseteq \{1,\dots ,k\}}(-1)^{|J|}b_{v,J}\Big),\]
which is equal the $v$th entry of the vector $W$ by the computation of \cite[Lemma~4.2]{aHLRS}. Now 
we compute using the formula for $M_q$ and the KMS condition:
\begin{align*}
\phi(M_q)&=\sum_{0\leq p\leq q} \sum_{\lambda\in \Lambda^p} \phi(t_\lambda P t_\lambda^*)\\
&= \sum_{0\leq p\leq q} \sum_{\lambda\in \Lambda^p} e^{-\beta r\cdot p}\phi(t_\lambda^*t_\lambda P )\\
&=\sum_{0\leq p\leq q} \sum_{\lambda\in \Lambda^p} e^{-\beta r\cdot p}\phi\Big(\prod_{i=1}^k\Big(u_{s(\lambda)} - \sum_{\mu\in s(\lambda)\Lambda^{e_i} } t_\mu t_\mu^*\Big) \Big)\\
&=\sum_{0\leq p\leq q} \sum_{\lambda\in \Lambda^p} e^{-\beta r\cdot p}W_{s(\lambda)}.
\end{align*}
We continue as in \cite[Lemma~2.2]{aHLRS} we get
\begin{align*}
\phi(M_q)&=\sum_{0\leq p\leq q} \sum_{v\in \Lambda^0}\sum_{w\in \Lambda^0} e^{-\beta r\cdot p}B^p(w,v)W_{v}\\
&=\sum_{0\leq p\leq q} \sum_{w\in \Lambda^0} e^{-\beta r\cdot p}\big(B^pW\big)_{w}\\
&=\sum_{w\in \Lambda^0} \Big(\sum_{0\leq p\leq q} e^{-\beta r\cdot p}B^pW\Big)_{w}\\
&=\sum_{w\in \Lambda^0} \Big(\prod_{i=1}^k\Big(\sum_{p_i=0} ^{q_i}e^{-\beta r_ip_i}B_i^{p_i}\Big)W\Big)_{w}.
\end{align*}
Substituting $W$ back in this equation gives
\[\phi(M_q)=\sum_{w\in \Lambda^0} \Big(\prod_{i=1}^k\Big(\sum_{p_i=0} ^{q_i}e^{-\beta r_ip_i}B_i^{p_i}\Big) \big(1 - e^{-\beta r_i}B_i\big)\Big)m^\phi\Big)_{w}.\]
For each $1\leq i\leq k$, since $\beta r_i>\ln\rho(B_i)$, we have $e^{\beta r_i}>\rho(B_i)$. Then the series $\sum_{p_i}e^{-\beta p_i}B_i^{p_i}$ converges with sum $(1-e^{-\beta r_i}B_i)^{-1}$. Now letting $q_i\to \infty$, we have 
\[
\phi(M_q)\to \sum_{w\in \Lambda^0} \Big(\prod_{i=1}^k\Big(\big(1-e^{-\beta r_i}B_i\big)^{-1} \big(1 - e^{-\beta r_i}B_i\big)\Big)m^\phi\Big)_{w} = \sum_{w \in \Lambda^0} m_w^\phi =1,
\]
proving the claim.

Next, since $\phi(M_q)\to 1$, \cite[Lemma 7.3]{lrr} implies that $\phi(M_q a M_q) \to \phi(a)$ as $q\to \infty$. 
Since $t_\lambda P t_\lambda^*$ is fixed by the action $\sigma$, the KMS condition implies 
$
\phi\big((t_\lambda P t_\lambda^*)a(t_\mu Pt_\mu^*)\big)=0$ whenever $\lambda\neq \mu$.
Now, another application of the KMS condition and \eqref{pug} gives
\begin{align*}
\phi(a)&=\sum_{p\in \N^k}\sum_{\lambda \in\Lambda^p} \phi(t_\lambda P t_\lambda^* a t_\lambda P t_\lambda ^*)\\
&=\sum_{p\in \N^k}\sum_{\lambda \in\Lambda^p} e^{-\beta r\cdot p} \phi(P t_\lambda ^* a t_\lambda Pt_\lambda ^*t_\lambda ) \\
&=\sum_{p\in \N^k} \sum_{\lambda \in\Lambda^p} e^{-\beta r\cdot p} \phi(P t_\lambda^* at_\lambda t_\lambda^*t_\lambda P),
\end{align*}
and after rewriting each summand using \eqref{omegap}, we get 
\[\phi(a)
=\sum_{p\in \N^k}\sum_{\lambda \in\Lambda^p} e^{-\beta r\cdot p} \phi(P) \omega_{P,\phi}(t_\lambda^* a t_\lambda),\]
giving the reconstruction formula~\eqref{KMS-reconstr-formula}.
\end{proof}

\begin{proof}[Proof of Theorem~\ref{Thm:KMS_beta_tau}\eqref{KMS_tau_affine}]
Since the set of tracial states on $C^*(G)$ and the set of KMS$_\beta$-states of $(\TT(G,\Lambda), \sigma)$ are compact in the weak$^*$ topology, it suffices to show that the map $\tau \mapsto \phi_{\tau}$ is continuous and bijective. 

An argument using the monotone convergence theorem shows that $\tau \mapsto \phi_{\tau}$ is affine and continuous. 
To see that it is injective, let $P$ be as in \eqref{def-p}. For each tracial state $\tau$ of $C^*(G)$, the formulas \eqref{Fock-U} and \eqref{Fock-T} for the family $(U,T)$ of part~\eqref{Zconverges} together with the adjoint formula \eqref{adjoint-T} show that $\pi_{U,T}(P)$ vanishes on elements $\delta_\lambda \otimes \xi_\tau$ with $d(\lambda)\neq 0$, and fixes elements $\delta_v \otimes \xi_\tau$ for all $v\in \Lambda^0$. Therefore the formula~\eqref{psi_a} for $\phi_\tau$ implies that 
\begin{align*}
 \phi_{\tau}(Pu_gP) &= Z(\beta, \tau)\inv \sum_{v\in \Lambda^0} \big(\pi_{U,T}(Pu_g)(\delta_v \otimes \xi_\tau) |\delta_v \otimes \xi_\tau\big)\\
 &= Z(\beta, \tau)^{-1}\big(\delta_{\D(g)} \otimes \pi_\tau(i_g) \xi_\tau) |\delta_{\D(g)} \otimes \xi_\tau\big)\\
 &=Z(\beta, \tau)^{-1}\tau(i_{g}). 
 \end{align*}
Since $\sum_{g\in G^0}u_g=1$, the linearity of $ \phi_{\tau}$ implies that $ \phi_{\tau}(P) = \phi_{\tau}(P(\sum_{g\in G^0}u_g)P)= Z(\beta, \tau)\inv$. Therefore $\omega_{P,\phi_\tau}(u_g)=\tau(i_g)$. Since 
 $u_g=\psi_0(i_g)$, we have $\tau = \omega_{P, \phi_{\tau}} \circ \psi_0$, and hence the map $\tau \mapsto \phi_{\tau}$ is injective. 

For the surjectivity, let $\phi$ be a KMS$_\beta$-state for $(\TT(G,\Lambda),\sigma)$. By Lemma~\ref{Lemma:phi-P}, $\tau:=\omega_{P,\phi}\circ \psi_0$ is a tracial state on $C^*(G)$. Applying part~\eqref{Zconverges} to $\tau$ gives a KMS$_\beta$-state $\phi_\tau$. The argument of the previous paragraph shows that $\omega_{P,\phi}\circ \psi_0 = \omega_{P,\phi_\tau}\circ \psi_0$. Therefore $\omega_{P,\phi}(u_g) = \omega_{P,\phi_\tau}(u_g)$ for all $g\in G$. Lemma~\ref{Lemma:phi-P} now implies that $\phi(u_g) = \phi_\tau(u_g)$ for all $g\in G$, and hence by Proposition~\ref{prop:algebraic-chara} we have $\phi=\phi_\tau$.
\end{proof}

\section{The $G$-periodicity group}

In this section we define the $G$-periodicity group for a self-similar action of a groupoid $G$ on a strongly connected $k$-graph $\Lambda$. We associate two functions $\theta, h$ to the $G$-periodicity group, which play an important role in the KMS structure of $\OO(G,\Lambda)$. Since our groupoid actions are local and faithful, our $G$-periodicity group generalises all three different notions of periodicity for group actions in \cite{LY}.  When the groupoid acting is trivial, $\theta$ is the bijection introduced in \cite{aHLRS}. 

Let $(G,\Lambda)$ be a self-similar action, where $\Lambda$ is a strongly connected finite $k$-graph. Let $\Lambda^\infty$ be the infinite path space, and $\sigma$ the shift map. We define the action of $G$ on $\Lambda^\infty$ by 
\[
(g\cdot x)(p,q):=g|_{x(0,p)}\cdot x(p,q) \text { for all } p\leq q \in \N^k.
\]
It is straightforward to see that for all composable elements $g,h\in G$, we have 
\[(gh)\cdot x=g\cdot(h\cdot x)\text{ for } x\in \D(h)\Lambda^\infty,\]
and each $\lambda\in \D(g)\Lambda$ satisfies 
\[g\cdot(\lambda y)=(g\cdot \lambda)\big(g|_\lambda \cdot y\big) \text{ for }y\in \D(g|_\lambda)\Lambda^\infty.\]
We define
\begin{equation}\label{eq:theGperiodicitygroup}
\Per(G,\Lambda) := \left\{ p-q \ : \ \begin{tabular}{@{}p{\textwidth-19em}@{}}$p,q\in \N^k$, and there exists $g\in G$ such that $\sigma^p( x)=\sigma^q( g\cdot x)$ for all $x\in \D(g)\Lambda^\infty$\end{tabular} \right\}.
\end{equation}

\begin{prop}\label{prop:per}
Let $\Lambda$ be a strongly connected finite $ k$-graph and let $(G,\Lambda)$ be a self-similar groupoid action. Suppose that $ g\in G$ and $p,q\in \N^k$ satisfy $\sigma^p(x)=\sigma^q (g\cdot x)$ for all $x\in \D(g)\Lambda^\infty$. 
\begin{enumerate}
\item\label{thetah1} For each $v\in \Lambda^{0}$, there is $g_v\in G$ such that $\D(g_v)=v$, and $\sigma^p( x)=\sigma^q( g_v\cdot x)$ for all $x\in \D(g_v)\Lambda^\infty$.
\item \label{prop:per2}If $p-q=m-n$, then there exists $h\in G$ such that 
\[\sigma^{m}( y)=\sigma^{ n} (h\cdot y) \text{ for all }y\in \D(h)\Lambda^\infty.\]
\item \label{prop:per3}The set $\Per(G,\Lambda)$ from \eqref{eq:theGperiodicitygroup} is a subgroup of $\Z^k$, we call the $G$-periodicity group of $(G,\Lambda)$.
\end{enumerate} 
\end{prop}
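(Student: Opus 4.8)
The plan rests on a single computational device, which I will call the \emph{restriction identity}: if $g$ with $w:=\D(g)$ satisfies $\sigma^p(x)=\sigma^q(g\cdot x)$ for all $x\in w\Lambda^\infty$, and $\lambda\in w\Lambda$ has $d(\lambda)\le p\wedge q$, then $g|_\lambda$ satisfies $\sigma^{p-d(\lambda)}(y)=\sigma^{q-d(\lambda)}(g|_\lambda\cdot y)$ for all $y\in s(\lambda)\Lambda^\infty$. To prove it I would feed $x=\lambda y$ (with $y\in s(\lambda)\Lambda^\infty$, so $\lambda y\in w\Lambda^\infty$) into the hypothesis: the identity $g\cdot(\lambda y)=(g\cdot\lambda)(g|_\lambda\cdot y)$ recorded just before \eqref{eq:theGperiodicitygroup}, together with the fact that the action preserves degrees (so $d(g\cdot\lambda)=d(\lambda)$) and $\D(g|_\lambda)=s(\lambda)$ from \lemref{lem:properties}(\ref{S3}), turns $\sigma^p(\lambda y)=\sigma^{p-d(\lambda)}(y)$ and $\sigma^q\big((g\cdot\lambda)(g|_\lambda\cdot y)\big)=\sigma^{q-d(\lambda)}(g|_\lambda\cdot y)$ into the claimed relation. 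I would pair this with the trivial \emph{boost} observation that applying $\sigma^c$ to the hypothesis shows $g$ itself also satisfies $\sigma^{p+c}(x)=\sigma^{q+c}(g\cdot x)$ for every $c\in\N^k$.

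Part \eqref{thetah1} is then immediate. Given $v\in\Lambda^0$, strong connectivity provides $\lambda\in w\Lambda v$; put $n_0:=d(\lambda)$. Boosting by $n_0$ makes the hypothesis hold at $(p+n_0,q+n_0)$, and since $p+n_0,q+n_0\ge n_0$ the restriction identity applied to $\lambda$ yields that $g_v:=g|_\lambda$ satisfies $\sigma^p(y)=\sigma^q(g_v\cdot y)$ for all $y\in v\Lambda^\infty$, with $\D(g_v)=s(\lambda)=v$. For part \eqref{prop:per2}, suppose $p-q=m-n$. Boosting the hypothesis by $n$ gives the relation at $(p+n,q+n)$, and since $p+n=m+q$ and $q+n=n+q$ this is exactly the relation at $(m+q,n+q)$. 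As $\Lambda$ is strongly connected it has no sources, so $w\Lambda^q\ne\emptyset$; choosing $\lambda\in w\Lambda^q$ and applying the restriction identity (legitimate because $m+q,n+q\ge q$) produces $h:=g|_\lambda$ with $\sigma^m(y)=\sigma^n(h\cdot y)$ for all $y\in\D(h)\Lambda^\infty$.

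For part \eqref{prop:per3} I would verify the three subgroup axioms directly. First $0\in\Per(G,\Lambda)$, witnessed by any $g=\id_v$, since $\id_v\cdot x=x$. For inverses, note that $x\mapsto g\cdot x$ is a bijection of $\D(g)\Lambda^\infty$ onto $\c(g)\Lambda^\infty$ with inverse $y\mapsto g^{-1}\cdot y$ (because $g^{-1}g=\id_{\D(g)}$ acts as the identity); substituting $x=g^{-1}\cdot y$ into the defining relation for $g$ shows $g^{-1}$ witnesses $q-p\in\Per(G,\Lambda)$. For sums, given witnesses $g,g'$ for $p-q$ and $p'-q'$, I would use part \eqref{thetah1} to replace $g'$ by an element $g'_{\c(g)}$ with domain $\c(g)$ witnessing the \emph{same} $p'-q'$; then $g'_{\c(g)}g$ is composable, and the computation $\sigma^{q+q'}(g'_{\c(g)}g\cdot x)=\sigma^{q}\sigma^{q'}(g'_{\c(g)}\cdot(g\cdot x))=\sigma^{q}\sigma^{p'}(g\cdot x)=\sigma^{p'}\sigma^{p}(x)=\sigma^{p+p'}(x)$ (using that the shifts commute) shows $(p-q)+(p'-q')\in\Per(G,\Lambda)$.

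The step I expect to be the real obstacle is the \emph{cancellation} implicit in part \eqref{prop:per2}: passing from a relation at $(m+q,n+q)$ down to one at $(m,n)$ looks problematic, since shrinking the shift exponents seems to discard the information carried by the first $q$-worth of edges. The resolution is exactly the restriction identity: prepending a path $\lambda\in w\Lambda^q$ and absorbing it into the restriction $g|_\lambda$ reorganises that information rather than losing it. The only ingredient it needs beyond the self-similarity identities of \lemref{lem:properties} is that $w\Lambda^q\ne\emptyset$, which the no-sources property of a strongly connected $\Lambda$ guarantees; once this lemma is in hand, all three parts are routine.
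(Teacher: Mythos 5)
Your proposal is correct and takes essentially the same approach as the paper: the paper's proofs of parts (1) and (2) are exactly your boost-and-restrict device carried out in a single chain of equalities (e.g.\ $\sigma^p(x)=\sigma^{p+d(\lambda)}(\lambda x)=\sigma^{d(\lambda)}\big(\sigma^q(g\cdot(\lambda x))\big)=\sigma^q(g|_\lambda\cdot x)$), and its proof of part (3) likewise handles sums by using part (1) to align domains and then composing the two witnesses. The only differences are bookkeeping (the paper restricts along a degree-$p$ path into a relocated copy of $g$ for part (2), where you boost by $n$ and restrict along a degree-$q$ path), so the two arguments are interchangeable.
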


\begin{proof}
For \eqref{thetah1}, fix $v\in \Lambda^0$. Since $\Lambda$ is strongly connected, there is $\lambda\in \D(g)\Lambda v$. Writing $g_v:=g|_\lambda$, we have $\D(g_v)=v$. Fix $x\in \D(g_v)\Lambda^\infty$. Since $\lambda x \in \D(g)\Lambda^\infty$, we have
\[\sigma^p(x)=\sigma^{p+d(\lambda)}(\lambda x)=\sigma^{d(\lambda)}\big(\sigma^{q}(g\cdot (\lambda x))\big)=\sigma^{q}\big(\sigma^{d(\lambda)}((g\cdot \lambda)( g|_\lambda\cdot x))\big)=\sigma^q( g_v\cdot x).\]

For \eqref{prop:per2}, note that since $\Lambda$ is strongly connected, there is $\eta\in\Lambda^{p}\c(g)$. Let $g_{r(\eta)}\in G$ be as in part~\eqref{thetah1}, and let $h:=g_{r(\eta)}|_{\eta}$. By (\ref{S3}) we have $\D(h)=s(\eta)=\c(g)$, and hence for each $y\in \D(h)\Lambda^\infty$, the element $\eta y$ belongs to $\D(g|_{r(\eta)})\Lambda^\infty$. Applying part~\eqref{thetah1} now gives 
\begin{align*}
\sigma^{m}(y)&=\sigma^{m+p}(\eta y)=\sigma^{m}\big(\sigma^{p}(\eta y)\big)=\sigma^{m}\big(\sigma^{q}\big(g_{r(\eta)}\cdot (\eta y)\big)\big)
=\sigma^{n+p}\big(g_{r(\eta)}\cdot (\eta y)\big)\\
&=\sigma^{n+p}\big((g_{r(\eta)}\cdot \eta)( h\cdot y)\big)=\sigma^{n}(h\cdot y).
\end{align*}

For \eqref{prop:per3}, take $v\in \Lambda^0$. Since $ \sigma^{0}( x)=x= \id_v\cdot x=\sigma^{0}(\id_v\cdot x)$ for all $x\in \D(\id_v)\Lambda^\infty$, we have $0\in \Per(G,\Lambda)$. To see that $\Per(G,\Lambda)$ is closed under inverses, fix $p-q\in \Per(G,\Lambda)$. Then there is $g\in G$ such that $\sigma^p( x)=\sigma^q(g\cdot x)$ for all $x\in \D(g)\Lambda^\infty$. Now, for $h:=g^{-1}$ we have
 $\sigma^p( h\cdot y)=\sigma^{q}(y)$ for all $y\in \D(h)\Lambda^\infty$, and hence $q-p\in \Per(G,\Lambda)$. It remains to show that $\Per(G,\Lambda)$ is closed under addition. Fix $p-q, m-n\in \Per(G,\Lambda)$. Then there is $g\in G$ such that $\sigma^p( x)=\sigma^{q}(g\cdot x)$ for all $x\in \D(g)\Lambda^\infty$. Applying part~\eqref{prop:per2} with $m-n$ and $v=\c(g)$ gives $g'\in G$ such that $\D(g')=\c(g)$ and $ \sigma^{m}( y)=\sigma^{n}(g'\cdot y)$ for all $y\in \D(g')\Lambda^\infty$. Let $h:=g'g$. Then for each $x\in \D(h)\Lambda^\infty= \D(g)\Lambda^\infty$, we have
\begin{align*}
\sigma^{p+m}(x)&=\sigma^{m}(\sigma^{p}(x))=\sigma^{m}\big(\sigma^{q}(g\cdot x)\big)=\sigma^{q}\big(\sigma^{m}(g\cdot x)\big).
\end{align*}
Since $g\cdot x\in \c(g)\Lambda^\infty=\D(g')\Lambda^\infty$, we get 
\begin{align*}
\sigma^{p+m}(x)=\sigma^{q}\big(\sigma^{n}\big(g'\cdot (g\cdot x)\big)\big)=\sigma^{q+n}(h\cdot x),
\end{align*}
and hence $(p-q)+(m-n)\in \Per(G,\Lambda)$, as required.
\end{proof}

\begin{lemma}\label{lemma:htehta}
Let $\Lambda$ be a strongly connected finite $ k$-graph, and $(G,\Lambda)$ be a self-similar groupoid action. Suppose that $p-q\in \Per(G,\Lambda)$.
\begin{enumerate}
\item \label{lemma:htehta-1}For each $\mu\in \Lambda^p$, there is a unique pair $\big(\theta_{p,q}(\mu), h_{p,q}(\mu)\big)\in \Lambda^q\times G$ such that 
\begin{align}\label{pershift}
\mu (h_{p,q}(\mu)\cdot x)=\theta_{p,q}(\mu)x, \quad\text{for all } x\in \D(h_{p,q}(\mu))\Lambda^\infty.
\end{align}
The function $\theta_{p,q}:\Lambda^p\to \Lambda^q$ is range-preserving, and the function $h_{p,q}:\Lambda^p\to G$ satisfies 
\[\c\big(h_{p,q}(\mu)\big)=s(\mu) \text{ and }\D\big(h_{p,q}(\mu)\big)=s\big(\theta_{p,q}(\mu)\big), \quad\text{for all } \mu\in \Lambda^p.\]
\item \label{lemma:htehta-2}If in addition $q-n\in \Per(G,\Lambda)$, then $p-n\in \Per(G,\Lambda)$ and $\theta_{q,n}\circ\theta_{p,q}=\theta_{p,n}$. The function $h_{p,n}:\Lambda^p\to G$ satisfies 
\begin{equation} \label{h-compo}
h_{p,n}(\mu)= h_{p,q}(\mu)h_{q,n}\big(\theta_{p,q}(\mu)\big), \quad\text{for all } \mu\in \Lambda^p.
\end{equation}
\item \label{lemma:htehta-3}For each $p\in \N^k$, the function $\theta_{p,p}:\Lambda^p\to \Lambda^p$ is the identity map, and $h_{p,p}:\Lambda^p\to G$ is given by $h_{p,p}(\mu)=\id_{s(\mu)}$.
\item \label{lemma:htehta-4}Each $\theta_{p,q}:\Lambda^p\to \Lambda^q$ 
is a bijection
with $\theta_{p,q}=\theta_{q,p}^{-1}$, and for each $ \mu\in \Lambda^p$, the groupoid element $h_{p,q}(\mu)$ satisfies
\begin{equation}\label{formula h invers}
h_{p,q}(\mu)^{-1}=h_{q,p}\big(\theta_{p,q}(\mu)\big).
\end{equation}
\end{enumerate} 
\end{lemma}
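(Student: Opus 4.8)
The plan is to derive part~\eqref{lemma:htehta-4} from the \emph{uniqueness} assertion in part~\eqref{lemma:htehta-1} together with the fact, from Proposition~\ref{prop:per}\eqref{prop:per3}, that $\Per(G,\Lambda)$ is a group. Since $p-q\in\Per(G,\Lambda)$, closure under negation gives $q-p\in\Per(G,\Lambda)$, so the maps $\theta_{q,p}$ and $h_{q,p}$ are well defined by part~\eqref{lemma:htehta-1}. Fix $\mu\in\Lambda^p$ and abbreviate $g:=h_{p,q}(\mu)$ and $\nu:=\theta_{p,q}(\mu)\in\Lambda^q$, so that the defining relation \eqref{pershift} reads $\mu(g\cdot x)=\nu x$ for all $x\in\D(g)\Lambda^\infty$. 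The goal is to show that the pair $(\mu,g^{-1})$ satisfies the equation that characterises $\big(\theta_{q,p}(\nu),h_{q,p}(\nu)\big)$, and then to quote uniqueness.

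First I would verify the relation $\nu(g^{-1}\cdot y)=\mu y$ for every $y\in\D(g^{-1})\Lambda^\infty=\c(g)\Lambda^\infty$. Given such a $y$, set $x:=g^{-1}\cdot y$; since $\c(g^{-1})=\D(g)$, the action axioms recalled before Proposition~\ref{prop:per} give $x\in\D(g)\Lambda^\infty$ and $g\cdot x=(gg^{-1})\cdot y=\id_{\c(g)}\cdot y=y$, the last step because units act trivially on $\Lambda^\infty$ (as already used in the proof of Proposition~\ref{prop:per}\eqref{prop:per3}). Substituting into the defining relation yields $\mu y=\mu(g\cdot x)=\nu x=\nu(g^{-1}\cdot y)$, as wanted. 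This is exactly \eqref{pershift} for the pair $(\mu,g^{-1})$ in place of $\big(\theta_{q,p}(\nu),h_{q,p}(\nu)\big)$, so the uniqueness in part~\eqref{lemma:htehta-1}, applied to $q-p\in\Per(G,\Lambda)$ and the element $\nu\in\Lambda^q$, forces $\theta_{q,p}(\nu)=\mu$ and $h_{q,p}(\nu)=g^{-1}$. These are precisely the identities $\theta_{q,p}\big(\theta_{p,q}(\mu)\big)=\mu$ and \eqref{formula h invers}.

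Since the first identity holds for every $\mu\in\Lambda^p$, we have $\theta_{q,p}\circ\theta_{p,q}=\id_{\Lambda^p}$, and exchanging the roles of $p$ and $q$ gives $\theta_{p,q}\circ\theta_{q,p}=\id_{\Lambda^q}$; hence $\theta_{p,q}$ is a bijection with $\theta_{p,q}=\theta_{q,p}^{-1}$. The only point demanding care is the groupoid bookkeeping in the displayed computation—checking that $g^{-1}\cdot y$ lands in $\D(g)\Lambda^\infty$ and that $g\cdot(g^{-1}\cdot y)=y$—but this is routine from the definition of the action on $\Lambda^\infty$ and property~\eqref{S5}. Alternatively, one can bypass the direct computation by specialising part~\eqref{lemma:htehta-2} to $n=p$: the composition laws there give $\theta_{q,p}\circ\theta_{p,q}=\theta_{p,p}$ and $h_{p,q}(\mu)\,h_{q,p}\big(\theta_{p,q}(\mu)\big)=h_{p,p}(\mu)$, and part~\eqref{lemma:htehta-3} identifies $\theta_{p,p}=\id$ and $h_{p,p}(\mu)=\id_{s(\mu)}$, whence $h_{q,p}\big(\theta_{p,q}(\mu)\big)=h_{p,q}(\mu)^{-1}$ by groupoid cancellation.
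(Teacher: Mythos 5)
Your proposal has a genuine gap: it proves only part~\eqref{lemma:htehta-4} of the lemma, while the statement comprises four parts, and the real work of the lemma lives in the parts you take for granted. You invoke ``the uniqueness assertion in part~\eqref{lemma:htehta-1}'' as a black box, but that assertion (and the existence of the pair $\big(\theta_{p,q}(\mu),h_{p,q}(\mu)\big)$ in the first place) is precisely what must be established. In the paper, uniqueness requires the faithfulness of the action: if $(\nu_1,g_1)$ and $(\nu_2,g_2)$ both satisfy \eqref{pershift}, one first gets $\nu_1=\nu_2$ by comparing degrees via the factorisation property, and then $g_1=g_2$ because $g_2^{-1}g_1$ fixes every finite path, which forces $g_2^{-1}g_1$ to be a unit only because the action is faithful. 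Existence is even less automatic: it uses strong connectedness to choose $\eta\in\Lambda^q r(\mu)$, invokes Proposition~\ref{prop:per}\eqref{thetah1} to transport the periodicity witness $g$ to the vertex $r(\eta)$, factors $\eta\mu=\zeta\nu$ with $d(\zeta)=p$, and then defines $\theta_{p,q}(\mu):=g|_\eta^{-1}\cdot\nu$ and $h_{p,q}(\mu):=\big((g|_\eta^{-1})|_\nu\big)^{-1}$, verifying \eqref{pershift} by a shift computation. None of this appears in your proposal, nor do the proofs of parts~\eqref{lemma:htehta-2} and \eqref{lemma:htehta-3} (each a short uniqueness argument, but still required, and your second route through part~\eqref{lemma:htehta-4} depends on both).

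For what it is worth, your treatment of part~\eqref{lemma:htehta-4} itself is correct, and you actually give two valid arguments for it. Your primary route --- showing directly that $(\mu,g^{-1})$ satisfies \eqref{pershift} for $q-p$ at $\nu=\theta_{p,q}(\mu)$ and quoting uniqueness --- is sound (the bookkeeping is fine: $\c(g^{-1})=\D(g)=s(\nu)$ and $g\cdot(g^{-1}\cdot y)=y$ since units act trivially), and it is marginally more self-contained than the paper's, which instead specialises part~\eqref{lemma:htehta-2} to $n=p$ and combines it with part~\eqref{lemma:htehta-3} exactly as in your ``alternative'' paragraph. But a correct proof of one quarter of the statement, resting on the unproved three quarters, does not discharge the lemma; you need to supply the existence/uniqueness construction of part~\eqref{lemma:htehta-1} and the composition law of part~\eqref{lemma:htehta-2} before either of your arguments for part~\eqref{lemma:htehta-4} can get off the ground.
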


\begin{proof}
For \eqref{lemma:htehta-1}, we first check the uniqueness. Suppose that there are pairs $(\nu_1,g_1),(\nu_2,g_2)\in\Lambda^q\times G$ that satisfy $\eqref{pershift}$. Since $\c(g_1)=s(\mu)=\c(g_2)$, $\D(g_2^{-1})=\c(g_1)$. Therefore $g_2^{-1}g_1$ is an element of $G$ and for all $x\in \D(g_1)\Lambda^\infty$, we have 
\[\nu_1 x=\mu(g_1\cdot x)=\mu((g_2g_2^{-1}g_1)\cdot x)=\nu_2(g_2^{-1}g_1)\cdot x\]
Since $d(\nu_1)=d(\nu_2)$, we must have $\nu_1=\nu_2$. This then implies that $x=(g_2^{-1}g_1)\cdot x$, and hence $\D(g_1)=\D(g_2)$ and $g_2\cdot \lambda=g_1\cdot \lambda$ for all $\lambda\in\D(g_1)\Lambda$. Since the action is faithful, we have $g_1=g_2$.

For the existence, fix $\mu\in \Lambda^p$. Since $\Lambda$ is strongly connected, there is $\eta\in\Lambda^q r(\mu)$. Let $g$ be the element of $G$ obtained by applying Proposition~\ref{prop:per}\eqref{thetah1} with the vertex $r(\eta)$. Let $\zeta:=(\eta\mu)(0,p)$ and $\nu:=(\eta\mu)(p,p+q)$. Then for $x\in s(\mu)\Lambda^\infty$, we have 
\begin{equation}\label{equ-lemma:htehta-1}
\nu x=\sigma^{p}(\zeta \nu x)=\sigma^{q}\big(g\cdot (\zeta\nu x)\big)=\sigma^{q}\big(g\cdot (\eta\mu x)\big)=g|_\eta\cdot (\mu x).
\end{equation}
This means $g|_\eta ^{-1}\cdot (\nu x) = \mu x$. We define 
\[
\theta_{p,q}(\mu):=g|_\eta ^{-1}\cdot\nu\quad\text{and}\quad h_{p,q}(\mu):=((g|_\eta ^{-1})|_\nu) ^{-1}. 
\]
For all $y\in \D(h_{p,q}(\mu))\Lambda^\infty$, we have $r(h_{p,q}(\mu)\cdot y)=\c(h_{p,q}(\mu))=s(\nu)=s(\mu)$. Then applying~\eqref{equ-lemma:htehta-1} with $x=h_{p,q}(\mu)\cdot y$ gives
\[
\mu(h_{p,q}(\mu)\cdot y)=(g|_\eta)^{-1}\cdot \big(\nu(h_{p,q}(\mu)\cdot y)\big )=\theta_{p,q}(\mu)\big((g|_\eta ^{-1})|_\nu( h_{p,q}\big(\mu)\cdot y\big)\big)=\theta_{p,q}(\mu)y,
\]
and so \eqref{lemma:htehta-1} holds.

For \eqref{lemma:htehta-2}, first note that since $\Per(G,\Lambda)$ is a group, we have $p-n\in\Per(G,\Lambda)$. Two applications of \eqref{pershift} gives
\begin{align*}
\theta_{q,n}\big(\theta_{p,q}(\mu)\big)x=\theta_{p,q}(\mu)\Big(h_{q,n}\big(\theta_{p,q}(\mu)\big)\cdot x\Big)
=\mu\Big(\Big( h_{p,q}(\mu)h_{q,n}\big(\theta_{p,q}(\mu)\big)\Big)\cdot x\Big).
\end{align*}
Now, the uniqueness in part~\eqref{lemma:htehta-1} gives $\theta_{q,n}\circ\theta_{p,q}=\theta_{p,n}$ and \eqref{h-compo} simultaneously. 

For \eqref{lemma:htehta-3}, since $\mu(\id_{s(\mu)}\cdot x)=\mu x$ for all $x\in s(\mu)\Lambda^\infty$, the uniqueness in part~\eqref{lemma:htehta-1} implies that $\theta_{p,p}(\mu)=\mu$ and $h_{p,p}(\mu)=\id_{s(\mu)}$ for all $\mu\in \Lambda^p$. Finally, part~\eqref{lemma:htehta-2} implies that $\theta_{q,p}\circ \theta_{p,q}=\theta_{p,p}$ and $\theta_{p,q}\circ \theta_{q,p}=\theta_{q,q}$ and therefore each $\theta_{p,q}$ 
is a bijection. The formula \eqref{formula h invers} follows from \eqref{h-compo} by letting $n=p$.
\end{proof}

The following technical lemma describes the interaction between the functions $\theta_{p,q}$ and $h_{p,q}$.

\begin{lemma}\label{lemma:mor-prop-htheta}
Let $p-q\in \Per(G,\Lambda)$, $\theta_{p,q},h_{p,q}$ be the functions of Lemma~\ref{lemma:htehta}, and $\mu\in \Lambda^p$. 
\begin{enumerate}
\item \label{lemma:mor-prop-htheta-1}For each $\eta\in s(\mu)\Lambda$, we have
\[
\theta_{p+d(\eta),q+d(\eta)}(\mu\eta)=\theta_{p,q}(\mu)\big(h_{p,q}(\mu)^{-1}\cdot \eta\big)\quad\text{and}\quad h_{p+d(\eta),q+d(\eta)}(\mu\eta)=h_{p,q}(\mu)|_{h_{p,q}(\mu)^{-1}\cdot \eta}.
\]
\item \label{lemma:mor-prop-htheta-2}For each $\beta \in \Lambda r(\mu)$, we have 
\[
\theta_{d(\beta)+p,d(\beta)+q}(\beta\mu)=\beta \theta_{p,q}(\mu)\quad\text{and}\quad h_{d(\beta)+p,d(\beta)+q}(\beta \mu)=h_{p,q}(\mu).
\]
\item \label{lemma:mor-prop-htheta-3}For each $g\in G$ with $\D(g)=r(\mu)$, we have
\[
\theta_{p,q}(g\cdot \mu)=g\cdot \theta_{p,q}(\mu)\quad\text{and}\quad h_{p,q}(g\cdot \mu)=g|_\mu h_{p,q}(\mu)g|_{\theta_{m,n}(\mu)}^{-1}.
\]
\end{enumerate}
\end{lemma}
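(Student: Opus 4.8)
The plan is to handle all three parts by a single uniform device. In each case I will exhibit an explicit candidate pair $(\theta',h')$ in the appropriate $\Lambda^{q'}\times G$, verify that it satisfies the characterising identity \eqref{pershift} for the path in question, and then invoke the uniqueness clause of Lemma~\ref{lemma:htehta}\eqref{lemma:htehta-1} to conclude that the candidate must equal the pair cut out by $\theta$ and $h$. Observe first that since $(p+d(\eta))-(q+d(\eta))=p-q=(d(\beta)+p)-(d(\beta)+q)$, every periodicity difference occurring in the statement lies in $\Per(G,\Lambda)$, so all the functions are defined. (I read the subscript $\theta_{m,n}$ in part~\eqref{lemma:mor-prop-htheta-3} as a typo for $\theta_{p,q}$.)

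Part~\eqref{lemma:mor-prop-htheta-2} is immediate: writing $g:=h_{p,q}(\mu)$, for $x\in\D(g)\Lambda^\infty$ the relation \eqref{pershift} gives $\mu(g\cdot x)=\theta_{p,q}(\mu)\,x$, and prepending $\beta$ yields $(\beta\mu)(g\cdot x)=\beta\,\theta_{p,q}(\mu)\,x$; since $\theta_{p,q}$ is range-preserving the concatenation $\beta\,\theta_{p,q}(\mu)$ is legitimate, and uniqueness produces both formulas at once. For part~\eqref{lemma:mor-prop-htheta-1} I again put $g:=h_{p,q}(\mu)$ and test the candidate $\bigl(\theta_{p,q}(\mu)(g^{-1}\cdot\eta),\,g|_{g^{-1}\cdot\eta}\bigr)$ against \eqref{pershift} for $\mu\eta$. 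Since $r(\eta)=s(\mu)=\c(g)$ we have $g^{-1}\cdot\eta\in\D(g)\Lambda$ with $r(g^{-1}\cdot\eta)=\D(g)$ by (\ref{S4}); for $x\in s(g^{-1}\cdot\eta)\Lambda^\infty$ I apply \eqref{pershift} to the tail $y=(g^{-1}\cdot\eta)x\in\D(g)\Lambda^\infty$, expand $g\cdot y$ using the infinite-path form of (\ref{S1}), and simplify via $g\cdot(g^{-1}\cdot\eta)=\eta$ and $g|_{g^{-1}\cdot\eta}\cdot x=h'\cdot x$. The source/codomain bookkeeping from (\ref{S3}) shows the candidate is admissible, and uniqueness closes the part.

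Part~\eqref{lemma:mor-prop-htheta-3} is the substantive one, and I expect the only real hurdle to be checking composability of, and correctly assembling, the groupoid word $g|_\mu\,h_{p,q}(\mu)\,g|_{\theta_{p,q}(\mu)}^{-1}$. The idea is to apply $g$ to both sides of $\mu(h_{p,q}(\mu)\cdot y)=\theta_{p,q}(\mu)\,y$: because $r(\mu)=r(\theta_{p,q}(\mu))=\D(g)$ (range-preservation of $\theta_{p,q}$), the infinite-path (\ref{S1}) rewrites this as $(g\cdot\mu)\bigl((g|_\mu h_{p,q}(\mu))\cdot y\bigr)=(g\cdot\theta_{p,q}(\mu))\bigl(g|_{\theta_{p,q}(\mu)}\cdot y\bigr)$. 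Substituting $y=g|_{\theta_{p,q}(\mu)}^{-1}\cdot x$ for $x\in s(g\cdot\theta_{p,q}(\mu))\Lambda^\infty$ collapses the right-hand tail to $x$ and packs the left-hand element into $h':=g|_\mu h_{p,q}(\mu)g|_{\theta_{p,q}(\mu)}^{-1}$, which is precisely the characterising identity for $g\cdot\mu$ with target path $g\cdot\theta_{p,q}(\mu)$; uniqueness then yields $\theta_{p,q}(g\cdot\mu)=g\cdot\theta_{p,q}(\mu)$ and the stated formula for $h_{p,q}(g\cdot\mu)$. Composability of $h'$ is the point needing care: chaining $\D(g|_\mu)=s(\mu)=\c(h_{p,q}(\mu))$, $\D(h_{p,q}(\mu))=s(\theta_{p,q}(\mu))$ from Lemma~\ref{lemma:htehta}\eqref{lemma:htehta-1}, and the codomain identities from (\ref{S3}) shows the three factors compose, while (\ref{S3}) and (\ref{S7}) confirm that $g|_{\theta_{p,q}(\mu)}^{-1}$ carries $s(g\cdot\theta_{p,q}(\mu))\Lambda$ back onto $s(\theta_{p,q}(\mu))\Lambda$, so the substituted $y$ lands in the correct domain.
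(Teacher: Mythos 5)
Your proposal is correct and follows essentially the same route as the paper: in each part you exhibit the same candidate pair, verify the characterising identity \eqref{pershift} via (\ref{S1}), (\ref{S3}), (\ref{S7}) and range-preservation of $\theta_{p,q}$ (in part~\eqref{lemma:mor-prop-htheta-3} using exactly the paper's substitution $y=g|_{\theta_{p,q}(\mu)}^{-1}\cdot x$), and then invoke the uniqueness clause of Lemma~\ref{lemma:htehta}\eqref{lemma:htehta-1}; your reading of $\theta_{m,n}$ as a typo for $\theta_{p,q}$ is also the intended one.
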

\begin{proof}
For convenience we write $g:=h_{p,q}(\mu)$. Since $\D(g^{-1})=\c(g)=s(\mu)=r(\eta)$, the right-hand sides of the equations in part~\eqref{lemma:mor-prop-htheta-1} make sense. Now, (\ref{S3}) implies that $\c(g|_{g^{-1}\cdot \eta})=s(\eta)$, and for all $ x\in s(g^{-1}\cdot \eta)\Lambda^\infty$ we have
\[
\mu\eta\big(g|_{g^{-1}\cdot \eta}\cdot x\big)=\mu\big((g g^{-1})\cdot \eta\big) \big(g|_{g^{-1}\cdot \eta}\cdot x\big)=\mu(g \cdot(g^{-1}\cdot \eta))\big(g|_{g^{-1}\cdot \eta}\cdot x\big)=\mu\big (g\cdot\big((g^{-1}\cdot \eta)x\big)\big ). 
\]
The right-hand expression is equal to $\theta_{p,q}(\mu)(g^{-1}\cdot \eta)x$ by \eqref{pershift}. The uniqueness in Lemma~\ref{lemma:htehta}\eqref{lemma:htehta-1} now gives part~\eqref{lemma:mor-prop-htheta-1}. 

Similarly, for each $x\in \D(h_{p,q}(\mu))$, equation \eqref{pershift} implies that $\beta \mu\big(h_{p,q}(\mu)\cdot x\big)=\beta \theta_{p,q}(\mu) x$, and another application of uniqueness in Lemma~\ref{lemma:htehta}\eqref{lemma:htehta-1}
gives part~\eqref{lemma:mor-prop-htheta-2}

To see that \eqref{lemma:mor-prop-htheta-3} holds, first note that Lemma~\ref{lemma:htehta}\eqref{lemma:htehta-1} says that the function $\theta_{p,q}$ is range-preserving, and therefore $g\cdot \theta_{p,q}(\mu)$ make sense. This also means we can compose the groupoid elements $g|_\mu, h_{p,q}(\mu)$, and $g|_{\theta_{p,q}(\mu)}^{-1}$ to form $g|_\mu h_{p,q}(\mu)g|_{\theta_{p,q}(\mu)}^{-1}$. Now, for each $x\in \D(g|_{\theta_{p,q}(\mu)}^{-1})\Lambda^\infty$ we can apply equation \eqref{pershift} to get
\[
\mu \Big(h_{p,q}(\mu)\cdot \Big(g|_{\theta_{p,q}(\mu)}^{-1}\cdot x\Big)=\theta_{p,q}(\mu) \Big(g|_{\theta_{p,q}(\mu)}^{-1}\cdot x\Big).
\]
Hence we have
\begin{align*}
(g\cdot \mu)\Big(\Big (g|_\mu h_{p,q}(\mu)g|_{\theta_{p,q}(\mu)}^{-1}\Big)\cdot x\Big)&=g\cdot\Big(\mu \Big(h_{p,q}(\mu)\cdot \Big(g|_{\theta_{p,q}(\mu)}^{-1}\cdot x\Big)\Big)\\
&=g\cdot\Big(\theta_{p,q}(\mu) \Big(g|_{\theta_{p,q}(\mu)}^{-1}\cdot x\Big)\Big)\\
&= \big(g\cdot\theta_{p,q}(\mu) \big)x.
\end{align*}
Again, the uniqueness in Lemma~\ref{lemma:htehta}\eqref{lemma:htehta-1} now gives part~\eqref{lemma:mor-prop-htheta-3}.
\end{proof}

\begin{remark}
If $\Lambda$ has a single vertex, and hence $G$ is a \emph{group} acting self-similarly on $\Lambda$, then for each $p-q\in \Per(G,\Lambda)$ and $\lambda\in \Lambda^p$, the triple $(\lambda,h_{p,q}(\lambda),\theta_{p,q}(\lambda))$ is a cycline triple in the sense of \cite{LY}. The group $\Per(G,\Lambda)$ also coincides with the $G$-periodicity group from \cite{LY}. While our approach to defining the $G$-periodicity group is different to that in \cite{LY}, some of our results follow from their results in the single-vertex setting, when the action is pseudo free and source invariant. In particular, Lemmas~\ref{lemmayl0} and \ref{lem:minimalex-measure} follow from \cite[Lemma~6.5 and Lemma~6.6]{LY}; and Proposition~\ref{prop:rep-per} and Lemma~\ref{lemma:rep-per} follow from \cite[Theorem~4.9 and Lemma~6.9]{LY}. 
\end{remark}

We finish this section, by defining the notion of $G$-aperiodicity for a self-similar action $(G,\Lambda)$. We will see in Section~\ref{main-cuntz} that this condition reduces the number KMS states of $\OO(G,\Lambda)$.

\begin{definition}
Let $\Lambda$ be a strongly connected finite $k$-graph, and $(G,\Lambda)$ be a self-similar groupoid action. We say $\Lambda$ is $G$-\textit{aperiodic} if for every $g\in G$, and $p\neq q\in \N^k$, there exists a $x\in \c(g) \Lambda^\infty$ such that 
\[\sigma^p( x)\neq \sigma^q( g\cdot x).\]
\end{definition}

\begin{remark}\label{rem:aperiodicity}
Observe that $\Lambda$ is $G$-aperiodic if and only if $\Per(G,\Lambda)=\{0\}.$
\end{remark}

\section{A central representation of $C^*(\Per(G,\Lambda))$ on $\OO(G,\Lambda)$}

In our main result on the KMS structure of $\OO(G,\Lambda)$, we will prove that KMS states are determined by states of $C^*(\Per(G,\Lambda))$. For this we first need to represent $C^*(\Per(G,\Lambda))$ in $\OO(G,\Lambda)$.

\begin{prop}\label{prop:rep-per}
Let $\Lambda$ be a strongly connected finite $ k$-graph, and $(G,\Lambda)$ be a self-similar groupoid action. For $p-q\in \Per(G,\Lambda)$, let $\theta_{p,q}$ and $h_{p,q}$ be the functions from Lemma~\ref {lemma:htehta}. Then there is a central unitary representation $V$ of $\Per(G,\Lambda)$ in $\OO(G,\Lambda)$ given by
\[V_{p-q}=\sum_{\lambda\in \Lambda^p}t_\lambda u_{h_{p,q}(\lambda)}t_{\theta_{p,q}(\lambda)}^*.\]
\end{prop}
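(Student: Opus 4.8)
The plan is to verify in turn that each $V_{p-q}$ is a well-defined unitary, that $p-q\mapsto V_{p-q}$ is a group homomorphism, and that each $V_{p-q}$ is central. Throughout, the feature distinguishing $\OO(G,\Lambda)$ from $\TT(G,\Lambda)$ is the Cuntz--Krieger relation $\sum_{\zeta\in v\Lambda^r}t_\zeta t_\zeta^*=t_v$ (available by Proposition~\ref{Toeplitz_spanning}\eqref{Toeplitz_spanning-2}), and it is precisely this relation that makes the formula independent of the chosen representative of $p-q$; this is the crux of the whole argument.

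First I would establish well-definedness. Since any two representations $(p,q)$ and $(m,n)$ of the same element $p-q=m-n$ share the common refinement obtained by adding a suitable vector to each coordinate, it suffices to show the defining sum is unchanged when $(p,q)$ is replaced by $(p+r,q+r)$ for $r\in\N^k$. Writing $g_\lambda:=h_{p,q}(\lambda)$ and factoring each $\mu\in\Lambda^{p+r}$ as $\mu=\lambda\eta$ with $\lambda\in\Lambda^p$ and $\eta\in s(\lambda)\Lambda^r$, I would use Lemma~\ref{lemma:mor-prop-htheta}\eqref{lemma:mor-prop-htheta-1} to rewrite $\theta_{p+r,q+r}(\lambda\eta)=\theta_{p,q}(\lambda)(g_\lambda^{-1}\cdot\eta)$ and $h_{p+r,q+r}(\lambda\eta)=g_\lambda|_{g_\lambda^{-1}\cdot\eta}$. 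Substituting $\zeta:=g_\lambda^{-1}\cdot\eta$, applying \eqref{UT} in the form $t_{g_\lambda\cdot\zeta}u_{g_\lambda|_\zeta}=u_{g_\lambda}t_\zeta$, and then collapsing $\sum_{\zeta\in\D(g_\lambda)\Lambda^r}t_\zeta t_\zeta^*=t_{\D(g_\lambda)}$ via the Cuntz--Krieger relation reduces the $(p+r,q+r)$ sum to the $(p,q)$ sum. Taking $p=q$ in the same computation gives $V_0=\sum_{\lambda\in\Lambda^p}t_\lambda t_\lambda^*=1$.

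Next I would prove the homomorphism property $V_{p-q}V_{m-n}=V_{(p-q)+(m-n)}$. Using well-definedness, I would refine both representatives so that the intermediate degrees agree, i.e.\ $q=m$. Then the middle factor $t_{\theta_{p,q}(\lambda)}^*t_\mu$ involves two paths of the same degree $q$, so (\ref{TCK3}) collapses it to $\delta_{\mu,\theta_{p,q}(\lambda)}t_{s(\mu)}$; the surviving terms assemble into $V_{p-n}$ using the composition rules $h_{p,q}(\lambda)h_{q,n}(\theta_{p,q}(\lambda))=h_{p,n}(\lambda)$ and $\theta_{q,n}\circ\theta_{p,q}=\theta_{p,n}$ of Lemma~\ref{lemma:htehta}\eqref{lemma:htehta-2} together with (\ref{U3}). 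Unitarity then follows: the substitution $\nu=\theta_{p,q}(\lambda)$ combined with Lemma~\ref{lemma:htehta}\eqref{lemma:htehta-4} shows $V_{p-q}^*=V_{q-p}$, whence $V_{p-q}V_{q-p}=V_0=1$.

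Finally I would establish centrality by checking that $V_{p-q}$ commutes with the generators $u_g$ and $t_\nu$; commutation with the adjoints is then automatic, since $V_{p-q}^*=V_{q-p}$ is again one of the $V$'s. For $u_g$, I would move $u_g$ through the sum using \eqref{UT} and \eqref{UT*}, reindex $\lambda\mapsto g\cdot\lambda$, and absorb the resulting conjugation using Lemma~\ref{lemma:mor-prop-htheta}\eqref{lemma:mor-prop-htheta-3}. For $t_\nu$, with $e:=d(\nu)$, I would compute $t_\nu V_{p-q}=\sum_{\lambda\in s(\nu)\Lambda^p}t_{\nu\lambda}u_{h_{p,q}(\lambda)}t_{\theta_{p,q}(\lambda)}^*$ and, via Lemma~\ref{lemma:mor-prop-htheta}\eqref{lemma:mor-prop-htheta-2}, identify its summands with those of $V_{p-q}t_\nu$ computed from the representative $(e+p,e+q)$; the extra terms in $V_{p-q}t_\nu$ indexed by $\xi\in\Lambda^{e+p}$ with $\xi(0,e)\neq\nu$ vanish because $\theta_{e+p,e+q}(\xi)(0,e)=\xi(0,e)$ forces $t_{\theta_{e+p,e+q}(\xi)}^*t_\nu=0$. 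The main obstacle is the well-definedness step, which is the only place the Cuntz--Krieger relation is truly indispensable and which licenses the representative-changing freedom used everywhere else; the delicate bookkeeping lies in the $t_\nu$-computation, in matching the representatives $(p,q)$ and $(e+p,e+q)$ and discarding the spurious terms.
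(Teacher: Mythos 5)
Your proposal is correct and follows essentially the same route as the paper's proof: well-definedness via the factorisation $\mu=\lambda\eta$, Lemma~\ref{lemma:mor-prop-htheta}\eqref{lemma:mor-prop-htheta-1}, \eqref{UT} and the Cuntz--Krieger relation; the homomorphism property via representatives with matching intermediate degree and Lemma~\ref{lemma:htehta}\eqref{lemma:htehta-2}; and centrality by checking commutation with $u_g$ and $t_\nu$ using Lemma~\ref{lemma:mor-prop-htheta}\eqref{lemma:mor-prop-htheta-3} and \eqref{lemma:mor-prop-htheta-2}, with adjoints handled through $V_{p-q}^*=V_{q-p}$. The only deviation is minor and valid: where the paper verifies unitarity by the direct computation of Lemma~\ref{lemma:rep-per}, you obtain it more cheaply from $V_{p-q}^*=V_{q-p}$ (via Lemma~\ref{lemma:htehta}\eqref{lemma:htehta-4}) combined with the homomorphism property and $V_0=1$.
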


To prove Proposition~\ref{prop:rep-per} we need the following two lemmas.

\begin{lemma}
Let $\Lambda$ be a strongly connected finite $ k$-graph, and $(G,\Lambda)$ be a self-similar groupoid action. Suppose that $p-q\in \Per(G,\Lambda)$ and $\lambda\in \Lambda^p$. Let $m:=(p\vee q)-p$ and $n:=(p\vee q)-q$. Then 
\[\Lambda^{\min}\big(\theta_{p,q}(\lambda),\lambda\big)=\big\{\big(\eta\,,\,h_{p,q}(\lambda)\cdot\theta_{n,m}(\eta)\big): \eta\in s(\theta_{p,q}(\lambda))\Lambda^n\big\}.\]
\end{lemma}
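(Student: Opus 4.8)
The plan is to deduce both inclusions from the single identity
\[
\theta_{p,q}(\lambda)\,\eta=\lambda\big(h_{p,q}(\lambda)\cdot\theta_{n,m}(\eta)\big)\qquad\text{for all }\eta\in s(\theta_{p,q}(\lambda))\Lambda^n.
\]
First I would set up the bookkeeping. Write $\nu:=\theta_{p,q}(\lambda)\in\Lambda^q$ and $g:=h_{p,q}(\lambda)$; Lemma~\ref{lemma:htehta}\eqref{lemma:htehta-1} gives $\c(g)=s(\lambda)$ and $\D(g)=s(\nu)$. Since $d(\nu)\vee d(\lambda)=p\vee q$, any pair in $\Lambda^{\min}(\nu,\lambda)$ has first coordinate of degree $(p\vee q)-q=n$ and range $s(\nu)$, hence lies in $s(\nu)\Lambda^n$. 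Because $n-m=p-q\in\Per(G,\Lambda)$, the maps $\theta_{n,m},h_{n,m}$ of Lemma~\ref{lemma:htehta} are defined on $\Lambda^n$, and as $\theta_{n,m}$ is range-preserving we get $r(\theta_{n,m}(\eta))=r(\eta)=s(\nu)=\D(g)$, so $g\cdot\theta_{n,m}(\eta)$ makes sense and has degree $m=(p\vee q)-p$. This confirms every term in the asserted formula is well-defined.

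For the identity itself I would push one cleverly factored infinite path through both instances of \eqref{pershift}. Fix $\eta\in s(\nu)\Lambda^n$, choose $z\in s(\theta_{n,m}(\eta))\Lambda^\infty$ (possible since $\Lambda$ is strongly connected), and set $x:=\eta\big(h_{n,m}(\eta)\cdot z\big)$. Applying \eqref{pershift} for the pair $(n,m)$ to $\eta$ rewrites $x=\theta_{n,m}(\eta)\,z$, so the single path $x$ has two useful factorisations. Reading the factorisation $x=\eta(\cdots)$ and prepending $\nu$ gives $(\nu x)(0,p\vee q)=\nu\eta$, since $\nu\eta$ has degree $q+n=p\vee q$. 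On the other hand $x\in s(\nu)\Lambda^\infty=\D(g)\Lambda^\infty$, so \eqref{pershift} for $(p,q)$ applied to $\lambda$, followed by the self-similarity of the action on $\Lambda^\infty$, yields
\[
\nu x=\lambda\big(g\cdot x\big)=\lambda\big(g\cdot(\theta_{n,m}(\eta)z)\big)=\lambda\big(g\cdot\theta_{n,m}(\eta)\big)\big(g|_{\theta_{n,m}(\eta)}\cdot z\big),
\]
whose degree-$(p\vee q)$ prefix is $\lambda\big(g\cdot\theta_{n,m}(\eta)\big)$ because $p+m=p\vee q$. Comparing the two prefixes of the same path $\nu x$ gives the identity.

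Finally I would assemble the inclusions. The identity shows that for each $\eta\in s(\nu)\Lambda^n$ the pair $\big(\eta,\,g\cdot\theta_{n,m}(\eta)\big)$ satisfies $\nu\eta=\lambda\big(g\cdot\theta_{n,m}(\eta)\big)$ with $d(\nu\eta)=p\vee q=d(\nu)\vee d(\lambda)$, so it lies in $\Lambda^{\min}(\nu,\lambda)$; this is the inclusion $\supseteq$. Conversely, any $(\eta,\zeta)\in\Lambda^{\min}(\nu,\lambda)$ has $\eta\in s(\nu)\Lambda^n$ by the degree count above, and the factorisation property forces $\zeta$ to be the unique degree-$m$ path with $\nu\eta=\lambda\zeta$; since $g\cdot\theta_{n,m}(\eta)$ is such a path, $\zeta=g\cdot\theta_{n,m}(\eta)$, giving $\subseteq$.

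The one genuinely delicate point is the degree mismatch. A direct application of \eqref{pershift} for $(p,q)$ to a path beginning with $\eta$ would produce $\lambda(g\cdot\eta)$, whose tail $g\cdot\eta$ has degree $n$ rather than the required $m$, and $p+n$ is in general incomparable with $p\vee q$. The device that resolves this is the substitution $x=\eta(h_{n,m}(\eta)\cdot z)=\theta_{n,m}(\eta)z$, which re-synchronises the path so that its degree-$m$ segment after $\lambda$ is exactly $g\cdot\theta_{n,m}(\eta)$; choosing this substitution correctly is the crux of the argument.
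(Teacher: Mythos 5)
Your proof is correct and rests on exactly the same mechanism as the paper's: the re-synchronising substitution $\eta\bigl(h_{n,m}(\eta)\cdot z\bigr)=\theta_{n,m}(\eta)z$ from \eqref{pershift}, pushed through \eqref{pershift} for $(p,q)$ and compared via the factorisation property. The only (harmless, and arguably cleaner) organisational difference is that you prove the single identity $\theta_{p,q}(\lambda)\eta=\lambda\bigl(h_{p,q}(\lambda)\cdot\theta_{n,m}(\eta)\bigr)$ once and deduce both inclusions from it by uniqueness of factorisation, whereas the paper runs two separate computations, handling the $\subseteq$ inclusion by applying the shift $\sigma^p$ to $\lambda\zeta x$ for a given minimal pair $(\eta,\zeta)$.
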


\begin{proof}
Suppose that $(\eta,\zeta)\in \Lambda^{\min}\big(\theta_{p,q}(\lambda),\lambda\big)$. Then $\theta_{p,q}(\lambda)\eta=\lambda\zeta$, $d(\eta)=n$ and $d(\zeta)=m$. Now let $y\in \D(h_{n,m}(\eta))\Lambda^\infty$, and $x=h_{n,m}(\eta)\cdot y$. Then $x\in s(\eta)\Lambda^\infty=s(\zeta)\Lambda^\infty$ by Lemma~\ref{lemma:htehta}\eqref{lemma:htehta-1}, and we have
\[\zeta x=\sigma^p(\lambda \zeta x )=\sigma^p\big(\theta_{p,q}(\lambda)\eta x\big)=\sigma^p\big(\lambda \big(h_{p,q}(\lambda)\cdot(\eta x)\big)\big)=h_{p,q}(\lambda)\cdot(\eta x).\]
An application of \eqref{pershift} implies that
\[\zeta x=h_{p,q}(\lambda)\cdot\big(\theta_{n,m}(\eta) y\big)=\big(h_{p,q}(\lambda)\cdot \theta_{n,m}(\eta)\big) \big(h_{p,q}(\lambda)|_{\theta_{n,m}(\eta)} \cdot y \big).\]
Since the action is degree preserving, by the factorisation property of $\Lambda$ we have $\zeta=h_{p,q}(\lambda)\cdot \theta_{n,m}(\eta)$. This gives the $\subseteq$ containment. 

For the reverse containment, let $\eta\in s(\theta_{p,q}(\lambda))\Lambda^n$, and $x=h_{n,m}(\eta)\cdot y$ for some $y\in \D(h_{n,m}(\eta))\Lambda^\infty$. Then 
\[\theta_{p,q}(\lambda)\eta x=\lambda \big(h_{p,q}(\lambda)\cdot (\eta x)\big)=\lambda \big(h_{p,q}(\lambda)\cdot (\theta_{n,m}(\eta)y\big)=\lambda\big(h_{p,q}(\lambda)\cdot \theta_{n,m}(\eta)\big) \big(h_{p,q}(\lambda)|_{\theta_{n,m}(\eta)} \cdot y \big).\]
Since $q+n=p+m=p\vee q$, we have $\theta_{p,q}(\lambda)\eta=\lambda\big(h_{p,q}(\lambda)\cdot \theta_{n,m}(\eta)\big)$, as required.
\end{proof}

\begin{lemma}\label{lemma:rep-per}
Let $\Lambda$ be a strongly connected finite $ k$-graph, and $(G,\Lambda)$ be a self-similar groupoid action. Let $p-q\in \Per(G,\Lambda)$. Then $t_\lambda t_\lambda^*=t_{\theta_{p,q}(\lambda)}t_{\theta_{p,q}(\lambda)}^*$ for all $\lambda\in \Lambda^p$, and 
$V_{p-q}=\sum_{\lambda\in \Lambda^p}t_\lambda u_{h_{p,q}(\lambda)}t_{\theta_{p,q}(\lambda)}^*$ is a unitary in $\OO(G,\Lambda)$.
\end{lemma}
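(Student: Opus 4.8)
The plan is to work throughout in $\OO(G,\Lambda)$, where the Cuntz--Krieger relation (\ref{CK}) holds, and to prove the two assertions in turn. Write $g_\lambda:=h_{p,q}(\lambda)$ and $\theta:=\theta_{p,q}$ for brevity, and set $m:=(p\vee q)-p$ and $n:=(p\vee q)-q$, so that $p+m=q+n=p\vee q$ and $n-m=p-q\in\Per(G,\Lambda)$; in particular $\theta_{n,m}$ is defined. For the first assertion I would use (\ref{CK}) at the source vertices, together with $t_\nu t_{s(\nu)}=t_\nu$ and (\ref{TCK2}), to split both projections into sums over $\Lambda^{p\vee q}$:
\[
t_\lambda t_\lambda^*=\sum_{\zeta\in s(\lambda)\Lambda^m}t_{\lambda\zeta}t_{\lambda\zeta}^*\qquad\text{and}\qquad t_{\theta(\lambda)}t_{\theta(\lambda)}^*=\sum_{\eta\in s(\theta(\lambda))\Lambda^n}t_{\theta(\lambda)\eta}t_{\theta(\lambda)\eta}^*.
\]
It then suffices to match these two families of range projections.

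The matching is supplied by the preceding lemma, which identifies $\Lambda^{\min}(\theta(\lambda),\lambda)$ with $\{(\eta,\,g_\lambda\cdot\theta_{n,m}(\eta)):\eta\in s(\theta(\lambda))\Lambda^n\}$. Thus for each such $\eta$ the path $\theta(\lambda)\eta$ equals $\lambda\zeta$ with $\zeta:=g_\lambda\cdot\theta_{n,m}(\eta)$, so that $t_{\theta(\lambda)\eta}t_{\theta(\lambda)\eta}^*=t_{\lambda\zeta}t_{\lambda\zeta}^*$. I would then verify that $\eta\mapsto\zeta=g_\lambda\cdot\theta_{n,m}(\eta)$ is a bijection of $s(\theta(\lambda))\Lambda^n$ onto $s(\lambda)\Lambda^m$: since $\theta_{n,m}$ is a range-preserving bijection (Lemma~\ref{lemma:htehta}) it restricts to a bijection $s(\theta(\lambda))\Lambda^n\to s(\theta(\lambda))\Lambda^m$, and since $g_\lambda$ is a degree-preserving partial isomorphism with $\D(g_\lambda)=s(\theta(\lambda))$ and $\c(g_\lambda)=s(\lambda)$, the map $\mu\mapsto g_\lambda\cdot\mu$ is a bijection $s(\theta(\lambda))\Lambda^m\to s(\lambda)\Lambda^m$. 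Reindexing the second sum by $\zeta$ turns it into the first, giving $t_\lambda t_\lambda^*=t_{\theta(\lambda)}t_{\theta(\lambda)}^*$.

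For the second assertion I would set $w_\lambda:=t_\lambda u_{g_\lambda}t_{\theta(\lambda)}^*$ and expand $V_{p-q}V_{p-q}^*=\sum_{\lambda,\lambda'}w_\lambda w_{\lambda'}^*$ and $V_{p-q}^*V_{p-q}=\sum_{\lambda,\lambda'}w_\lambda^*w_{\lambda'}$. The cross terms vanish: $t_{\theta(\lambda)}^*t_{\theta(\lambda')}=\delta_{\lambda,\lambda'}t_{s(\theta(\lambda))}$ (same degree $q$, and $\theta$ injective) kills the off-diagonal terms of $V_{p-q}V_{p-q}^*$, while $t_\lambda^*t_{\lambda'}=\delta_{\lambda,\lambda'}t_{s(\lambda)}$ does the same for $V_{p-q}^*V_{p-q}$. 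On the diagonal, a short calculation using (\ref{U2}), (\ref{U3}), $t_\nu^*t_\nu=t_{s(\nu)}$, and the identities $\c(g_\lambda)=s(\lambda)$, $\D(g_\lambda)=s(\theta(\lambda))$ collapses $u_{g_\lambda}t_{s(\theta(\lambda))}u_{g_\lambda}^*=u_{s(\lambda)}$ and $u_{g_\lambda}^*t_{s(\lambda)}u_{g_\lambda}=u_{s(\theta(\lambda))}$, yielding $w_\lambda w_\lambda^*=t_\lambda t_\lambda^*$ and $w_\lambda^*w_\lambda=t_{\theta(\lambda)}t_{\theta(\lambda)}^*$. Summing and applying (\ref{CK}) gives $V_{p-q}V_{p-q}^*=\sum_{\lambda\in\Lambda^p}t_\lambda t_\lambda^*=\sum_{v\in\Lambda^0}t_v=1$, and, since $\theta$ is a bijection of $\Lambda^p$ onto $\Lambda^q$, $V_{p-q}^*V_{p-q}=\sum_{\nu\in\Lambda^q}t_\nu t_\nu^*=1$, so $V_{p-q}$ is unitary.

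I expect the main obstacle to be the first assertion, specifically the verification that the reindexing $\eta\mapsto g_\lambda\cdot\theta_{n,m}(\eta)$ is a genuine bijection between $s(\theta(\lambda))\Lambda^n$ and $s(\lambda)\Lambda^m$ with the correct source and range vertices; this is the only point where the periodicity data $(\theta,h)$ enters essentially, and once the preceding lemma and Lemma~\ref{lemma:htehta} are invoked it becomes bookkeeping. The unitarity computation in the last step is then routine given the groupoid relations and (\ref{CK}).
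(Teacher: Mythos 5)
Your proposal is correct and takes essentially the same route as the paper's proof: both rest on the (\ref{CK}) decomposition, the identification $\theta_{p,q}(\lambda)\eta=\lambda\bigl(h_{p,q}(\lambda)\cdot\theta_{n,m}(\eta)\bigr)$ (with $m=(p\vee q)-p$ and $n=(p\vee q)-q$) supplied by the preceding lemma, the bijectivity of $\theta_{n,m}$, and the same cross-term/diagonal computation for unitarity. The only difference is organizational: you match the two (\ref{CK}) decompositions of $t_\lambda t_\lambda^*$ and $t_{\theta_{p,q}(\lambda)}t_{\theta_{p,q}(\lambda)}^*$ term by term via the bijection $\eta\mapsto h_{p,q}(\lambda)\cdot\theta_{n,m}(\eta)$, whereas the paper factors out $t_\lambda u_{h_{p,q}(\lambda)}$ and collapses the inner sum to a vertex projection before recombining.
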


\begin{proof}
Let $m:=(p\vee q)-p$ and $n:=(p\vee q)-q$. Using relation (\ref{CK}) gives
\[
t_{\theta_{p,q}(\lambda)}t_{\theta_{p,q}(\lambda)}^*=t_{\theta_{p,q}(\lambda)}\sum_{\eta\in s(\theta_{p,q}(\lambda))\Lambda^n}t_\eta t_\eta^*t_{\theta_{p,q}(\lambda)}^*=\sum_{\eta\in s(\theta_{p,q}(\lambda))\Lambda^n}t_{\theta_{p,q}(\lambda)\eta} t_{\theta_{p,q}(\lambda)\eta}^*.
\]
Applying \eqref{pershift} to each summand now gives
\begin{align*}
t_{\theta_{p,q}(\lambda)}t_{\theta_{p,q}(\lambda)}^*&=\sum_{\eta\in s(\theta_{p,q}(\lambda))\Lambda^n}t_{\lambda(h_{p,q}(\lambda)\cdot\theta_{n,m}(\eta))} t_{\lambda(h_{p,q}(\lambda)\cdot\theta_{n,m}(\eta))}^*\\
&=\sum_{\eta\in s(\theta_{p,q}(\lambda))\Lambda^n}t_{\lambda}t_{h_{p,q}(\lambda)\cdot\theta_{n,m}(\eta)} t_{h_{p,q}(\lambda)\cdot\theta_{n,m}(\eta)}^*t_\lambda^*.
\end{align*}
Since $t_{g\cdot \nu}t_{g\cdot \nu}^*=t_{g\cdot \nu}u_{g|_{\nu}}u_{g|_{\nu}}^*t_{g\cdot \nu}^*=u_gt_\nu t_\nu^* u_g$ for all $\nu\in \D(g)\Lambda$, we have
\begin{align}\label{thetakomaki}
t_{\theta_{p,q}(\lambda)}t_{\theta_{p,q}(\lambda)}^*
\notag&=\sum_{\eta\in s(\theta_{p,q}(\lambda))\Lambda^n}t_{\lambda}u_{h_{p,q}(\lambda)}t_{\theta_{n,m}(\eta)}t_{\theta_{n,m}(\eta)}^*u_{h_{p,q}(\lambda)}t_\lambda^*\\
&=t_{\lambda}u_{h_{p,q}(\lambda)}\sum_{\eta\in s(\theta_{p,q}(\lambda))\Lambda^n}t_{\theta_{n,m}(\eta)}t_{\theta_{n,m}(\eta)}^*u_{h_{p,q}(\lambda)}t_\lambda^*.
\end{align}
Since $\theta_{p,q}$ is a range-preserving bijection, we can use (\ref{CK}) to get
\[\sum_{\eta\in s(\theta_{p,q}(\lambda))\Lambda^n}t_{\theta_{n,m}(\eta)}t_{\theta_{n,m}(\eta)}^*=\sum_{\alpha\in s(\theta_{p,q}(\lambda))\Lambda^m}t_{\alpha}t_{\alpha}^*=u_{s(\theta_{n,m}(\lambda))}=u_{\D(h_{n,m}(\lambda))}.\]
Substituting this into \eqref{thetakomaki} then gives
\[t_{\theta_{p,q}(\lambda)}t_{\theta_{p,q}(\lambda)}^*=t_{\lambda}u_{h_{p,q}(\lambda)}u_{\D(h_{n,m}(\lambda))}u_{h_{p,q}(\lambda)}t_\lambda^*=t_{\lambda}u_{\c(h_{n,m}(\lambda))}t_\lambda^*=t_{\lambda}u_{s(\lambda))}t_\lambda^*=t_{\lambda}t_\lambda^*.\]

For the second statement, we compute
\[V_{p-q}^*V_{p-q}=\sum_{\lambda,\mu\in \Lambda^p}t_{\theta_{p,q}(\lambda)} u_{h_{p,q}(\lambda)}^*t_\lambda^* t_\mu u_{h_{p,q}(\mu)}t_{\theta_{p,q}(\mu)}^*=\sum_{\lambda\in \Lambda^p}t_{\theta_{p,q}(\lambda)} u_{h_{p,q}(\lambda)}^*u_{s(\lambda)}u_{h_{p,q}(\lambda)}t_{\theta_{p,q}(\lambda)}^*.\]
Since $s(\lambda)=\c\big(h_{p,q}(\lambda)\big)$ and $\D\big(h_{p,q}(\lambda)\big)=s\big(\theta_{p-q}(\lambda)\big)$, we have 
\[V_{p-q}^*V_{p-q}=\sum_{\lambda\in \Lambda^p}t_{\theta_{p,q}(\lambda)}t_{\theta_{p,q}(\lambda)}^*=\sum_{v\in \Lambda^0}\sum_{\mu\in v\Lambda^q}t_\mu t_\mu^*=\sum_{v\in \Lambda^0}t_v=1.\]
Similarly, we can show that $V_{p-q}V_{p-q}^*=1.$
\end{proof}

\begin{proof}[Proof of Proposition~\ref{prop:rep-per}]
We first show that $V$ is well defined. Fix $p-q\in \Per(G,\Lambda)$ and $n\in \N^k$. We claim that $V_{p+n,q+n}=V_{p,q}$. Using Proposition~\ref{prop:per}\eqref{prop:per3} we get
\begin{align}
V_{p+n,q+n}\notag&=\sum_{\lambda\in \Lambda^{p+n}}t_\lambda u_{h_{p+n,q+n}(\lambda)}t_{\theta_{p+n,q+n}(\lambda)}^*\\
\notag&=\sum_{\mu\in \Lambda^p}\sum_{\nu\in s(\mu)\Lambda^n}t_\mu t_\nu u_{h_{p+n,q+n}(\mu\nu)}t_{\theta_{p+n,q+n}(\mu\nu)}^*\\
&=\sum_{\mu\in \Lambda^p}\sum_{\nu\in s(\mu)\Lambda^n}t_\mu t_\nu u_{h_{p,q}(\mu)|_{h_{p,q}(\mu)^{-1}\cdot \nu}}t_{h_{p,q}(\mu)^{-1}\cdot \nu}^*t_{\theta_{p,q}(\mu)}^*\label{eq:Vwelldefined1}.
\end{align}
Now, for each $\mu\in \Lambda^p$ and $\nu\in s(\mu)\Lambda^n$, we have $r(h_{p,q}(\mu)^{-1}\cdot \nu)=\D(h_{p,q}(\mu))$. So the action of $h_{p,q}(\mu)^{-1}$ is a bijection of $s(\mu)\Lambda^n$ onto $\D(h_{p,q}(\mu))\Lambda^n$. We now rewrite \eqref{eq:Vwelldefined1} as 
\[
V_{p+n,q+n}=\sum_{\mu\in \Lambda^p}\sum_{\eta\in \D(h_{p,q}(\mu))\Lambda^n}t_\mu t_{h_{p,q}(\mu)\cdot \eta} u_{h_{p,q}(\mu)|_\eta}t_{ \eta}^*t_{\theta_{p,q}(\mu)}^*.
\]
We now use \eqref{UT} on the right-hand side of this expression to get
\[
V_{p+n,q+n}=\sum_{\mu\in \Lambda^p}t_\mu u_{h_{p,q}(\mu)}u_{\D(h_{p,q}(\mu))}t_{\theta_{p,q}(\mu)}^*=\sum_{\mu\in \Lambda^p}t_\mu u_{h_{p,q}(\mu)}t_{\theta_{p,q}(\mu)}^*=V_{p,q},
\]
and hence $V$ is well defined.

We next we show that $a\mapsto V_a$ is a homomorphism. Let $a,b\in \Per(G,\Lambda) $. As in \cite[Proposition~6.1]{aHLRS}, we can find $n,p,q\in \N^k$ such that $a=p-q$ and $b=q-n$. Since 
$t_{\theta_{p,q}(\lambda)}^* t_\mu=\delta_{\theta_{p,q}(\lambda),\mu} u_{s(\theta_{p,q}(\lambda))}$ and $s(\theta_{p,q}(\lambda))=\D(h_{p,q}(\lambda))$, we have 
\begin{align}
V_{a}V_b=V_{p-q}V_{q-n}\notag&=\sum_{\lambda\in \Lambda^{p}}\sum_{\mu\in \Lambda^{q}}t_\lambda u_{h_{p,q}(\lambda)}t_{\theta_{p,q}(\lambda)}^*t_{\mu} u_{h_{q,n}(\mu)}t_{\theta_{q,n}(\mu)}^*\\
&=\notag\sum_{\lambda\in \Lambda^{p}}t_\lambda u_{h_{p,q}(\lambda)} u_{h_{q,n}(\theta_{p,q}(\lambda))}t_{\theta_{q,n}(\theta_{p,q}(\lambda))}^*.
\end{align}
We can now apply Lemma~\ref{lemma:htehta}\eqref{lemma:htehta-2} on this sum to get 
\[
V_aV_b=\sum_{\lambda\in \Lambda^{p}}t_\lambda u_{h_{p,n}(\lambda)}t_{\theta_{p,n}(\lambda)}^*=V_{p,n}=V_{a+b}.
\]

It remains to see that $V$ is central. We claim that $t_\mu V_a =V_a t_\mu$ and $u_gV_a =V_a u_g$ for all $a\in \Per(G,\Lambda)$, $\mu\in \Lambda$ and $g\in G$. To see this, fix $a\in \Per(G,\Lambda)$, $\mu\in \Lambda$. 
Choose $p,q\in \N^k$ such that $a=p-q$ and let $m:=p+d(\mu)$ and $n:=q+d(\mu)$. By factoring $\nu\in \Lambda^p$, we can rewrite the sum
\begin{align}
V_{a}=V_{m,n}\notag&=\sum_{\nu\in \Lambda^{m}}t_\nu u_{h_{m,n}(\nu)}t_{\theta_{m,n}(\nu)}^*=
\sum_{\eta\in \Lambda^{d(\mu)}}\sum_{\lambda\in s(\eta)\Lambda^{p}}t_{\eta\lambda }u_{h_{d(\mu)+p,d(\mu)+q}(\eta\lambda)}t_{\theta_{d(\mu)+p,d(\mu)+q}(\eta\lambda)}^*\\
&=\notag\sum_{\eta\in \Lambda^{d(\mu)}}\sum_{\lambda\in s(\eta)\Lambda^{p}}t_{\eta\lambda }u_{h_{p,q}(\lambda)}t_{\theta_{p,q}(\lambda)}^*t_\eta^*\quad\text{ by Lemma }\ref{lemma:mor-prop-htheta}\eqref{lemma:mor-prop-htheta-2}
\end{align}
Since $t_\eta^* t_\mu=\delta_{\eta,\mu}u_{s(\eta)}$ and $s(\eta)=r(\lambda)=r\big(\theta_{p,q}(\lambda)\big)$, we have

\begin{align}
V_{a}t_\mu\notag&=\sum_{\lambda\in s(\mu)\Lambda^{p}}t_{\mu\lambda }u_{h_{p,q}(\lambda)}t_{\theta_{p,q}(\lambda)}^*=t_\mu\sum_{\lambda\in \Lambda^{p}}t_{\lambda }u_{h_{p,q}(\lambda)}t_{\theta_{p,q}(\lambda)}^*=t_\mu V_{a}.
\end{align}

To see $u_gV_a =V_a u_g$, note that $g|_\lambda h_{p,q}(\lambda)=h_{p,q}(g\cdot \lambda)g|_{\theta_{p,q}(\lambda)}$. Then the equation \eqref{UT} implies that
\begin{align}
u_gV_{a}=u_gV_{p-q}\notag&=\sum_{\lambda\in\D(g)\Lambda^{p}}t_{g\cdot \lambda }u_{g|_\lambda}u_{h_{p,q}(\lambda)}t_{\theta_{p,q}(\lambda)}^*=\sum_{\lambda\in\D(g)\Lambda^{p}}t_{g\cdot \lambda }u_{h_{p,q}(g\cdot \lambda)}u_{g|_{\theta_{p,q}(\lambda)}}t_{\theta_{p,q}(\lambda)}^*.
\end{align}
Since $\theta_{p,q}(g\cdot \lambda)=g\cdot \theta_{p,q}(\lambda)$, we get 
\begin{align}
u_gV_{a}=\sum_{\lambda\in\D(g)\Lambda^{p}}t_{g\cdot \lambda }u_{h_{p,q}(g\cdot \lambda)}u_{g|_{g^{-1}\cdot\theta_{p,q}(g\cdot \lambda)}}t_{g^{-1}\cdot\theta_{p,q}(g\cdot\lambda)}^*.
\end{align}
For all $\zeta\in \Lambda, g\in G$ with $r(\zeta)=\c(g)$, (\ref{S7}) and the formula \eqref{UT} imply that
\[u_{g|_{g^{-1}\cdot \zeta}} t_{g^{-1}\cdot\zeta}^*=u_{g^{-1}|_ \zeta}^* t_{g^{-1}\cdot\zeta}^*=\big(t_{g^{-1}\cdot\zeta}u_{g^{-1}|_ \zeta}\big)^*=(u_{g^{-1}}t_\zeta)^*=t_\zeta^* u_{g^{-1}}.\]
Therefore
\begin{align}
u_gV_{a}\notag&=\sum_{\lambda\in\D(g)\Lambda^{p}}t_{g\cdot \lambda }u_{h_{p,q}(g\cdot \lambda)}t_{\theta_{p,q}(g\cdot\lambda)}^*u_{g}\\
\notag&=\sum_{\eta\in\c(g)\Lambda^{p}}t_{\eta }u_{h_{p,q}(\eta)}t_{\theta_{p,q}(\eta)}^*u_{g}\\
\notag&=\notag\sum_{\eta\in\Lambda^{p}}t_{\eta }u_{h_{p,q}(\eta)}t_{\theta_{p,q}(\eta)}^*u_{g}\\
\notag&=V_au_g.
\end{align}
Now, Lemma~\ref{lemma:htehta}\eqref{lemma:htehta-4} allows us to write 
\[V_{p-q}^*=\sum_{\lambda\in \Lambda^p}t_{\theta_{p,q}(\lambda)} u_{h_{p,q}(\lambda)^{-1}}t_\lambda^*=\sum_{\mu\in \Lambda^q}t_{\mu} u_{h_{q,p}(\mu)}t_{\theta_{q,p}(\mu)}^*=V_{q-p}.\]
This means, for instance, that $V_at_\mu^*=(t_\mu V_{-a})^*= (V_{-a}t_\mu)^*=t_\mu^*V_a$. Similarly, we know that $V_a$ commutes with each $u_g$. It follows that $V$ is central.
\end{proof}

\section{Perron--Frobenius measure on the infinite-path space}

Suppose that $\Lambda$ is a strongly connected finite $k$-graph and let  $B_1,\dots ,B_k \in M_{\Lambda^0}[0,\infty)$ be the  adjacency matrices of $\Lambda$. Then \cite[Corollary~4.2]{aHLRS1} shows that  each $\rho(B_i)>0$, and there is a unique vector $x_\Lambda\in(0,\infty)^{\Lambda^0}$ with unit $1$-norm
such that $B_i x_\Lambda=\rho(B_i)x_\Lambda$ for all $1\leq i\leq k$. The vector $x_\Lambda$ is called the \textit{unimodular Perron--Frobenius eigenvector of $\Lambda$}. 

For each $\lambda\in \Lambda$, let 
\[
Z(\lambda):=\{x\in \Lambda^\infty: x=\lambda y \text{ for some } y\in \Lambda^\infty\}.
\]
The collection $\{Z(\lambda):\lambda\in \Lambda\}$ forms a basis of clopen sets for a Hausdorff topology on $\Lambda ^\infty$. 
By \cite[ Proposition~8.2]{aHLRS1}, there is a unique Borel probability measure $M$ on $\Lambda^\infty$ such that 
\begin{equation}\label{peron-measure-prop}
M(Z(\lambda))=\rho(\Lambda)^{d(\lambda)}M(Z(s(\lambda)))=\rho(\Lambda)^{d(\lambda)}x_\Lambda(s(\lambda))\quad \text{ for all } \lambda\in \Lambda,
\end{equation}
where  $\rho(\Lambda)^p:=\prod_{i=1}^k\rho(B_i)^{p_i}$ for all $p=(p_1,\dots, p_k)\in \N^k$. Following \cite[Proposition~4.2]{KP1}, we call $M$ the \textit{Parry measure}, and reserve the letter $M$ for this measure.
The $C^*$-algebra $C(\Lambda^\infty)$ is isomorphic to the commutative subalgebra $\overline{\operatorname{span}}\{t_\lambda t_\lambda^*: \lambda\in\Lambda\}$ of $\OO(G,\Lambda)$, where the characteristic function $1_{Z(\lambda)}$ is mapped to $t_\lambda t_\lambda^*$. Therefore  each KMS$_1$-state $\phi$ of $\OO(G,\Lambda)$ gives rise to a measure $M_\phi$ on $\Lambda^\infty$ satisfying \eqref{peron-measure-prop} and $M_\phi(Z(\lambda))=\phi(t_\lambda t_\lambda^*)$. This means  if $\phi$ is a KMS$_1$-state of $\OO(G,\Lambda)$, then we have 
\begin{equation}\label{peron-measure-prop1}
\phi(t_\lambda t_\lambda^*)=M(Z(\lambda))\quad \text{ and }\quad \phi(t_v)=M(Z(v))=x_\Lambda (v)
\end{equation}
 for all $\lambda\in \Lambda$ and $v\in \Lambda^0$. 
 
The measure $M$ plays a crucial role in building the unique KMS$_1$-state in \cite{aHLRS1,LY}. We will also need $M$ to construct KMS$_1$-states for our preferred dynamics on $\OO(G,\Lambda)$, but we first need to study $M$ in the presence of a self-similar groupoid action. 

\begin{lemma}\label{lemma:cg}
Let $\Lambda$ be a strongly connected finite $ k$-graph, $(G,\Lambda)$ be a self-similar groupoid action, and $x_\Lambda$ be the unimodular Perron--Frobenius eigenvector of $\Lambda$. Let $p-q\in \Per(G,\Lambda)$, $\lambda\in \Lambda^p$, and $ h_{p,q}(\lambda)$ be as in Lemma~\ref{lemma:htehta}. Write $N:=(1,\dots,1)\in \N^k$ and suppose that $g\in G$ satisfies $\c(g)=\c(h_{p,q}(\lambda))$, $\D(g)=\D(h_{p,q}(\lambda))$ and $g\neq h_{p,q}(\lambda)$. 
 For $v\in \Lambda^{0}$, $l\in \N$ define
\[F_{p,q,\lambda}^l(g,v):=\big\{\mu\in \D(g)\Lambda^{lN} v: g\cdot \mu=h_{p,q}(\lambda)\cdot \mu, g|_\mu=h_{p,q}(\lambda)|_ \mu\big\}\]
and
\[c_{p,q,\lambda}^l(g):=\rho(\Lambda)^{-lN}\sum_{v\in \Lambda^{0}}\big|F^l_{p,q,\lambda}(g,v)\big|x_\Lambda(v).\]
Then $\{c_{p,q,\lambda}^{l}(g)\}$ converges to a limit $c_{p,q,\lambda}(g)\in\big[0,x_\Lambda(\D(g))\big)$ as $l\to \infty$.
\end{lemma}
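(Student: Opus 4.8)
The plan is to forget the detailed structure of $h:=h_{p,q}(\lambda)$ and use only that $\D(h)=\D(g)$, $\c(h)=\c(g)$ and $g\neq h$. Writing $a^l:=\sum_{v\in\Lambda^0}\big|F^l_{p,q,\lambda}(g,v)\big|x_\Lambda(v)$, so that $c^l_{p,q,\lambda}(g)=\rho(\Lambda)^{-lN}a^l$, I would show that $\{c^l_{p,q,\lambda}(g)\}$ is nondecreasing and bounded above by $x_\Lambda(\D(g))$ (hence convergent), and then produce a \emph{uniform} positive lower bound on the gap $x_\Lambda(\D(g))-c^l_{p,q,\lambda}(g)$ to force the limit strictly below $x_\Lambda(\D(g))$.

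First I would prove monotonicity, whose engine is that membership in $F^l$ is hereditary under extension: if $\mu\in F^l_{p,q,\lambda}(g,v)$ and $\nu\in s(\mu)\Lambda^N$, then $\mu\nu\in F^{l+1}_{p,q,\lambda}(g,s(\nu))$. Indeed (\ref{S1}) gives $g\cdot(\mu\nu)=(g\cdot\mu)(g|_\mu\cdot\nu)$ and likewise for $h$, so $g\cdot\mu=h\cdot\mu$ together with $g|_\mu=h|_\mu$ yields $g\cdot(\mu\nu)=h\cdot(\mu\nu)$, while (\ref{S2}) gives $g|_{\mu\nu}=(g|_\mu)|_\nu=(h|_\mu)|_\nu=h|_{\mu\nu}$. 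Since distinct pairs $(\mu,\nu)$ give distinct products $\mu\nu$, summing $x_\Lambda(s(\nu))$ over $\nu\in s(\mu)\Lambda^N$ and using $B^Nx_\Lambda=\rho(\Lambda)^Nx_\Lambda$ (with $B^N=\prod_{i=1}^k B_i$, the $B_i$ commuting as usual for $k$-graphs) gives
\[
a^{l+1}\ \ge\ \sum_{\mu\in F^l_{p,q,\lambda}(g,\cdot)}\ \sum_{\nu\in s(\mu)\Lambda^N} x_\Lambda(s(\nu))\ =\ \rho(\Lambda)^N\sum_{\mu\in F^l_{p,q,\lambda}(g,\cdot)} x_\Lambda(s(\mu))\ =\ \rho(\Lambda)^N a^l,
\]
so $c^{l+1}_{p,q,\lambda}(g)\ge c^l_{p,q,\lambda}(g)$. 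Boundedness is immediate from $F^l_{p,q,\lambda}(g,v)\subseteq\D(g)\Lambda^{lN}v$: since $\sum_v\big|\D(g)\Lambda^{lN}v\big|x_\Lambda(v)=(B^{lN}x_\Lambda)(\D(g))=\rho(\Lambda)^{lN}x_\Lambda(\D(g))$, we get $c^l_{p,q,\lambda}(g)\le x_\Lambda(\D(g))$. Thus $c_{p,q,\lambda}(g):=\lim_l c^l_{p,q,\lambda}(g)$ exists and lies in $[0,x_\Lambda(\D(g))]$.

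The hard part will be the strict inequality. Because the action is faithful and $g\neq h$ share a domain, $g$ and $h$ are distinct partial isomorphisms of $\D(g)\Lambda$, so there is $\lambda_0\in\D(g)\Lambda$ with $g\cdot\lambda_0\neq h\cdot\lambda_0$; extending $\lambda_0$ (a strongly connected $\Lambda$ has no sources) I may assume $d(\lambda_0)=l_0N$ for some $l_0\in\N$, the disagreement persisting since a difference in an initial segment survives any extension. Then for every $l\ge l_0$ and every $\xi\in s(\lambda_0)\Lambda^{(l-l_0)N}v$, the path $\lambda_0\xi$ lies in $\D(g)\Lambda^{lN}v\setminus F^l_{p,q,\lambda}(g,v)$, because $g\cdot(\lambda_0\xi)$ and $h\cdot(\lambda_0\xi)$ already differ on their degree-$l_0N$ initial segments. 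Counting these excluded paths with the eigenvector identity once more gives
\[
x_\Lambda(\D(g))-c^l_{p,q,\lambda}(g)=\rho(\Lambda)^{-lN}\sum_{v}\big(\big|\D(g)\Lambda^{lN}v\big|-\big|F^l_{p,q,\lambda}(g,v)\big|\big)x_\Lambda(v)\ \ge\ \rho(\Lambda)^{-l_0N}x_\Lambda(s(\lambda_0)),
\]
a positive constant independent of $l$. Letting $l\to\infty$ yields $c_{p,q,\lambda}(g)\le x_\Lambda(\D(g))-\rho(\Lambda)^{-l_0N}x_\Lambda(s(\lambda_0))<x_\Lambda(\D(g))$, as required. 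I expect convergence to be routine (a soft monotone–bounded argument), with the genuine obstacle being strictness: the point is to convert the single disagreement $g\cdot\lambda_0\neq h\cdot\lambda_0$ furnished by faithfulness into a lower bound on the disagreement weight that does not decay in $l$, which is exactly what $B^{mN}x_\Lambda=\rho(\Lambda)^{mN}x_\Lambda$ supplies.
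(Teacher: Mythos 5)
Your proof is correct and follows essentially the same route as the paper's: the monotonicity step (hereditary extension of $F^l$ via (\ref{S1})--(\ref{S2}) plus the eigenvector identity) is identical, and your strictness bound $x_\Lambda(\D(g))-c^l_{p,q,\lambda}(g)\ge \rho(\Lambda)^{-l_0N}x_\Lambda(s(\lambda_0))$ is exactly the paper's inequality $c^l_{p,q,\lambda}(g)\le x_\Lambda(\D(g))-\rho(\Lambda)^{-jN}x_\Lambda(s(\eta))$, obtained from the same device of a disagreement path of degree $l_0N$ all of whose extensions are excluded from $F^l$. The only difference is packaging: the paper folds the uniform gap into the boundedness step, while you first get convergence from the crude bound and then establish the gap separately.
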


\begin{remark}\label{cgremark}

When $\Lambda$ has a single vertex, and so $G$ is a group, if $(G,\Lambda)$ is pseudo free in the sense of \cite{LY}, we claim that the set $F_{p,q,\lambda}^l(g,v)$ is empty, and hence $c_{p,q,\lambda}(g)=0$.  This holds because if $\mu\in F_{p,q,\lambda}^l(g,v)$, then by (\ref{S6}) and (\ref{S7}) we have $(h_{p,q}(\lambda)^{-1}g)|_\mu=s(\mu)=v$ and $(h_{p,q}(\lambda)^{-1}g)\cdot \mu=\mu$. Now the pseudo freeness implies $g=h_{p,q}(\lambda)$, which is a contradiction.
\end{remark}

\begin{proof}[Proof of Lemma~\ref{lemma:cg}]
We adjust the arguments in the proof of \cite[Proposition~8.2]{LRRW} to fit our setting. We claim that the sequence $\{c_{p,g,\lambda}^l(g)\}$ is increasing and bounded.
For boundedness, first note that since $g\neq h_{p,q}(\lambda)$, there is $\eta\in \Lambda$ such that $g\cdot \eta\neq h_{p,q}(\lambda)\cdot \eta$. We can assume that $d(\eta)=jN$ for some $j\in \N$. Hence $\eta\notin F_{p,q,\lambda}^{j}(g,s(\eta))$. An application of (\ref{S3}) implies that for every $\zeta\in s(\eta)\Lambda^{lN-jN}$ with $l> j$, the element $\eta\zeta$ does not belong to $F_{p,q,\lambda}^{l}(g,s(\zeta))$. Therefore for $l>j$ and $v\in \Lambda^0$, we have 
\[\big| F_{p,q,\lambda}^{l}(g,v)\big|\leq \big|\D(g)\Lambda^{lN}v\big|-\big|s(\eta)\Lambda^{lN-jN}v\big|=B^{lN}(\D(g),v)-B^{lN-jN}(s(\eta),v).\]
Now we have 
\begin{align*}
c_{p,q,\lambda}^{l}(g)&\leq \rho(\Lambda)^{-lN}\Big(\sum_{v\in \Lambda^{0}}B^{lN}(\D(g),v)x_\Lambda(v)-\sum_{v\in \Lambda^{0}}B^{lN-jN}(s(\eta),v)x_\Lambda(v)\Big)\\
&=\rho(\Lambda)^{-lN}\Big(\rho(\Lambda)^{lN}x_\Lambda(\D(g))-\rho(\Lambda)^{lN-jN}x_\Lambda(s(\eta))\Big)\\
&=x_\Lambda(\D(g))-\rho(\Lambda)^{-jN}x_\Lambda(s(\eta)),
\end{align*}
and hence $0\leq c_{p,q,\lambda}^{l}(g)<x_\Lambda(\D(g))$.

It remains to show that $\{c_{p,q,\lambda}^l(g)\}$ is an increasing sequence. Let $\mu\in F_{p,q,\lambda}^{l}(g,v)$ and $\nu \in v\Lambda^N$. Then (\ref{S1}) implies that 
\[g\cdot(\mu \nu)=h_{p,q}(\lambda)\cdot( \mu\nu)\quad \text{ and } g|_{\mu\nu}=h_{p,q}(\lambda)|_ {\mu\nu}.\]
Therefore $\mu\nu\in F_{p,q,\lambda}^{l+1}(g, s(\nu))$ and hence $\big|F_{p,q,\lambda}^{l+1}(g,v)|\geq \sum_{w\in \Lambda^0}|F_{p,q,\lambda}^{l}(g,w)\big|B^N(w,v)$. Now we have 
\begin{align*}
c_{p,q,\lambda}^{l+1}(g)&=\rho(\Lambda)^{-lN-N}\sum_{v\in \Lambda^{0}}\big|F^{l+1}_{p,q,\lambda}(g,v)\big|x_\Lambda(v)
\\
&\geq\rho(\Lambda)^{-lN-N}\sum_{v\in \Lambda^{0}}\sum_{w\in \Lambda^0}|F_{p,q,\lambda}^{l}(g,w)|B^N(w,v)x_\Lambda(v).
\end{align*}
Recalling that $x_\Lambda$ is the Perron--Frobenius eigenvector of $\Lambda$, we get
\[c_{p,q,\lambda}^{l+1}(g)\geq\rho(\Lambda)^{-lN-N}\sum_{w\in \Lambda^{0}}|F_{p,q,\lambda}^{l}(g,w)|\rho(\Lambda)^{N}x_\Lambda(w) 
=c_{p,q,\lambda}^{l}(g),
\]
as required.
\end{proof}

\begin{cor}\label{cor:cg}
Let $\Lambda$ be a strongly connected finite $ k$-graph, $(G,\Lambda)$ be a self-similar groupoid action, and $x_\Lambda$ be the unimodular Perron--Frobenius eigenvector of $\Lambda$. Let $N:=(1,\dots,1)\in \N^k$, and $g\in G\setminus{\Lambda^0}$. 
 For $v\in \Lambda^{0}$, $l\in \N$ define
\[F_{g}^l(v):=\big\{\mu\in \D(g)\Lambda^{lN} v: g\cdot \mu= \mu, g|_\mu=v\big\}\quad \text{and}\quad c_{g}^l:=\rho(\Lambda)^{-lN}\sum_{v\in \Lambda^{0}}\big|F^l_{g}(v)\big|x_\Lambda(v).\]
Then $\{c_{g}^{l}\}$ converges to a limit $c_{g}\in\big[0,x_\Lambda(\D(g))\big)$ as $l\to \infty$.
\end{cor}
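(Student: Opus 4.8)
The plan is to deduce this corollary directly from \lemref{lemma:cg} by specialising to the trivial element $0\in\Per(G,\Lambda)$, after first dealing with a degenerate case separately.

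First I would observe that if $\c(g)\neq\D(g)$, then every $F_g^l(v)$ is empty. Indeed, for $\mu\in\D(g)\Lambda^{lN}v$ we have $r(\mu)=\D(g)$, so by (\ref{S4}) of \lemref{lem:properties} the range of $g\cdot\mu$ is $g\cdot r(\mu)=\c(g)\neq\D(g)=r(\mu)$; hence $g\cdot\mu\neq\mu$ and the defining condition fails. Thus $c_g^l=0$ for all $l$, the sequence trivially converges to $c_g=0$, and since $x_\Lambda$ is strictly positive (it lies in $(0,\infty)^{\Lambda^0}$) we have $0\in\big[0,x_\Lambda(\D(g))\big)$, as required.

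In the remaining case $\c(g)=\D(g)=:v_0$, I would apply \lemref{lemma:cg} with $p=q=0$ and $\lambda:=v_0\in\Lambda^0$ (so that $\lambda\in\Lambda^p$ with $p=0$). Note that $0\in\Per(G,\Lambda)$ by \proref{prop:per}\eqref{prop:per3}. By \lemref{lemma:htehta}\eqref{lemma:htehta-3}, $h_{0,0}(v_0)=\id_{s(v_0)}=\id_{v_0}$, so $\c(h_{0,0}(v_0))=\D(h_{0,0}(v_0))=v_0=\c(g)=\D(g)$, and $g\neq\id_{v_0}$ because $g\notin\Lambda^0$; thus all hypotheses of \lemref{lemma:cg} are satisfied. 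Using (\ref{S5}), for any $\mu\in v_0\Lambda$ we have $\id_{v_0}|_\mu=\id_{s(\mu)}$, so the two defining conditions $g\cdot\mu=h_{0,0}(v_0)\cdot\mu$ and $g|_\mu=h_{0,0}(v_0)|_\mu$ become $g\cdot\mu=\mu$ and $g|_\mu=\id_{s(\mu)}=s(\mu)$. Consequently $F_{0,0,v_0}^l(g,v)=F_g^l(v)$ and $c_{0,0,v_0}^l(g)=c_g^l$ for every $l$, and \lemref{lemma:cg} yields the convergence of $\{c_g^l\}$ to the limit $c_g:=c_{0,0,v_0}(g)\in\big[0,x_\Lambda(\D(g))\big)$.

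Since the argument is a direct reduction, I do not expect any substantive obstacle; the only steps requiring care are the two identifications $h_{0,0}(v_0)=\id_{v_0}$ and $\id_{v_0}|_\mu=\id_{s(\mu)}$, which are exactly what is needed to show that the counting sets of the corollary coincide with those of the lemma.
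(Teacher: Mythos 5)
Your proof is correct and is essentially the paper's own argument: the paper disposes of this corollary in one line by applying Lemma~\ref{lemma:cg} with $p=q=0$, using that $h_{0,0}$ is the identity function on $\Lambda^0$, exactly as in your main case. Your separate treatment of the degenerate case $\c(g)\neq\D(g)$ (where every $F_g^l(v)$ is empty, so $c_g^l\equiv 0$) is a detail the paper's proof leaves implicit—the hypotheses of Lemma~\ref{lemma:cg} with $p=q=0$ force $\c(g)=\D(g)$, so strictly speaking that case needs the trivial argument you give—and including it makes the reduction fully rigorous.
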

\begin{proof}
Since $0\in\Per(G,\Lambda)$ and $h_{0,0}$ is the identity function on $\Lambda^0$, the proof follows from applying Lemma~\ref{lemma:cg} with $p=q=0$.
\end{proof}
\begin{remark}\label{cgremark2}
For a directed graph $E$ and self-similar action $(G,E)$,  the set $ F_g^l(v)$ and the number $c_g$ coincide with those of \cite[Proposition~8.2]{LRRW}. 
\end{remark}

\begin{definition}\label{def:finitestate}
Following \cite{lrrw, LRRW, LY}, we say a self-similar action $(G,\Lambda)$ is \textit{finite-state} if for all $g\in G\setminus\Lambda^0$, the set 
$
\{g|_\lambda: \lambda\in \D(g)\Lambda\}
$
is finite. Given $\lambda,\mu\in \Lambda$ and $g\in G$ with $s(\lambda)=g\cdot s(\mu)$, we define
\begin{equation}\label{Z-set}
Z(\lambda,g,\mu):=\{x\in 
\Lambda^\infty: x=\lambda (g\cdot y)=\mu y \text{ for some } y\in \Lambda^\infty\}.
\end{equation}
A continuity argument shows that $Z(\lambda,g,\mu)$ is closed and hence is $M$-measurable.
\end{definition}

\begin{thm}\label{prop:peron-measure}
Let $\Lambda$ be a strongly connected finite $ k$-graph, $(G,\Lambda)$ be a finite-state self-similar groupoid action, $M$ be the Parry measure on $\Lambda^\infty$, and $x_\Lambda$ be the unimodular Perron--Frobenius eigenvector of $\Lambda$. Then for $\lambda,\mu\in \Lambda$ and $g\in G$ with $s(\lambda)=g\cdot s(\mu)$, we have 
\begin{align}\label{measure-triple}
M\big(Z(\lambda,g,\mu)\big)
&= \begin{cases}\rho(\Lambda)^{-d(\mu)}x_\Lambda(s(\mu))& \text {if } d(\lambda)-d(\mu)\in \Per(G,\Lambda), \mu=\theta_{d(\lambda),d(\mu)}(\lambda )\\
\quad\quad\quad \quad\quad\quad&\text{ and }g=h_{d(\lambda),d(\mu)}(\lambda )\\
\rho(\Lambda)^{-d(\mu)}c_{d(\lambda),d(\mu),\lambda}(g)& \text {if } d(\lambda)-d(\mu)\in \Per(G,\Lambda), \mu=\theta_{d(\lambda),d(\mu)}(\lambda )\\
\quad\quad\quad \quad\quad\quad&\text{ and }g\neq h_{d(\lambda),d(\mu)}(\lambda )\\
0 &\text{otherwise.}\\
\end{cases}
\end{align}
\end{thm}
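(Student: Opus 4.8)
The plan is to strip $Z(\lambda,g,\mu)$ down to a fixed-point set and then read off its measure from the finite-state data. Write $p:=d(\lambda)$, $q:=d(\mu)$, and set
\[Y:=\{y\in s(\mu)\Lambda^\infty:\lambda(g\cdot y)=\mu y\},\]
so that $Z(\lambda,g,\mu)=\mu Y$. Since the map $y\mapsto\mu y$ scales $M$ by the uniform factor $\rho(\Lambda)^{-q}$ (immediate from \eqref{peron-measure-prop}), I get $M(Z(\lambda,g,\mu))=\rho(\Lambda)^{-q}M(Y)$, and the whole problem becomes the identification of $M(Y)$. The elementary identity $(g\cdot y)(0,m)=g\cdot y(0,m)$, a consequence of (\ref{S1}), will be used throughout to pass between the action on finite and infinite paths.

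The technical heart is the measure identity
\[M\big(\{y\in\D(k)\Lambda^\infty:k\cdot y=y\}\big)=c_k\qquad(k\in G\setminus\Lambda^0),\]
with $c_k$ the limit of Corollary~\ref{cor:cg}, together with the trivial $M(\{y:k\cdot y=y\})=x_\Lambda(\D(k))$ when $k\in\Lambda^0$. To prove this I would adapt \cite[Proposition~8.2]{LRRW}: put $F:=\{y:\exists l,\ y(0,lN)\in F_k^{l}(s(y(0,lN)))\}$; the ``stays trivial'' consequence of (\ref{S1})--(\ref{S6}) shows that the defining cylinders of $F$ increase with $l$ and have total measure $c_k^{l}$, so $M(F)=c_k$ and $F\subseteq\{k\cdot y=y\}$. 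The content is that $\{k\cdot y=y\}\setminus F$ is null. Decomposing this set by the level $lN$ up to which no prefix has yet trivialised, and using that finite-stateness (Definition~\ref{def:finitestate}) makes the restrictions $k|_{y(0,lN)}$ range over a finite set, I would produce a uniform contraction: faithfulness forces each nontrivial restriction to move some finite path (extended to degree a multiple of $N$), and strong connectedness together with $B^{jN}x_\Lambda=\rho(\Lambda)^{jN}x_\Lambda$ turns this into a factor $1-\delta<1$ by which the measure of the not-yet-trivialised cylinders drops every $j_0$ steps; hence that measure tends to $0$.

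With this in hand, the two ``periodic'' lines of \eqref{measure-triple} are direct. If $p-q\in\Per(G,\Lambda)$ and $\mu=\theta_{p,q}(\lambda)$, then \eqref{pershift} gives $\lambda(h_{p,q}(\lambda)\cdot y)=\mu y$ for \emph{all} $y$, whence
\[Y=\{y:g\cdot y=h_{p,q}(\lambda)\cdot y\}=\{y:k\cdot y=y\},\qquad k:=h_{p,q}(\lambda)^{-1}g .\]
This is all of $s(\mu)\Lambda^\infty$ (measure $x_\Lambda(s(\mu))$) when $g=h_{p,q}(\lambda)$, and equals $c_k=c_{p,q,\lambda}(g)$ otherwise; the identification $c_{p,q,\lambda}(g)=c_{h_{p,q}(\lambda)^{-1}g}$ comes from (\ref{S6}) and (\ref{S7}), which match the defining sets $F^{l}_{p,q,\lambda}(g,\cdot)$ with those of Corollary~\ref{cor:cg}. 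When $p-q\in\Per(G,\Lambda)$ but $\mu\neq\theta_{p,q}(\lambda)$, any $y\in Y$ would satisfy $\mu y=\lambda(g\cdot y)=\theta_{p,q}(\lambda)\big((h_{p,q}(\lambda)^{-1}g)\cdot y\big)$ by \eqref{pershift}, and comparing degree-$q$ prefixes forces $\mu=\theta_{p,q}(\lambda)$, a contradiction; so $Y=\emptyset$.

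It remains to show $M(Y)=0$ whenever $p-q\notin\Per(G,\Lambda)$, and I expect this to be the main obstacle. I would argue the contrapositive: if $M(Y)>0$ then $p-q\in\Per(G,\Lambda)$. Decomposing $Y$ over $\Lambda^{\min}(\lambda,\mu)$ via the pair $(\pi,\tau)=(g\cdot y(0,m),y(0,n))$, where $m=(p\vee q)-p$ and $n=(p\vee q)-q$, and applying the fixed-point identity of the second paragraph, positivity yields some $\tau$ with $c_{g|_\tau}>0$, hence (by Lemma~\ref{lemma:cg}/Corollary~\ref{cor:cg}) a finite $\nu$ with $g|_{\tau\nu}$ a unit and $g\cdot(\tau\nu)=\pi\nu$. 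Setting $\rho:=\lambda\pi\nu=\mu\tau\nu$, this gives $\sigma^p w=g\cdot\sigma^q w$ for every $w\in Z(\rho)$, and a prepending argument as in the proofs of Proposition~\ref{prop:per}\eqref{thetah1}--\eqref{prop:per2} and \cite[Proposition~6.1]{aHLRS} upgrades this single-cylinder relation to the global one, placing $p-q$ in $\Per(G,\Lambda)$. The delicate point throughout is exactly this extraction of a genuine periodicity from a positive-measure coincidence set, where finite-state triviality, faithfulness and strong connectedness must be combined; the remaining arguments are bookkeeping with \eqref{peron-measure-prop}, Lemma~\ref{lemma:htehta} and the automaton relations (\ref{S1})--(\ref{S7}).
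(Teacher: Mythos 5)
Your reduction $Z(\lambda,g,\mu)=\mu Y$ with $M(\mu B)=\rho(\Lambda)^{-d(\mu)}M(B)$, your fixed-point identity $M(\{y\in \D(k)\Lambda^\infty : k\cdot y=y\})=c_k$ (whose proposed proof is the same finite-stateness/contraction argument as the paper's Lemma~\ref{meaure:lemma1}), and your treatment of the three subcases with $d(\lambda)-d(\mu)\in\Per(G,\Lambda)$ are all sound; in particular the identification $c_{d(\lambda),d(\mu),\lambda}(g)=c_k$ for $k=h_{d(\lambda),d(\mu)}(\lambda)^{-1}g$, via (\ref{S6}) and (\ref{S7}), is correct and is a clean repackaging of the paper's relative sets $F^l_{d(\lambda),d(\mu),\lambda}(g,v)$. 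The gap is in the case $d(\lambda)-d(\mu)\notin\Per(G,\Lambda)$, exactly where you flagged it. Write $p=d(\lambda)$, $q=d(\mu)$, $m=(p\vee q)-p$, $n=(p\vee q)-q$, so $n-m=p-q\neq 0$. The piece of $Y$ indexed by $(\pi,\tau)\in\Lambda^{\min}(\lambda,\mu)$ is $\tau\cdot\{y' : (g\cdot\tau)(g|_\tau\cdot y')=\pi y'\}$, which, in terms of $w'=\pi y'$, is a copy of $Z(g\cdot\tau,\,g|_\tau,\,\pi)$: another twisted set with the \emph{same} nonzero degree difference $d(g\cdot\tau)-d(\pi)=n-m=p-q$, not a fixed-point set of $g|_\tau$. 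So the fixed-point identity does not apply to the pieces, and ``positivity yields some $\tau$ with $c_{g|_\tau}>0$'' does not follow. Worse, the object you then try to extract cannot exist: the action is degree-preserving and $d(\tau\nu)=n+d(\nu)\neq m+d(\nu)=d(\pi\nu)$, so the equation $g\cdot(\tau\nu)=\pi\nu$ is impossible for every $\nu$. What you would actually need is the infinite-path identity $(g\cdot(\tau\nu))z=\pi\nu z$ for all $z\in s(\nu)\Lambda^\infty$, which is precisely the cylinder-wide periodicity relation you are trying to derive; the argument is circular at this point. (Your final upgrade is fine and needs no prepending: given a cylinder $Z(\varrho)$, $d(\varrho)\geq p\vee q$, on which $\sigma^p w=g\cdot\sigma^q w$ holds, set $\alpha:=\varrho(p,d(\varrho))$, $\beta:=g\cdot\varrho(q,d(\varrho))$, $k:=g|_{\varrho(q,d(\varrho))}$; then $\alpha z=\beta(k\cdot z)$ for all $z$, and applying $\sigma^{d(\alpha)\vee d(\beta)}$ exhibits $k$ as satisfying the defining relation of \eqref{eq:theGperiodicitygroup} on all of $\D(k)\Lambda^\infty$ with degree difference $p-q$.)

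What is missing is a mechanism that converts the \emph{negation} of periodicity into quantitative decay, and the contrapositive does not let you avoid this: some step must actually use $p-q\notin\Per(G,\Lambda)$. This is what the paper's Lemma~\ref{lemmayl0} supplies: since $p-q\notin\Per(G,\Lambda)$, each of the finitely many restrictions $g_i$ of $g$ (finite-stateness) admits a blocking path $\nu_i$ with $\Lambda^{\min}(\lambda(g_i\cdot\nu_i),\mu\nu_i)=\emptyset$, uniformly over all $\lambda,\mu$ with $d(\lambda)-d(\mu)=p-q$. Lemma~\ref{lem:minimalex-measure} then runs the same geometric contraction you use for the fixed-point identity, but on the covering family of cylinders $Z(\mu\nu)$ with $\Lambda^{\min}(\lambda(g\cdot\nu),\mu\nu)\neq\emptyset$, giving $M(Z(\lambda,g,\mu))\leq K^l M(Z(\mu))\to 0$. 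Your self-similar decomposition could in principle be combined with this blocking-path idea (each piece carries the same degree difference and a restriction of $g$, so avoiding $\nu_i$ at each generation yields a factor $K<1$), but as written your sketch substitutes for it an appeal to the fixed-point identity, which is unavailable in this case.
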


We spend the rest of this section proving Theorem~\ref{prop:peron-measure}. We first need some technical lemmas.

\begin{lemma}\label{meaure:lemma1}
Let $\Lambda$ be a strongly connected finite $ k$-graph, $(G,\Lambda)$ be a finite-state self-similar groupoid action. Let $M$ be the Parry measure on $\Lambda^\infty$, $p-q\in \Per(G,\Lambda)$, $\lambda\in \Lambda^p$, and $h:=h_{p,q}(\lambda)$. Take $g\in G$ such that $g\neq h$ and suppose that there exists $\omega\in \Lambda\setminus\Lambda^0$ such that 
 $g|_\omega\neq h|_\omega$. Then for $N\coloneqq (1,1,\dots,1)\in \N^k$ there exists $b:=jN$ for $j\in\N$, and a real number $0<K<1$, such that 
\begin{align}\label{approximatio-meaure 1}
M\bigg(\bigcup_{\substack{\mu\in s(\lambda)\Lambda^{lb}\\g\cdot \mu=h\cdot \mu, \, g|_\mu\neq h|_\mu}}Z(\lambda \mu)\bigg)
&\leq K^l M\big(Z(\lambda)\big)\quad \text{ for } l\in \N.
\end{align}
\end{lemma}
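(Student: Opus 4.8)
The goal is to show that the measure of infinite paths $\lambda\mu y$ where $\mu$ has length $lb$, $g$ and $h$ agree on the initial action $g\cdot\mu=h\cdot\mu$ but differ in the restriction $g|_\mu\neq h|_\mu$, decays geometrically in $l$. The plan is to exploit the finite-state hypothesis to find a uniform probability of ``escape''—i.e.\ a uniform lower bound on the measure of continuations on which the restrictions start to genuinely differ in their action.

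\textbf{Setting up the recursion.} First I would pass from the union over $\mu\in s(\lambda)\Lambda^{lb}$ to a measure computation. Since $M(Z(\lambda\mu))=\rho(\Lambda)^{-d(\mu)}M(Z(\lambda))\cdot\rho(\Lambda)^{d(\mu)}\cdots$—more precisely using \eqref{peron-measure-prop}, $M(Z(\lambda\mu))=\rho(\Lambda)^{d(\lambda)+d(\mu)}x_\Lambda(s(\mu))$—the union over a disjoint family of cylinders $Z(\lambda\mu)$ has measure $\sum_\mu M(Z(\lambda\mu))$. So I would define, for each $l$, the quantity
\[
A_l:=\frac{1}{M(Z(\lambda))}\,M\bigg(\bigcup_{\substack{\mu\in s(\lambda)\Lambda^{lb}\\ g\cdot\mu=h\cdot\mu,\,g|_\mu\neq h|_\mu}}Z(\lambda\mu)\bigg),
\]
and aim to prove $A_{l}\leq K\,A_{l-1}$ with $0<K<1$, together with $A_0\le 1$, whence $A_l\le K^l$.

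\textbf{Choosing $b$ and producing the contraction.} The finite-state hypothesis is the crucial ingredient: the set $S:=\{(g|_\mu,h|_\mu):\mu\in\D(g)\Lambda\}$ is finite. For each pair $(g',h')\in S$ with $g'\neq h'$ but $g'\cdot e=h'\cdot e$ for all edges $e$ at the relevant level, I would use the hypothesis that there is some $\omega$ (which we may take of degree a multiple of $N$, say $d(\omega)=jN$) with $g'|_\omega\neq h'|_\omega$, and more importantly a continuation on which the \emph{actions} of the restricted elements differ—so that the cylinder over that continuation is excluded from the ``agreeing'' set at the next level. Setting $b:=jN$ with $j$ large enough to handle every pair in the finite set $S$ simultaneously, I get: for each $\mu$ with $g|_\mu\neq h|_\mu$ there exists at least one $\omega\in s(\mu)\Lambda^{b}$ with $(g|_\mu)\cdot\omega\neq (h|_\mu)\cdot\omega$, i.e.\ $\mu\omega$ fails the condition $g\cdot(\mu\omega)=h\cdot(\mu\omega)$. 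Because $x_\Lambda$ is a strictly positive Perron--Frobenius eigenvector and $\Lambda$ is finite and strongly connected, the measure of $Z(\lambda\mu\omega)$ relative to $Z(\lambda\mu)$ is bounded below by a strictly positive constant $\kappa$ uniform in $\mu$ (this uses that $\rho(\Lambda)^{-b}x_\Lambda(w)/x_\Lambda(s(\mu))$ is bounded below over the finitely many vertices). Then the proportion of the measure of $Z(\lambda\mu)$ that survives into the agreeing set at level $l$ is at most $1-\kappa=:K<1$, giving the recursion $A_{l}\le K\,A_{l-1}$.

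\textbf{The main obstacle.} The delicate point is the step from $g|_\mu\neq h|_\mu$ to the existence of a \emph{uniform-length} $\omega$ on which the actions genuinely differ, valid for all $\mu$ at once. Merely having $g|_\mu\neq h|_\mu$ gives, by faithfulness, some $\omega$ with $(g|_\mu)\cdot\omega\neq(h|_\mu)\cdot\omega$, but the length of $\omega$ could a priori depend on $\mu$. Here is where finite-state is indispensable: the pairs $(g|_\mu,h|_\mu)$ range over the \emph{finite} set $S$, so one takes the maximum of the finitely many required lengths and rounds up to a common multiple $b=jN$ of $N$. I would argue carefully that the defining conditions $g\cdot\mu=h\cdot\mu$ and $g|_\mu\neq h|_\mu$ depend only on the pair $(g|_{\mu'},h|_{\mu'})$ for an appropriate prefix $\mu'$, using properties (\ref{S1})--(\ref{S6}) from Lemma~\ref{lem:properties} to propagate restrictions along $\mu$. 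The uniform positivity of $\kappa$ then follows from finiteness of $\Lambda^0$ and strict positivity of $x_\Lambda$, and assembling the recursion with $A_0\le 1$ yields \eqref{approximatio-meaure 1}.
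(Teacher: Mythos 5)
Your proposal is correct and is essentially the paper's own argument: finite-state reduces the problem to finitely many pairs of restrictions $(g|_\mu,h|_\mu)$, each such pair gets a witness path of a common degree $b=jN$ on which the two restricted elements act differently (witnesses can be extended to a common degree since $\Lambda$ has no sources), and excluding the witness cylinder from each bad cylinder yields a uniform contraction factor $K<1$, after which the bound follows by induction on $l$ exactly as in your recursion $A_l\le K A_{l-1}$. The only difference is cosmetic and in your favour: you phrase the finiteness in terms of the set of pairs of restrictions, which is the correct reading of the paper's literally-too-strong claim that the set of paths $\{\nu: g|_\nu\neq h|_\nu\}$ is itself finite.
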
 

\begin{proof}
Since $(G,\Lambda)$ is finite-state, and the existence of $\omega$, the set $\{\nu\in \Lambda\setminus\{0\}: g|_\nu\neq h|_\nu\}$ is nonempty and finite; say $\{\nu_1,\dots, \nu_r\}$. Then there are (not necessarily distinct) $\xi_1, \dots,\xi_r$ such that 
\begin{equation}\label{equ:finite-state}
g|_{\nu_i}\cdot \xi_i\neq h|_{\nu_i}\cdot \xi_i \quad \text{ for all }1\leq i\leq r.
\end{equation} 
Since $\Lambda$ is strongly connected, it has no sources (see \cite[Lemma~2.1]{aHLRS}), and we can assume that for each $1\le i\le r$ we have $d(\xi_i)=jN$ for some $j\in \N$. Let $b\coloneqq jN$. 

Now, \eqref{peron-measure-prop} implies that $M(Z(\xi_i))>0$ for each $1\leq i\leq j$. Hence there is a real number $0<K_i<1$ for each $1\leq i\leq j$ such that 
\begin{equation}\label{kiequation}
M\big(Z(r(\xi_i)\setminus Z(\xi_i)\big)< K_iM\big(Z(r(\xi_i)\big).
\end{equation}
We let $K:=\max\{K_1\dots K_j\}$. We claim that $b$ and $K$ satisfy \eqref{approximatio-meaure 1}.

We proceed by induction on $l$. The case $l=0$ is trivial. Now suppose \eqref{approximatio-meaure 1} is true for $l$. First observe the general fact that given $\mu= \eta\zeta$, we have 
\begin{align*} 
g\cdot \mu=h\cdot \mu\text{ and }g|_\mu\neq h|_\mu \iff g\cdot \eta=h\cdot \eta, g|_\eta\cdot \zeta=h|_\eta\cdot \zeta, g|_\eta\neq h|_\eta, (g|_\eta)|_\zeta\neq(h|_\eta)|_\zeta. 
\end{align*}
Using this fact we see that 
\begin{align*}
\bigcup_{\substack{\mu\in s(\lambda)\Lambda^{(l+1)b}\\g\cdot \mu=h\cdot \mu\\ g|_\mu\neq h|_\mu}}Z(\lambda \mu)
\notag&=\bigcup_{\substack{\eta\in s(\lambda)\Lambda^{lb}\\g\cdot \eta=h\cdot \eta\\ g|_\eta\neq h|_\eta}}\bigcup_{\substack{\zeta\in s(\eta)\Lambda^{b}\\g|_\eta\cdot \zeta=h|_\eta\cdot \zeta \\ (g|_\eta)|_\zeta\neq(h|_\eta)|_\zeta}}Z(\lambda \eta\zeta).
\end{align*}
For each $\eta\in s(\lambda)\Lambda^{lb}$ in the subscript of the exterior union, since $g|_\eta\neq h|_\eta$, we have $\eta=\nu_i$ for some $1\le i\le r$. Then the corresponding $\xi_i$ does not belong to the interior union, and hence
\begin{align}\label{equ:finite-state-measure 3}
\bigcup_{\substack{\mu\in s(\lambda)\Lambda^{(l+1)b}\\g\cdot \mu=h\cdot \mu\\ g|_\mu\neq h|_\mu}}Z(\lambda \mu)
&\subseteq\bigcup_{\substack{\eta\in s(\lambda)\Lambda^{lb}\\g\cdot \eta=h\cdot \eta\\ g|_\eta\neq h|_\eta}}\bigcup_{\substack{\zeta\in s(\eta)\Lambda^{b}\setminus\{\xi_i\}\\}}Z(\lambda \eta\zeta).
\end{align}
Two applications of \eqref{peron-measure-prop} now gives
\begin{align}\label{similar-measure1}
M\bigg(\bigcup_{\substack{\mu\in s(\lambda)\Lambda^{(l+1)b}\\g\cdot \mu=h\cdot \mu\\ g|_\mu\neq h|_\mu}}Z(\lambda \mu)\bigg)
\notag&\leq \sum_{\substack{\eta\in s(\lambda)\Lambda^{lb}\\g\cdot \eta=h\cdot \eta\\ g|_\eta\neq h|_\eta}}\sum_{\zeta\in s(\eta)\Lambda^{b}\setminus\{\xi_i\}}M\big(Z(\lambda \eta\zeta)\big)\\
\notag&=\sum_{\substack{\eta\in s(\lambda)\Lambda^{lb}\\g\cdot \eta=h\cdot \eta\\ g|_\eta\neq h|_\eta}}\rho(\Lambda)^{d(\lambda \eta)}\sum_{\zeta\in s(\eta)\Lambda^{b}\setminus\{\xi_i\}}M\big(Z(\zeta)\big)\\
&=\sum_{\substack{\eta\in s(\lambda)\Lambda^{lb}\\g\cdot \eta=h\cdot \eta\\ g|_\eta\neq h|_\eta}}\rho(\Lambda)^{d(\lambda \eta)}M\Big(Z(r(\xi_i))\setminus Z(\xi_i)\Big).
\end{align}
Since $ s(\eta)=r(\xi_i)$, \eqref{kiequation} and the inductive hypothesis now gives
\begin{align}\label{similar-measure2}
M\bigg(\bigcup_{\substack{\mu\in s(\lambda)\Lambda^{(l+1)b}\\g\cdot \mu=h\cdot \mu\\ g|_\mu\neq h|_\mu}}Z(\lambda \mu)\bigg)
\notag&\leq K_i\sum_{\substack{\eta\in s(\lambda)\Lambda^{lb}\\g\cdot \eta=h\cdot \eta\\ g|_\eta\neq h|_\eta}}\rho(\Lambda)^{d(\lambda \eta)}M\big(Z(s(\eta)\big)\\
\notag&\leq KM\bigg(\bigcup_{\substack{\eta\in s(\lambda)\Lambda^{lb}\\g\cdot \eta=h\cdot \eta\\ g|_\eta\neq h|_\eta}}Z(\lambda \eta)\bigg)\\
&\leq K^{l+1}M\big(Z(\lambda)\big).
\end{align}
\end{proof}

\begin{lemma}\label{lemmayl0}
Let $\Lambda$ be a strongly connected finite $ k$-graph, and $(G,\Lambda)$ be a self-similar groupoid action. Let $\kappa\in \Z^k\setminus\Per(G,\Lambda)$. Let $\{g_1,\dots, g_j\}$ be a finite subset of $G$. Then there exist $a\in \N^k\setminus\{0\}$ and 
for each $g_i$ a path $\nu_i\in \D(g_i)\Lambda^a$ such that for $\lambda, \mu\in \Lambda$ with $s(\lambda)=g_i\cdot s(\mu)$ and $d(\lambda)-d(\mu)=\kappa$, we have
\begin{equation}\label{measure-new}
\Lambda^{\min}(\lambda(g_i\cdot \nu_i),\mu\nu_i)= \emptyset. 
\end{equation}
\end{lemma}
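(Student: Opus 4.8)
The plan is to reduce first to a single groupoid element and then to contradict $\kappa\notin\Per(G,\Lambda)$ by producing an infinite-path periodicity relation.

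\emph{Reduction to one $g$.} The key observation is that the property ``$\Lambda^{\min}(\alpha,\beta)=\emptyset$'' is inherited by extensions: if $\alpha,\beta$ have no common extension, then neither do $\alpha\alpha'$ and $\beta\beta'$, since a common extension of the longer pair restricts to one of the shorter pair. Using $g\cdot(\nu\omega)=(g\cdot\nu)(g|_\nu\cdot\omega)$ from Lemma~\ref{lem:properties}\eqref{S1}, I would show that if some $(a_i,\nu_i')$ works for a single $g_i$ (that is, $\Lambda^{\min}(\lambda(g_i\cdot\nu_i'),\mu\nu_i')=\emptyset$ for all admissible $\lambda,\mu$), then any extension $\nu_i$ of $\nu_i'$ with $d(\nu_i)\ge a_i$ works as well. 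Thus it suffices to treat each $g_i$ separately; I would then set $a:=\bigvee_i a_i\neq0$ and extend each $\nu_i'$ to some $\nu_i\in\D(g_i)\Lambda^a$, which is possible because a strongly connected $\Lambda$ has no sources.

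\emph{The single-element statement.} Fix $g\in G$; since $0\in\Per(G,\Lambda)$ we have $\kappa\neq0$, and I write $\kappa=\kappa_+-\kappa_-$ with $\kappa_+,\kappa_-\in\N^k$ of disjoint support. I would argue by contradiction, assuming that for every $a\neq0$ and every $\nu\in\D(g)\Lambda^a$ there are admissible $\lambda,\mu$ (so $s(\mu)=\D(g)$, $s(\lambda)=\c(g)$ and $d(\lambda)-d(\mu)=\kappa$) with $\Lambda^{\min}(\lambda(g\cdot\nu),\mu\nu)\neq\emptyset$. The heart of the proof is the following computation, carried out for $a\ge\kappa_+\vee\kappa_-$: writing $p:=d(\lambda)$, $q:=d(\mu)$ and $m:=p\vee q$, the nonempty $\Lambda^{\min}$ provides a common extension $\tau$ of $\lambda(g\cdot\nu)$ and $\mu\nu$ of degree $m+a$; its truncation $\tau(0,m)$ is a minimal common extension of $\lambda$ and $\mu$, and, setting $\gamma:=\tau(m,m+a)$, the factorisation property forces $\nu=\tau(q,q+a)$ to begin with $\gamma(0,a-\kappa_+)$ and $g\cdot\nu=\tau(p,p+a)$ to begin with $\gamma(0,a-\kappa_-)$. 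Consequently $\sigma^{\kappa_+}(\nu)=\gamma(0,a-\kappa_+)$ and $\sigma^{\kappa_-}(g\cdot\nu)=\gamma(0,a-\kappa_-)$ are both initial segments of the same path $\gamma$, and hence agree on their first $a-(\kappa_+\vee\kappa_-)$ edges.

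\emph{Extracting periodicity.} The previous step shows that, under the contradiction hypothesis, for every large $a=lN$ and every $\nu\in\D(g)\Lambda^a$ the paths $\sigma^{\kappa_+}(\nu)$ and $\sigma^{\kappa_-}(g\cdot\nu)$ agree on a prefix whose length grows with $l$. Given an arbitrary $x\in\D(g)\Lambda^\infty$, I would apply this to $\nu=x(0,lN)$; using $g|_{\D(g)}=g$ (Lemma~\ref{lem:properties}\eqref{S8}) to identify $g\cdot x(0,lN)=(g\cdot x)(0,lN)$ and letting $l\to\infty$ yields $\sigma^{\kappa_+}(x)=\sigma^{\kappa_-}(g\cdot x)$. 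As this holds for every $x\in\D(g)\Lambda^\infty$ with the single fixed element $g$, the definition \eqref{eq:theGperiodicitygroup} gives $\kappa=\kappa_+-\kappa_-\in\Per(G,\Lambda)$, contradicting the hypothesis, and so the single-element statement holds. I expect the main obstacle to be the $k$-graph bookkeeping in the factorisation step—keeping track of the disjoint-support decomposition $\kappa=\kappa_+-\kappa_-$ and verifying that the two initial segments of $\gamma$ genuinely record $\sigma^{\kappa_+}(\nu)$ and $\sigma^{\kappa_-}(g\cdot\nu)$—together with confirming that the limit $l\to\infty$ legitimately upgrades the finite prefix agreements, valid for \emph{all} $\nu$, to the identity holding for \emph{all} infinite paths $x$.
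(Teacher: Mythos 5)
Your proof is correct, but it proves the contrapositive of what the paper proves, and the surrounding machinery is genuinely different even though the combinatorial core is the same. The paper argues directly: writing $\kappa=p-q$ with $p:=\kappa\vee 0$ and $q:=-\kappa\vee 0$, the hypothesis $\kappa\notin\Per(G,\Lambda)$ supplies for each $g_i$ a witness $x_i\in\D(g_i)\Lambda^\infty$ with $\sigma^p(x_i)\neq\sigma^q(g_i\cdot x_i)$, hence a finite stage $m_i$ at which they already disagree; it then takes the single degree $a:=p+q+\sum_i m_i$ and $\nu_i:=x_i(0,a)$, and checks that any common extension of $\lambda(g_i\cdot\nu_i)$ and $\mu\nu_i$ would force the segments recording $\sigma^p(x_i)(0,m)$ and $\sigma^q(g_i\cdot x_i)(0,m)$ to coincide, which is absurd. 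You instead assume that for some fixed $g$ every pair $(a,\nu)$ fails, extract from each failure the agreement of $\sigma^{\kappa_+}(\nu)$ and $\sigma^{\kappa_-}(g\cdot\nu)$ on prefixes of degree $a-(\kappa_+\vee\kappa_-)$, and assemble these (via $\nu=x(0,lN)$, $l\to\infty$) into $\sigma^{\kappa_+}(x)=\sigma^{\kappa_-}(g\cdot x)$ for all $x\in\D(g)\Lambda^\infty$, contradicting $\kappa\notin\Per(G,\Lambda)$. The bookkeeping at the heart of both arguments is identical: your identities $m-q=\kappa_+$, $m-p=\kappa_-$ are the paper's factorisation $d(\lambda)=n+p$, $d(\mu)=n+q$, and both proofs locate the decisive segment of a hypothetical common extension just beyond degree $n+\kappa_++\kappa_-$. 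What each approach buys: the paper's direct construction needs no limiting argument and gets one uniform $a$ for all the $g_i$ for free (since $m\geq m_i$ for each $i$), whereas you must add the extension-stability observation together with the no-sources property to unify your separate $a_i$, and you must justify upgrading prefix agreements to equality of infinite paths (which is fine: two degree-preserving functors $\Omega_k\to\Lambda$ agreeing on all initial segments agree everywhere). In return, your version makes transparent that \eqref{measure-new} is, for a single $g$, precisely the negation of the periodicity relation defining \eqref{eq:theGperiodicitygroup}. One small wording slip: $\nu$ does not \emph{begin} with $\gamma(0,a-\kappa_+)$; rather its tail $\sigma^{\kappa_+}(\nu)$ \emph{equals} that initial segment of $\gamma$ --- but the displayed identities you state immediately afterwards are the correct ones, so nothing breaks.
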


 \begin{proof}
As in \cite[Lemma~8.3]{aHLRS}, $p:=\kappa \vee 0$, and $q:=-\kappa \vee 0$ satisfies $\kappa=p-q$, and whenever $\kappa=p'-q'$, we have $p'\leq p$ and $q'\leq q$. Since $p-q\not\in \Per(G,\Lambda)$, for each $g_i$ there must be $x_i\in \D(g_i)\Lambda^\infty$ such that $\sigma^{p}( x_i)\neq \sigma^{q}(g_i\cdot x_i)$. Therefore there are $m_i\in \N^k$ such that $\sigma^{p}(x_i)(0,m_i)\neq \sigma^{q}( g_i\cdot x_i)(0,m_i)$. We let $a:=p+q+\sum_{i=1}^j m_i$, and $\nu_i:=x_i(0,a)$ for each $1\leq i\leq I$. 
 
Fix $i$, and let $\lambda, \mu\in \Lambda$ with $s(\lambda)=g_i\cdot s(\mu)$ and $d(\lambda)-d(\mu)=\kappa$. Then $d(\lambda)=n+p$ and $d(\mu)=n+q$ for some $n\in \N^k$. Let $\lambda=\eta \nu$ and $\mu=\zeta \tau$ such that $d(\eta)=d(\zeta)=n$. If $\eta\neq \zeta$, clearly $\Lambda^{\min}(\lambda(g_i\cdot \nu_i),\mu\nu_i)= \emptyset$. So we suppose that $\eta=\zeta$. Writing $m:=\sum_{i=1}^j m_i$, we have 
 \[\nu(g_i\cdot \nu_i)(p+q,p+q+m)=(g_i\cdot \nu_i)(q,q+m)=(g_i\cdot x_i)(q,q+m)=\sigma^q(g_i\cdot x_i)(0,m).\]
 A simpler argument shows that $\tau\nu_i(p+q,p+q+m)=\sigma^p(x_i)(0,m)$. 
 Since $m\geq m_i$, $\sigma^{p}( x_i)(0,m_i)\neq \sigma^{q}(g_i\cdot x_i)(0,m_i)$ implies that $\sigma^{p}(x_i)(0,m)\neq \sigma^{q}( g_i\cdot x_i)(0,m)$m giving $\Lambda^{\min}(\nu(g_i\cdot \nu_i),\tau\nu_i)= \emptyset$. Hence $\Lambda^{\min}(\lambda(g_i\cdot \nu_i),\mu\nu_i)= \emptyset$.
 \end{proof}

\begin{lemma}\label{lem:minimalex-measure}
Let $\Lambda$ be a strongly connected finite $ k$-graph, $(G,\Lambda)$ be a finite-state self-similar groupoid action, and $M$ be the Parry measure on $\Lambda^\infty$. Let $g\in G$, and $\kappa\in \Z^k\setminus\Per(G,\Lambda)$. Then there exists $a\in \N^k\setminus\{0\}$ and a real number $ 0<K<1$ such that for all $\lambda, \mu\in \Lambda$ with $s(\lambda)=g\cdot s(\mu)$ and $d(\lambda)-d(\mu)=\kappa$, we have
\begin{equation}\label{equ:minimalex-measure}
M\bigg(\bigcup_{\substack{\nu\in s(\mu)\Lambda^{la}\\ \Lambda^{\min}(\lambda(g\cdot \nu),\mu\nu)\neq\emptyset}}Z(\mu \nu)\bigg)
\leq K^l M(Z(\mu))\quad \text{ for } l\in \N. 
\end{equation}
\end{lemma}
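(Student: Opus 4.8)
The plan is to prove \lemref{lem:minimalex-measure} by induction on $l$, following the same self-refining strategy as \lemref{meaure:lemma1}. First I would fix $g$ and $\kappa\in\Z^k\setminus\Per(G,\Lambda)$, and apply \lemref{lemmayl0} to the finite set $\{g|_\nu:\nu\in\D(g)\Lambda\}\cup\{g\}$, which is finite precisely because $(G,\Lambda)$ is finite-state. This produces $a\in\N^k\setminus\{0\}$ and for each relevant groupoid element $g_i$ a path $\nu_i\in\D(g_i)\Lambda^a$ with $\Lambda^{\min}(\lambda'(g_i\cdot\nu_i),\mu'\nu_i)=\emptyset$ whenever $d(\lambda')-d(\mu')=\kappa$ and $s(\lambda')=g_i\cdot s(\mu')$. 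The point of using the restrictions $g|_\nu$ (rather than just $g$) is that after descending one block of length $a$ along a path $\nu$, the obstruction to nonempty $\Lambda^{\min}$ is governed by the restricted element $g|_\nu$, and I want a uniform supply of ``escape'' paths $\xi_i$ for every restriction that can occur.

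Next I would set up the geometric constant. Since $\Per(G,\Lambda)$-violating behaviour persists under concatenation, I expect that for each $g_i$ the escape path $\nu_i$ satisfies $M(Z(r(\nu_i)))>0$ by \eqref{peron-measure-prop}, so there is $0<K_i<1$ with $M(Z(r(\nu_i))\setminus Z(\nu_i))<K_iM(Z(r(\nu_i)))$; I would take $K:=\max_i K_i$. The inductive step is then the measure-theoretic heart: I would show that the bad set at level $l+1$,
\[
\bigcup_{\substack{\nu\in s(\mu)\Lambda^{(l+1)a}\\ \Lambda^{\min}(\lambda(g\cdot\nu),\mu\nu)\neq\emptyset}}Z(\mu\nu),
\]
factors through a bad set at level $l$ by writing $\nu=\eta\zeta$ with $d(\eta)=la$, $d(\zeta)=a$. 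The key observation is that if $\Lambda^{\min}(\lambda(g\cdot\nu),\mu\nu)\neq\emptyset$ then necessarily $\Lambda^{\min}(\lambda(g\cdot\eta),\mu\eta)\neq\emptyset$ (so $\eta$ lies in the level-$l$ bad set for the pair $\lambda,\mu$), and for the final block $\zeta$ we must have $\zeta\neq\nu_i$ for the escape path $\nu_i$ associated to $g|_\eta$, by \lemref{lemmayl0} applied to $g|_\eta$. This gives the containment
\[
\bigcup_{\substack{\nu\in s(\mu)\Lambda^{(l+1)a}\\ \Lambda^{\min}(\lambda(g\cdot\nu),\mu\nu)\neq\emptyset}}Z(\mu\nu)\subseteq\bigcup_{\substack{\eta\in s(\mu)\Lambda^{la}\\ \Lambda^{\min}(\lambda(g\cdot\eta),\mu\eta)\neq\emptyset}}\bigcup_{\zeta\in s(\eta)\Lambda^{a}\setminus\{\nu_i\}}Z(\mu\eta\zeta).
\]
Then two applications of \eqref{peron-measure-prop} convert the inner union into $M(Z(r(\nu_i))\setminus Z(\nu_i))<K\,M(Z(s(\eta)))$, and summing over $\eta$ followed by the inductive hypothesis yields the bound $K^{l+1}M(Z(\mu))$, exactly as in the final display chain of \lemref{meaure:lemma1}.

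The main obstacle I anticipate is the bookkeeping that makes the passage $\nu=\eta\zeta$ rigorous: I must check that the self-similarity identities (\ref{S1})--(\ref{S4}) guarantee that $s(g\cdot\eta)=(g|_\eta)\cdot s(\eta)$ matches up so that ``$\lambda(g\cdot\eta)$ and $\mu\eta$ have nonempty minimal common extensions'' is genuinely the level-$l$ hypothesis for the \emph{same} $\kappa$, and that the escape path governing the final block is the one attached to $g|_\eta$ rather than to $g$. Concretely, the degree arithmetic $d(\lambda(g\cdot\eta))-d(\mu\eta)=d(\lambda)-d(\mu)=\kappa$ is preserved because the action is degree-preserving, so $\kappa$ does not drift as we descend; this is what lets a single $a$ and a single $K$ work for all levels. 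Once this reduction is justified, the remaining estimates are routine applications of \eqref{peron-measure-prop} and the definition of $K$, and the induction closes.
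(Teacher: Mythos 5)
Your proposal is correct and follows essentially the same route as the paper's proof: finite-state gives the finite set of restrictions $\{g|_\eta\}$, Lemma~\ref{lemmayl0} supplies the uniform $a$ and escape paths $\nu_i$, the constants $K_i$ come from \eqref{peron-measure-prop} (note the crucial point is $M(Z(\nu_i))>0$, which holds since $x_\Lambda$ is strictly positive), and the induction closes via the prefix decomposition $\nu=\eta\zeta$ with the escape path attached to $g|_\eta$ excluded from the inner union. The key observation you single out --- that nonempty $\Lambda^{\min}$ for $(\lambda(g\cdot\nu),\mu\nu)$ forces nonempty $\Lambda^{\min}$ for the prefixes, and that $\kappa$ does not drift because the action is degree-preserving --- is exactly the content of the paper's double-union identity and its parenthetical degree computation.
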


\begin{proof}
Since $(G,\Lambda)$ is finite-state, the set $\{g|_\eta:\eta\in \Lambda\}$ is finite and we can label it as $\{g_1,\dots ,g_j\}$. Applying Lemma~\ref{lemmayl0} gives $a\in \N^k\setminus\{0\}$ and paths $\nu_i\in \D(g_i)\Lambda^a$ that satisfy \eqref{measure-new}.
For each $i$, \eqref{peron-measure-prop} gives a real number
$0<K_i<1$ such that 
 \[M\big(Z\big(\D(g_i)\big)\setminus Z(\nu_i)\big)<K_i M\big(Z\big(\D(g_i)\big)\big).\]
Let $K:=\max\{K_1,\cdots ,K_j\}$. We claim that $a$ and $K$ satisfy \eqref{equ:minimalex-measure}.
 
We follow by induction. The base case $l=0$ is clear. Suppose that \eqref{equ:minimalex-measure} is true for $l$. We first note that
\begin{align*}
\bigcup_{\substack{\nu\in s(\mu)\Lambda^{(l+1)a}\\ \Lambda^{\min}(\lambda(g\cdot \nu),\mu\nu)\neq\emptyset}}Z(\mu \nu)
&=\bigcup_{\substack{\eta\in s(\mu)\Lambda^{la}\\ \Lambda^{\min}(\lambda(g\cdot \eta),\mu\eta)\neq\emptyset}}\,\,\bigcup_{\substack{\zeta\in s(\eta)\Lambda^{a}\\ \Lambda^{\min}(\lambda(g\cdot \eta)(g|_\eta\cdot \zeta),\mu\eta\zeta)\neq\emptyset}}Z(\mu \eta\zeta).
\end{align*}
Fix $\eta$ in the subscript of the exterior union. Then $g|_\eta$ belongs to $\{g_1,\dots, g_I\}$; say $g|_\eta=g_i$. Since $d(\lambda(g\cdot \eta))-d(\mu\eta)=d(\lambda)-d(\mu)=\kappa$, the path $\nu_i$ corresponding to $g|_\eta$ satisfies $\Lambda^{\min}\big(\lambda(g\cdot \eta)(g|_\eta\cdot\nu_i),\mu\eta\nu_i\big)=\emptyset$, and therefore $\nu_i$ does not belong to the interior union. So we have 
\begin{align*}
\bigcup_{\substack{\nu\in s(\mu)\Lambda^{(l+1)a}\\ \Lambda^{\min}(\lambda(g\cdot \nu),\mu\nu)\neq\emptyset}}Z(\mu \nu)
&\subseteq\bigcup_{\substack{\eta\in s(\mu)\Lambda^{la}\\ \Lambda^{\min}(\lambda(g\cdot \eta),\mu\eta)\neq\emptyset}}\bigcup_{\substack{\zeta\in s(\eta)\Lambda^{a}\setminus\{\nu_i\}\\ }}Z(\mu \eta\zeta).
\end{align*}
Now, similar computations to those used to establish \eqref{similar-measure1} and \eqref{similar-measure2}, using induction hypothesis, show that
\begin{align*}
M\bigg(\bigcup_{\substack{\nu\in s(\mu)\Lambda^{(l+1)a}\\ \Lambda^{\min}(\lambda(g\cdot \nu),\mu\nu)\neq\emptyset}}Z(\lambda \mu)\bigg)
&\leq K^{l+1}M\big(Z(\mu)\big),
\end{align*}
finishing the proof.
\end{proof}

\begin{proof}[Proof of Theorem~\ref{prop:peron-measure}]
First suppose that $d(\lambda)-d(\mu)\notin \Per(G,\Lambda)$. Let $b\in \N^k$ the number obtained by applying Lemma~\ref{meaure:lemma1} with $g$ and $d(\lambda)-d(\mu)$. Then for each $l\in \N$, we have
\begin{align*}
Z(\lambda,g,\mu)&=\bigcup_{\substack{\nu\in s(\mu)\Lambda^{lb}\\ \Lambda^{\min}(\lambda(g\cdot \nu),\mu\nu)\neq\emptyset}} \{x\in 
\Lambda^\infty: x=\lambda(g\cdot \nu)(g|_\nu\cdot y)=\mu\nu y \text{ for some } y\in \Lambda^\infty\}\\
&\subseteq \bigcup_{\substack{\nu\in s(\mu)\Lambda^{lb}\\ \Lambda^{\min}(\lambda(g\cdot \nu),\mu\nu)\neq\emptyset}}Z(\mu\nu).
\end{align*}
By Lemma~\ref{meaure:lemma1}, we have
\begin{align*}
M\big(Z(\lambda,g,\mu)\big)&\leq\sum_{\substack{\nu\in s(\mu)\Lambda^{lb}\\ \Lambda^{\min}(\lambda(g\cdot \nu),\mu\nu)\neq\emptyset}}M(Z(\mu\nu))\leq K^{l}M(Z(\mu)).
\end{align*}
Now letting $l\to \infty$, we get $M\big(Z(\lambda,g,\mu)\big)$.

Second, suppose that $d(\lambda)-d(\mu)\in \Per(G,\Lambda)$ and $\mu\neq\theta_{d(\lambda),d(\mu)}(\lambda )$. The equation~\eqref{pershift} implies that $Z\big(\theta_{d(\lambda),d(\mu)}(\lambda )\big)\subseteq Z(\lambda)$. Also rewriting ~\eqref{pershift} as 
\[\lambda y= \theta_{d(\lambda),d(\mu)}(\lambda)\big(h_{d(\lambda),d(\mu)}(\lambda)^{-1}\cdot y\big) \text{ for } y\in s(\lambda))\Lambda^\infty\]
 gives $Z(\lambda)\subseteq Z\big(\theta_{d(\lambda),d(\mu)}(\lambda )\big) $. Therefore $Z(\lambda)=Z\big(\theta_{d(\lambda),d(\mu)}(\lambda ) \big)$. Now we have 
\[Z(\lambda,g,\mu)\subseteq Z(\lambda)\cap Z(\mu)=Z\big(\theta_{d(\lambda),d(\mu)}(\lambda )\big)\cap Z(\mu)=\emptyset,\]
giving $M\big(Z(\lambda,g,\mu)\big)=0$.

Third, suppose that $d(\lambda)-d(\mu)\in \Per(G,\Lambda), \mu=\theta_{d(\lambda),d(\mu)}(\lambda )$ and 
$g=h_{d(\lambda),d(\mu)}(\lambda )$. Then by definitions of the functions $\theta_{d(\lambda),d(\mu)}$ and $h_{d(\lambda),d(\mu)}$, $Z(\lambda,g,\mu)=Z(\mu)$ and hence $M\big(Z(\lambda,g,\mu)\big)=\rho(\Lambda)^{-d(\mu)}x_\Lambda(s(\mu))$.

Finally, let $d(\lambda)-d(\mu)\in \Per(G,\Lambda), \mu=\theta_{d(\lambda),d(\mu)}(\lambda )$ and 
$g\neq h_{d(\lambda),d(\mu)}(\lambda )$. Let $N:=(1,\dots,1)\in \N^k$, $l\in \N$ and write $h:=h_{d(\lambda),d(\mu)}(\lambda )$. For each $\nu\in s(\lambda)\Lambda$ and $y\in s(\nu)\Lambda^\infty$, we have $\mu \nu y=\lambda (h\cdot (\nu y))=\lambda (h\cdot \nu)(g|_\nu\cdot y)$. Then by factorization property of $\Lambda$, we have
\[Z(\lambda,g,\mu)=\bigcup_{\substack{\nu\in s(\lambda)\Lambda^{lN}\\g\cdot \nu=h\cdot \nu}}\{x\in 
\Lambda^\infty: x=\lambda (g\cdot \nu)(g|_\nu\cdot y)=\mu \nu y \text{ for some } y\in \Lambda^\infty\}.\]

Since $\{x\in 
\Lambda^\infty: x=\lambda (g\cdot \nu)(g|_\nu\cdot y)=\mu \nu y \text{ for some } y\in \Lambda^\infty\}\subseteq Z(\mu\nu)$, we have 
\begin{align*}
M\bigg(\bigcup_{\substack{\nu\in s(\lambda)\Lambda^{lN}\\g\cdot \nu=h\cdot \nu, \, g|_\nu\neq h|_\mu}}& \{x\in 
\Lambda^\infty: x=\lambda (g\cdot \nu)(g|_\nu\cdot y)=\mu \nu y \text{ for some } y\in \Lambda^\infty\}\bigg)\\
&\leq M\bigg(\bigcup_{\substack{\nu\in s(\lambda)\Lambda^{lN}\\g\cdot \nu=h\cdot \nu, \, g|_\nu\neq h|_\mu}}Z(\mu\nu)
\end{align*}
which will vanish as $l\to \infty$ by Lemma~\ref{meaure:lemma1}. Therefore 
\begin{align*}
M\big(Z(\lambda,g,\mu)\big)&=M\bigg(\bigcup_{\substack{\nu\in s(\lambda)\Lambda^{lN}\\g\cdot \nu=h\cdot \nu, \, g|_\nu= h|_\mu}} \{x\in 
\Lambda^\infty: x=\lambda (g\cdot \nu)(g|_\nu\cdot y)=\mu \nu y \text{ for some } y\in \Lambda^\infty\}\bigg)\\
&=M\bigg(\bigcup_{\substack{\nu\in s(\lambda)\Lambda^{lN}\\g\cdot \nu=h\cdot \nu, \, g|_\nu= h|_\mu}} \{x\in 
\Lambda^\infty: x=\lambda (h\cdot (\nu y))=\mu \nu y \text{ for some } y\in \Lambda^\infty\}\bigg)\\
&=M\bigg(\bigcup_{\substack{\nu\in s(\lambda)\Lambda^{lN}\\g\cdot \nu=h\cdot \nu, \, g|_\nu= h|_\mu}} Z(\mu\nu)\bigg)\\
&=\sum_{\substack{\nu\in s(\lambda)\Lambda^{lN}\\g\cdot \nu=h\cdot \nu, \, g|_\nu= h|_\mu}} M\big(Z(\mu\nu)\big)\\
&=\rho(\Lambda)^{-d(\mu)}\rho(\Lambda)^{-lN}\sum_{\substack{\nu\in s(\lambda)\Lambda^{lN}\\g\cdot \nu=h\cdot \nu, \, g|_\nu= h|_\mu}} M\big(Z(s(\nu))\big)\\
&=\rho(\Lambda)^{-d(\mu)}\rho(\Lambda)^{-lN}\sum_{v\in \Lambda^{0}}\big|F^l_{d(\lambda),d(\mu),\lambda}(g,v)\big|x_\Lambda(v)
\end{align*}
by definition of $F^l_{d(\lambda),d(\mu),\lambda}(g,v)$. By letting $l\to \infty$, we get 
\[M\big(Z(\lambda,g,\mu)\big)=\rho(\Lambda)^{-d(\mu)}c_{d(\lambda),d(\mu),\lambda}(g)\]
 proving the second case of \eqref{measure-triple}.
\end{proof}

\section{A characterisation of KMS states of $\OO(G,\Lambda)$ for the preferred dynamics}

Suppose that $ \Lambda$ is a finite strongly connected $k$-graph. By the \textit{preferred dynamics} on $\OO(G,\Lambda)$ we mean the dynamics $\sigma$ given by 
\begin{equation}\label{pref-dyn}
\sigma_t(t_\lambda u_g t_\mu^*)=\rho(\Lambda)^{it(d(\lambda)-d(\mu))}t_\lambda u_g t_\mu^*,
\end{equation}
which is in fact the dynamics \eqref{action on T} with $r=(\ln(\rho(B_1)),\dots, \ln(\rho(B_k)) )$.

\begin{thm}\label{thm:formula for kms1}
Let $\Lambda$ be a strongly connected finite $k$-graph, $(G,\Lambda)$ be a finite-state self-similar groupoid action, and $x_\Lambda$ be the unimodular Perron--Frobenius eigenvector of $\Lambda$. Let $\sigma$ be the preferred dynamics \eqref{pref-dyn} on $\OO(G,\Lambda)$, and $V$ be the representation of $ \Per(G,\Lambda)$ as in Proposition~\ref{prop:rep-per}. For each $p-q\in \Per(G,\Lambda)$ and $\lambda\in \Lambda^p$, let $\theta_{p,q}(\lambda), h_{p,q}(\lambda)$ be as in Lemma~\ref{lemma:htehta}. For $g\in G$ with $g\neq h_{p,q}(\lambda)$, let $c_{p,q,\lambda}(g)$ be the number defined in Lemma~\ref{lemma:cg}.
If $\phi$ is a KMS$_{1}$-state for $\big(\OO(G,\Lambda),\sigma\big)$, then
\begin{align}\label{thm10}
\phi(t_\lambda u_g t_\mu^*)
&= \begin{cases}\rho(\Lambda)^{-d(\mu)}x_\Lambda(s(\mu))\phi\big(V_{d(\lambda)-d(\mu)}\big)& \text {if } d(\lambda)-d(\mu)\in \Per(G,\Lambda), \\
\quad\quad\quad \quad&\mu=\theta_{d(\lambda),d(\mu)}(\lambda )\text{ and }g=h_{d(\lambda),d(\mu)}(\lambda )\\
\rho(\Lambda)^{-d(\mu)} c_{d(\lambda),d(\mu),\lambda}(g)\phi\big(V_{d(\lambda)-d(\mu)}\big)& \text {if } d(\lambda)-d(\mu)\in \Per(G,\Lambda), \\
\quad\quad\quad \quad\quad\quad&\mu=\theta_{d(\lambda),d(\mu)}(\lambda )\text{ and }g\neq h_{d(\lambda),d(\mu)}(\lambda )\\
0 &\text{otherwise.}\\
\end{cases}
\end{align}
\end{thm}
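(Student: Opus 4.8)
The plan is to prove \eqref{thm10} first for the extreme points of the simplex of KMS$_1$-states and then extend to all KMS$_1$-states by affineness: the KMS$_1$-states of $(\OO(G,\Lambda),\sigma)$ form a simplex, the measure $M$ is fixed, and both sides of \eqref{thm10} are weak$^*$-continuous affine functionals of $\phi$, so equality at the extreme points suffices. The reason to pass to extreme points is that an extremal KMS-state has a \emph{factorial} GNS representation $\pi_\phi$; since $V$ is central in $\OO(G,\Lambda)$ by Proposition~\ref{prop:rep-per}, each $\pi_\phi(V_a)$ lies in the centre of the factor $\pi_\phi(\OO(G,\Lambda))''$ and hence equals the scalar $\phi(V_a)\,1$, giving the factorisation $\phi(V_a x)=\phi(V_a)\phi(x)$ for extremal $\phi$. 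Two preliminary facts hold for \emph{every} KMS$_1$-state. Since KMS-states are $\sigma$-invariant and $\sigma_t(V_{p-q})=\rho(\Lambda)^{it(p-q)}V_{p-q}$, one has $\phi(V_{p-q})=0$ unless $r\cdot(p-q)=0$; the same computation gives $\phi(t_\lambda u_g t_\mu^*)=0$ whenever $r\cdot(d(\lambda)-d(\mu))\neq0$, which already settles every case with $r\cdot(d(\lambda)-d(\mu))\neq0$ (the right-hand side vanishing there as well). Second, when $p-q:=d(\lambda)-d(\mu)\in\Per(G,\Lambda)$ and $\mu=\theta_{p,q}(\lambda)$, I would record the identity
\[
t_\lambda u_g t_\mu^*=V_{p-q}\,t_\lambda u_b t_\lambda^*,\qquad b:=g\,h_{p,q}(\lambda)^{-1},
\]
where $b$ is a loop at $s(\lambda)$; this follows by inserting $V_{q-p}V_{p-q}=1$, computing $t_\mu^*V_{q-p}=u_{h_{q,p}(\mu)}t_{\theta_{q,p}(\mu)}^*$ directly from the definition of $V$, simplifying via $\theta_{q,p}(\mu)=\lambda$ and $h_{q,p}(\mu)=h_{p,q}(\lambda)^{-1}$ from Lemma~\ref{lemma:htehta}, and using centrality of $V$.

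For the remaining vanishing cases I assume $r\cdot(d(\lambda)-d(\mu))=0$. If $d(\lambda)-d(\mu)\notin\Per(G,\Lambda)$, I would expand $t_\lambda u_g t_\mu^*=\sum_{\nu\in s(\mu)\Lambda^{la}}t_{\lambda(g\cdot\nu)}u_{g|_\nu}t_{\mu\nu}^*$ via \eqref{UT} and the Cuntz--Krieger relation, keep only the summands with $\Lambda^{\min}(\lambda(g\cdot\nu),\mu\nu)\neq\emptyset$, bound each by Cauchy--Schwarz by $M(Z(\mu\nu))^{1/2}M(Z(\lambda(g\cdot\nu)))^{1/2}$, and apply Lemma~\ref{lem:minimalex-measure} to obtain geometric decay as $l\to\infty$; here $r\cdot(d(\lambda)-d(\mu))=0$ ensures the two measures carry the same power of $\rho(\Lambda)$, so the bound collapses to a constant multiple of $M\big(\bigcup_\nu Z(\mu\nu)\big)\to0$. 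If instead $d(\lambda)-d(\mu)\in\Per(G,\Lambda)$ but $\mu\neq\theta_{p,q}(\lambda)$, then $t_\lambda t_\lambda^*=t_{\theta_{p,q}(\lambda)}t_{\theta_{p,q}(\lambda)}^*$ by Lemma~\ref{lemma:rep-per}, which is orthogonal to $t_\mu t_\mu^*$; writing $w=t_\lambda u_g t_\mu^*=(t_\lambda t_\lambda^*)\,w\,(t_\mu t_\mu^*)$ and noting that $t_\lambda t_\lambda^*\,w$ is $\sigma$-invariant, the KMS condition gives $\phi(w)=\phi\big(t_\mu t_\mu^*\,t_\lambda t_\lambda^*\,w\big)=0$.

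It remains to treat the main case $d(\lambda)-d(\mu)\in\Per(G,\Lambda)$, $\mu=\theta_{p,q}(\lambda)$. For extremal $\phi$ the factorisation and the displayed identity give $\phi(t_\lambda u_g t_\mu^*)=\phi(V_{p-q})\,\phi(t_\lambda u_b t_\lambda^*)$. The value $\phi(t_\lambda u_b t_\lambda^*)$ I would compute for an \emph{arbitrary} KMS$_1$-state using the same expansion $t_\lambda u_b t_\lambda^*=\sum_{\nu\in s(\lambda)\Lambda^{lN}}t_{\lambda(b\cdot\nu)}u_{b|_\nu}t_{\lambda\nu}^*$: summands with $b\cdot\nu\neq\nu$ vanish since they are equal-degree off-diagonal terms, summands with $b\cdot\nu=\nu$ and $b|_\nu\neq s(\nu)$ lie in a union whose measure tends to $0$ by Lemma~\ref{meaure:lemma1}, and the surviving summands with $b\cdot\nu=\nu$, $b|_\nu=s(\nu)$ contribute $\sum M(Z(\lambda\nu))$, whose limit is precisely the expression computed for $M(Z(\lambda,b,\lambda))$ in the proof of Theorem~\ref{prop:peron-measure}. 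Thus $\phi(t_\lambda u_b t_\lambda^*)=M(Z(\lambda,b,\lambda))$, and unwinding \eqref{pershift} shows $Z(\lambda,b,\lambda)=Z(\lambda,g,\mu)$ as subsets of $\Lambda^\infty$. Substituting the explicit value of $M(Z(\lambda,g,\mu))$ from Theorem~\ref{prop:peron-measure} then yields \eqref{thm10} at the extreme points.

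The main obstacle is exactly the factorisation $\phi(V_a x)=\phi(V_a)\phi(x)$, which genuinely \emph{fails} for non-extremal $\phi$. The resolution is the two-step structure above: one proves \eqref{thm10} only at the extreme KMS$_1$-states, where factoriality of the GNS representation forces the central unitaries $V_a$ to act as scalars, and one then observes that the diagonal value $\phi(t_\lambda u_b t_\lambda^*)=M(Z(\lambda,b,\lambda))$ is independent of $\phi$. This independence is what makes the right-hand side of \eqref{thm10} an affine function of $\phi$, so that validity at the extreme points propagates to the whole simplex; verifying this $\phi$-independent diagonal computation, together with the bookkeeping of the three cases of Theorem~\ref{prop:peron-measure}, is where the real work lies.
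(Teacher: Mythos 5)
Your proposal is correct, and in the decisive case it takes a genuinely different route from the paper's proof. The vanishing cases essentially coincide: the paper also disposes of $d(\lambda)-d(\mu)\notin\Per(G,\Lambda)$ by expanding over $\nu$ and combining Lemma~\ref{lemma-kms-g-invariance}\eqref{kms-action-inv-4} with Lemma~\ref{lem:minimalex-measure}, and of $\mu\neq\theta_{d(\lambda),d(\mu)}(\lambda)$ by Lemma~\ref{lemma:rep-per} and the KMS condition, exactly as you do. The difference is the case $\mu=\theta_{p,q}(\lambda)$, which the paper handles for \emph{all} KMS$_1$-states at once, in two stages: for $g=h_{p,q}(\lambda)$ it quotes the Li--Yang cycline-triple argument (in effect, $t_\lambda u_{h_{p,q}(\lambda)}t_\mu^*=t_\lambda t_\lambda^*V_{p-q}$, and the functional $a\mapsto\phi(aV_{p-q})$ on the diagonal satisfies the same scaling relation as $M$, hence equals $\phi(V_{p-q})M$ there); for $g\neq h_{p,q}(\lambda)$ it expands over $\nu\in\Lambda^{lb}$, discards the summands with $g\cdot\nu\neq h\cdot\nu$ (previous case) or $g|_\nu\neq h|_\nu$ (Lemma~\ref{meaure:lemma1}), applies the first-stage formula to each surviving higher-degree cycline summand, and identifies $c_{d(\lambda),d(\mu),\lambda}(g)$ in the limit. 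You replace both stages by a single argument: the identity $t_\lambda u_gt_\mu^*=V_{p-q}\,t_\lambda u_bt_\lambda^*$ with $b=gh_{p,q}(\lambda)^{-1}$ --- which is correct, by your computation of $t_\mu^*V_{q-p}$ together with Lemma~\ref{lemma:htehta}\eqref{lemma:htehta-4} --- then scalarness of the central unitaries $\pi_\phi(V_a)$ in the factorial GNS representation of an \emph{extremal} KMS$_1$-state, the $\phi$-independent diagonal value $\phi(t_\lambda u_bt_\lambda^*)=M(Z(\lambda,b,\lambda))=M(Z(\lambda,g,\mu))$, and affine extension over the compact simplex of KMS$_1$-states. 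What your route buys: you never need to reproduce the Li--Yang rigidity step, the subcases $g=h_{p,q}(\lambda)$ and $g\neq h_{p,q}(\lambda)$ are treated uniformly (the dichotomy is absorbed into Theorem~\ref{prop:peron-measure}), and the factorisation $\phi(V_ax)=\phi(V_a)\phi(x)$ comes from soft von Neumann algebra theory. What it costs: you must import the Choquet-theoretic facts that the KMS$_1$-states form a weak$^*$-compact convex set whose extreme points are factor states (Bratteli--Robinson), which the paper's direct argument never needs; and your diagonal computation still consumes exactly the same measure lemmas as the paper. Two small repairs for a careful write-up: the expansion in your diagonal computation should run over the degrees $ljN$ furnished by Lemma~\ref{meaure:lemma1} rather than over $lN$ (the resulting subsequence of the sequence defining $c_b$ still converges to $c_b$; the paper's own proof has the same bookkeeping), and when no $\omega$ with $b|_\omega\neq s(\omega)$ exists the hypothesis of Lemma~\ref{meaure:lemma1} fails, but then the set of bad summands is empty and no estimate is needed.
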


\begin{remark}
As we mentioned in Remark~\ref{cgremark}, when $\Lambda$ has a single vertex and the action is pseudo free, the numbers $c_{p,q,\lambda}(g)$ are zero. Also, \cite[Corollary~3.11]{LY} shows that for $p-q\in \Per(G,\Lambda)$, we have $\rho(\Lambda)^{-p}=\rho(\Lambda)^{-q}$. It follows that Theorem~\ref{thm:formula for kms1} coincides with \cite[Theorem~6.10]{LY} in the setting of a psedo free action of a single vertex $k$-graph.
\end{remark}

\begin{proof}[Proof of Theorem~\ref{thm:formula for kms1}]
First suppose that $d(\lambda)-d(\mu)\notin \Per(G,\Lambda)$. If $\rho(\Lambda)^{d(\lambda)}\neq\rho(\Lambda)^{d(\mu)}$, then two applications of KMS condition show that 
$\phi(t_\lambda u_g t_\mu^*)=0$. If $\rho(\Lambda)^{d(\lambda)}=\rho(\Lambda)^{d(\mu)}$, then let $a\in \N^k$ and $K$ be the numbers obtained by applying Lemma~\ref{lem:minimalex-measure} with $g$ and $\kappa=d(\lambda)-d(\mu)$. For each $l\in \N$, similar calculations to those in the proof of \cite[Theorem~6.10]{LY} show that 
\begin{align}\label{komaki-way}
\phi(t_\lambda u_g t_\mu^*)&=\sum_{\substack{\nu\in \D(g) \Lambda^n\\\Lambda^{\min}(\lambda(g\cdot \nu),\mu\nu)\neq\emptyset}}\phi\big(t_{\lambda (g\cdot \nu)}u_{g|_\nu}t_{\mu\nu}^*\big).
\end{align}
Since $s(g\cdot \nu)=g|_\nu\cdot s(\nu)$, Lemma~\ref{lemma-kms-g-invariance}\eqref{kms-action-inv-4} now implies that 
\begin{align}
\big|\phi(t_\lambda u_g t_\mu^*)\big|\notag&\leq\sum_{\substack{\nu\in \D(g) \Lambda^{la}\\\Lambda^{\min}(\lambda(g\cdot \nu),\mu\nu)\neq\emptyset}}\phi(t_{\mu\nu}t_{\mu\nu}^*).
\end{align}
Then Lemma~\ref{lem:minimalex-measure} implies that
\begin{align}
\big|\phi(t_\lambda u_g t_\mu^*)\big|\notag&\leq M\Big(\bigcup_{\substack{\nu\in \D(g) \Lambda^{la}\\ \Lambda^{\min}(\lambda(g\cdot \nu),\mu\nu)\neq\emptyset}}Z(\mu\nu)\Big)
\leq K^{l}M\big(Z(\mu)\big).
\end{align}
Letting $l\to \infty$, we see that $\phi(t_\lambda u_g t_\mu^*)=0$.

We assume for the rest of the proof that $d(\lambda)-d(\mu)\in \Per(G,\Lambda)$. Suppose that $\mu\neq\theta_{d(\lambda),d(\mu)}(\lambda )$. By Lemma~\ref{lemma:rep-per} we have
\[t_\lambda u_g t_\mu^*=t_\lambda t_\lambda^*t_\lambda u_g t_\mu^*=t_{\theta_{d(\lambda),d(\mu)}(\lambda )}t_{\theta_{d(\lambda),d(\mu)}(\lambda )}^*t_\lambda u_g t_\mu^*.\]
The KMS condition then implies that 
\[\phi(t_\lambda u_g t_\mu^*)=\phi\big(t_\mu^*t_{\theta_{d(\lambda),d(\mu)}(\lambda )}t_{\theta_{d(\lambda),d(\mu)}(\lambda )}^*t_\lambda u_g \big),\]
which is zero because $d\big(\theta_{d(\lambda),d(\mu)}(\lambda )\big)=d(\mu)\implies t_\mu^*t_{\theta_{d(\lambda),d(\mu)}}=0$.

Now suppose that $\mu=\theta_{d(\lambda),d(\mu)}(\lambda )$ and 
$g=h_{d(\lambda),d(\mu)}(\lambda )$. Then we have a cycline triple $(\lambda,h_{d(\lambda),d(\mu)}(\lambda), \theta_{d(\lambda),d(\mu)}(\lambda))$ in the sense of \cite{LY}, and a similar argument to the last paragraph in the proof of \cite[Theorem~6.1]{LY} shows that 
\begin{equation}\label{cyc-triple}
\phi\big(t_\lambda u_{g} t_\mu^*\big)=\phi\big(t_\lambda u_{h_{d(\lambda),d(\mu)}(\lambda)} t_{\theta_{d(\lambda),d(\mu)}(\lambda)}^*\big)=\rho(\Lambda)^{-p}x_\Lambda(s( \lambda))\phi(V_{d(\lambda)-d(\mu)}).
\end{equation}

Finally, suppose
 that $\mu=\theta_{d(\lambda),d(\mu)} (\lambda)$ and $g\neq h_{d(\lambda),d(\mu)}(\lambda)$. For convenience let $h:=h_{d(\lambda),d(\mu)}(\lambda)$. Let $N:=(1,\dots,1)\in \N^k$ and $b\in \N^k$ be as in Lemma~\ref{meaure:lemma1}. For each $l\in \N$, the equation ~\eqref{komaki-way} implies that 
\begin{align}\label{psi-thm2-g,h}
\phi(t_\lambda u_g t_\mu^*)\notag&=\sum_{\nu\in \D(g) \Lambda^{lb}}\phi\big(t_{\lambda(g\cdot \nu)}u_{g|_\nu}t_{\mu\nu}^*\big).
\end{align}
We aim to reveal the number $c_{d(\lambda),d(\mu),\lambda}(g)$ in \eqref{thm10}, by reducing the sum step by step: Fix a $\nu$-summand. If $g\cdot \nu\neq h\cdot \nu$, then since $d(\lambda(g\cdot \nu\big))-d(\mu\nu)=d(\lambda)-d(\mu)\in\Per(G,\Lambda)$, Proposition~\ref{prop:per}\eqref{prop:per3} implies that
\[\theta_{d(\lambda)+n,d(\mu)+m}\big(\lambda(g\cdot \nu)\big)=\theta_{d(\lambda),d(\mu)} (\lambda)((h^{-1}g)\cdot\nu)=\mu((h^{-1}g)\cdot\nu)\neq \mu\nu.\]
An argument as in the previous case now shows that $\phi\big(t_{\lambda(g\cdot \nu)}u_{g|_\nu}t_{\mu\nu}^*\big)$ vanishes, and we get
\begin{align}\label{finally we get}
\phi(t_\lambda u_g t_\mu^*)\notag&=\sum_{\substack{\nu\in \D(g) \Lambda^{lb}\\g\cdot \nu=h\cdot \nu}}\phi\big(t_{\lambda(g\cdot \nu)}u_{g|_\nu}t_{\mu\nu}^*\big)\\
&=\sum_{\substack{\nu\in \D(g) \Lambda^{lb}\\g\cdot \nu=h\cdot \nu,g|_\nu= h|_\nu}}\phi\big(t_{\lambda(g\cdot \nu)}u_{g|_\nu}t_{\mu\nu}^*\big)+\sum_{\substack{\nu\in \D(g) \Lambda^{lb}\\g\cdot \nu= h\cdot\nu, g|_\nu\neq h|_\nu}}\phi\big(t_{\lambda(g\cdot \nu)}u_{g|_\nu}t_{\mu\nu}^*\big).
\end{align}
Next, we apply  Lemma~\ref{lemma-kms-g-invariance}\eqref{kms-action-inv-4} and Lemma~\ref{meaure:lemma1} to get
\begin{align}\label{second vanish}
\Bigg|\sum_{\substack{\nu\in \D(g) \Lambda^{lb}\\g\cdot \nu= h\cdot\nu, g|_\nu\neq h|_\nu}}\phi\big(t_{\lambda(g\cdot \nu)}u_{g|_\nu}t_{\mu\nu}^*\big)\Bigg|\notag&\leq
\sum_{\substack{\nu\in \D(g) \Lambda^{lb}\\g\cdot \nu= h\cdot\nu, g|_\nu\neq h|_\nu}}\big|\phi\big(t_{\mu\nu}t_{\mu\nu}^*\big)\big|\\
\notag&=M\Big(\bigcup_{\substack{\nu\in \D(g) \Lambda^{lb}\\g\cdot \nu= h\cdot\nu, g|_\nu\neq h|_\nu}}Z\big(\mu\nu\big)\Big)\\
&\leq K^l M\big(Z(\mu)\big).
\end{align}
Since $K^l M\big(Z(\mu)\big)$ approaches zero as $l\to \infty$, we have
\begin{align}\label{thm2:equ22}
\phi(t_\lambda u_g t_\mu^*)
&=\lim_{lb\to \infty}\sum_{\substack{\nu\in \D(g) \Lambda^{lb}\\g\cdot \nu=h\cdot \nu,g|_\nu= h|_\nu}}\phi\big(t_{\lambda(g\cdot \nu)}u_{g|_\nu}t_{\mu\nu}^*\big).
\end{align}
 Now for $\nu\in \D(g) \Lambda^{lb}$ with $g\cdot \nu=h\cdot \nu$ and $g|_\nu= h|_\nu$ the formulas for $h$ and $\theta $ functions in Lemma~\ref{lemma:mor-prop-htheta} implies that
$\theta_{d(\lambda)+lb,d(\mu)+lb}\big(\lambda(g\cdot \nu)\big)=\mu\nu$ and $h_{d(\lambda)+lb,d(\mu)+lb}\big(\lambda(g\cdot \nu)\big)=h|_\nu=g|_\nu$. Since $\phi$ is a KMS$_1$-state and $d(\lambda(g\cdot \nu))-d(\mu\nu)=d(\lambda)-d(\mu)\in \Per(G,\Lambda)$, \eqref{cyc-triple} gives us 
\[\phi\big(t_{\lambda(g\cdot \nu)}u_{g|_\nu}t_{\mu\nu}^*\big)=\rho(\Lambda)^{-(d(\lambda)+lb)}x_\Lambda(s(h\cdot \lambda))\phi\big(V_{d(\lambda)-d(\mu)}\big)\]
Putting this in \eqref{thm2:equ22}, we have
\begin{align}
\phi(t_\lambda u_g t_\mu^*)
\notag&=\rho(\Lambda)^{-d(\lambda)}\phi\big(V_{d(\lambda)-d(\mu)}\big)\lim_{lb\to \infty}\sum_{\substack{\nu\in \D(g) \Lambda^{lb}\\g\cdot \nu=h\cdot \nu\\g|_\nu= h|_\nu}}\rho(\Lambda)^{-lb}x_\Lambda(s(h\cdot \lambda))\\
&\notag=\rho(\Lambda)^{-d(\nu)} c_{d(\lambda),d(\mu),\lambda}(g)\phi\big(V_{d(\lambda)-d(\mu)}\big)
\end{align}
by definition of $c_{d(\lambda),d(\mu),\lambda}(g)$, and we have finished the proof.
\end{proof}

\section{Construction of KMS states of $\OO(G,\Lambda)$ for a preferred dynamics}
In this section, we use the Parry measure $M$ on $\Lambda^\infty$ to build a KMS$_1$-state of $(\OO(G,\Lambda),\sigma)$.  We will use this state to prove the surjectivity of the isomorphism in Theorem~\ref{thm3}. To build the KMS$_1$-state, we need to  assume that the unimodular Perron--Frobenius eigenvector $x_\Lambda$ is \textit{$G$-invariant} in the sense that $x_\Lambda(v)=x_\Lambda(w)$ if  $v=g\cdot w$ for a $g\in G$. Equivalently, $x_\Lambda(\D(g))=x_\Lambda(\c(g))$ for all $g\in G$. 

We start with a lemma. We leave the proof to the reader.

\begin{lemma}\label{infinit-path-rep}
Let $\Lambda$ be a finite strongly-connected $k$-graph, and $(G,\Lambda)$ be a self-similar groupoid action. Let $\{\delta_x:x\in \Lambda^\infty\}$ be the orthogonal basis of point masses in $\Lambda^\infty$. Then there is a Cuntz--Krieger $\Lambda$-family $\{T_\lambda:\lambda\in \Lambda\}$ in $B(\ell^2(\Lambda^\infty))$, and a unitary representation $U: G\to B(\ell^2(\Lambda^\infty))$ 
such that 
\begin{equation}\label{ut-infinite}
T_\lambda(\delta_ x)=\delta_{s(\lambda),r(x)} \delta_{\lambda x}\,\,\text { and }\,\, U_g(\delta_ x)=\delta_{\D(g),r(x)} \delta_{g\cdot x}.
\end{equation}
The pair $(U,T)$ satisfies the relations \eqref{us1}--\eqref{us3} of Proposition~\ref{prop:universal}.
\end{lemma}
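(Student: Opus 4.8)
The plan is to define $T_\lambda$ and $U_g$ on the orthonormal basis $\{\delta_x:x\in\Lambda^\infty\}$ by the stated formulas, extend linearly, and verify boundedness together with each relation by evaluating on basis vectors. First I would check that both families consist of partial isometries, so that they extend to bounded operators of norm at most $1$. For $T_\lambda$ the map $x\mapsto \lambda x$ is a bijection of $s(\lambda)\Lambda^\infty$ onto $Z(\lambda)$ with inverse $y\mapsto\sigma^{d(\lambda)}(y)$, so $T_\lambda$ permutes a subset of the basis and is an isometry on $\clsp\{\delta_x:r(x)=s(\lambda)\}$ and zero elsewhere; a direct inner-product computation then gives $T_\lambda^*\delta_y=\delta_{\sigma^{d(\lambda)}(y)}$ when $y(0,d(\lambda))=\lambda$ and $T_\lambda^*\delta_y=0$ otherwise. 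For $U_g$, the action of $G$ on $\Lambda^\infty$ defined just before \eqref{eq:theGperiodicitygroup} is a bijection of $\D(g)\Lambda^\infty$ onto $\c(g)\Lambda^\infty$ with inverse $g^{-1}\cdot(-)$, using $(gh)\cdot x=g\cdot(h\cdot x)$; hence $U_g$ is a partial isometry and the same computation yields $U_g^*=U_{g^{-1}}$.

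Next I would verify that $U$ is a unitary representation. Since $v\cdot x=x$ whenever $r(x)=v$, the operator $U_v$ coincides with $T_v$ and is the orthogonal projection onto $\clsp\{\delta_x:r(x)=v\}$; these are mutually orthogonal because $\Lambda^\infty=\bigsqcup_{v\in\Lambda^0}v\Lambda^\infty$, giving \eqref{U1}. Relation \eqref{U2} was noted above, and \eqref{U3} follows by evaluating $U_gU_h$ and $\delta_{\D(g),\c(h)}U_{gh}$ on each $\delta_x$: both are nonzero exactly when $r(x)=\D(h)$ and $\D(g)=\c(h)$, and then both equal $\delta_{(gh)\cdot x}$ via $(gh)\cdot x=g\cdot(h\cdot x)$ and $r(h\cdot x)=\c(h)$ (the latter from \eqref{S4} and \eqref{S8}). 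This establishes \eqref{us1}.

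I would then check that $\{T_\lambda\}$ is a Cuntz--Krieger family. Relation \eqref{TCK1} is immediate from the preceding paragraph; \eqref{TCK2} reduces to the identity $\lambda(\mu x)=(\lambda\mu)x$ on basis vectors; and \eqref{TCK3} is the same combinatorial computation as in the proof of Theorem~\ref{Thm:KMS_beta_tau}\eqref{Zconverges} (compare \eqref{right-hand Fock} and \eqref{left-hand Fock}), matching $T_\lambda^*T_\mu$ against $\sum_{(\eta,\zeta)\in\Lambda^{\min}(\lambda,\mu)}T_\eta T_\zeta^*$ on each $\delta_x$ via the factorisation property. For \eqref{CK}, the operator $T_\lambda T_\lambda^*$ is the projection onto $\clsp\{\delta_x:x\in Z(\lambda)\}$, and for fixed $v\in\Lambda^0$ and $p\in\N^k$ the cylinders $Z(\lambda)$ with $\lambda\in v\Lambda^p$ partition $v\Lambda^\infty$ (using that a strongly connected $\Lambda$ has no sources), so $\sum_{\lambda\in v\Lambda^p}T_\lambda T_\lambda^*=T_v$. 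This gives \eqref{us2} and the Cuntz--Krieger relation.

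Finally, \eqref{us3} is the computation that $U_gT_\lambda$ and $T_{g\cdot\lambda}U_{g|_\lambda}$ both send $\delta_x$ to $\delta_{(g\cdot\lambda)(g|_\lambda\cdot x)}$ precisely when $r(x)=s(\lambda)$ and $\D(g)=r(\lambda)$, and to $0$ otherwise; this uses the infinite-path identity $g\cdot(\lambda x)=(g\cdot\lambda)(g|_\lambda\cdot x)$ together with \eqref{S3} and \eqref{S4} to identify $\D(g|_\lambda)=s(\lambda)$ and $\c(g|_\lambda)=s(g\cdot\lambda)$, so that the range/source conditions on the right agree with those on the left. Every step reduces to the bijectivity of the $G$-action on $\Lambda^\infty$ and to the factorisation property, so no genuine difficulty arises; I expect the only point requiring care to be the bookkeeping in \eqref{TCK3} and the consistent matching of the domain (range/source) conditions on the two sides of each relation.
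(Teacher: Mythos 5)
Your proof is correct and is exactly the verification the paper intends (the paper states this lemma with the proof left to the reader): the basis-vector checks — the partial-isometry structure of $T_\lambda$ and $U_g$, the adjoint formulas, the matching of domain/codomain conditions via (\ref{S3}), (\ref{S4}) and (\ref{S8}), the $\Lambda^{\min}$ bookkeeping for (\ref{TCK3}), and the partition of $v\Lambda^\infty$ by the cylinder sets $Z(\lambda)$, $\lambda\in v\Lambda^p$, for (\ref{CK}) — are the standard argument, mirroring the Fock-space computations in the proof of Theorem~\ref{Thm:KMS_beta_tau}\eqref{Zconverges}. The only points worth making explicit, both implicit in what you wrote, are that $g\cdot x$ is a well-defined infinite path (this follows from (\ref{S1})--(\ref{S2})) and that the mutually orthogonal projections $T_v$ sum to the identity because $\Lambda^\infty=\bigsqcup_{v\in\Lambda^0}v\Lambda^\infty$, as required by relation~(\ref{us2}) of Proposition~\ref{prop:universal}.
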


\begin{prop}\label{prop:KMS1}
Let $\Lambda$ be a finite strongly-connected $k$-graph, and $(G,\Lambda)$ be a finite-state self-similar groupoid action. Suppose that $x_\Lambda$ is $G$-invariant and let  $\sigma$ be  the preferred dynamics \eqref{pref-dyn}. Let $(U,T)$ be the family of Lemma~\ref{infinit-path-rep} and $\pi_{U,T}: \OO(\Lambda,G)\to B(\ell^2(\Lambda^\infty))$ be the corresponding homomorphism. Let $M$ be the Parry measure on $\Lambda^\infty$. Then for each $a\in \OO(\Lambda,G)$ the function $\omega_a: x\mapsto (\pi_{U,T}(a)\delta_x|\delta_x)$ is $M$-measurable, and the formula 
\begin{equation}\label{KMS1-formula}
\phi(a)= \int_{\Lambda^\infty} \big(\pi_{U,T}(a)\delta_x|\delta_x\big)\,dM(x) \quad\text{ for all } a\in \OO(\Lambda,G)
\end{equation}
defines a KMS$_1$-state for $(\OO(\Lambda,G),\sigma)$.
\end{prop}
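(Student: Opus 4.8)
My plan is to reduce the whole statement to the way $\pi_{U,T}$ acts on the point masses $\{\delta_x\}$ and to exploit a scaling law for $M$ under the partial maps these produce. First, from \eqref{ut-infinite} I would compute, for a spanning element $t_\lambda u_g t_\mu^*$ with $s(\lambda)=g\cdot s(\mu)$, that
\[
\pi_{U,T}(t_\lambda u_g t_\mu^*)\,\delta_x=
\begin{cases}
\delta_{\lambda(g\cdot\sigma^{d(\mu)}(x))} & \text{if } x\in Z(\mu),\\
0 & \text{otherwise,}
\end{cases}
\]
using that $T_\mu^*\delta_x=1_{Z(\mu)}(x)\,\delta_{\sigma^{d(\mu)}(x)}$, that $U_g$ then applies because $\D(g)=s(\mu)$ and $\c(g)=s(\lambda)$, and that $T_\lambda$ prepends $\lambda$. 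Writing $\Phi_{\lambda,g,\mu}(x):=\lambda\big(g\cdot\sigma^{d(\mu)}(x)\big)$ for the resulting injective partial map $Z(\mu)\to Z(\lambda)$, the diagonal coefficient $\omega_{t_\lambda u_g t_\mu^*}(x)$ equals $1_{Z(\lambda,g,\mu)}(x)$, since $\Phi_{\lambda,g,\mu}(x)=x$ is exactly the condition defining $Z(\lambda,g,\mu)$ in \eqref{Z-set}. As $Z(\lambda,g,\mu)$ is $M$-measurable, $\omega_{t_\lambda u_g t_\mu^*}$ is measurable; for a general $a$ I would take norm approximants $b$ from the dense span of the spanning elements (Proposition~\ref{Toeplitz_spanning}), use $|\omega_a-\omega_b|\le\|a-b\|$ pointwise to see that $\omega_a$ is a uniform limit of measurable functions, hence measurable and bounded by $\|a\|$. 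In particular $\phi(t_\lambda u_g t_\mu^*)=M\big(Z(\lambda,g,\mu)\big)$. That $\phi$ is a state is then routine: it is linear with $|\phi(a)|\le\|a\|$, positive because $\phi(a^*a)=\int_{\Lambda^\infty}\|\pi_{U,T}(a)\delta_x\|^2\,dM(x)\ge 0$, and $\phi(1)=\int_{\Lambda^\infty}1\,dM=1$ as $M$ is a probability measure and $\sum_{v\in\Lambda^0}t_v$ is the unit of $\OO(G,\Lambda)$.

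For the KMS$_1$ condition it suffices to verify $\phi(ab)=\phi\big(b\,\sigma_i(a)\big)$ on the analytic spanning elements $a=t_\lambda u_g t_\mu^*$ and $b=t_\nu u_h t_\xi^*$, where $\sigma_i(a)=\rho(\Lambda)^{-(d(\lambda)-d(\mu))}a$ by \eqref{pref-dyn}. The key input is the scaling law
\[
M\big(\Phi_{\lambda,g,\mu}(E)\big)=\rho(\Lambda)^{-(d(\lambda)-d(\mu))}\,M(E)\qquad\text{for measurable } E\subseteq Z(\mu),
\]
which I would derive from the Perron--Frobenius scaling of $M$ in \eqref{peron-measure-prop} on cylinders: since $\Phi_{\lambda,g,\mu}\big(Z(\mu\eta)\big)=Z\big(\lambda(g\cdot\eta)\big)$, the identity $d(g\cdot\eta)=d(\eta)$ together with $s(g\cdot\eta)=g|_\eta\cdot s(\eta)$ from Lemma~\ref{lem:properties}(\ref{S4}) and the $G$-invariance of $x_\Lambda$ give $x_\Lambda(s(g\cdot\eta))=x_\Lambda(s(\eta))$, so the two cylinder measures differ by the factor $\rho(\Lambda)^{-(d(\lambda)-d(\mu))}$. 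This is the only place the $G$-invariance hypothesis is used. Because each $\pi_{U,T}$ carries basis vectors to basis vectors or to $0$, one has $\pi_{U,T}(ab)\delta_x=\delta_{\Phi_{\lambda,g,\mu}\circ\Phi_{\nu,h,\xi}(x)}$ where defined, so $\phi(ab)=M(F)$ with $F$ the fixed-point set of $\Phi_{\lambda,g,\mu}\circ\Phi_{\nu,h,\xi}$ and, symmetrically, $\phi(ba)=M(F')$ with $F'$ the fixed-point set of $\Phi_{\nu,h,\xi}\circ\Phi_{\lambda,g,\mu}$. A short set-theoretic check shows that $\Phi_{\lambda,g,\mu}$ restricts to a bijection $F'\to F$ with $F'\subseteq Z(\mu)=\operatorname{dom}\Phi_{\lambda,g,\mu}$, whence the scaling law gives
\[
\phi(ab)=M(F)=M\big(\Phi_{\lambda,g,\mu}(F')\big)=\rho(\Lambda)^{-(d(\lambda)-d(\mu))}M(F')=\rho(\Lambda)^{-(d(\lambda)-d(\mu))}\phi(ba)=\phi\big(b\,\sigma_i(a)\big),
\]
which is exactly the KMS$_1$ condition.

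The main obstacle is bookkeeping the exponent of $\rho(\Lambda)$ in the scaling law and matching its sign against the one produced by $\sigma_i$; this is precisely where the $G$-invariance of $x_\Lambda$ is indispensable, being what makes $M$ behave like a conformal measure for the partial maps $\Phi_{\lambda,g,\mu}$. A secondary, fiddlier point is the domain--range bookkeeping in the composition $\Phi_{\lambda,g,\mu}\circ\Phi_{\nu,h,\xi}$, in particular confirming that the nonvanishing conditions $s(\lambda)=g\cdot s(\mu)$ and $s(\nu)=h\cdot s(\xi)$ propagate correctly through the fixed-point bijection. A welcome feature of this route is that it extracts $\phi(ab)$ and $\phi(ba)$ purely from the transformation of $M$ under composition, so it never requires the explicit case-by-case values of $M\big(Z(\lambda,g,\mu)\big)$ from Theorem~\ref{prop:peron-measure}.
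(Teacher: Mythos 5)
Your proposal is correct, and while its first half (the diagonal coefficients $\omega_{t_\lambda u_g t_\mu^*}=1_{Z(\lambda,g,\mu)}$, measurability by norm-density, and the state property) coincides with the paper's argument, your verification of the KMS$_1$ condition is genuinely different. The paper first reduces, via Lemma~\ref{simplifyingkms}, to pairs with $d(\nu),d(\xi)\geq d(\lambda)\vee d(\mu)$, and then evaluates $\phi(bc)$ and $\phi(cb)$ term by term using the explicit case-by-case values of $M\big(Z(\lambda,g,\mu)\big)$ from Theorem~\ref{prop:peron-measure}; this forces the equivalence of the conditions \eqref{dagger} and \eqref{ddagger}, careful manipulation of the functions $\theta_{p,q}$ and $h_{p,q}$, and a bijection argument for the constants $c_{p,q,\lambda}(g)$ to establish \eqref{measure:KMS1-cond}. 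You instead prove that $M$ is conformal for the partial homeomorphisms $\Phi_{\lambda,g,\mu}\colon Z(\mu)\to Z(\lambda)$, with Radon--Nikodym factor $\rho(\Lambda)^{-(d(\lambda)-d(\mu))}$, using only the cylinder formula \eqref{peron-measure-prop}, degree-preservation, (\ref{S4}) of Lemma~\ref{lem:properties}, and $G$-invariance of $x_\Lambda$; the KMS identity then drops out of the bijection $\Phi_{\lambda,g,\mu}\colon F'\to F$ between the two fixed-point sets. This is shorter and more conceptual, and it buys something real: since Theorem~\ref{prop:peron-measure} is never invoked, your argument does not use the finite-state hypothesis at all (finite-state enters the paper's proof only through Theorem~\ref{prop:peron-measure} and Lemmas~\ref{meaure:lemma1}, \ref{lemmayl0}, \ref{lem:minimalex-measure}), so it establishes existence of the KMS$_1$-state under strictly weaker hypotheses. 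What the paper's heavier route buys in exchange is the explicit evaluation of $\phi$ on spanning elements, which feeds directly into the injectivity and uniqueness statements of Theorems~\ref{thm3} and \ref{thm2}.

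Two routine points you should make explicit in a final write-up. First, the scaling law is verified on cylinders $Z(\mu\eta)$; to pass to all Borel $E\subseteq Z(\mu)$ you should note that both $E\mapsto M\big(\Phi_{\lambda,g,\mu}(E)\big)$ and $E\mapsto\rho(\Lambda)^{-(d(\lambda)-d(\mu))}M(E)$ are finite Borel measures (the former because $\Phi_{\lambda,g,\mu}$ is a homeomorphism of $Z(\mu)$ onto $Z(\lambda)$, with inverse $\Phi_{\mu,g^{-1},\lambda}$), and that the sets $\{Z(\mu\eta):\eta\in s(\mu)\Lambda^p\}$ form finite clopen partitions of $Z(\mu)$ refining as $p$ grows, so agreement on cylinders forces agreement everywhere by the standard uniqueness-of-extension argument. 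Second, the fixed-point sets $F$ and $F'$ are closed (fixed points of continuous maps on clopen domains), hence $M$-measurable, which is needed before the scaling law can be applied to $F'$. Neither point is a gap in substance, but both belong in the proof.
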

\begin{remark}
Let $E$ be strongly connected  directed graph and $(G,E)$ be a  self-similar groupoid action. By \cite[Proposition~8.4]{LRRW},  the unimodular Perron--Frobenius eigenvector $x_E$ is $G$-invariant. We note that for a strongly connected  finite  $k$-graph $\Lambda$, and a self-similar action $(G,\Lambda)$, the self-similarity does not imply the $G$-invariance of $x_\Lambda$.
However, if $\Lambda$ is coordinatewise-irreducible in the sense of  \cite{aHLRS}, then by \cite[Lemma~2.1]{aHLRS},  the unimodular Perron-Frobenius eigenvectors of the matrices $B_1, \dots, B_k$ are all the same  and hence  equal to $x_\Lambda$ (see \cite[Corollary~4.2(b)]{aHLRS}). Now since the directed graph $E_1=(\Lambda^0,\Lambda^{e_1}, r,s)$ is strongly connected and $(G,E_1)$ is self-similar as in \cite{LRRW}, an application of \cite[Proposition~8.4]{LRRW} implies that  $x_\Lambda$ is $G$-invariant.
\end{remark}

We need the following lemma to prove Proposition~\ref{prop:KMS1}.

\begin{lemma}\label{simplifyingkms}
Let $\Lambda$ be a finite strongly-connected $k$-graph, and $(G,\Lambda)$ be a finite-state self-similar groupoid action. Suppose that $\sigma$ is the preferred dynamics \eqref{pref-dyn}, and that $\phi$ is a state of $\big(\OO(G,\Lambda),\sigma\big)$ such that 
\begin{equation}\label{KMS1-cond-forgens}
\phi(t_\lambda u_g t_\mu^*t_\nu u_f t_\xi^*)=\rho(\Lambda)^{-(d(\lambda)-d(\mu))}\phi(t_\nu u_f t_\xi^*t_\lambda u_g t_\mu^*),
\end{equation}
for all $f,g\in G$ and $\lambda,\mu,\nu,\xi\in \Lambda$ with $s(\lambda)=g\cdot s(\mu)$, $s(\nu)=f\cdot s(\xi)$ and $d(\nu),d(\xi)\geq d(\lambda)\vee d(\mu)$. Then $\phi$ is a KMS$_1$-state. 
\end{lemma}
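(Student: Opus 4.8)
The plan is to verify the KMS$_1$ condition directly on the spanning family $\{t_\lambda u_g t_\mu^* : \lambda,\mu\in\Lambda,\ g\in G,\ s(\lambda)=g\cdot s(\mu)\}$ of Proposition~\ref{Toeplitz_spanning}. Each such element is analytic for $\sigma$, being an eigenvector with $\sigma_t(t_\lambda u_g t_\mu^*)=\rho(\Lambda)^{it(d(\lambda)-d(\mu))}t_\lambda u_g t_\mu^*$, whose entire continuation satisfies $\sigma_i(t_\lambda u_g t_\mu^*)=\rho(\Lambda)^{-(d(\lambda)-d(\mu))}t_\lambda u_g t_\mu^*$; these elements span a dense $\sigma$-invariant $*$-subalgebra, so by the standard reduction it is enough to establish $\phi(ab)=\phi(b\sigma_i(a))$ for $a,b$ in this family. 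Since that identity for $a=t_\lambda u_g t_\mu^*$ and $b=t_\nu u_f t_\xi^*$ is exactly \eqref{KMS1-cond-forgens}, the whole task is to upgrade the hypothesis, which is assumed only when $d(\nu),d(\xi)\geq d(\lambda)\vee d(\mu)$, to an \emph{arbitrary} second argument $b$.

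The key device I would use is to expand a general $b$ as a finite sum of admissible spanning elements whose two path-degrees are both large, and then appeal to linearity of $\phi$. So I would fix $a=t_\lambda u_g t_\mu^*$, set $P:=d(\lambda)\vee d(\mu)$, and take an arbitrary $b=t_\nu u_f t_\xi^*$ (so $\c(f)=s(\nu)$ and $\D(f)=s(\xi)$). Choosing the single exponent $\ell:=P$ and applying the Cuntz--Krieger relation \eqref{CK} at the vertex $\D(f)$ gives $u_f=u_f u_{\D(f)}=\sum_{\beta\in\D(f)\Lambda^{\ell}}u_f t_\beta t_\beta^*$; rewriting $u_f t_\beta=t_{f\cdot\beta}u_{f|_\beta}$ by \eqref{UT} and amalgamating the creation operators through \eqref{TCK2} yields
\begin{equation*}
b=\sum_{\beta\in\D(f)\Lambda^{\ell}}t_{\nu(f\cdot\beta)}\,u_{f|_\beta}\,t_{\xi\beta}^*.
\end{equation*}
I would then check that each summand $b_\beta:=t_{\nu(f\cdot\beta)}u_{f|_\beta}t_{\xi\beta}^*$ is again admissible: the source condition $s(\nu(f\cdot\beta))=f|_\beta\cdot s(\xi\beta)$ follows from \eqref{S4}, and degree-preservation of the action gives $d(\nu(f\cdot\beta))=d(\nu)+\ell\geq\ell=P$ and $d(\xi\beta)=d(\xi)+\ell\geq\ell=P$, so each pair $(a,b_\beta)$ lies within the scope of \eqref{KMS1-cond-forgens}.

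It then remains to apply \eqref{KMS1-cond-forgens} to each pair and sum, using that $b=\sum_\beta b_\beta$ holds as an identity in $\OO(G,\Lambda)$ and that $\phi$ is linear:
\begin{equation*}
\phi(ab)=\sum_\beta\phi(ab_\beta)=\rho(\Lambda)^{-(d(\lambda)-d(\mu))}\sum_\beta\phi(b_\beta a)=\rho(\Lambda)^{-(d(\lambda)-d(\mu))}\phi(ba),
\end{equation*}
which is the desired KMS$_1$ identity for the arbitrary pair $(a,b)$. I expect the only delicate point to be the expansion step itself: one must expand $b$ rather than $a$ (so that the degree $P$ read off from the fixed element $a$ controls how far to push), select the one exponent $\ell=P$ that simultaneously lifts both $d(\nu)$ and $d(\xi)$ above $P$, and confirm that every summand retains the admissible form $t_{\nu'}u_{f'}t_{\xi'}^*$ with $s(\nu')=f'\cdot s(\xi')$ — all of which rests on the self-similarity identities of Lemma~\ref{lem:properties} together with the commutation relation \eqref{UT}. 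The initial analyticity-and-density reduction is routine.
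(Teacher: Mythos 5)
Your proof is correct and follows essentially the same route as the paper: fix the first factor, insert the Cuntz--Krieger resolution $u_{\D(f)}=\sum_{\beta\in\D(f)\Lambda^{P}}t_\beta t_\beta^*$ with $P=d(\lambda)\vee d(\mu)$ between $u_f$ and $t_\xi^*$, push $u_f$ through via \eqref{UT} to write the second factor as a sum of admissible spanning elements of degrees at least $P$, and conclude by linearity from the hypothesis \eqref{KMS1-cond-forgens}. The only differences are cosmetic: you spell out the analyticity/density reduction and the admissibility check (via (\ref{S4})) that the paper leaves implicit.
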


\begin{proof}
It suffices to take two typical spanning elements $b=t_\lambda u_g t_\mu^*$ and $c=t_\nu u_f t_\xi^*$ with $s(\lambda)=g\cdot s(\mu)$ and $s(\nu)=f\cdot s(\xi)$ and show that 
\begin{equation}\label{lemma:KMS1-cond}
\phi(bc)=\rho(\Lambda)^{-(d(\lambda)-d(\mu))}\phi(cb).
\end{equation}
Let $p:=d(\lambda)\vee d(\mu)$. A computation using (\ref{CK}) and \eqref{UT} gives
\begin{align*}
bc=t_\lambda u_g t_\mu^*t_\nu u_f t_\xi^*
&=\sum_{\eta\in s(\xi)\Lambda^p}t_\lambda u_g t_\mu^*t_\nu u_ft_\eta t_\eta^* t_\xi^*\\
\notag&=\sum_{\eta\in s(\xi)\Lambda^p}t_\lambda u_gt_{\mu}^*t_\nu t_{f\cdot \eta} u_{f|_\eta} t_{\xi\eta}^* \\
\notag&=\sum_{\eta\in s(\xi)\Lambda^p}t_\lambda u_gt_{\mu}^*t_{\nu (f\cdot \eta)} u_{f|_\eta} t_{\xi\eta}^*.
\end{align*}
Now, applying \eqref{KMS1-cond-forgens} and reversing the above computation gives
\begin{align*}
\phi(bc)&=\sum_{\eta\in s(\xi)\Lambda^p}\phi\big(t_\lambda u_gt_{\mu}^*t_{\nu (f\cdot \eta)} u_{f|_\eta} t_{\xi\eta}^*\big)\\
&=\rho(\Lambda)^{d(\lambda)-d(\mu)}\sum_{\eta\in s(\xi)\Lambda^p}\phi\big(t_{\nu (f\cdot \eta)} u_{f|_\eta} t_{\xi\eta}^*t_\lambda u_gt_{\mu}^*\big)\\
&=\rho(\Lambda)^{-(d(\lambda)-d(\mu))}\phi(cb),
\end{align*}
giving \eqref{lemma:KMS1-cond}.
\end{proof}

\begin{proof}[Proof of Proposition~\ref{prop:KMS1}]
Take $\lambda,\mu\in \Lambda$ and $g\in G$ such that $s(\lambda)=g\cdot s(\mu)$. The set $Z(\lambda,g,\mu):=\{x\in \Lambda^\infty: x=\lambda (g\cdot y)=\mu y \text{ for some } y\in \Lambda^\infty\}$ is $M$-measurable and by \eqref{ut-infinite} we have
\begin{align}\label{kms1-comp}
\big(\pi_{U,T}(t_{\lambda} u_{g} t_\mu^*)\delta_x|\delta_x\big)&=\big( U_{g} T_\mu^*\delta_x|T_{\lambda}^*\delta_x\big)
= \begin{cases}1& \text {if } x\in Z(\lambda,g,\mu)\\
0 &\text{otherwise.}
\end{cases}
\end{align}
Hence $\omega_{t_\lambda u_g t_\mu^*}$ is the characteristic function of the set $Z(\lambda,g,\mu)$, and so is $M$-measurable. A continuity argument similar to the proof of \cite[Lemma~10.1(b)]{aHLRS} now shows that for each $a\in\OO(G,V)$ the function $\omega_a$ is $M$-measurable, and that $\phi$ is a well-defined norm decreasing map. Since $M$ is a probability measure, we have $\phi(1)= \int_{\Lambda^\infty}\|\delta_x\|\,dM(x)=1$ and hence $\phi$ is a state.

To see that $\phi$ is a KMS$_1$-state, it suffices by Lemma~\ref{simplifyingkms} to take two typical elements $b=t_\lambda u_g t_\mu^*$ and $c=t_\nu u_f t_\xi^*$ with $d(\nu),d(\xi)\geq d(\lambda)\vee d(\mu)$ and prove 
\begin{equation}\label{KMS1-cond}
\phi(bc)=\rho(\Lambda)^{-(d(\lambda)-d(\mu))}\phi(cb).
\end{equation}

 We start by computing the left-hand side. Since $d(\nu)\geq d(\mu)$, $t_\mu^*t_\nu$ vanishes unless $\nu=\mu \alpha$ for some $\alpha \in \Lambda$. Now, since $\c(f)=s(\nu)$, we can apply \eqref{kms1-comp} to get 
\begin{align*}
\phi\big(bc\big)
\notag&= \begin{cases}\phi\big(t_\lambda u_g t_\alpha u_f t_\xi^*\big) &\text {if } \nu=\mu \alpha \\
0 &\text{ otherwise.}
\end{cases}\\
\notag&= \begin{cases}\phi\big(t_{\lambda(g\cdot \alpha)} u_{g|_\alpha f} t_\xi^*\big)& \text {if } \nu=\mu \alpha \\
0 &\text{ otherwise.}
\end{cases}\\
&= \begin{cases}M\big(Z\big(\lambda(g\cdot \alpha),g|_\alpha f,\xi\big)\big)& \text {if } \nu=\mu \alpha \\
0 &\text{otherwise.}
\end{cases}
\end{align*}
By the formula \eqref{measure-triple} we have 
\begin{align*}
\phi\big(bc\big)=
&\begin{cases}M\big(Z\big(\lambda(g\cdot \alpha),g|_\alpha f,\xi\big)\big)&\text {if } \nu=\mu \alpha, d(\lambda(g\cdot \alpha))-d(\xi)\in \Per(G,\Lambda)\\
&\quad\quad\text{ and }\xi=\theta_{ d(\lambda(g\cdot \alpha)),d(\xi)}(\lambda(g\cdot \alpha) )\\
0 &\text{ otherwise.}
\end{cases}
\end{align*}
Similar computation together with $\c(g|_{g^{-1}\cdot \beta})=s(\beta)=s(\xi)=\D(f)$ show that
\begin{align*}
\phi\big(cb\big)&=\begin{cases}\phi\big(t_\nu u_fu_{g|_{g^{-1}\cdot \beta}} t_{\mu(g^{-1}\cdot \beta)}^*\big)& \text {if } \xi=\lambda \beta\\
0 &\text{otherwise.}
\end{cases}\\
&=\begin{cases}M\big(Z\big(\nu, f (g|_{g^{-1}\cdot \beta}), \mu(g^{-1}\cdot \beta)\big)\big)& \xi=\lambda \beta,d(\nu)-d(\mu(g^{-1}\cdot \beta))\in \Per(G,\Lambda)\\
& \quad\quad\text{ and }\theta_{ d(\nu),d(\mu(g^{-1}\cdot \beta))}(\nu)=\mu(g^{-1}\cdot \beta)\\
0 &\text{otherwise.}
\end{cases}\\
\end{align*}

Next we show that the conditions appearing in $\phi(bc)$ and $\phi(cb)$ are equivalent. Suppose that we have 
\begin{align}\label{dagger}
\nu=\mu \alpha,\quad d(\lambda(g\cdot\alpha))-d(\xi)&\in \Per(G,\Lambda)\text{ and }\quad \xi=\theta_{d(\lambda(g\cdot\alpha)),d(\xi)}\big(\lambda (g\cdot \alpha)\big).
 \end{align}
Since $d(\alpha)-(d(\xi)-d(\lambda))=d(\lambda(g\cdot\alpha))-d(\xi)\in \Per(G,\Lambda)$, we can write $\beta:=\theta_{d(\alpha),d(\xi)-d(\lambda)}(g\cdot \alpha)$. Then Lemma~\ref{lemma:mor-prop-htheta}\eqref{lemma:mor-prop-htheta-2} implies that 
 $\xi=\theta_{d(\lambda(g\cdot\alpha)),d(\xi)}\big(\lambda (g\cdot \alpha)\big)=\lambda \beta.$
 Then 
\[d(\nu)-d(\mu(g^{-1}\cdot \beta))=d(\alpha)-d(\beta)=d(\alpha)-d(\xi)+d(\lambda)= d(\lambda(g\cdot\alpha))-d(\xi)\in \Per(G,\Lambda).\]
Applying Lemma~\ref{lemma:mor-prop-htheta}\eqref{lemma:mor-prop-htheta-2} twice and using Lemma~\ref{lemma:htehta}\eqref{lemma:htehta-2} in the third equality, we get
\begin{align*}
\theta_{d(\mu(g^{-1}\cdot \beta)),d(\nu)}\big(\mu(g^{-1}\cdot \beta)\big)&=\mu\theta_{d(\beta),d(\alpha)}(g^{-1}\cdot \beta)=\mu\theta_{d(\beta),d(\alpha)}\big(\theta_{d(\alpha),d(\beta)}( \alpha)\big)\\
&=\mu\theta_{d(\alpha),d(\alpha)}(\alpha)=\mu\alpha =\nu,
\end{align*}
giving $\mu(g^{-1}\cdot \beta)=\theta_{d(\nu),d(\mu(g^{-1}\cdot \beta))}(\nu)$. Thus we have proven 
\begin{align}\label{ddagger}
\xi=\lambda \beta, d(\nu)-d(\mu(g^{-1}\cdot \beta))\in \Per(G,\Lambda) \text{ and }\mu(g^{-1}\cdot \beta)=\theta_{d(\nu),d(\mu(g^{-1}\cdot \beta))}(\nu).
\end{align}
Similarly, we can deduce $\eqref{dagger}$ from $\eqref{ddagger}$ by taking $\alpha:=\theta_{d(\beta),d(\nu)-d(\mu))}(g^{-1}\cdot \beta).$

To prove \eqref{KMS1-cond}, it suffices to consider \eqref{dagger} and \eqref{ddagger}, and establish 
\begin{equation}\label{measure:KMS1-cond}
M\big(Z\big(\lambda(g\cdot \alpha),g|_\alpha f,\xi\big)\big)=\rho(\Lambda)^{-(d(\lambda)-d(\mu))}M\big(Z\big(\nu, f (g|_{g^{-1}\cdot \beta}), \mu(g^{-1}\cdot \beta)\big)\big).
\end{equation}
For this, we first write $h:=h_{ d(\alpha),d(\beta))}(\alpha )$ and apply Lemma~\ref{lemma:mor-prop-htheta} to get
\begin{align}\label{h1h2}
h_{ d(\lambda(g\cdot \alpha)),d(\xi)}(\lambda(g\cdot \alpha) )\notag=h_{ d( \alpha),d(\beta)}(g\cdot \alpha )=&g|_\alpha h \big(g|_{g^{-1}\cdot\beta}\big)^{-1}=g|_\alpha h g^{-1}|_\beta\quad\text{and }\\
h_{ d(\nu),d(\mu(g^{-1}\cdot \beta))}(\nu )&=h.
 \end{align}
 Now if $g|_\alpha f= h_{ d(\lambda(g\cdot \alpha)),d(\xi)}(\lambda(g\cdot \alpha) )$, \eqref{h1h2} implies that $f=h \big(g|_{g^{-1}\cdot\beta}\big)^{-1}$ and hence $f (g|_{g^{-1}\cdot \beta})=h= h_{ d(\nu),d(\mu(g^{-1}\cdot \beta))}(\nu )$. Since $s(g^{-1}\cdot \beta)=g^{-1}|_\beta\cdot s(\beta)=g^{-1}|_\beta\cdot s(\xi)$, the formula \eqref{measure-triple} for $M$, implies that
\begin{align*}
\rho(\Lambda)^{-(d(\lambda)-d(\mu))}& M\big(Z\big(\nu, f (g|_{g^{-1}\cdot \beta}), \mu(g^{-1}\cdot \beta)\big)\big) \\
&=\rho(\Lambda)^{-(d(\lambda)-d(\mu))}\rho(\Lambda)^{-d(\mu(g^{-1}\cdot \beta))}x_\Lambda\big(g^{-1}|_\beta\cdot s(\xi)\big)\\
&=\rho(\Lambda)^{-d(\xi)}x_\Lambda(s(\xi))\quad \text{(since $x_\Lambda$ is $G$-invariant)}\\
&=M\big(Z\big(\lambda(g\cdot \alpha),g|_\alpha f,\xi\big)\big).
\end{align*}

If $g|_\alpha f\neq h_{ d(\lambda(g\cdot \alpha)),d(\xi)}(\lambda(g\cdot \alpha) )$,
 by \eqref{h1h2}, there is a $\eta$ such that $f\cdot \eta\neq \big( h\big(g|_{g^{-1}\cdot\beta}\big)^{-1}\big)\cdot \eta$.
Writing $\eta':= \big(g|_{g^{-1}\cdot\beta}\big)^{-1}\cdot \eta$, we get $(f g|_{g^{-1}\cdot \beta})\cdot \eta'\neq h_{ d(\nu),d(\mu(g^{-1}\cdot \beta))}(\nu )\cdot \eta'$ which implies that 
$f (g|_{g^{-1}\cdot \beta})\neq h_{ d(\nu),d(\mu(g^{-1}\cdot \beta))}(\nu ).$ For each $l\in \N$ and $v\in \Lambda^0$, \eqref{h1h2} implies that 
\begin{align*}
&F_{d(\nu),d(\mu(g^{-1}\cdot \beta)),\lambda(g\cdot\alpha)}^l\big(g|_\alpha f,v\big)\\
\quad&=\{\eta\in \D(f)\Lambda^{lN}; (g|_\alpha f)\cdot \eta=(g|_\alpha hg^{-1}|_\beta)\cdot \eta\text{ and }(g|_\alpha f)|_\eta=(g|_\alpha hg^{-1}|_\beta)|_\eta\},
\end{align*}
and therefore 
\begin{align*}
c_{d(\nu),d(\mu(g^{-1}\cdot \beta)),\lambda(g\cdot\alpha)}^l\big(g|_\alpha f,v\big)&=\sum_{\substack{\eta\in \D(f)\Lambda^{lN}, (g|_\alpha f)\cdot \eta=(g|_\alpha hg^{-1}|_\beta)\cdot \eta\\(g|_\alpha f)|_\eta=(g|_\alpha hg^{-1}|_\beta)|_\eta}}M(Z(\eta)).
\end{align*}
A computation using (\ref{S6}) and (\ref{S7}) shows that $\eta\mapsto (g^{-1}|_\beta)\cdot \eta$ is a bijection from the set $\{\eta\in \D(f)\Lambda^{lN}: (g|_\alpha f)\cdot \eta=(g|_\alpha hg^{-1}|_\beta)\cdot \eta \text{ and }(g|_\alpha f)|_\eta=(g|_\alpha hg^{-1}|_\beta)|_\eta\}$ onto 
the set $\{\zeta\in \D(g|_{g^{-1}\cdot \beta})\Lambda^{lN}:(fg|_{g^{-1}\cdot \beta})\cdot \zeta=h\cdot \zeta\text{ and } (fg|_{g^{-1}\cdot \beta})|_\zeta=h|_\zeta\}$. So we can rewrite the sum:
\begin{align*}
c_{d(\nu),d(\mu(g^{-1}\cdot \beta)),\lambda(g\cdot\alpha)}^l\big(g|_\alpha f,v\big)&=\sum_{\substack{\zeta\in \D(g|_{g^{-1}\cdot \beta})\Lambda^{lN}, (fg|_{g^{-1}\cdot \beta})\cdot \zeta=h\cdot \zeta\\(fg|_{g^{-1}\cdot \beta})|_\zeta=h|_\zeta}}M(Z(\zeta))\\
&=\rho(\Lambda)^{lN}\sum_{v\in \Lambda^0}\Big|F_{d(\lambda(g\cdot \alpha)),d(\xi), \nu}^l\big(fg|_{g^{-1}},v\big)\Big|x_\Lambda(s((\zeta))&\text{ by } \eqref{h1h2}\\
&=c_{d(\lambda(g\cdot \alpha)),d(\xi), \nu}^l\big(fg|_{g^{-1}},v\big).
\end{align*}
By letting $l\to \infty$, we get
\[c_{d(\nu),d(\mu(g^{-1}\cdot \beta)),\lambda(g\cdot\alpha)}\big(g|_\alpha f,v\big)=c_{d(\lambda(g\cdot \alpha)),d(\xi), \nu}\big(fg|_{g^{-1}},v\big),\] 
and therefore 
\begin{align*}
\rho(\Lambda)^{-(d(\lambda)-d(\mu))}&M\big(Z\big(\nu, f (g|_{g^{-1}\cdot \beta}), \mu(g^{-1}\cdot \beta)\big)\big)\\
&= \rho(\Lambda)^{-(d(\lambda)-d(\mu))}\rho(\Lambda)^{-d(\mu)-d(\beta)}c_{d(\lambda(g\cdot \alpha)),d(\xi), \nu}\big(fg|_{g^{-1}},v\big)\\
&=\rho(\Lambda)^{-d(\xi)}c_{d(\nu),d(\mu(g^{-1}\cdot \beta)),\lambda(g\cdot\alpha)}\big(g|_\alpha f,v\big)\\
&=M\big(Z\big(\lambda(g\cdot \alpha),g|_\alpha f,\xi\big)\big),
\end{align*}
finishing the proof of \eqref{measure:KMS1-cond} as we required.
\end{proof}

\section{Main results for a preferred dynamics on $\OO(G,\Lambda)$}\label{main-cuntz}

We now state our main results about the KMS structure of $\OO(G,\Lambda)$ under the preferred dynamics.

\begin{thm}\label{thm3}
Let $\Lambda$ be a finite strongly connected $k$-graph, $(G,\Lambda)$ be a finite-state self-similar groupoid action, and $\sigma$ be the preferred dynamics \eqref{pref-dyn} on $\OO(G,\Lambda)$. Let $V$ be the unitary representation of Proposition~\ref{prop:rep-per}
and let $\pi_V:C^*(\Per(G,\Lambda))\to \OO(G,\Lambda)$ be the induced homomorphism. 
\begin{enumerate}
\item \label{thm3-1}If there is a $g\in G$ such that $x_\Lambda(\D(g))\neq x_\Lambda(\c(g))$, then $\OO(G,\Lambda)$ has no KMS$_1$-state.
\item \label{thm3-2}If $x_\Lambda$ is $G$-invariant, then  the map $\phi\mapsto \phi\circ \pi_V$ is an affine isomorphism from the simplex of KMS$_1$-states of $(\OO(G,\Lambda),\sigma)$ onto the simplex of states of $C^*(\Per(G,\Lambda))$. 
\end{enumerate}
\end{thm}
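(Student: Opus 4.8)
The plan is to handle the two parts separately: part~\eqref{thm3-1} is a short obstruction argument, while part~\eqref{thm3-2} is the main event, and I would reduce it almost entirely to results already in hand. For part~\eqref{thm3-1} I would argue by contradiction. If $\phi$ were a KMS$_1$-state of $(\OO(G,\Lambda),\sigma)$, then by the discussion culminating in \eqref{peron-measure-prop1} the measure $M_\phi$ it determines must coincide with the unique Perron--Frobenius measure $M$, forcing $\phi(t_v)=x_\Lambda(v)$ for every $v\in\Lambda^0$. Since $u_v=t_v$, Lemma~\ref{lemma-kms-g-invariance}\eqref{kms-action-inv-1} would then give
\[
x_\Lambda(\D(g))=\phi(u_{\D(g)})=\phi(u_{\c(g)})=x_\Lambda(\c(g))\qquad\text{for all }g\in G,
\]
contradicting the hypothesis. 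Hence no KMS$_1$-state can exist.

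For part~\eqref{thm3-2}, since $\pi_V$ is a unital $*$-homomorphism and $\phi\mapsto\phi\circ\pi_V$ is plainly affine and weak$^*$-continuous, the real content is bijectivity. Injectivity would follow at once from Theorem~\ref{thm:formula for kms1}: that formula expresses each $\phi(t_\lambda u_g t_\mu^*)$ as an explicit scalar (depending only on $\Lambda,g,\lambda,\mu$, never on $\phi$) multiplied by $\phi\big(V_{d(\lambda)-d(\mu)}\big)$. Thus two KMS$_1$-states with $\phi_1\circ\pi_V=\phi_2\circ\pi_V$ satisfy $\phi_1(V_a)=\phi_2(V_a)$ for all $a\in\Per(G,\Lambda)$, hence agree on the spanning set of Proposition~\ref{Toeplitz_spanning}\eqref{Toeplitz_spanning-1} and are therefore equal.

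The crux is surjectivity, where my plan is a gauge-twisting argument. First I would fix a base point: Proposition~\ref{prop:KMS1} (applicable since $x_\Lambda$ is $G$-invariant) produces a KMS$_1$-state $\phi_M$ from $M$, and combining \eqref{KMS1-formula}, \eqref{kms1-comp} and the first case of \eqref{measure-triple} gives, via the bijection $\theta_{p,q}\colon\Lambda^p\to\Lambda^q$ of Lemma~\ref{lemma:htehta},
\[
\phi_M(V_{p-q})=\sum_{\lambda\in\Lambda^p}M\big(Z(\lambda,h_{p,q}(\lambda),\theta_{p,q}(\lambda))\big)=\rho(\Lambda)^{-q}\sum_{\mu\in\Lambda^q}x_\Lambda(s(\mu))=1,
\]
the last equality using $B^q x_\Lambda=\rho(\Lambda)^q x_\Lambda$ and $\|x_\Lambda\|_1=1$. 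So $\phi_M\circ\pi_V$ is the trivial character $i_a\mapsto1$. Next I would use the full gauge action $\gamma\colon\T^k\to\Aut\OO(G,\Lambda)$ of \eqref{action on T}, which commutes with $\sigma$; consequently every $\phi_M\circ\gamma_z$ is again a KMS$_1$-state, and since $\gamma_z(V_a)=z^aV_a$ we obtain $(\phi_M\circ\gamma_z)(V_a)=z^a$, so $(\phi_M\circ\gamma_z)\circ\pi_V$ realises the character $\chi_z\colon a\mapsto z^a$. Because $\Per(G,\Lambda)\le\Z^k$ and $\T$ is divisible, the restriction map $\widehat{\Z^k}=\T^k\to\widehat{\Per(G,\Lambda)}$, $z\mapsto\chi_z$, is surjective, so every character of $\Per(G,\Lambda)$ lies in the image. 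Identifying the commutative $C^*(\Per(G,\Lambda))$ with $C(\widehat{\Per(G,\Lambda)})$, these characters are precisely the extreme points of its state space; since the image of $\phi\mapsto\phi\circ\pi_V$ is convex and compact (a continuous image of the weak$^*$-compact simplex of KMS$_1$-states) and contains all extreme points, it is the whole state space.

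I expect the surjectivity step to be the main obstacle, though it is mostly organisational once the heavy machinery is granted: the genuinely delicate inputs are Theorem~\ref{thm:formula for kms1} and the measure computation \eqref{measure-triple}, both already established. Within the present argument the two points needing care are the identity $\phi_M(V_a)=1$ (which hinges on the range-preservation and bijectivity of $\theta_{p,q}$ from Lemma~\ref{lemma:htehta} together with unimodularity of $x_\Lambda$) and the standard-but-essential fact that a compact convex subset of the state space of a commutative $C^*$-algebra containing every character must be the entire state space.
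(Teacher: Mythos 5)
Your proposal is correct and follows essentially the same route as the paper: part (1) via Lemma~\ref{lemma-kms-g-invariance}\eqref{kms-action-inv-1} together with \eqref{peron-measure-prop1}, injectivity in part (2) from Theorem~\ref{thm:formula for kms1}, and surjectivity by gauge-twisting the state of Proposition~\ref{prop:KMS1} to realise every character of $\Per(G,\Lambda)$ and then invoking Krein--Milman. The only difference is expository: you carry out explicitly the computations (e.g. $\phi_M(V_{p-q})=1$ and the lifting of characters of $\Per(G,\Lambda)$ to $\T^k$) that the paper delegates to the argument of \cite[Theorem~7.1]{aHLRS1}, and your computations are correct.
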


The proof follows the argument of the proof of \cite[Theorem~7.1]{aHLRS1}.

\begin{proof}[Proof of Theorem~\ref{thm3}]
For \eqref{thm3-1}, note that if  there is a  KMS$_1$-state $\phi$ of $\OO(G,\Lambda)$, then Lemma~\ref{lemma-kms-g-invariance}\eqref{kms-action-inv-1} says that $\phi(\D(g))= \phi(\c(g))$. Now \eqref{peron-measure-prop1} forces $x_\Lambda(\D(g))=x_\Lambda(\c(g))$ which is a contradiction.

For \eqref{thm3-2}, note that the map $\Omega:\phi\mapsto \phi\circ \pi_V$ is continuous and affine, and it follows from Theorem~\ref{thm:formula for kms1} that it is injective. To see the surjectivity, we first claim each character $\chi$ of $\Per(G,\Lambda)$ is in the range of $\Omega$. Let $z\in \T^k$ such that $z^p=\chi(p)$ for all $p\in\Per(G,\Lambda)$. Let $\phi$ be the KMS$_1$-state of the Proposition~\ref{prop:KMS1} and write
 $\gamma_z$ for the gauge automorphism of $\OO(G,\Lambda)$. An easy computation shows that $\phi\circ \gamma_z$ is a KMS$_1$-state of $\OO(G,\Lambda)$, and we have $\phi\circ \gamma_z(t_\lambda u_g t_\mu^*)=\rho(\Lambda)^{d(\lambda)-d(\mu)}M(Z(\lambda,g,\mu))$. Now the formula \eqref{measure-triple} for the Parry measure $M$ implies that 
\begin{align*}
\phi_z(t_\lambda u_g t_\mu^*)
&\!=\! \begin{cases}\rho(\Lambda)^{-d(\mu)}z^{d(\lambda)-d(\mu)}x_\Lambda(s(\mu))& \text {if } d(\lambda)-d(\mu)\in \Per(G,\Lambda), \mu=\theta_{d(\lambda),d(\mu)}(\lambda )\\
\quad\quad\quad \quad\quad\quad&\text{ and }g=h_{d(\lambda),d(\mu)}(\lambda )\\
\rho(\Lambda)^{-d(\mu)} z^{d(\lambda)-d(\mu)}c_{d(\lambda),d(\mu),\lambda}(g)& \text {if } d(\lambda)-d(\mu)\in \Per(G,\Lambda), \mu=\theta_{d(\lambda),d(\mu)}(\lambda )\\
\quad\quad\quad \quad\quad\quad&\text{ and }g\neq h_{d(\lambda),d(\mu)}(\lambda )\\
0 &\text{otherwise.}\\
\end{cases}
\end{align*} 
 Writing $i_{\Per(G,\Lambda)}:\Per(G,\Lambda)\to C^*(\Per(G,\Lambda))$ for the universal representation of $\Per(G,\Lambda)$, a similar computation to that in the proof of \cite[Theorem~7.1]{aHLRS1} shows that 
\[\phi_z\circ \pi_V\big(i_{\Per(G,\Lambda)}(p-q)\big)=\chi(p-q) \text{  for all }p-q\in \Per(G,\Lambda).\]
Thus $\Omega(\phi_z)=\chi$, and the claim holds. We can now appeal to the Krein--Milman theorem and the argument in the proof of \cite[Theorem~7.1]{aHLRS1} to see that $\Omega$ is surjective. Hence $\Omega$ is an isomorphism.
\end{proof}

\begin{thm}\label{thm2}
Let $\Lambda$ be a finite strongly connected $k$-graph, $(G,\Lambda)$ be a finite-state self-similar groupoid action, and $x_\Lambda$ be $G$-invariant. Let  $\sigma$ be the preferred dynamics \eqref{pref-dyn} on $\OO(G,\Lambda)$. For each $g\in G\setminus{\Lambda^0}$, let $c_g$ be as in Corollary~\ref{cor:cg}. 
Then there is a KMS$_1$-state $\Phi$ on $(\OO(G,\Lambda),\sigma)$ such that 
\begin{align}\label{kms1cg}
\Phi (t_\lambda u_g t_\mu^*)
&= \begin{cases}\rho(\Lambda)^{-d(\mu)}x_\Lambda(s(\mu))& \text {if } \lambda=\mu
\text{ and }g\in \Lambda^0\\
\rho(\Lambda)^{-d(\mu)} c_{g}&\text {if } \lambda=\mu
\text{ and } g \notin \Lambda^0\\
0 &\text{otherwise.}\\
\end{cases}
\end{align}
Moreover, $\Phi$ is the only KMS$_1$-state for $(\OO(G,\Lambda),\sigma)$ if and only if 
 $\Lambda$ is $G$-aperiodic.
\end{thm}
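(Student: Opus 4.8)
The plan is to obtain $\Phi$ as the KMS$_1$-state corresponding, under the isomorphism of Theorem~\ref{thm3}\eqref{thm3-2}, to the canonical tracial state on $C^*(\Per(G,\Lambda))$, and then to read the formula \eqref{kms1cg} off Theorem~\ref{thm:formula for kms1}. Since $x_\Lambda$ is $G$-invariant, Theorem~\ref{thm3}\eqref{thm3-2} supplies an affine isomorphism $\Omega\colon\phi\mapsto\phi\circ\pi_V$ from the simplex of KMS$_1$-states of $(\OO(G,\Lambda),\sigma)$ onto the state space of $C^*(\Per(G,\Lambda))$. Writing $i_{\Per(G,\Lambda)}$ for the universal representation, I would let $\tau_0$ be the canonical trace determined by $\tau_0\big(i_{\Per(G,\Lambda)}(p-q)\big)=\delta_{p-q,0}$, which is a tracial state, and define $\Phi:=\Omega^{-1}(\tau_0)$. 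Then $\Phi$ is a KMS$_1$-state and $\Phi\big(V_{p-q}\big)=\tau_0\big(i_{\Per(G,\Lambda)}(p-q)\big)=\delta_{p-q,0}$ for every $p-q\in\Per(G,\Lambda)$.

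To verify \eqref{kms1cg}, I would substitute $\Phi(V_{p-q})=\delta_{p-q,0}$ into the formula of Theorem~\ref{thm:formula for kms1} and examine each case for a spanning element $t_\lambda u_g t_\mu^*$ with $s(\lambda)=g\cdot s(\mu)$. If $d(\lambda)-d(\mu)\notin\Per(G,\Lambda)$, or if $d(\lambda)-d(\mu)\in\Per(G,\Lambda)$ is nonzero, then the factor $\Phi\big(V_{d(\lambda)-d(\mu)}\big)$ vanishes and $\Phi(t_\lambda u_g t_\mu^*)=0$. When $d(\lambda)=d(\mu)$, Lemma~\ref{lemma:htehta}\eqref{lemma:htehta-3} gives $\theta_{d(\lambda),d(\mu)}(\lambda)=\lambda$ and $h_{d(\lambda),d(\mu)}(\lambda)=\id_{s(\lambda)}$, so the hypothesis $\mu=\theta_{d(\lambda),d(\mu)}(\lambda)$ collapses to $\mu=\lambda$; hence $\mu\neq\lambda$ again lands in the ``otherwise'' case and gives $0$, while $\mu=\lambda$ yields $\rho(\Lambda)^{-d(\mu)}x_\Lambda(s(\mu))$ when $g=\id_{s(\lambda)}\in\Lambda^0$ and $\rho(\Lambda)^{-d(\mu)}c_{0,0,\lambda}(g)$ when $g\notin\Lambda^0$. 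Finally, Corollary~\ref{cor:cg}, obtained from Lemma~\ref{lemma:cg} at $p=q=0$, identifies $c_{0,0,\lambda}(g)$ with $c_g$, which matches \eqref{kms1cg} exactly.

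For the uniqueness statement I would argue entirely through the bijection $\Omega$. Being an affine isomorphism of simplices, it shows that $\Phi$ is the unique KMS$_1$-state precisely when $C^*(\Per(G,\Lambda))$ has a unique state, that is, when $C^*(\Per(G,\Lambda))\cong\C$. Since $\Per(G,\Lambda)$ is a subgroup of $\Z^k$ we have $C^*(\Per(G,\Lambda))\cong C\big(\widehat{\Per(G,\Lambda)}\big)$, which is one-dimensional exactly when $\Per(G,\Lambda)=\{0\}$ (otherwise $\Per(G,\Lambda)\cong\Z^m$ with $m\geq1$, and $C(\T^m)$ carries infinitely many states). By Remark~\ref{rem:aperiodicity}, $\Per(G,\Lambda)=\{0\}$ is equivalent to $\Lambda$ being $G$-aperiodic, giving the desired equivalence.

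The conceptual content is already carried by Theorems~\ref{thm3}, \ref{thm:formula for kms1}, and \ref{prop:peron-measure}, so I do not anticipate a genuine obstacle; the only points demanding care are the bookkeeping showing that the off-diagonal terms with $\mu\neq\lambda$ are annihilated by the combination $\Phi\big(V_{d(\lambda)-d(\mu)}\big)=0$ and $\theta_{p,p}=\id$, together with the clean identification $c_{0,0,\lambda}(g)=c_g$ needed to make the explicit formula coincide with \eqref{kms1cg}.
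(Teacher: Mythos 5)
Your proposal is correct and follows essentially the same route as the paper: the paper also defines $\Phi:=\Omega^{-1}(\omega)$ where $\omega$ is the trivial state $\omega(i_{\Per(G,\Lambda)}(p))=\delta_{p,0}$, reads \eqref{kms1cg} off Theorem~\ref{thm:formula for kms1} (your case analysis via Lemma~\ref{lemma:htehta}\eqref{lemma:htehta-3} and the identification $c_{d(\lambda),d(\lambda),\lambda}(g)=c_g$ just makes explicit what the paper leaves implicit), and deduces the uniqueness equivalence from the affine isomorphism of Theorem~\ref{thm3} together with $\Per(G,\Lambda)=\{0\}$ iff $\Lambda$ is $G$-aperiodic.
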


\begin{proof}
Let $\omega$ be the trivial state on $C^*\big(\Per(G,\Lambda)\big)$ given by $\omega(i_{\Per(G,\Lambda)}(p))=\delta_{p,0}$ for $p\in \Per(G,\Lambda)$. Then by Theorem~\ref{thm3}, $\Phi:=\Omega^{-1}(\omega)$ is a KMS$_1$-state for $(\OO(G,\Lambda),\sigma)$. Since $\Phi\circ \pi_V =\omega$,
an application of Theorem~\ref{thm:formula for kms1} shows that $\Phi$ satisfies \eqref{kms1cg}. The second assertion follows from Theorem~\ref{thm3} and the fact that $G$-aperiodic graphs have trivial $G$-aperiodic group.
\end{proof}

\section{Computing KMS$_1$ states of $\OO(G,\Lambda)$ in examples}\label{examples}
Suppose that  $\Lambda$ is a finite strongly connected $k$-graph,  $(G,\Lambda)$ is a finite-state self-similar groupoid action, $x_\Lambda$ is $G$-invariant, and $\Lambda$ is $G$-aperiodic.
Theorem~\ref{thm2} implies that $(\OO(G,\Lambda),\sigma)$ has a unique KMS$_1$-state $\Phi$ such that
\[\Phi(t_\lambda u_gt_\mu^*)=\delta_{\lambda,\mu}\rho(\Lambda)^{-d(\lambda)}\Phi(u_g),\quad \text { for } s(\lambda)=g\cdot s(\mu).\]
 Therefore the unique KMS$_1$-state $\Phi$ is determined by the values  $\{\Phi(u_g):g\in G\}$.
 
In this section, we show that both Examples~\ref{ex: single vertex} and \ref{ex: bascilica2} satisfy the conditions of Theorem~\ref{thm2}. We then compute the values of the unique KMS$_1$-state for these examples at partial unitaries corresponding to generators of $G_A$.  We first need a lemma.

\begin{lemma}\label{rho-per}
Let $\Lambda$ be a strongly connected   finite $k$-graph and $(G,\Lambda)$ be a  self-similar groupoid action. Suppose that for each  $p-q\in \Per(G,\Lambda)$ and $\lambda\in \Lambda^p$, we have  $\c(h_{p,q}(\mu))=\D(h_{p,q}(\mu))$. Then 
\begin{equation}\label{equ-cor-per-vec}
\rho(\Lambda)^p=\rho(\Lambda)^q\quad \text{ for all  }p-q\in \Per(G,\Lambda)\setminus\{0\},
\end{equation}
and   $\Per(G,\Lambda)$ is a subgroup of $\{n\in \Z^k:\rho(\Lambda)^n=1\}$.
\end{lemma}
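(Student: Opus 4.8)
The plan is to translate the standing hypothesis into an equality of source vertices, and then read off the conclusion directly from the explicit formula for the Perron--Frobenius measure $M$. Fix a nonzero element $p-q\in\Per(G,\Lambda)$. Since $\Lambda$ is strongly connected it has no sources, so $\Lambda^p\neq\emptyset$ and I may choose $\mu\in\Lambda^p$. By Lemma~\ref{lemma:htehta}\eqref{lemma:htehta-1} the functions satisfy $\c\big(h_{p,q}(\mu)\big)=s(\mu)$ and $\D\big(h_{p,q}(\mu)\big)=s\big(\theta_{p,q}(\mu)\big)$, so the assumption $\c\big(h_{p,q}(\mu)\big)=\D\big(h_{p,q}(\mu)\big)$ is precisely the statement that $s(\mu)=s\big(\theta_{p,q}(\mu)\big)$.

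Next I would record the set equality $Z(\mu)=Z\big(\theta_{p,q}(\mu)\big)$ inside $\Lambda^\infty$. This is exactly the computation carried out in the second case of the proof of Theorem~\ref{prop:peron-measure}: rewriting \eqref{pershift} as $\mu\big(h_{p,q}(\mu)\cdot x\big)=\theta_{p,q}(\mu)x$ and using that $h_{p,q}(\mu)$ acts bijectively on the relevant infinite-path space yields both inclusions. Consequently $M(Z(\mu))=M\big(Z(\theta_{p,q}(\mu))\big)$, and applying the measure formula \eqref{peron-measure-prop} to each side (noting $d(\mu)=p$ and $d(\theta_{p,q}(\mu))=q$) gives
\[
\rho(\Lambda)^{p}\,x_\Lambda(s(\mu))=\rho(\Lambda)^{q}\,x_\Lambda\big(s(\theta_{p,q}(\mu))\big).
\]
Because $s(\mu)=s\big(\theta_{p,q}(\mu)\big)$ by the first step, and because the unimodular Perron--Frobenius eigenvector has strictly positive entries so that $x_\Lambda(s(\mu))>0$, I can cancel this common positive factor to conclude $\rho(\Lambda)^{p}=\rho(\Lambda)^{q}$. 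This is \eqref{equ-cor-per-vec}.

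For the subgroup statement, Proposition~\ref{prop:per}\eqref{prop:per3} already supplies that $\Per(G,\Lambda)$ is a subgroup of $\Z^k$, and $\{n\in\Z^k:\rho(\Lambda)^n=1\}$ is the kernel of the homomorphism $n\mapsto\prod_{i=1}^k\rho(B_i)^{n_i}$ into the positive reals, hence itself a subgroup of $\Z^k$; so it remains only to check the inclusion of sets. For $n=0$ this is trivial, and for $n=p-q\neq0$ the identity just proved gives $\rho(\Lambda)^{n}=\rho(\Lambda)^{p}\big(\rho(\Lambda)^{q}\big)^{-1}=1$. I would add a sentence noting that this is independent of the chosen decomposition $n=p-q$, since \eqref{equ-cor-per-vec} applies to every representation with $p,q\in\N^k$, the requisite functions $\theta_{p,q},h_{p,q}$ being available by Proposition~\ref{prop:per}\eqref{prop:per2} and Lemma~\ref{lemma:htehta}. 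I do not anticipate a genuine obstacle here: the argument is essentially an assembly of earlier facts, and the only points demanding care are the existence of $\theta_{p,q},h_{p,q}$ for an arbitrary decomposition of $n$ and the strict positivity of $x_\Lambda$ that licenses the cancellation.
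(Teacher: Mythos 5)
Your proof is correct, and it takes a genuinely different route from the paper's. The paper makes the same first observation as you do---that the hypothesis $\c(h_{p,q}(\mu))=\D(h_{p,q}(\mu))$ together with Lemma~\ref{lemma:htehta}\eqref{lemma:htehta-1} says $s(\mu)=s\big(\theta_{p,q}(\mu)\big)$---but packages it as the statement that $\theta_{p,q}\colon\Lambda^p\to\Lambda^q$ is a range- and source-preserving bijection, and then invokes the counting computation (7.1) of \cite[Corollary~7.2]{aHLRS1}: such a bijection forces $|v\Lambda^p w|=|v\Lambda^q w|$ for all $v,w\in\Lambda^0$, hence $B^p=B^q$ as matrices, and evaluating both sides on the strictly positive eigenvector $x_\Lambda$ gives \eqref{equ-cor-per-vec}. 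You instead work on the infinite path space: the cylinder-set identity $Z(\mu)=Z\big(\theta_{p,q}(\mu)\big)$ (which holds with no hypothesis on $h_{p,q}$ and is indeed established inside the second case of the proof of Theorem~\ref{prop:peron-measure}), the measure formula \eqref{peron-measure-prop}, and then the source equality plus strict positivity of $x_\Lambda$ to cancel. Neither route is more self-contained than the other, since both lean on results of \cite{aHLRS1} (Corollary~7.2 for the paper, Proposition~8.2---the existence of $M$---for you); but your version needs only a \emph{single} path $\mu\in\Lambda^p$ and its source condition, whereas the paper's needs the global bijection to identify the matrices, and in exchange obtains the stronger entrywise equality $B^p=B^q$ and avoids measure theory. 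Two minor remarks: \eqref{peron-measure-prop} as printed carries a sign typo (the exponent should be $-d(\lambda)$, as used in \eqref{measure-triple} and in \cite{aHLRS1}), which you copied verbatim---harmless, since the cancellation is insensitive to the sign; and your closing sentence about independence of the decomposition $n=p-q$, justified via Proposition~\ref{prop:per}\eqref{prop:per2} and Lemma~\ref{lemma:htehta}, is exactly the point needed to make the subgroup inclusion well posed, and the paper's proof leaves it implicit.
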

\begin{proof}
Let $p-q\in \Per(G,\Lambda)\setminus\{0\}$. Since $\c(h_{p,q}(\mu))=\D(h_{p,q}(\mu))$,  Lemma~\ref{lemma:htehta}\eqref{lemma:htehta-1}, implies that the function $\theta_{p,q}:\Lambda^p\to \Lambda^q$ is a range and source preserving bijection. Now we are in the situation of \cite[Corollary~7.2]{aHLRS1} and \eqref{equ-cor-per-vec} follows from the computation (7.1) there.
\end{proof}
\begin{prop}
Let $(G_A,\Lambda)$ be the self-similar action of Example~\ref{ex: single vertex}. Then $G_A$ is a finite groupoid with elements $a:=g_a$ and $v:=g_v$. Moreover, $\Per(G_A,\Lambda)$ is trivial, and $\OO(G,\Lambda)$ with the preferred dynamics \eqref{pref-dyn} has a unique KMS$_1$-state $\Phi$ satisfying
\begin{equation}\label{KMS1stateExample1}
\Phi(u_g)=\begin{cases}
1 & \text{ if } g=v \\
1/3 & \text{ if } g=a.
\end{cases}
\end{equation}
\end{prop}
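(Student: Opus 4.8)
The plan is to prove three things in sequence: that $G_A$ is the finite groupoid $\{a,v\}$, that $\Per(G_A,\Lambda)$ is trivial, and then to compute the values $\Phi(u_g)$ using Theorem~\ref{thm2}. For the first claim, I would use the defining relations of the automaton in Example~\ref{ex: single vertex}. Since there is a single vertex $v$, the groupoid $G_A$ is in fact a group, and I need only check that $a$ has finite order. From the restriction formulas $a|_{e_1}=a|_{e_2}=a|_{e_3}=v$ and $a|_{f_1}=a|_{f_2}=a$, together with the action $a\cdot e_2 = e_3$, $a\cdot e_3 = e_2$, $a\cdot e_1 = e_1$, $a\cdot f_i = f_i$, I would compute $a^2$. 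Using \eqref{S6} to compute restrictions of $a^2$, I expect to find $a^2\cdot e = e$ and $a^2|_e = v$ for every edge $e$, so that $a^2$ acts as the identity on all of $\Lambda$; since the action is faithful this gives $a^2 = v$. Hence $G_A = \{v, a\}$ with $a^2 = v$, a finite (cyclic of order two) group.

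Next I would establish that $\Per(G_A,\Lambda)$ is trivial, i.e.\ equal to $\{0\}$ by Remark~\ref{rem:aperiodicity}, which is equivalent to showing $\Lambda$ is $G_A$-aperiodic. The cleanest route is to invoke Lemma~\ref{rho-per}: since $\Lambda$ has a single vertex, $\c(h)=\D(h)=v$ automatically for every $h\in G_A$, so the hypothesis of Lemma~\ref{rho-per} holds and $\Per(G_A,\Lambda)$ is a subgroup of $\{n\in\Z^2:\rho(\Lambda)^n=1\}$. Here $\Lambda$ is a $2$-graph with one vertex, $|\Lambda^{e_1}|=3$ (the $e_i$ edges) and $|\Lambda^{e_2}|=2$ (the $f_i$ edges), so $\rho(B_1)=3$ and $\rho(B_2)=2$. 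Then $\rho(\Lambda)^n = 3^{n_1}2^{n_2}=1$ forces $n_1=n_2=0$ since $2$ and $3$ are multiplicatively independent, whence $\Per(G_A,\Lambda)=\{0\}$. (Alternatively one can verify $G$-aperiodicity directly from the shift dynamics, but the Lemma~\ref{rho-per} argument is shorter.) This also confirms we are in the $G$-aperiodic case of Theorem~\ref{thm2}.

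Having triviality of $\Per(G_A,\Lambda)$, I would check the remaining hypotheses of Theorem~\ref{thm2}: $(G_A,\Lambda)$ is finite-state (immediate, since $G_A$ itself is finite), and $x_\Lambda$ is $G_A$-invariant (immediate, since there is only one vertex so $x_\Lambda$ is the scalar $1$ on $\Lambda^0=\{v\}$, and $\D(g)=\c(g)=v$ for all $g$). Theorem~\ref{thm2} then yields a unique KMS$_1$-state $\Phi$ determined by $\Phi(u_g)=\rho(\Lambda)^{-d(\mu)}c_g$-type formulas, and in particular $\Phi(u_v)=x_\Lambda(v)=1$ and $\Phi(u_a)=c_a$, where $c_a$ is the limit from Corollary~\ref{cor:cg}.

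The main obstacle, and the heart of the computation, is evaluating $c_a$. By definition $c_a=\lim_{l}\rho(\Lambda)^{-lN}\sum_v |F_a^l(v)|x_\Lambda(v)$ with $N=(1,1)$, $\rho(\Lambda)^N = 6$, and $x_\Lambda(v)=1$, so I must count $F_a^l(v)=\{\mu\in\D(a)\Lambda^{lN}:a\cdot\mu=\mu,\ a|_\mu = v\}$. Since $a^2=v$ with $a|_e=v$ for the $e$-edges and $a|_{f_i}=a$, the condition $a|_\mu=v$ is controlled by the parity of the number of $f$-edges in a traversal of $\mu$, and the condition $a\cdot\mu=\mu$ forces each $e_i$-letter used to be $e_1$ (since $a$ swaps $e_2\leftrightarrow e_3$ and fixes $e_1$) while imposing no constraint on the $f$-letters when $a$ acts trivially. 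I would set up the count of fixed paths of degree $(l,l)$ under these rules; I expect the fraction of fixed paths among all $6^l$ paths of degree $(l,l)$ to stabilise, and the $G_A$-aperiodicity together with the structure $a\cdot e_1 = e_1$ to give exactly one admissible $e$-choice ($e_1$) out of three $e$-edges at each of the $l$ first-coordinate steps, yielding the factor $1/3$ in the limit. Carrying out this counting carefully and taking the limit is the one genuinely computational step; I anticipate it produces $c_a = 1/3$, giving \eqref{KMS1stateExample1}.
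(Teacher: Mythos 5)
Your overall architecture is the same as the paper's: prove $a^2=v$ using faithfulness, deduce triviality of $\Per(G_A,\Lambda)$ from Lemma~\ref{rho-per} plus the multiplicative independence of $2$ and $3$, note that the remaining hypotheses of Theorem~\ref{thm2} (finite-state, $G$-invariance of $x_\Lambda$) are automatic for a single-vertex graph, and reduce everything to the number $c_a$ of Corollary~\ref{cor:cg}. But two of your computational claims are false, and the second one is a genuine gap. The small one first: it is not true that $a^2|_e=v$ for every edge $e$. By (\ref{S6}), $a^2|_{f_i}=a|_{a\cdot f_i}\,a|_{f_i}=aa=a^2$, so on the $f$-edges the restriction of $a^2$ is $a^2$ itself, and your argument is circular as written. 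The repair is a strong induction on path length: a path beginning with an $e$-edge is fixed with restriction $v$, while for a path beginning with an $f$-edge one has $a^2\cdot(f_i\mu')=f_i(a^2\cdot\mu')$ and the inductive hypothesis applies to $\mu'$.

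The serious error is in the count of $F_a^l(v)$. The condition $a\cdot\mu=\mu$ does \emph{not} force "each $e_i$-letter used to be $e_1$". Since $a|_{e_i}=v$ for every $i$, the automaton state drops to $v$ at the first $e$-coloured letter of any traversal and stays there, so every subsequent letter of either colour is unconstrained; only the \emph{first} $e$-letter must be $e_1$. (For the same reason the "parity of the number of $f$-edges" remark is wrong: $a|_{f_1}=a|_{f_2}=a$, so $f$-letters never change the state at all, and $a|_\mu=v$ holds automatically for every $\mu$ of degree $(l,l)$ with $l\geq 1$; $G_A$-aperiodicity plays no role in the count either.) Your rule of "exactly one admissible $e$-choice out of three at each of the $l$ first-coordinate steps" gives $|F_a^l(v)|=2^l$, hence $c_a^l=(1/3)^l\to 0$, contradicting the limit $1/3$ that you then assert. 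The correct count is $|F_a^l(v)|=1\cdot 3^{l-1}\cdot 2^l=2\cdot 6^{l-1}$ (one constrained $e$-letter, all other letters free), so that $c_a^l=6^{-l}\cdot 2\cdot 6^{l-1}=1/3$ for every $l$. This is exactly the paper's computation, phrased there at the level of squares: $a$ fixes, with restriction $v$, precisely the two squares $e_1f_1$ and $e_1f_2$, so $F_a^l(v)$ consists of the paths of degree $(l,l)$ whose initial $(1,1)$-corner is one of these two squares.
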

\begin{proof}
Easy computations show that $a^2$ acts trivially on edges and squares. Faithfulness of the action then implies that $a^2=v$, which is the neutral element, so $G_A \cong \Z/2\Z$. 
Since we only have one vertex, $\c(h_{p,q}(\mu))=\D(h_{p,q}(\mu))$ for all $p-q\in \Per(G,\Lambda)$ and $\lambda\in \Lambda^p$, so Lemma~\ref{rho-per} implies that  $\Per(G,\Lambda)\subseteq \{n\in \Z^2:\rho(\Lambda)^n=1\}$. On the other hand, since $\rho(\Lambda)=(2,3)$, the only $n\in \Z^2$ that satisfies $\rho(\Lambda)^n=1$ is $(0,0)$. Therefore $\Per(G_A,\Lambda)$ is trivial. The vector $x_\Lambda$ is automatically $G$-invariant. 
The computations proving \eqref{item: square_preserving_1} in Example~\ref{ex: single vertex} show that $a$ acts trivially on the squares $e_1 f_1\sim f_1 e_1$ and $e_1 f_2\sim f_2 e_1$ with restriction $v$, and non-trivially on all other squares.  From this it follows that $F_{a}^l(v)$ consists of all paths in the cylinder sets $Z(e_1 f_1) \cup Z(e_1f_2)$, so we have $|F_{a}^l(v)|=2\cdot 6^{l-1}$. So we compute
\[
c_{g}^l=(3 \cdot 2)^{-l}(2\cdot 6^{l-1})=1/3.
\]
Thus, using \eqref{kms1cg}, the KMS$_{1}$-state satisfies \eqref{KMS1stateExample1}.
\end{proof}

\begin{prop}\label{ex2}
Let $(G_A,\Lambda)$ be the self-similar action of Example~\ref{ex: bascilica2} and and let $\gamma=\frac{1+\sqrt{5}}{2}$ be the Golden mean. Then  $\Per(G_A,\Lambda)$ is trivial and    $\OO(G,\Lambda)$ with the preferred dynamics \eqref{pref-dyn} has a unique KMS$_1$-state $\Phi$ such that
\begin{equation}\label{KMS1stateExample2}
\Phi(u_g)=\begin{cases}
\gamma^{-1} & \text{ if } g=v \\
\gamma^{-2} & \text{ if } g=w \\
0 & \text{ if } g=a_v \\
0 & \text{ if } g=a_w \\
\frac{\gamma^{-1}}{2} & \text{ if } g=b_v \\
\frac{\gamma^{-2}}{2} & \text{ if } g=b_w.
\end{cases}
\end{equation}
\end{prop}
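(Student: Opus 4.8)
The plan is to verify the hypotheses of Theorem~\ref{thm2} and then evaluate the constants $c_g$ of Corollary~\ref{cor:cg} on the generators. First I would read off the coordinate adjacency matrices of $\Lambda$ from the edge data of Example~\ref{ex: bascilica2}: ordering the vertices as $(v,w)$, and recalling that the red edges $e_1,e_2,e_3$ carry colour $c_1$ while the blue edges $f_1,\dots,f_4$ carry colour $c_2$, one gets
\[
B_1=\begin{pmatrix}1&1\\1&0\end{pmatrix},\qquad B_2=\begin{pmatrix}2&0\\0&2\end{pmatrix},
\]
so that $\rho(B_1)=\gamma$, $\rho(B_2)=2$, and the unimodular Perron--Frobenius eigenvector is $x_\Lambda=(\gamma^{-1},\gamma^{-2})$ (using $1+\gamma=\gamma^2$). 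This already pins down the unit values $\Phi(u_v)=x_\Lambda(v)=\gamma^{-1}$ and $\Phi(u_w)=x_\Lambda(w)=\gamma^{-2}$ via the first case of \eqref{kms1cg}. I would also note that $\Lambda$ is strongly connected, since $e_1\in w\Lambda v$ and $e_2\in v\Lambda w$ together with the loops give $u\Lambda u'\neq\emptyset$ for all $u,u'$.

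Next I would record the groupoid structure. Every generator $g_{a_v},g_{a_w},g_{b_v},g_{b_w}$ has $\D=\c$ equal to its subscript vertex, and since a composite is only defined when the middle vertices agree, \emph{every} element of $G_A$ has $\D=\c$; thus $G_A$ is the disjoint union of the isotropy groups at $v$ and at $w$, and $x_\Lambda$ is $G$-invariant for the trivial reason that $x_\Lambda(\D(g))=x_\Lambda(\c(g))$ always holds. Finite-stateness is immediate from the automaton: by \eqref{restriction-thm} the edge-restrictions of each $g_a$ lie in the finite set $\{g_b:b\in A\}$, and (\ref{S2}), (\ref{S6}) then bound the restrictions of a word of length $n$ by words of length at most $n$, so $\{g|_\lambda:\lambda\in\D(g)\Lambda\}$ is finite. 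For triviality of $\Per(G_A,\Lambda)$ I would invoke Lemma~\ref{rho-per}: since all groupoid elements (in particular all $h_{p,q}(\mu)$) satisfy $\c=\D$, we have $\Per(G_A,\Lambda)\subseteq\{n\in\Z^2:\gamma^{n_1}2^{n_2}=1\}$. A short number-theoretic step finishes this: if $n_1\neq 0$ then $\gamma^{n_1}=F_{n_1}\gamma+F_{n_1-1}$ is irrational and cannot equal the rational $2^{-n_2}$, forcing $n_1=0$ and then $n_2=0$. Hence $\Lambda$ is $G$-aperiodic by Remark~\ref{rem:aperiodicity}, so Theorem~\ref{thm2} yields a unique KMS$_1$-state $\Phi$, determined on generators by \eqref{kms1cg}; it remains only to compute $c_{a_v},c_{a_w},c_{b_v},c_{b_w}$.

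For the constants I would peel off the first $(1,1)$-subpath. Writing $N=(1,1)$ and factoring $\mu=\nu\mu'$ with $d(\nu)=N$, the identities (\ref{S1}) and (\ref{S2}) give $g\cdot\mu=\mu$ iff $g\cdot\nu=\nu$ and $g|_\nu\cdot\mu'=\mu'$, together with $g|_\mu=(g|_\nu)|_{\mu'}$; this yields $|F_g^l(v')|=\sum_{\nu}|F_{g|_\nu}^{l-1}(v')|$ over the fixed $(1,1)$-paths $\nu$, and after multiplying by $\rho(\Lambda)^{-lN}x_\Lambda(v')$ and summing,
\[
c_g^{\,l}=\frac{1}{2\gamma}\sum_{\substack{\nu\in\D(g)\Lambda^{N}\\ g\cdot\nu=\nu}}c_{g|_\nu}^{\,l-1}.
\]
Letting $l\to\infty$ (the sums are finite, so the limit passes through) and using $c_v^{\,l}\equiv x_\Lambda(v)$ for units gives the closed system $c_g=\tfrac1{2\gamma}\sum_{\nu:\,g\cdot\nu=\nu}c_{g|_\nu}$. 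I would then tabulate the four $(1,1)$-paths out of each vertex and apply the action and restriction maps of Example~\ref{ex: bascilica2}: both $a_v$ and $a_w$ swap the two blue loops available after each red step, so they fix \emph{no} $(1,1)$-path and $c_{a_v}=c_{a_w}=0$; the element $b_v$ fixes all four paths $e_3f_1,e_3f_2,e_2f_3,e_2f_4$ with restrictions $a_v,v,a_w,w$, giving $c_{b_v}=\tfrac1{2\gamma}(0+\gamma^{-1}+0+\gamma^{-2})=\tfrac1{2\gamma}=\tfrac{\gamma^{-1}}2$; and $b_w$ fixes $e_1f_1,e_1f_2$ with restrictions $a_v,v$, giving $c_{b_w}=\tfrac1{2\gamma}\gamma^{-1}=\tfrac{\gamma^{-2}}2$. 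Substituting these into \eqref{kms1cg} at $\lambda=\mu=\D(g)$ reproduces \eqref{KMS1stateExample2}.

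The genuinely delicate part is the bookkeeping in the last step rather than any hard analysis: one must correctly read each $g\cdot\nu$ and $g|_\nu$ off the table of Example~\ref{ex: bascilica2} and keep the red/blue factorisation straight, since a single slip (for instance confusing $b_v|_{f_1}=a_v$ with $b_v\cdot f_1=f_1$) would corrupt the final values. I expect the recursion itself to be routine once one observes that the resulting linear system is finite and triangular: because $c_{a_v}=c_{a_w}=0$, the equations for $c_{b_v}$ and $c_{b_w}$ involve only previously computed quantities and require no genuine fixed-point argument.
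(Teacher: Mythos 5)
Your proposal is correct, and it reaches \eqref{KMS1stateExample2} by the same overall skeleton as the paper---verify the hypotheses of Theorem~\ref{thm2} (same adjacency matrices and Perron--Frobenius eigenvector, $G$-invariance of $x_\Lambda$ because every element of $G_A$ has $\D=\c$, triviality of $\Per(G_A,\Lambda)$ via Lemma~\ref{rho-per} plus multiplicative independence of $\gamma$ and $2$), then evaluate the constants of Corollary~\ref{cor:cg}---but it differs in two genuine ways. First, your finite-state argument (by (\ref{S6}) and (\ref{S7}), restrictions of a word of length $n$ in the generators are words of length at most $n$ in the finite set $\{g_b^{\pm1}:b\in A\}$) is self-contained, whereas the paper defers to a contracting-action argument ``almost identical'' to one in \cite{lrrw}. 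Second, and more substantively, you compute $c_{a_v},c_{a_w},c_{b_v},c_{b_w}$ through the first-segment recursion $c^{\,l}_g=\tfrac1{2\gamma}\sum_{\nu\in\D(g)\Lambda^N,\,g\cdot\nu=\nu}c^{\,l-1}_{g|_\nu}$ and then solve the resulting triangular system, while the paper counts the sets $F^l_g(u)$ of Corollary~\ref{cor:cg} directly; as literally stated, the paper's intermediate claim that $|F^l_{b_v}(u)|=0$ for $l>1$ is inconsistent with that definition (for instance $e_3f_2e_3f_2\in F^2_{b_v}(v)$, since $b_v|_{e_3f_2}=v$ is a unit and units fix every path and restrict to units; emptiness for $l>1$ would also force $c_{b_v}=0$ by the monotone convergence in Lemma~\ref{lemma:cg}, contradicting the claimed value). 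What the paper is implicitly counting are paths whose restriction becomes a unit for the \emph{first} time at stage $l$, and with that reading its computation agrees with yours; your recursion, with $c_u=x_\Lambda(u)$ for units $u$, is exactly the clean formulation of that decomposition, since unwinding it reproduces the first-trivialisation sum. Both routes land on $c_{a_v}=c_{a_w}=0$, $c_{b_v}=\gamma^{-1}/2$, $c_{b_w}=\gamma^{-2}/2$, and your reading of the action/restriction table of Example~\ref{ex: bascilica2} is accurate throughout, so your argument in fact closes a small gap in the paper's own write-up rather than inheriting it.
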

\begin{proof}
We observe that the adjacency matrices are $\left(\begin{matrix} 1&1\\1&0\\\end{matrix}\right)$ and $\left(\begin{matrix} 2&0\\0&2\\\end{matrix}\right)$, and they have spectral radii $\frac{1+\sqrt{5}}{2}$ and $2$, respectively. The unique $1$-norm eigenvector for both adjacency matrices is given by $x_\Lambda = \left(\begin{matrix} \gamma^{-1} \\ \gamma^{-2} \\ \end{matrix}\right)$. Since all groupoid elements  have the same domain and codomain, Lemma~\ref{rho-per} implies that $\Per(G_A,\Lambda)\subseteq \{n\in \Z^2:\rho(\Lambda)^n=1\}$. Since $\frac{1+\sqrt{5}}{2}$ and $2$ are rationally independent, the only $n\in \Z^2$ that satisfies $\rho(\Lambda)^n=1$ is $(0,0)$. Hence $\Per(G_A,\Lambda)$ is trivial and $\Lambda$ is $G$-aperiodic. 
 Since there is no groupoid element connecting the vertices $v$ and $w$, we deduce that $x_\Lambda$ is $G$-invariant. An argument almost identical to \cite[Proposition~ 2.5]{lrrw} shows that $(G_A,\Lambda)$ contracting, and hence 
  $(G_A,\Lambda)$ is finite-state. 
  
Looking at the relations we checked for \eqref{item: square_preserving_1}   and \eqref{item: square_preserving_2} in Example~\ref{ex: bascilica2}, we see that neither $a_v$ nor $a_w$ act trivially on any square, and hence Corollary~\ref{cor:cg} implies $c_{a_v}=0$ and $c_{a_w}=0$. Another application of  the relations in  \eqref{item: square_preserving_1}   and \eqref{item: square_preserving_2} for $b_v$ and $b_w$ show that
\[ \left| F^1_{b_v}(v)\right|=1, \, \left| F^1_{b_v}(w)\right|=1 \text{ and } \left| F^l_{b_v}(v)\right|=0, \, \left| F^l_{b_v}(w)\right|=0 \text{ for } l >1
\]
and
\[ \left| F^1_{b_w}(v)\right|=1, \, \left| F^1_{b_w}(w)\right|=0 \text{ and } \left| F^l_{b_w}(v)\right|=0, \, \left| F^l_{b_w}(w)\right|=0 \text{ for } l >1.
\]
Combining this with Corollary~\ref{cor:cg} we compute
\[
c_{b_v}=2^{-1}\gamma^{-1}(\gamma^{-1}+\gamma^{-2})=\frac{\gamma^{-1}}{2}\quad\text{and}\quad c_{b_w}=2^{-1}\gamma^{-1}(\gamma^{-1})=\frac{\gamma^{-2}}{2}.
\]
Thus, using \eqref{kms1cg}, the KMS$_{1}$-state satisfies \eqref{KMS1stateExample2}.
\end{proof}

\end{document}